\theoremstyle{change} 
\newenvironment{proof}{\noindent \bf Proof:   \rm}{\hspace*{\fill}
$\square$ \medskip}
\newtheorem{theorem}{Theorem}
\newtheorem{lemma}[theorem]{Lemma}
\newtheorem{proposition}[theorem]{Proposition}
\newtheorem{corollary}[theorem]{Corollary}
{\theorembodyfont{\upshape}
\newtheorem{definition}{Definition}
\newtheorem{remark}{Remark}
\newtheorem{example}{Example}
}
\DeclareMathAlphabet{\mathpzc}{OT1}{pzc}{m}{it}
   \mathchardef\mhyphen="2D
\newcommand{\eq}[1][r]
   {\ar@<-3pt>@{-}[#1]
    \ar@<-1pt>@{}[#1]|<{}="gauche"
    \ar@<+0pt>@{}[#1]|-{}="milieu"
    \ar@<+1pt>@{}[#1]|>{}="droite"
    \ar@/^2pt/@{-}"gauche";"milieu"
    \ar@/_2pt/@{-}"milieu";"droite"}
\newcommand*\mycirc[1]{%
   \begin{tikzpicture}[baseline=(C.base)]
     \node[draw,circle,inner sep=1pt](C) {#1};
   \end{tikzpicture}}
\begin{document}

\title{Hilbertian Frobenius algebras}


\author{Laurent Poinsot\footnote{Second address: CREA, 
\'Ecole de l'Air, 
Base a\'erienne 701,
13661 Salon-de-Provence,
France.}}         
\affil{LIPN - UMR CNRS 7030\\
University Sorbonne Paris Nord\\
93140 Villetaneuse, France\\\quad\\
laurent.poinsot@lipn.univ-paris13.fr}

\date{}

\maketitle

\begin{abstract}
Commutative Hilbertian Frobenius algebras are those commutative semigroup objects in the monoidal category of Hilbert spaces, for which the Hilbert adjoint of the multiplication satisfies the Frobenius compatibility relation, that is, this adjoint is a bimodule map. In this note we prove that they split as an orthogonal direct sum of two closed ideals, their Jacobson radical which in fact is nothing but their annihilator,  and the closure of the linear span of their group-like elements. As a consequence such an algebra is semisimple if, and only if, its multiplication has a dense range. In particular every commutative special Hilbertian algebra, that is, with a coisometric multiplication,  is semisimple. Extending a known result in the finite-dimensional situation, we prove that the structures of such  Frobenius algebras on a given Hilbert space are in one-one correspondence with its bounded above orthogonal sets. We show, moreover, that the category of commutative Hilbertian Frobenius algebras is dually equivalent to a category of pointed sets. Thus,  each semigroup morphism between commutative Hilbertian Frobenius semigroups arises from a unique base-point preserving map (of some specific kind), from the set of minimal ideals of its codomain to the set of minimal ideals of its domain, both  with zero added. \\

\noindent\textbf{MSC 2010:} Primary 46J40, Secondary 16T15.\\
\noindent\textbf{Keywords:} Frobenius compatibility relation,  Banach algebras, semisimplicity, orthogonal sets.
\end{abstract}

\section*{Introduction}

Frobenius algebras have deep connections with various mathematical objects: they appear of course in module theory since each finite-dimensional selfinjective algebra over a field is Morita equivalent to a Frobenius algebra~\cite[Corollary~3.11, p.~351]{Skowronski},    but also  in representation theory because of their similarity with group algebras~\cite{Reiner}, in mathematical physics as their category is equivalent to that of 2-dimensional topological quantum field theories~\cite{Abrams}, in  (categorical) quantum computing since they provide an algebraic characterization of orthonormal bases and observables~\cite{Coecke2007, Coecke} or also in the theory of Hopf algebras in particular because their relation with weak Hopf algebras~\cite{Bohm}. 

As is well-known the Hilbert space $\ell^2(X)$ of square-summable functions on $X$ is by no means free over $X$. However in this note we prove that $X$ -- or more precisely $X$ plus a new point added -- freely generates a (non-unital, when $X$ is not finite) commutative Frobenius algebra, whose underlying space is (unitarily isomorphic to) $\ell^2(X)$. More generally we show that every commutative Frobenius algebra $(H,\mu)$, where $(H,\mu)$ is a semigroup in the category of Hilbert space and for which $\mu^{\dagger}$ satisfies the Frobenius compatibility relation, splits into an orthogonal direct sum of two ideals $\ell^2(X)\oplus_2 A(H,\mu)$, where $A(H,\mu)$ is the annihilator of $(H,\mu)$ and $X$ is a set equipotent to that of minimal ideals of $(H,\mu)$. The free Frobenius algebras, that is, those  of the form $\ell^2(X)$, are precisely the semisimple ones. Before describing in more detail the content of this note, let us put it into perspective.

A Frobenius algebra may be described in several equivalent ways (\cite[Theorem~61.3, p.~414]{Reiner}), for instance it is a unital algebra over some base field which as a left module over itself is isomorphic to its algebraic (right) dual. This definition implies directly that the algebra must be finite-dimensional or at least finitely generated projective if base rings were allowed~\cite{Eilenberg}. 

Not all equivalent characterizations are equally suitable for the applications or generalizations we have in mind, for instance if one expects to extend this notion to not necessarily finite-dimensional algebras. Thus alternatively a Frobenius algebra is a finite-dimensional unital algebra together with a non-degenerate and associative bilinear form.  Dropping the finiteness condition leads to  infinite-dimensional Frobenius algebras studied in~\cite{Jans}.  At this point one may add that under this form the Frobenius algebras are substantially similar to  a class of (non unital) algebras combining Banach algebras and Hilbert spaces, called $H^*$-algebras~\cite{Ambrose}, where operators of right multiplication are the Hilbert adjoints of the operators of left multiplication.  

More recently another way to characterize Frobenius algebras appears  in~\cite[Theorem~1, p.~572]{Abrams} in a commutative situation. A Frobenius algebra  is a finite-dimensional unital algebra which at the same time is a counital coalgebra such that both structures interact nicely. More precisely the compatibility  condition between the  algebra and the  coalgebra of a Frobenius algebra -- the so-called {\em Frobenius relation} -- asserts that the comultiplication of the latter is a morphism of bimodules over the former. 

Because the above compatibility relation is stated entirely using only    tensor products and linear maps, the former description has the advantage over others  to allow for talking about Frobenius algebras in the realm of monoidal categories. This is precisely the point of view adopted in the recent book~\cite{Kock}. This approach is  used with success in~\cite{Coecke,Heunen} where the authors take advantage of the presence of the Hilbertian adjoint to define a comultiplication from a multiplication. More precisely they consider commutative  {\em $\dagger$-Frobenius monoids}, that is, Frobenius algebras $(H,\mu,\eta,\delta,\epsilon)$ over a finite-dimensional Hilbert space $H$, where $\delta=\mu^{\dagger}$ and $\epsilon=\eta^{\dagger}$.  

In what follows ``{\em Hilbertian Frobenius}''  stands for  ``{\em $\dagger$-Frobenius}'',  to recall the fact that the comultiplication is the Hilbert adjoint of the multiplication, because no other kinds of Frobenius semigroups in the category of Hilbert spaces are considered here. To summarize the coalgebra structure of a Hilbertian Frobenius algebra is the Hilbert adjoint of its algebra structure.  

Most notably the main result in~\cite{Coecke} is the statement that  orthogonal bases on a given finite-dimensional Hilbert space and  its structures of commutative (unital and counital) Hilbertian Frobenius algebras are in a one-one correspondence.   


In an effort to extend this result to arbitrary Hilbert spaces, a notion of non-unital Frobenius algebra is proposed in~\cite{Heunen}, referred to as (commutative) {\em Hilbertian Frobenius semigroups} (or {\em algebras}) in what follows, obtained by dropping the unital and counital assumptions. More precisely, these are Hilbertian Frobenius semigroups $(H,\mu)$  (in the monoidal category $(\mathbf{Hilb},\hat{\otimes}_2,\mathbb{C})$ of Hilbert spaces and bounded linear maps), that is, $H\hat{\otimes}_2 H\xrightarrow{\mu}H$ is commutative and associative, and its adjoint $H\xrightarrow{\mu^{\dagger}}H\hat{\otimes}_2H$ satisfies the Frobenius condition. 

While the authors only partially achieve their goal,  that is, the characterization of arbitrary orthonormal bases by means of  Frobenius structures, one of their merit is a clarification of the relation  between commutative {\em special}  Hilbertian Frobenius semigroups, that is, those Hilbertian Frobenius semigroups $(H,\mu)$  with an isometric comultiplication ($\mu\circ\mu^{\dagger}=id$),  and Ambrose's $H^*$-algebras, relation which was further analyzed in~\cite{Poinsot2019} for {\em special Hilbertian} algebras which are not necessarily of the Frobenius kind. 

It is precisely the intention of this note to provide a better understanding of the relations between orthogonal bases or better orthogonal sets, and algebraic structures of commutative Hilbertian Frobenius semigroups  over not necessarily finite-dimensional Hilbert spaces.  

We, hence, provide a structure theorem for commutative Hilbertian Frobenius semigroups (Theorem~\ref{thm:stHFs}): it is an easy fact that they split as an orthogononal direct sum of their Jacobson radical and the topological closure of the linear span of their {\em group-like} elements, that is, those non zero elements $x$ sent to $x\otimes x$ by the comultiplication $\mu^{\dagger}$. But what is less immediate is that in fact, the orthogonal complement of the Jacobson radical is a subalgebra, that is, that the set of group-like elements is closed under multiplication. In fact it is not difficult to notice that the product of two {\em distinct} group-like elements is equal to zero, but what is not as immediate is that the square of a group-like element belongs to the one-dimensional space  spanned by the element (to be fair this observation is free when the comultiplication is assumed isometric as in~\cite{Heunen}, which is not the case in what follows), which by the way turns this space into a minimal ideal.

A similar result is claimed in~\cite{Heunen} for the particular case of commutative Hilbertian Frobenius semigroups with an isometric comultiplication, but its proof  is not completely correct (see the introduction of~\cite{Poinsot2019} for more details). Furthermore our structure theorem is completed by the observation that the radical is precisely the annihilator of the algebra, which is a direct consequence unnoticed elsewhere, of the Frobenius condition  (see Proposition~\ref{prop:normality}) which forces the multiplication operators to be normal, that is, that they commute with their own adjoint.  

Thus, clearly, a commutative Hilbertian Frobenius semigroup $(H,\mu)$ not only splits into an orthogonal direct sum of a subalgebra and an ideal, but actually of two (closed) ideals,  one being radical while the other is semisimple. As becomes clear the question of semisimplicity of such an algebra is completely governed by this structure theorem:  a necessary and sufficient condition for a commutative Hilbertian Frobenius algebra to be semisimple is that its regular representation is faithful, or equivalently that its multiplication $H\hat{\otimes}_2 H\xrightarrow{\mu} H$  has a dense range or using the Hilbert adjoint,  that its comultiplication is one-to-one (Theorem~\ref{thm:semisimplicityforFrob}). (In particular every commutative {\em special} Hilbertian Frobenius algebra is semisimple.) It is a remarkable fact that in the finite-dimensional situation this may be interpreted as the existence of a unit (Corollary~\ref{cor:semisimplicityforFrob1}). Consequently is recovered (with a proof free of a $C^*$-argument) the result of~\cite{Coecke}  that finite-dimensional commutative Hilbertian Frobenius {\em monoids} are semisimple.

By the way, because group-like elements of a commutative Hilbertian Frobenius semigroup, are non-zero and pairwise orthogonal (even orthonormal when furthermore the algebra is special, that is, when the comultiplication is an isometry) several bijective correspondences (Theorem~\ref{thm:dictionary}) between structures of Frobenius semigroups of certain kinds available on a given Hilbert space, and some of its orthogonal (or orthonormal) sets are obtained, which extend the main result of~\cite{Coecke}. It is worth mentioning  that contrary to the finite-dimensional situation, not all orthogonal sets correspond to a  structure of a commutative Hilbertian Frobenius semigroup but only those which are bounded above (or below by a strictly positive constant), including the empty set; in fact one cannot expect  unbounded orthogonal families to be in the range of the above bijections as boundedness of the norm of the group-like elements, is a direct consequence of the fact that in a Banach algebra, the multiplication as a bilinear map, is jointly continuous. 

Besides the above structure theorem also has some important consequences at the level of the category ${}_c\mathbf{FrobSem}(\mathbb{Hilb})$ of commutative Hilbertian Frobenius semigroups  and semigroup morphisms. Most notably it is shown that every semigroup morphism between commutative Hilbertian Frobenius semigroups arises from a unique set-theoretic base-point preserving map (of some specific kind), from the set of minimal ideals of its codomain to the set of minimal ideals of its domain, both with zero added as base-point. Among others one proves that 
\begin{enumerate}
\item\label{it:pt1} ${}_c\mathbf{FrobSem}(\mathbb{Hilb})$ is equivalent to the product ${}_{\mathsf{semisimple},c}\mathbf{FrobSem}(\mathbb{Hilb})\times \mathbf{Hilb}$ (Proposition~\ref{prop:structuretheoremrevisited1}) where the first factor is  the  full subcategory  of ${}_c\mathbf{FrobSem}(\mathbb{Hilb})$ spanned by the semisimple algebras. The splitting into a semisimple Hilbertian Frobenius semigroup and the radical (which is essentially a Hilbert space as its multiplication is trivial) provides the equivalence. 

\item\label{it:pt2} ${}_{\mathsf{semisimple},c}\mathbf{FrobSem}(\mathbb{Hilb})$ is dually equivalent to a category of pointed weighted sets (Theorem~\ref{thm:equivalence_weighted_sets_and_semisimple_Frob}). The functor whose object component sends a semisimple commutative Hilbertian Frobenius semigroup to its set of minimal ideals (with the trivial ideal added),  implements the equivalence. Its equivalence inverse is similar to the $\ell^2_\bullet$-functor introduced in~\cite{Poinsot2019}.
\end{enumerate}

The paper is organized as follows: 

Section~\ref{sec:preliminaries} is mainly devoted to the introduction of the  terminology, definitions, names of categories and notions used in this note. Here  in particular are recalled among others the notions of group-like elements and of  Hilbertian Frobenius semigroups. 

The short Section~\ref{sec:weighted_hilb} provides an example of a  commutative Hilbertian Frobenius semigroup which is of great interest since in fact the orthogonal complement of the Jacobson radical of any commutative Hilbertian Frobenius semigroup is unitarily isomorphic to such an algebra (Proposition~\ref{prop:onlyoneexample}). 

Section~\ref{sec:structuretheoremgeneral} may be seen as a continuation of~\cite{Poinsot2019} and {\em does not} concern Frobenius algebras. We provide a structure theorem (Theorem~\ref{thm:structthm4specialHilbAlg}) for  commutative Hilbertian algebras with an isometric comultiplication which are not necessarily of the Frobenius kind. 

Section~\ref{sec:comHilbFromSemigroups} discusses the case of commutative Hilbertian Frobenius semigroups and in particular the conditions under which they are semisimple. In Section~\ref{sec:frobalgrevisited} is stated the  structure theorem for commutative Hilbertian Frobenius semigroups (Theorem~\ref{thm:stHFs}) and the resulting conditions for semisimplicity (Theorem~\ref{thm:semisimplicityforFrob}).

In Section~\ref{sec:directcons} are explored some of the immediate consequences of the structure theorem. Thus here is provided Theorem~\ref{thm:dictionary}  about the bijective correspondences between bounded orthogonal sets and some structures of Hilbertian Frobenius algebras on a Hilbert space, Proposition~\ref{prop:hstar} about the logical  equivalence between commutative $H^*$-algebras and commutative Hilbertian Frobenius semigroups  and a reduction of semisimplicity to the existence of   approximate (co)units (Corollary~\ref{cor:semisimplicityforFrob}).

Section~\ref{sec:reformulation} concerns the reformulation of the structure theorem as an equivalence of categories (item~\ref{it:pt1} above).   

Section~\ref{sec:mainequiv} deals with the equivalence of categories from point~\ref{it:pt2} above.

Section~\ref{sec:otherequiv} treats the case of other equivalences obtained by considering other kinds of  morphisms, for instance  it deals with ambidextrous morphisms (Section~\ref{sec:ambi}), that is, maps which are simultaneously morphisms of algebras and coalgebras, and with proper morphisms (Section~\ref{sec:proper}). 

Section~\ref{sec:epilogue} discusses an example of a non-commutative non-semisimple Hilbertian Frobenius semigroup whose merit is to show that the Frobenius condition does not govern anymore semisimplicity in the non-commutative situation. 

\section{Preliminaries}\label{sec:preliminaries}

Let us begin with some words about the terminology used in this note:
\begin{enumerate}
\item Every vector space is over the field of complex numbers, and unless stated otherwise, all algebras are over $\mathbb{C}$, associative and  {\em commutative}, but not necessarily unital, so one usually refers to them solely as {\em algebras}. Consequently in this note one usually -- but not always -- drops the adjective {\em commutative} and, when commutativity is not assumed, we write it explicitly. An algebra map or morphism of algebras is not required to preserve a unit.
\item  Let $H$ be a Hilbert space.  Let $X\subseteq H$ be a set of pairwise orthogonal vectors, that is, $\forall x,y\in H,\ x\not=y\Rightarrow \langle x,y\rangle=0$. $X$ is an {\em orthogonal set} when moreover $0\not\in X$, that is, an orthogonal set is a set of pairwise {\em non-zero} orthogonal vectors. It follows that every orthogonal set in linearly independent, and that $\emptyset$ is an orthonormal set, while $(0)$ is not. An {\em orthogonal basis} is an orthogonal set $X$ such that $X^{\perp}=0$. 

\item Recall from the Introduction that in this note the only kind of Frobenius algebras we deal with are those for which the coalgebra structure is adjoint in the Hilbert sense of the algebra structure. Rather than calling them ``{\em $\dagger$-Frobenius}'' one says ``{\em Hilbertian Frobenius}''. 
\end{enumerate}
 
Let us now summarize some notations and results from~\cite{Poinsot2019} as far as they are needed hereafter. 

\subsection{Hilbert spaces}
When $E$ is Banach space, $\mathcal{B}(E)$ stands for the Banach space of all bounded linear endomorphism of $E$ with the operator norm $\|-\|_{\mathsf{op}}$. The obvious forgetful functor from Hilbert spaces to Banach spaces, with   bounded linear maps as morphisms for both categories, is denoted $U$ but there is no risk to identify -- as we shall do hereafter -- a  Hilbert space $H$ with its underlying Banach space as $U$ is injective on objects.\footnote{Let $H,K$ be Hilbert spaces such that $U(H)=U(K)$, then as complex vector spaces $H=K$. The parallelogram law implies that the norms of $U(H)$ and $U(K)$ comes from a common inner product, and thus $H=K$ as Hilbert spaces.} The inner  product (linear in its first variable) of a Hilbert space $H$ is denoted $\langle\cdot,\cdot \rangle_H$ or simply $\langle \cdot,\cdot\rangle$. Basic properties about the Hilbertian tensor product $\hat{\otimes}_2$ are provided in~\cite{Kadison}. Given a bounded linear map $\mu\colon H\hat{\otimes}_2 K\to L$, where $H, K, L$ are Hilbert spaces, $\mu_{\mathsf{bil}}\colon H\times K\to L$ denotes its (unique) associated bounded bilinear map. For a bounded multilinear or linear map $f$, $\|f\|_{\mathsf{op}}$ stands for its usual {\em operator norm}. The {\em Hilbert direct sum} (or {\em orthogonal direct sum}) of Hilbert spaces $H,K$ is denoted $H\oplus_2 K$. Finally given a closed subspace $V$ of $H$, $p_V\colon H\to H$ denotes the orthogonal projection onto $V$, that is, $p_V=i_V\circ \pi_V$, where $\pi_V\colon H\to V$ is the canonical projection $H\simeq V\times V^{\perp}\to V$ and $i_V\colon V\hookrightarrow H$ is the canonical inclusion.

\begin{lemma}\label{lem:restriction}
Let $H,K$ be Hilbert spaces and $V,W$ be  closed subspaces of respectively $H$ and $K$. Let $H\xrightarrow{f}K$ be a bounded linear map. $f(V)\subseteq W$ if, and only if, $f^{\dagger}(W^{\perp})\subseteq V^{\perp}$.
\end{lemma}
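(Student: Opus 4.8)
The plan is to prove the biconditional by establishing a direct relationship between the inclusion $f(V)\subseteq W$ and orthogonality conditions, exploiting the defining property of the Hilbert adjoint, namely $\langle f(v), w\rangle_K = \langle v, f^\dagger(w)\rangle_H$ for all $v\in H$, $w\in K$. The key observation is that a closed subspace inclusion can be rephrased purely in terms of inner products with the orthogonal complement: for a closed subspace $W$ of $K$, an element $y\in K$ lies in $W$ if, and only if, $\langle y, w'\rangle_K = 0$ for every $w'\in W^\perp$ (this is precisely the content of $W = (W^\perp)^\perp$, which holds because $W$ is closed).

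First I would prove the forward implication. Assume $f(V)\subseteq W$. To show $f^\dagger(W^\perp)\subseteq V^\perp$, take an arbitrary $w'\in W^\perp$; I must show $f^\dagger(w')\in V^\perp$, that is, $\langle v, f^\dagger(w')\rangle_H = 0$ for every $v\in V$. By the adjoint relation, $\langle v, f^\dagger(w')\rangle_H = \langle f(v), w'\rangle_K$. Since $f(v)\in f(V)\subseteq W$ and $w'\in W^\perp$, this inner product vanishes, which gives the claim. The argument is a short chain of rewriting using the adjoint.

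The converse implication will follow by an entirely symmetric argument, and here the cleanest route is to invoke the forward direction applied to $f^\dagger$ in place of $f$. Indeed, if $f^\dagger(W^\perp)\subseteq V^\perp$, then applying the already-established forward implication to the bounded linear map $f^\dagger\colon K\to H$, with the closed subspaces $W^\perp\subseteq K$ and $V^\perp\subseteq H$, yields $(f^\dagger)^\dagger((V^\perp)^\perp)\subseteq (W^\perp)^\perp$. Using $(f^\dagger)^\dagger = f$ together with $(V^\perp)^\perp = V$ and $(W^\perp)^\perp = W$ (both valid since $V$ and $W$ are closed), this reads exactly $f(V)\subseteq W$, as desired.

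I do not anticipate a genuine obstacle here, since the statement is essentially a packaging of the adjoint identity together with the closedness of the subspaces. The only point requiring a modicum of care is the appeal to $(V^\perp)^\perp = V$ and $(W^\perp)^\perp = W$, which is where the hypothesis that $V$ and $W$ are \emph{closed} is used; were they merely linear subspaces, one would only recover their closures and the biconditional could fail in the nonclosed case. Organizing the proof so that the converse is deduced from the forward direction via $f^\dagger$ avoids duplicating the inner-product computation and makes the symmetry transparent.
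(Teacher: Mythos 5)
Your proof is correct. The paper states Lemma~\ref{lem:restriction} without proof, treating it as standard Hilbert-space theory, and your argument --- the adjoint identity $\langle f(v),w'\rangle=\langle v,f^{\dagger}(w')\rangle$ for the forward implication, then the converse by applying that implication to $f^{\dagger}$ together with $(f^{\dagger})^{\dagger}=f$ and $(V^{\perp})^{\perp}=V$, $(W^{\perp})^{\perp}=W$ for closed subspaces --- is precisely the standard argument the paper implicitly relies on.
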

%

\subsection{Categories}
Let $\mathbb{Hilb}:=(\mathbf{Hilb},\hat{\otimes}_2,\mathbb{C})$ be the symmetric monoidal category of complex Hilbert spaces and bounded linear maps together with the usual Hilbertian tensor product. The associaitivity constraint $\alpha_{H,K,L}\colon (H\hat{\otimes}_2K)\hat{\otimes}_2 L\simeq H\hat{\otimes}_2(K\hat{\otimes}_2L)$ and the symmetry constraint $\sigma_{H,K}\colon H\hat{\otimes}_2K\simeq K\hat{\otimes}_2 H$ are unitary transformations. Let $\mathbb{FdHilb}:=(\mathbf{FdHilb},\otimes_2,\mathbb{C})$ be its monoidal subcategory of finite-dimensional Hilbert spaces and necessarily bounded linear maps. $H\otimes_2 K$ thus is just the finite-dimensional vector space $H\otimes_\mathbb{C}K$ together with the inner product $\langle u\otimes v,u'\otimes v'\rangle=\langle u,u'\rangle_H\langle v,v'\rangle_K$, $u,u'\in H$, $v,v'\in K$. 

A semigroup object $(H,\mu)$ in $\mathbb{Hilb}$ has an {\em underlying} Banach algebra $(H,\mu_{\mathsf{bil}})$. Note that $\|\mu_{\mathsf{bil}}(x,y)\|\leq M\|x\|\|y\|$ for some constant $M$, where $\|-\|$ is the norm induced by the inner product on $H$. So it may happen that strictly speaking, $(H,\mu_{\mathsf{bil}})$ is not a Banach algebra (i.e., $\|-\|$ is not submultiplicative). However $\|x\|':=\max\{\, 1,\|\mu_{\mathsf{bil}}\|_{\mathsf{op}}\,\}\|x\|$ defines a submultiplicative norm, that is, $\|\mu_{\mathsf{bil}}(x,y)\|'\leq \|x\|'\|y\|'$,  equivalent to $\|-\|$. In other words $((H,\|-\|'),\mu_{\mathsf{bil}})$ is a usual Banach algebra, which is the underlying Banach algebra of $(H,\mu)$.

An {\em ideal} $I$ of $(H,\mu)$ is defined as an ideal of the underlying Banach algebra $(H,\mu_{\mathsf{bil}})$. When $I$ is closed, this is equivalent to the requirement that $\mu((I^{\perp}\hat{\otimes}_2 I^{\perp})^{\perp})\subseteq I$. Note that $(I^{\perp}\hat{\otimes}_2 I^{\perp})^{\perp}=(I\hat{\otimes}_2 I^{\perp})\oplus_2 (I\hat{\otimes}_2 I)\oplus_2 (I^{\perp}\hat{\otimes}_2 I)$ (see \cite[Lemma~12, p.~13]{Poinsot2019}).


By  $\mathbf{Sem}(\mathbb{C})$, ${}_c\mathbf{Sem}(\mathbb{C})$,  and ${}_{coc}\mathbf{Cosem}(\mathbb{C})$  are meant respectively the categories of  semigroups, commutative semigroups, and cocommutative  cosemigroups in a symmetric monoidal category $\mathbb{C}$. 
One observes that ${}_c\mathbf{Sem}(\mathbb{FdHilb})$ is nothing but the full subcategory of ${}_c\mathbf{Sem}(\mathbb{Hilb})$ spanned by those commutative semigroups whose underlying vector space is finite-dimensional. A {\em Hilbertian (co)algebra} (or {\em (co)semigroup}) is an object of ${}_c\mathbf{Sem}(\mathbb{Hilb})$ (resp.,  ${}_{coc}\mathbf{Cosem}(\mathbb{Hilb})$) while by {\em special} is meant a Hilbertian (co)algebra $(H,\mu)$ (resp. $(H,\delta)$) with a coisometric multiplication (resp. isometric comultiplication), that is, $\mu\circ \mu^{\dagger}=id$ (resp. $\delta^{\dagger}\circ\delta=id$). ${}_{c}^{\dagger}\mathbf{Sem}(\mathbb{Hilb})$ is the full subcategory of ${}_c\mathbf{Sem}(\mathbb{Hilb})$  spanned by the special Hilbertian algebras. 

The dagger functor $\mathbf{Hilb}^{\mathsf{op}}\xrightarrow{(-)^{\dagger}}\mathbf{Hilb}$ lifts to an isomorphism from the category  ${}_{coc}\mathbf{Cosem}(\mathbb{Hilb})^{\mathsf{op}}$ to ${}_{c}\mathbf{Sem}(\mathbb{Hilb})$. This is still true after the substitution of $\mathbb{Hilb}$ by $\mathbb{FdHilb}$.

By a {\em subalgebra} $V$ of a Hilbertian algebra $(H,\mu)$ is meant a {\em closed} subspace $V$ of $H$ such that $\mu(V\hat{\otimes}_2 V)\subseteq V$, where here $V\hat{\otimes}_2 V$ is identified with the range of $V\hat{\otimes}_2 V\xrightarrow{i_V\hat{\otimes}_2 i_V}H\hat{\otimes}_2 H$. $(V,\mu_{|_V})$ is a Hilbertian algebra on its own right with multiplication $V\hat{\otimes}_2 V\xrightarrow{\mu_{|_V}}V$ the restriction and co-restriction of $H\hat{\otimes}_2H\xrightarrow{\mu}H$. 

By a {\em subcoalgebra} $V$ of $(H,\mu)$ is meant a {\em closed} subspace  $V$ of $H$ such that $\mu^{\dagger}(V)\subseteq V\hat{\otimes}_2 V$. (Alternatively $V$ could be called a {\em subcoalgebra of $(H,\mu^{\dagger})$}.) $(V,(\mu^{\dagger})_{|_V})$ is a Hilbertian coalgebra on its own right with comultiplication $V\xrightarrow{(\mu^{\dagger})_{|_V}}V\hat{\otimes}_2 V$ the restriction and co-restriction of $H\hat{\otimes}_2 H\xrightarrow{\mu^{\dagger}}H$.

When $(H,\mu)$ is a Hilbertian algebra, $xy$ stands for $\mu(x\otimes y)$ and $x^2$ for $\mu(x\otimes x)$, $x,y\in H$.

\begin{lemma}\label{lem:semigroupiso}
Let $(H,\mu),(K,\gamma)$ be Hilbertian algebras. Let $f\colon H\to K$ be a bounded linear map.
\begin{enumerate}
\item For all $u,v\in H$, $f(uv)=f(u)f(v)$ if, and only if,  $f\colon (H,\mu)\to (K,\gamma)$ is a semigroup morphism if, and only if, $f^{\dagger}\colon (K\gamma^{\dagger})\to (H,\mu^{\dagger})$ is a cosemigroup morphism.
\item $f\colon (H,\mu)\to (K,\gamma)$ is a semigroup isomorphism if, and only if, $f$ is both a semigroup morphism and a bijection.
\item $f\colon (H,\mu^{\dagger})\to (K,\gamma^{\dagger})$ is a cosemigroup isomorphism if, and only if, $f$ is both a cosemigroup morphism and a bijection.
\end{enumerate} 
\end{lemma}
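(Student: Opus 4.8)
The plan is to treat the three items separately, reducing each to two standard facts about the Hilbertian tensor product: that the elementary tensors $u\otimes v$ have dense linear span in $H\hat{\otimes}_2 H$, and that the dagger is an involutive contravariant functor compatible with $\hat{\otimes}_2$, i.e.\ $(g\hat{\otimes}_2 h)^{\dagger}=g^{\dagger}\hat{\otimes}_2 h^{\dagger}$ (a basic property of $\hat{\otimes}_2$, cf.~\cite{Kadison}).

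For item 1, I would first unwind the definitions. Writing $uv=\mu(u\otimes v)$ and $f(u)f(v)=\gamma(f(u)\otimes f(v))=\gamma\circ(f\hat{\otimes}_2 f)(u\otimes v)$, the condition $f(uv)=f(u)f(v)$ for all $u,v$ says precisely that the two bounded linear maps $f\circ\mu$ and $\gamma\circ(f\hat{\otimes}_2 f)$ agree on every elementary tensor. Since such tensors span a dense subspace and both maps are continuous, this is equivalent to $f\circ\mu=\gamma\circ(f\hat{\otimes}_2 f)$, i.e.\ to $f$ being a semigroup morphism; this gives the first equivalence. For the second equivalence I would simply apply the dagger to the identity $f\circ\mu=\gamma\circ(f\hat{\otimes}_2 f)$: using contravariance and $(f\hat{\otimes}_2 f)^{\dagger}=f^{\dagger}\hat{\otimes}_2 f^{\dagger}$ one obtains $\mu^{\dagger}\circ f^{\dagger}=(f^{\dagger}\hat{\otimes}_2 f^{\dagger})\circ\gamma^{\dagger}$, which is exactly the assertion that $f^{\dagger}\colon(K,\gamma^{\dagger})\to(H,\mu^{\dagger})$ is a cosemigroup morphism. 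As $(-)^{\dagger}$ is involutive the converse is immediate; alternatively this is a direct instance of the already-recorded isomorphism between ${}_{coc}\mathbf{Cosem}(\mathbb{Hilb})^{\mathsf{op}}$ and ${}_{c}\mathbf{Sem}(\mathbb{Hilb})$.

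For item 2, the forward implication is just the definition of an isomorphism in a category. For the converse, suppose $f$ is a bijective semigroup morphism. Because $f$ is a bounded linear bijection between Hilbert (hence Banach) spaces, the bounded inverse theorem guarantees that $f^{-1}$ is again bounded and linear. It then remains to check $f^{-1}$ is a semigroup morphism: for $u',v'\in K$ set $u=f^{-1}(u')$, $v=f^{-1}(v')$, so that $f(uv)=f(u)f(v)=u'v'$ by item~1, whence $f^{-1}(u'v')=uv=f^{-1}(u')f^{-1}(v')$. Thus $f^{-1}$ is a morphism two-sided inverse to $f$, making $f$ a semigroup isomorphism. Item 3 follows the same template starting from the cosemigroup identity $(f\hat{\otimes}_2 f)\circ\mu^{\dagger}=\gamma^{\dagger}\circ f$: again $f^{-1}$ is bounded by the bounded inverse theorem, and composing this identity on the left by $f^{-1}\hat{\otimes}_2 f^{-1}$ and on the right by $f^{-1}$, together with $(f^{-1}\hat{\otimes}_2 f^{-1})\circ(f\hat{\otimes}_2 f)=id$, yields $\mu^{\dagger}\circ f^{-1}=(f^{-1}\hat{\otimes}_2 f^{-1})\circ\gamma^{\dagger}$, i.e.\ that $f^{-1}$ is a cosemigroup morphism.

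None of the steps presents a genuine obstacle; the only points requiring care are the two structural facts invoked at the outset, namely the density of the elementary tensors (used for item 1) and the compatibility $(g\hat{\otimes}_2 h)^{\dagger}=g^{\dagger}\hat{\otimes}_2 h^{\dagger}$ of the adjoint with the Hilbertian tensor product. The appeal to the bounded inverse theorem in items 2 and 3 is what upgrades a bijective morphism to an isomorphism, and is really the crux distinguishing these categories of Hilbertian (co)semigroups from a purely algebraic setting, where bijectivity of a morphism already suffices.
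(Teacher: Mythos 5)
Your proof is correct and follows essentially the same route as the paper: density of elementary tensors plus continuity for the first equivalence of item 1, functoriality/involutivity of the dagger for the second, and the open mapping (bounded inverse) theorem together with the same algebraic manipulation for item 2. Your item 3 verifies the inverse identity directly rather than formally combining items 1 and 2 as the paper does, but this is a cosmetic difference resting on the identical ingredients.
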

\begin{proof}
\begin{enumerate}
\item The first equivalence is due to the fact that $H\otimes_{\mathbb{C}}H$ is dense into $H\hat{\otimes}_2 H$. The second equivalence is immediate.
\item The direct implication is immediate. Let us prove the converse. By the Open Mapping Theorem, since $f\colon H\to K$ is bounded and is bijective, $f\colon H\to K$ has a bounded inverse $f^{-1}\colon K\to H$. Now let $u,v\in K$. Then, $f(f^{-1}(uv))=uv=f(f^{-1}(u))f(f^{-1}(v))=f(f^{-1}(u)f^{-1}(v))$ so that $f^{-1}(uv)=f^{-1}(u)f^{-1}(v)$. One concludes using the first point above.
\item By combining the first and second points. 
\end{enumerate}
\end{proof}

\subsection{Semisimplicity}

\subsubsection{The Jacobson radical}
The {\em Jacobson radical} or {\em radical} $J(A)$, or $J$ when there is no risk of confusion, of a not necessarily commutative algebra $A$   is the intersection of all its maximal modular left (or right) ideals~\cite[p.~166]{Rowen} and thus if $A$ is unital, then $J(A)$ is the intersection of all maximal left (or right) ideals~\cite{Farb} since in this case every ideal is modular. Observe that every maximal modular left, right or two-sided ideal is closed (\cite[Theorem~2.4.7, p.~236]{Palmer1}) in a complex (unital or not, commutative or not) Banach algebra.  $J(A)$ may equivalently be defined as the intersection of all {\em non-trivial characters} of ${A}$, that is, (necessarily continuous) non-zero algebra maps from ${A}$ to $\mathbb{C}$, if in addition ${A}$ is commutative~\cite[Theorem~2.1.8, p.~49]{Kaniuth} because in this case there is a one-one correspondence between maximal modular ideals and non-trivial characters. Note also in passing that the set $\mathsf{char}(A)$ of all non-trivial characters of a commutative Banach algebra $A$ when equipped with the weakest topology with respect to which all maps $\mathsf{char}(A)\xrightarrow{\hat{x}}\mathbb{C}$, $\hat{x}(\ell):=\ell(x)$, $x\in A$, are continuous, is locally compact and even compact when $A$ furthermore is unital (see~\cite[Theorem~2.2.3, p.~52]{Kaniuth}). $\mathsf{char}(A)$ topologized as above is the {\em character space} of $A$.

The Jacobson radical $J({A})$ of a commutative Banach algebra ${A}$ and that $J({A}^1)$ of its {\em unitarization} ${A}^1$ (see e.g.,~\cite[p.~6]{Kaniuth}) are related as follows. 

\begin{lemma}\label{lem:jacofunitarization}(See also~\cite[Theorem~4.3.2.(a), p.~474]{Palmer1}\footnote{Observe however that in~\cite[Definition~1.1.11, p.~19]{Palmer1} $A^1$ is a {\em conditional unitization}, that is, $A^1$ is either the usual unitarization of $A$ when $A$ has no unit, or is $A$ itself when $A$  already has a unit.}.)
Let ${A}$ be a complex commutative Banach algebra. Then, $J({A})=J({A}^1)$. 
\end{lemma}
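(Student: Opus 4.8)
The plan is to exploit the description of the radical recalled just above: for a complex commutative Banach algebra $B$ one has $J(B)=\bigcap_{\chi}\ker\chi$, the intersection ranging over all non-trivial characters of $B$, with the convention that an empty intersection equals $B$. Since $A^1$ is unital, its characters are exactly its non-zero multiplicative linear functionals, all of which are automatically non-trivial; so the whole proof reduces to understanding how $\mathsf{char}(A^1)$ relates to $\mathsf{char}(A)$.

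First I would establish the standard correspondence $\mathsf{char}(A^1)\cong\mathsf{char}(A)\sqcup\{\psi_\infty\}$. Writing $A^1=A\oplus\mathbb{C}\,1$ with $1$ the adjoined unit, take $\psi\in\mathsf{char}(A^1)$ and consider the restriction $\psi|_A$, which is an algebra homomorphism $A\to\mathbb{C}$, continuous because it factors as $\psi$ composed with the bounded inclusion $A\hookrightarrow A^1$. Either $\psi|_A=0$, in which case $\psi(a+\lambda 1)=\lambda$ is the augmentation character $\psi_\infty$, whose kernel is exactly $A$; or $\chi:=\psi|_A$ is a non-trivial character of $A$, in which case $\psi(a+\lambda 1)=\chi(a)+\lambda$ is the unique unital extension $\chi^1$ of $\chi$. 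Conversely each non-trivial character of $A$ extends uniquely in this way, and $\psi_\infty$ is always present, which gives the asserted bijection.

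Next I would simply compute the intersection. Identifying $A$ with its image $\{\,a+0\cdot 1 : a\in A\,\}$ in $A^1$, the bijection above yields
\[
J(A^1)=\ker\psi_\infty\cap\bigcap_{\chi\in\mathsf{char}(A)}\ker\chi^1=A\cap\bigcap_{\chi}\ker\chi^1.
\]
An element of $\ker\psi_\infty$ has the form $a+0\cdot1$ with $a\in A$, and for such an element $\chi^1(a)=\chi(a)$; hence $a\in J(A^1)$ precisely when $\chi(a)=0$ for every non-trivial character $\chi$ of $A$, that is, precisely when $a\in J(A)$. Therefore $J(A^1)=J(A)$ under the canonical embedding.

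The only point requiring care is the degenerate case in which $A$ has no non-trivial character (a radical algebra): then $J(A)=A$ by the empty-intersection convention, while $\mathsf{char}(A^1)=\{\psi_\infty\}$ forces $J(A^1)=\ker\psi_\infty=A$, so equality persists. I expect the main—though mild—obstacle to be the bookkeeping of the character correspondence, in particular verifying that $\psi|_A$ is genuinely a character of $A$ when non-zero and that its unital extension is unique; the substantive input, namely $J(B)=\bigcap_\chi\ker\chi$ for commutative Banach algebras, has already been recalled. For completeness I note a purely ring-theoretic alternative that sidesteps characters: since $A$ is a codimension-one, hence maximal and (as $A^1$ is unital) modular, ideal of $A^1$, one has $J(A^1)\subseteq A$; combining this with the general identity $J(I)=I\cap J(R)$ valid for any two-sided ideal $I$ of a ring $R$ gives $J(A)=A\cap J(A^1)=J(A^1)$ immediately.
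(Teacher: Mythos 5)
Your proof is correct. It is worth noting that the paper itself contains no argument for this lemma: it is stated with a bare citation to Palmer's Theorem~4.3.2.(a), whose content is essentially your closing ring-theoretic remark --- namely the general identity $J(I)=I\cap J(R)$ for a two-sided ideal $I$ of a ring $R$, combined with the observation that $A$ is a maximal modular ideal of $A^1$, whence $J(A^1)\subseteq A$ and the equality follows. Your main, character-theoretic argument is therefore a genuinely self-contained alternative: it uses only the Gelfand-type description of the radical that the paper has already recalled from Kaniuth ($J(B)=\bigcap_{\chi}\ker\chi$ over non-trivial characters, with the empty-intersection convention), together with the standard bijection $\mathsf{char}(A^1)\cong\mathsf{char}(A)\sqcup\{\psi_\infty\}$, and your treatment of the degenerate case where $A$ is radical (so that $\mathsf{char}(A)=\emptyset$ and $\mathsf{char}(A^1)=\{\psi_\infty\}$) correctly closes the one loophole in that computation. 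What the character route buys is that it stays entirely within the commutative Banach-algebra toolkit the paper has set up and requires no input from the structure theory of ideals in abstract rings; what the cited (ring-theoretic) route buys is brevity and greater generality, since $J(I)=I\cap J(R)$ and the maximality of $A$ in $A^1$ do not use commutativity, whereas the character description of the radical does.
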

%
%
%

Call {\em semisimple} an algebra with a trivial radical, and {\em radical} when it coincides with its own radical. (This choice is  consistent with the common terminology from the theory of Banach algebras. In classical algebra the term {\em Jacobson semisimple} or {\em semiprimitive} corresponds to what we call ``semisimple'' while {\em semisimple} means ``Jacobson semisimple and Artinian'', see e.g.,~\cite{Farb}.) The only algebra which is both semisimple and radical is the zero algebra $0$ (which is unital!).

\subsubsection{Group-like elements}\label{sect:grouplike}
The Jacobson radical $J(H,\mu)$ of a Hilbertian algebra $(H,\mu)$ is defined as the Jacobson radical $J(H,\mu_{\mathsf{bil}})$ of its underlying Banach algebra $((H,\|-\|'),\mu_{\mathsf{bil}})$. $(H,\mu)$ is {\em semisimple} (resp. {\em radical}) when so is the Banach algebra $((H,\|-\|'),\mu_{\mathsf{bil}})$. 

$J(H,\mu)$ also has an intrinsic description: Let $G(H,\mu):=\{\, x\in H\setminus\{\, 0\,\}\colon \mu(x\otimes x)=x\,\}$ be the set of all {\em group-like elements} of $(H,\mu)$. Then, $J(H,\mu)=G(H,\mu)^{\perp}$ and $\overline{\langle G(H,\mu)\rangle}=J(H,\mu)^{\perp}$, where here and elsewhere $\langle X\rangle$ denotes the linear span,  and $\overline{X}$ the closure, of a subset $X$ of $H$. 

$(H,\mu)$ is semisimple when $G(H,\mu)^{\perp}=(0)$ and $(H,\mu)$  is radical when $G(H,\mu)=\emptyset$.  

As a consequence of the Riesz representation theorem, the map $G(H,\mu)\xrightarrow{R} \mathsf{char}(H,\mu_{\mathsf{bil}})$, $x\mapsto \langle \cdot,x\rangle$ is bijective (\cite[Lemma~19, p.~17]{Poinsot2019}). $G(H,\mu)$ becomes a locally compact space under the weakest topology associated to the family of maps $(G(H,\mu)\xrightarrow{R}\mathsf{char}(H,\mu_{\mathsf{bil}})\xrightarrow{\hat{x}}\mathbb{C})_{x\in H}$ (\cite[p.~19]{Poinsot2019}) so that under $R$, $G(H,\mu)$ and $\mathsf{char}(H,\mu_{\mathsf{bil}})$ are homeomorphic.

\begin{lemma}\label{lem:subalg_and_subcoalg}
Let $(H,\mu)$ be a Hilbertian algebra and let $V$ be a closed subspace of $H$ which is both a subalgebra and a subcoalgebra. Then, $(\mu_{|_V})^{\dagger}=(\mu^\dagger)_{|_V}$ and $G(V,\mu_{|_V})=G(H,\mu)\cap V$. 
\end{lemma}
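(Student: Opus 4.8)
The plan is to work with the explicit operator descriptions of the restricted multiplication and comultiplication and then pass to Hilbert adjoints. Write $i_V\colon V\hookrightarrow H$ for the inclusion and $\pi_V\colon H\to V$ for the canonical projection, so that $p_V=i_V\circ\pi_V$, $i_V^{\dagger}=\pi_V$ and $\pi_V^{\dagger}=i_V$. Since $i_V$ is an isometry, so is $i_V\hat{\otimes}_2 i_V$, and it identifies $V\hat{\otimes}_2 V$ with a closed subspace of $H\hat{\otimes}_2 H$; its adjoint is $(i_V\hat{\otimes}_2 i_V)^{\dagger}=\pi_V\hat{\otimes}_2\pi_V$. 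The subalgebra hypothesis guarantees that $\mu\circ(i_V\hat{\otimes}_2 i_V)$ lands in $V$, hence that $\mu_{|_V}=\pi_V\circ\mu\circ(i_V\hat{\otimes}_2 i_V)$ as a map $V\hat{\otimes}_2 V\to V$; the subcoalgebra hypothesis is precisely what makes the restriction--corestriction $(\mu^{\dagger})_{|_V}\colon V\to V\hat{\otimes}_2 V$ of $\mu^{\dagger}$ well defined.

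For the first identity I would simply take the adjoint of this composite, obtaining $(\mu_{|_V})^{\dagger}=(i_V\hat{\otimes}_2 i_V)^{\dagger}\circ\mu^{\dagger}\circ\pi_V^{\dagger}=(\pi_V\hat{\otimes}_2\pi_V)\circ\mu^{\dagger}\circ i_V$, a map $V\to V\hat{\otimes}_2 V$. It then remains to observe that for $v\in V$ the subcoalgebra condition places $\mu^{\dagger}(i_V v)$ already inside (the embedded copy of) $V\hat{\otimes}_2 V$, so that $\pi_V\hat{\otimes}_2\pi_V$ merely corestricts and returns the same vector, which is by definition $(\mu^{\dagger})_{|_V}(v)$. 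A slightly more transparent route avoiding this identification is to verify the defining adjoint relation directly: for $v\in V$ and $\xi\in V\hat{\otimes}_2 V$, use that $i_V\hat{\otimes}_2 i_V$ preserves inner products to rewrite $\langle\xi,(\mu^{\dagger})_{|_V}(v)\rangle$ as $\langle(i_V\hat{\otimes}_2 i_V)\xi,\mu^{\dagger}(i_V v)\rangle_{H\hat{\otimes}_2 H}=\langle\mu((i_V\hat{\otimes}_2 i_V)\xi),i_V v\rangle_H$, then invoke the subalgebra property to replace $\mu((i_V\hat{\otimes}_2 i_V)\xi)$ by $i_V(\mu_{|_V}(\xi))$ and conclude $\langle\xi,(\mu^{\dagger})_{|_V}(v)\rangle=\langle\mu_{|_V}(\xi),v\rangle_V=\langle\xi,(\mu_{|_V})^{\dagger}(v)\rangle$. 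As $\xi$ ranges over all of $V\hat{\otimes}_2 V$ this yields $(\mu^{\dagger})_{|_V}=(\mu_{|_V})^{\dagger}$.

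For the second identity the key elementary observation is that $(i_V\hat{\otimes}_2 i_V)(v\otimes v)=v\otimes v$ for $v\in V$ and that, by the subalgebra property, $\mu(v\otimes v)\in V$; hence $\mu_{|_V}(v\otimes v)=\pi_V(\mu(v\otimes v))=\mu(v\otimes v)$ as an equality inside $V\subseteq H$. Consequently the group-like equation $\mu_{|_V}(v\otimes v)=v$ in $V$ is literally the same relation as $\mu(v\otimes v)=v$ in $H$, and since both $G(V,\mu_{|_V})$ and $G(H,\mu)$ additionally exclude $0$, the equivalence $v\in G(V,\mu_{|_V})\Leftrightarrow (v\in V\text{ and }v\in G(H,\mu))$ follows at once, giving both inclusions simultaneously. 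Note that this part uses only the subalgebra hypothesis.

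I do not anticipate a genuine difficulty; the only point demanding care is the bookkeeping between the \emph{identified} and the \emph{embedded} copies of $V\hat{\otimes}_2 V$, that is, distinguishing when $\pi_V\hat{\otimes}_2\pi_V$ acts as a genuine orthogonal projection and when it is a mere corestriction of the identity. This is exactly where the two hypotheses are consumed---the subcoalgebra one in the first identity and the subalgebra one in both---and keeping the isometric embedding $i_V\hat{\otimes}_2 i_V$ explicit throughout, together with the adjoint relations $i_V^{\dagger}=\pi_V$ and $(i_V\hat{\otimes}_2 i_V)^{\dagger}=\pi_V\hat{\otimes}_2\pi_V$, disposes of it cleanly. (That the subalgebra and subcoalgebra conditions are genuinely independent, so that both are really needed, can be seen by rephrasing each as a range containment via Lemma~\ref{lem:restriction}.)
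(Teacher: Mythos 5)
Your proposal is correct and follows essentially the same route as the paper: express $\mu_{|_V}$ as $\pi_V\circ\mu\circ(i_V\hat{\otimes}_2 i_V)$, take Hilbert adjoints to get $(\mu_{|_V})^{\dagger}=(\pi_V\hat{\otimes}_2\pi_V)\circ\mu^{\dagger}\circ i_V$, and use the defining relation $(i_V\hat{\otimes}_2 i_V)\circ(\mu^{\dagger})_{|_V}=\mu^{\dagger}\circ i_V$ of the subcoalgebra corestriction to identify this with $(\mu^{\dagger})_{|_V}$. Your added inner-product verification and the explicit argument for $G(V,\mu_{|_V})=G(H,\mu)\cap V$ (which the paper dismisses as clear) are sound refinements of the same argument, not a different method.
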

\begin{proof}
 By definition of $\mu_{|_V}$, $i_{V}\circ \mu_{|_{V}}=\mu\circ (i_{V}\hat{\otimes}_2 i_{V})$ so that $\mu_{|_{V}}=\pi_{V}\circ \mu\circ  (i_{V}\hat{\otimes}_2 i_{V})$ and thus $(\mu_{|_{V}})^{\dagger}=(\pi_{V}\hat{\otimes}_2 \pi_{V})\circ \mu^{\dagger}\circ i_{V}=(\mu^{\dagger})_{|_{V}}$, the last equality holds because by definition of $(\mu^{\dagger})_{|_V}$, $(i_{V}\hat{\otimes}_2 i_{V})\circ (\mu^{\dagger})_{|_{V}}=\mu^{\dagger}\circ i_{V}$. It is then clear that $G(V,\mu_{|_V})= G(H,\mu)\cap V$. 
\end{proof}

\begin{lemma}\label{lem:cosemigroupmorphisms}
Let $(H,\mu),(K,\gamma)$ be Hilbertian algebras. Let $f\colon H\to K$ be a bounded linear map. If $f\colon (H,\mu^{\dagger})\to (K,\gamma^{\dagger})$ is a coalgebra map, then $f(G(H,\mu))\subseteq G(K,\gamma)\cup\{\, 0\,\}$. When $(H,\mu)$ is semisimple, the converse also holds.
\end{lemma}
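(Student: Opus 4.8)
The plan is to work throughout with the coalgebraic description of group-like elements: a nonzero $x\in H$ lies in $G(H,\mu)$ precisely when $\mu^{\dagger}(x)=x\otimes x$. This is the form recorded in the Introduction, and it is equivalent to the defining condition through the Riesz correspondence recalled above, since for all $u,v\in H$ one has $\langle uv,x\rangle=\langle u\otimes v,\mu^{\dagger}(x)\rangle$ while $\langle u,x\rangle\langle v,x\rangle=\langle u\otimes v,x\otimes x\rangle$, so that $\langle\cdot,x\rangle$ is a character exactly when $\mu^{\dagger}(x)=x\otimes x$. With this in hand the forward implication is a one-line unfolding of the coalgebra-map condition. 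Assume $f\colon(H,\mu^{\dagger})\to(K,\gamma^{\dagger})$ is a coalgebra map, i.e.\ $\gamma^{\dagger}\circ f=(f\hat{\otimes}_2 f)\circ\mu^{\dagger}$, and let $x\in G(H,\mu)$. Then
\[
\gamma^{\dagger}(f(x))=(f\hat{\otimes}_2 f)(\mu^{\dagger}(x))=(f\hat{\otimes}_2 f)(x\otimes x)=f(x)\otimes f(x),
\]
so $f(x)$ satisfies the group-like identity in $(K,\gamma)$; hence $f(x)$ is either zero or a group-like element of $(K,\gamma)$, that is, $f(x)\in G(K,\gamma)\cup\{\,0\,\}$.

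For the converse, suppose $(H,\mu)$ is semisimple, so that $G(H,\mu)^{\perp}=(0)$ and hence $\overline{\langle G(H,\mu)\rangle}=H$, and suppose $f(G(H,\mu))\subseteq G(K,\gamma)\cup\{\,0\,\}$. Both $\gamma^{\dagger}\circ f$ and $(f\hat{\otimes}_2 f)\circ\mu^{\dagger}$ are bounded linear maps $H\to K\hat{\otimes}_2 K$, so it suffices to check that they agree on the total set $G(H,\mu)$ and then invoke continuity together with the density of $\langle G(H,\mu)\rangle$. Fix $x\in G(H,\mu)$; then $(f\hat{\otimes}_2 f)(\mu^{\dagger}(x))=f(x)\otimes f(x)$ as before. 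If $f(x)=0$ this equals $0=\gamma^{\dagger}(0)=\gamma^{\dagger}(f(x))$, while if $f(x)\in G(K,\gamma)$ then $\gamma^{\dagger}(f(x))=f(x)\otimes f(x)$ by the group-like identity in $(K,\gamma)$; in both cases the two maps coincide at $x$. Therefore they coincide on $\overline{\langle G(H,\mu)\rangle}=H$, which is exactly the statement that $f$ is a coalgebra map.

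The argument is short, and the only point requiring real care is the converse, where semisimplicity is used in an essential way: it is precisely the density of the linear span of the group-like elements that upgrades the pointwise agreement of the two continuous maps on $G(H,\mu)$ to an identity on all of $H$. Without semisimplicity this span is only a proper closed subspace (its orthogonal complement being the radical), and the two maps need not agree off it, which is why the converse can fail in general. I expect no genuine obstacle beyond keeping the two equivalent descriptions of group-like elements aligned and being careful that $f$ is merely assumed bounded, so that every identity must ultimately be propagated by continuity from a dense set rather than verified termwise on a Hamel basis.
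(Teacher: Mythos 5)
Your proof is correct and follows essentially the same route as the paper: the forward implication is the same one-line unfolding of the coalgebra-map identity on a group-like element (which the paper simply cites from an earlier work), and the converse uses exactly the paper's argument that the two continuous maps $(f\hat{\otimes}_2 f)\circ\mu^{\dagger}$ and $\gamma^{\dagger}\circ f$ agree on $\langle G(H,\mu)\rangle$ and hence, by continuity and semisimplicity ($\overline{\langle G(H,\mu)\rangle}=J(H,\mu)^{\perp}=H$), on all of $H$. Your preliminary reconciliation of the character description with the condition $\mu^{\dagger}(x)=x\otimes x$ is also the correct reading of the paper's operative definition of group-like elements.
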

\begin{proof}
The first statement is~\cite[Lemma~25, p.~19]{Poinsot2019}. Let us assume that $(H,\mu)$ is semisimple. Then, by assumption and linearity, $(f\hat{\otimes}_2 f)\circ \mu^{\dagger}=\gamma^{\dagger}\circ f$ on $\langle G(H,\mu)\rangle$. By continuity the maps are equal on $\overline{\langle G(H,\mu)\rangle}=J(H,\mu)^{\perp}=H$ by semisimplicity.
\end{proof}

When $(H,\mu)$ is a special Hilbertian algebra, more can be said about $G(H,\mu)$. First it is an orthonormal family~\cite[Lemma~27, p.~21]{Poinsot2019}, and even an orthonormal basis of $J(H,\mu)^{\perp}$,  and secondly it is a discrete space (\cite[p.~20]{Poinsot2019}). As a direct consequence of the above $G(H,\mu)$ is an orthonormal basis  of $H$ when $(H,\mu)$ is a semisimple special Hilbertian algebra $(H,\mu)$ and for $u,v\in H$, $\mu(u\otimes v)=\sum_{x\in G(H,\mu)}\langle u,x\rangle\langle v,x\rangle x$. Note here and now that a semisimple special Hilbertial algebra thus is a (commutative) $\dagger$-special Frobenius semigroup (see below). 

A notation: if $\mathbf{C}$ is a subcategory of ${}_c\mathbf{Sem}(\mathbb{Hilb})$ or of ${}_c\mathbf{Sem}(\mathbb{FdHilb})$, then ${}_{\mathsf{semisimple}}\mathbf{C}$ stands for the full subcategory of $\mathbf{C}$ spanned by the Hilbertian algebras in $\mathbf{C}$ which are semisimple.

\subsection{Hilbertian Frobenius algebras}\label{sec:hilbertian_frob_alg}

Let $H$ be a Hilbert space and let $H\hat{\otimes}_2 H\xrightarrow{\mu}H$ be a bounded linear map. Let us consider the following diagram where $\alpha_{H,H,H}$ is the component at $(H,H,H)$ of the coherence constraint of associativity of $\mathbb{Hilb}$. 
\begin{equation}\label{diag:Frob}
\xymatrix{
\ar[rd]_{\mu}\ar[d]_{\mu^{\dagger}\hat{\otimes}_2 id}H\hat{\otimes}_2 H\ar[r]^{id\hat{\otimes}_2\mu^{\dagger}} &H\hat{\otimes}_2(H\hat{\otimes}_2 H)\ar[dr]^{\alpha^{-1}_{H,H,H}}& \\
(H\hat{\otimes}_2 H)\hat{\otimes}_2H\ar[rd]_{\alpha_{H,H,H}}&H\ar[rd]_{\mu^{\dagger}}&(H\hat{\otimes}_2 H)\hat{\otimes}_2H\ar[d]^{\mu\hat{\otimes}_2 id}\\
&H\hat{\otimes}_2(H\hat{\otimes}_2 H)\ar[r]_{id\hat{\otimes}_2 \mu}&H\hat{\otimes}_2 H\\
&&
}
\end{equation}
One says that $(H,\mu)$ satisfies the  {\em $\dagger$-Frobenius condition} (or simply the {\em Frobenius condition}) -- or that $(H,\mu)$ is {\em Frobenius} -- when the top and the bottom cells of Diag.(\ref{diag:Frob}) commute. In this case, the surrounding diagram commutes too.

By a {\em  Hilbertian Frobenius semigroup} is meant a Hilbertian algebra  which satisfies the $\dagger$-Frobenius condition. (Such objects are referred to as {\em commutative $\dagger$-Frobenius semigroups} in~\cite{Coecke}.) A  Hilbertian Frobenius semigroup $(H,\mu)$ is said to be {\em special} when furthermore $\mu\circ\mu^{\dagger}=id$. 

\begin{example}
Any Hilbert space with the zero multiplication is a Hilbertian Frobenius semigroup.
\end{example}

\begin{remark}\label{rem:left_or_right_Frob_no_matter}
Since $(H,\mu)$ is assumed  commutative, it is not difficult to check that the Frobenius condition actually reduces to the commutativity of only one of the two cells of Diag.~(\ref{diag:Frob}). One thus recovers the definition of a Frobenius algebra in $\mathbb{Hilb}$ provided in~\cite{Heunen}.
\end{remark}

Let ${}_c\mathbf{FrobSem}(\mathbb{Hilb})$ and ${}_{c}^{\dagger}\mathbf{FrobSem}(\mathbb{Hilb})$  be respectively the full subcategories of ${}_c\mathbf{Sem}(\mathbb{Hilb})$ spanned by the  Hilbertian Frobenius semigroups and by the  special  Hilbertian Frobenius semigroups. 

One obtains corresponding categories after the replacement of $\mathbb{Hilb}$ by $\mathbb{FdHilb}$. Still in the finite-dimensional situation one  may  as well consider   {\em  $\dagger$-Frobenius monoids}, that is, commutative monoid objects $(H,\mu,\eta)$ in $\mathbb{FdHilb}$ such that $(H,\mu)$ is a finite-dimensional $\dagger$-Frobenius semigroup. Let ${}_c\mathbf{FrobMon}(\mathbb{FdHilb})$ be the full subcategory of the category ${}_c\mathbf{Mon}(\mathbb{FdHilb})$ of monoid objects of $\mathbb{FdHilb}$, they generate. 

Finally let us call a (cocommutative) comonoid $(H,\delta,\epsilon)$ in $\mathbb{FdHilb}$ a {\em finite-dimensional  Hilbertian Frobenius comonoid} when $(H,\delta^{\dagger},\epsilon^{\dagger})$ is a Hilbertian Frobenius monoid.  The full subcategory ${}_{coc}\mathbf{FrobComon}(\mathbb{FdHilb})$ of the category of comonoid objects ${}_{coc}\mathbf{Comon}(\mathbb{FdHilb})$  in $\mathbb{FdHilb}$, they generate is of course isomorphic to ${}_c\mathbf{FrobMon}(\mathbb{FdHilb})^{\mathsf{op}}$ under the dagger functor. 

\begin{example}\label{ex:premierexemple}
Let $X$ be a set. Let $x\in X$ and let $\delta_x\colon X\to\mathbb{C}$  be the map which takes the value zero at each member of $X$ except at $x$ where it is equal to $1$, that is, $\delta_x(x')$ is the usual Kronecker delta symbol $\delta_{x,x'}$ ($x'\in X$). Let $\mu_X\colon \ell^2(X)\hat{\otimes}_2 \ell^2(X)\to \ell^2(X)$ be given by $\mu_{X}(\delta_x\otimes \delta_{x'})=\delta_{x,x'}\delta_x$, that is, $\ell^2(X)\times\ell^2(X)\xrightarrow{(\mu_{X})_{\mathsf{bil}}}\ell^2(X)$ is the point-wise multiplication of maps. Then, $(\ell^2(X),\mu_X)$ is a special Hilbertian Frobenius semigroup. 

If $X$ is finite, then $e_X\colon \mathbb{C}\to \ell^2(X)$ given by $e_X(1):=\sum_{x\in X}\delta_x$, is a unit for $(\ell^2(X),\mu_X)$ so that $(\ell^2(X),\mu_X,e_X)$ thus is a finite-dimensional special Hilbertian Frobenius monoid. 
\end{example}

\section{Weighted Hilbert spaces}\label{sec:weighted_hilb}

Let $X$ be a non empty set, and let $w\colon X\to [C,+\infty[$ be a map, where $C>0$. Let $\ell^2_{w}(X):=\{\, f\in \mathbb{C}^{X}\colon \sum_{x\in X}w(x)|f(x)|^2<+\infty\,\}$. With inner product $\langle f,g\rangle_w:=\sum_{x\in X}w(x)f(x)\overline{g(x)}$ this provides a Hilbert space. The corresponding norm is denoted $\|-\|_w$. For $x\in X$, let us identify $\delta_x\colon X\to\mathbb{C}$ with $x$ itself. Under this identification, $\{\, \frac{x}{w(x)^{\frac{1}{2}}}\colon x\in X\,\}$ forms a Hilbertian basis of $\ell^2_w(X)$.  Note that $\langle f,\frac{x}{w(x)^{\frac{1}{2}}}\rangle_w=w(x)^{\frac{1}{2}}f(x)$, $x\in X$.

The next result is clear.
\begin{lemma}\label{lem:inclusion_ell_2}
$\ell^2_w(X)\subseteq \ell_2(X)$, the inclusion is bounded and $\overline{\ell^2_w(X)}=\ell^2(X)$. Furthermore if $w$ is also bounded above, then $\ell^2_w(X)=\ell_2(X)$ as vector spaces and the norms $\|-\|_w$ and $\|-\|$ are equivalent. 
\end{lemma}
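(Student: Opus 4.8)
The plan is to extract everything from the pointwise inequalities satisfied by the weight $w$, using that $w$ is bounded below by the strictly positive constant $C$. All four assertions are elementary consequences of transporting the scalar bounds on $w(x)$ through the defining series, so the argument is essentially a sequence of estimates.

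First I would establish the inclusion together with its boundedness in one stroke. For $f\in\ell^2_w(X)$ the estimate $|f(x)|^2=\frac{1}{w(x)}\,w(x)|f(x)|^2\le \frac{1}{C}\,w(x)|f(x)|^2$ holds pointwise since $w(x)\ge C>0$; summing over $x\in X$ gives $\|f\|^2\le \frac{1}{C}\|f\|_w^2<+\infty$. Hence $f\in\ell^2(X)$, so $\ell^2_w(X)\subseteq\ell^2(X)$, and the same inequality $\|f\|\le C^{-\frac{1}{2}}\|f\|_w$ shows that the inclusion map $\ell^2_w(X)\hookrightarrow\ell^2(X)$ is bounded, with operator norm at most $C^{-\frac{1}{2}}$.

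Next, for the density statement $\overline{\ell^2_w(X)}=\ell^2(X)$ (the closure being taken in $\ell^2(X)$), I would observe that every finitely supported function lies in $\ell^2_w(X)$: if $f$ has finite support then $\sum_{x\in X}w(x)|f(x)|^2$ is a finite sum, hence finite. The finitely supported functions are exactly the elements of the linear span $\langle\{\,\delta_x\colon x\in X\,\}\rangle$, which is dense in $\ell^2(X)$ because $(\delta_x)_{x\in X}$ is a Hilbertian basis of $\ell^2(X)$. Since $\ell^2_w(X)$ contains this dense subspace and sits inside $\ell^2(X)$, its closure in $\ell^2(X)$ is the whole space. Finally, assuming in addition that $w$ is bounded above, say $w(x)\le D$ for all $x\in X$ (necessarily $D\ge C$), then for any $f\in\ell^2(X)$ one has $\|f\|_w^2=\sum_{x\in X}w(x)|f(x)|^2\le D\sum_{x\in X}|f(x)|^2=D\|f\|^2<+\infty$, so $f\in\ell^2_w(X)$; combined with the inclusion already proved, this gives $\ell^2_w(X)=\ell^2(X)$ as vector spaces. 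The two-sided bound $C\le w(x)\le D$ then integrates to $C\|f\|^2\le \|f\|_w^2\le D\|f\|^2$, that is $\sqrt{C}\,\|f\|\le\|f\|_w\le\sqrt{D}\,\|f\|$, which is exactly the equivalence of the norms $\|-\|$ and $\|-\|_w$.

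There is no genuine obstacle here: the entire statement reduces to the scalar inequalities $C\le w(x)$ (and $w(x)\le D$ in the bounded-above case) applied termwise in the defining sums. The only step demanding a nominal argument beyond direct estimation is the density claim, and that rests solely on the standard fact that the span of the $\delta_x$ is dense in $\ell^2(X)$.
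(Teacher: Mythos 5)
Your proof is correct: the termwise estimates from $C\le w(x)$ (and $w(x)\le D$), plus density of the span of the $\delta_x$, are exactly the routine verifications the paper has in mind when it declares this lemma ``clear'' and omits a proof. Nothing is missing and nothing differs in substance from the intended argument.
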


Let $m_X\colon \ell^2_w(X)\times\ell^2_w(X)\to \ell^2_w(X)$ be given by $m_X(f,g):=fg$ where by juxtaposition is denoted the pointwise product of maps. $m_X$ is a weak Hilbert-Schmidt mapping (\cite{Kadison}) as $\sum_{x,y\in X}|\langle m_X(\frac{x}{w(x)^{\frac{1}{2}}},\frac{x}{w(x)^{\frac{1}{2}}}),f\rangle_w|^2\leq \frac{1}{C}\|f\|_w^{2}$ for each $f\in \ell^2_w(X)$. Let $\mu_X\colon \ell^2_w(X)\hat{\otimes}_2\ell^2_w(X)\to\ell^2_w(X)$ be the corresponding bounded linear map, that is, $\mu_X(f\otimes g)=fg$ (see~\cite[Theorem~2.6.4, p.~132]{Kadison}). In details, $\mu_X(f\otimes g)=\sum_{x\in X}f(x)g(x)x$. 

It is clear by its very definition that $\mu_X$ is commutative and associative making $(\ell^2_w(X),\mu_X)$ a Hilbertian semigroup. Moreover $\|\mu_X\|_{\mathsf{op}}\leq \frac{1}{C^{\frac{1}{2}}}$. Such a semigroup $(\ell^2_w(X),\mu_X)$ was already considered in~\cite[pp.~11-12]{Poinsot2019} in the situation where $C=1$. 

By direct computations one obtains
\begin{proposition}\label{prop:grplike_of_ell_2_alpha}
$G(\ell^2_w(X),\mu_X)=\{\, \frac{x}{w(x)}\colon x\in X\,\}$ and $(\ell^2_w(X),\mu_X)$ is a  semisimple Hilbertian Frobenius semigroup.  
\end{proposition}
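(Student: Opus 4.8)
The plan is to reduce everything to an explicit formula for the adjoint $\mu_X^{\dagger}$ of the multiplication, computed in the orthonormal basis $\bigl(e_x:=\tfrac{x}{w(x)^{1/2}}\bigr)_{x\in X}$ of $\ell^2_w(X)$. First I would record that, under the identification $x=\delta_x$, the pointwise product satisfies $xy=\delta_{x,y}\,x$, so that $\mu_X(e_x\hat{\otimes}_2 e_y)=\tfrac{1}{w(x)^{1/2}w(y)^{1/2}}\,xy=\tfrac{\delta_{x,y}}{w(x)^{1/2}}\,e_x$. A direct adjoint computation then gives $\langle \mu_X^{\dagger}(e_z),e_x\hat{\otimes}_2 e_y\rangle_w=\langle e_z,\mu_X(e_x\hat{\otimes}_2 e_y)\rangle_w=\delta_{x,y}\,\delta_{x,z}\,w(z)^{-1/2}$, whence $\mu_X^{\dagger}(e_z)=w(z)^{-1/2}\,e_z\hat{\otimes}_2 e_z$ for every $z\in X$. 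I expect this adjoint computation to be the only delicate point, and indeed the crux of the whole argument: one must track the weight $w$ carefully and work in the orthonormal basis $(e_x)_x$ rather than in the non-normalized family $(\delta_x)_x$, since it is exactly the factors $w(z)^{-1/2}$ produced here that make the final answer involve $\tfrac{x}{w(x)}$ rather than $\delta_x$.

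Given this formula, I would determine $G(\ell^2_w(X),\mu_X)$ by solving the group-like equation $\mu_X^{\dagger}(f)=f\hat{\otimes}_2 f$ (the comultiplication form of group-likeness recalled in the Introduction). Writing $f=\sum_{z}c_z e_z$, the left-hand side equals $\sum_z c_z\,w(z)^{-1/2}\,e_z\hat{\otimes}_2 e_z$ while the right-hand side equals $\sum_{z,z'}c_z c_{z'}\,e_z\hat{\otimes}_2 e_{z'}$; comparing coefficients, the off-diagonal terms force $c_z c_{z'}=0$ for $z\neq z'$, so $f$ is supported on a single basis vector $e_x$, and the remaining diagonal equation $c_x\,w(x)^{-1/2}=c_x^2$ forces $c_x=w(x)^{-1/2}$. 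Hence $f=w(x)^{-1/2}e_x=w(x)^{-1}x=\tfrac{x}{w(x)}$; conversely each $\tfrac{x}{w(x)}$ is readily checked to be group-like, so $G(\ell^2_w(X),\mu_X)=\{\,\tfrac{x}{w(x)}:x\in X\,\}$ as claimed.

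Finally, semisimplicity and the Frobenius condition both follow quickly. For semisimplicity I would invoke the intrinsic description $J(\ell^2_w(X),\mu_X)=G(\ell^2_w(X),\mu_X)^{\perp}$: since $\langle f,\tfrac{x}{w(x)}\rangle_w=f(x)$ for every $x\in X$, any vector orthogonal to all group-like elements vanishes pointwise and is therefore $0$, so the radical is trivial (equivalently, $\overline{\langle G(\ell^2_w(X),\mu_X)\rangle}=\ell^2_w(X)$, the $\tfrac{x}{w(x)}$ being nonzero scalar multiples of the $\delta_x$ whose span is dense). For the Frobenius condition I would, by Remark~\ref{rem:left_or_right_Frob_no_matter}, verify only one cell of Diag.~(\ref{diag:Frob}), and do so on basis elements: writing $H=\ell^2_w(X)$, both $(\mu_X\hat{\otimes}_2 id)\circ\alpha^{-1}_{H,H,H}\circ(id\hat{\otimes}_2\mu_X^{\dagger})$ and $\mu_X^{\dagger}\circ\mu_X$ send $e_a\hat{\otimes}_2 e_b$ to $\delta_{a,b}\,w(a)^{-1}\,e_a\hat{\otimes}_2 e_a$, so they agree on the dense span of the $e_a\hat{\otimes}_2 e_b$ and hence everywhere by continuity. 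Since $(\ell^2_w(X),\mu_X)$ is already known to be a Hilbertian semigroup, this shows it is a semisimple Hilbertian Frobenius semigroup.
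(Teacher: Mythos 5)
Your proof is correct and takes essentially the same approach as the paper: the paper offers no detailed argument for this proposition, asserting it ``by direct computations'', and your computations (the adjoint formula $\mu_X^{\dagger}(e_z)=w(z)^{-1/2}\,e_z\hat{\otimes}_2 e_z$ in the orthonormal basis $e_x=\tfrac{x}{w(x)^{1/2}}$, the coefficient comparison solving the group-like equation, semisimplicity via $J=G^{\perp}$, and one Frobenius cell checked on basis vectors and extended by density and continuity) are exactly those direct computations, carried out correctly. You also made the right call in using the comultiplication form $\mu_X^{\dagger}(f)=f\hat{\otimes}_2 f$ of group-likeness, which is the reading used in the Introduction and in all of the paper's subsequent arguments, and the only one under which the stated equality $G(\ell^2_w(X),\mu_X)=\{\,\tfrac{x}{w(x)}\colon x\in X\,\}$ holds.
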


Letting $w\equiv 1$ one observes that $(\ell^2(X),\mu_X)$ is also a (special) Hilbertian Frobenius semigroup, with $G(\ell^2(X),\mu_X)=X$  (Example~\ref{ex:premierexemple}).
\begin{lemma}
$(\ell^2_w(X),m_X)$ is a not necessarily closed ideal of $(\ell^2(X),m_X)$. In fact $(\ell^2_w(X),m_X)=(\ell^2(X),m_X)$ if, and only if, $\ell^2_w(X)$ is closed in $\ell^2(X)$ if, and only if, $w$ is bounded above.
\end{lemma}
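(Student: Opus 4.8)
The plan is to verify the ideal property first, and then to close a short cycle of implications among the three conditions, isolating the one step that carries real content.

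For the ideal claim, recall from Lemma~\ref{lem:inclusion_ell_2} that $\ell^2_w(X)$ is a linear subspace of $\ell^2(X)$, so it remains only to check absorption. Given $f\in\ell^2(X)$ and $g\in\ell^2_w(X)$, each coordinate of $f$ is dominated by its $\ell^2$-norm, $|f(x)|\le\|f\|$ for all $x$, whence $\sum_{x\in X}w(x)|f(x)g(x)|^2\le\|f\|^2\sum_{x\in X}w(x)|g(x)|^2=\|f\|^2\|g\|_w^2<+\infty$. Thus $fg=m_X(f,g)\in\ell^2_w(X)$, so $\ell^2_w(X)$ is an ideal of $(\ell^2(X),m_X)$; at this stage I make no closedness claim.

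Next I would dispatch the equivalence of ``$\ell^2_w(X)=\ell^2(X)$'' with ``$\ell^2_w(X)$ is closed in $\ell^2(X)$'' by density. One implication is trivial, since $\ell^2(X)$ is closed in itself; conversely, Lemma~\ref{lem:inclusion_ell_2} gives $\overline{\ell^2_w(X)}=\ell^2(X)$, so if $\ell^2_w(X)$ is closed it must coincide with its own closure, namely all of $\ell^2(X)$. The implication ``$w$ bounded above $\Rightarrow \ell^2_w(X)=\ell^2(X)$'' is exactly the second assertion of Lemma~\ref{lem:inclusion_ell_2}.

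The remaining and only substantive step is ``$\ell^2_w(X)=\ell^2(X) \Rightarrow w$ bounded above'', which I would prove by contraposition: assuming $\sup_X w=+\infty$, I would exhibit an element of $\ell^2(X)\setminus\ell^2_w(X)$. A routine recursion produces pairwise distinct points $x_n\in X$ with $w(x_n)\ge 2^n$: at stage $n$ the set $\{x:w(x)\ge 2^n\}$ is nonempty by unboundedness, and it cannot be contained in the finite set $\{x_1,\dots,x_{n-1}\}$ of already chosen points, for otherwise $w<2^n$ off a finite set and $w$ would be bounded. Setting $f(x_n):=1/n$ and $f\equiv 0$ elsewhere gives $\sum_x|f(x)|^2=\sum_n n^{-2}<+\infty$, so $f\in\ell^2(X)$, while $\sum_x w(x)|f(x)|^2\ge\sum_n 2^n/n^2=+\infty$, so $f\notin\ell^2_w(X)$. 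This construction is the only place requiring genuine care; the minor obstacle is arranging the recursion so the points stay distinct, handled exactly as indicated. Combining the three links closes the stated equivalence chain.
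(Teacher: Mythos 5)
Your proof is correct and follows essentially the same route as the paper: the ideal property via a coordinatewise bound (you use $|f(x)|\le\|f\|$ exactly as the paper does, just with the roles of the two factors exchanged), the equivalence of equality and closedness via the density assertion of Lemma~\ref{lem:inclusion_ell_2}, and, for the unbounded case, the same counterexample $f(x_n)=1/n$ supported on pairwise distinct points where $w$ is large. The only (welcome) difference is in the bookkeeping: your selection of distinct points --- the set $\{\, x\colon w(x)\ge 2^n\,\}$ cannot lie inside the finitely many points already chosen, else $w$ would be bounded --- is simpler than the paper's extraction via the decreasing sets $X_k=\{\, x\colon w(x)\ge k^2\,\}$, and your divergence of $\sum_n 2^n/n^2$ replaces the paper's observation that the terms $w(x_n)|f(x_n)|^2\ge 1$ fail to tend to zero.
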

\begin{proof}
Let $f\in \ell^2_w(X)$ and let $g\in \ell^2(X)$. Then for each finite subset $F\subseteq X$, $\sum_{x\in F}|f(x)|^2|g(x)|^2\leq \frac{1}{C}\sum_{x\in F}w(x)|f(x)|^2|g(x)|^2\leq \frac{1}{C}\|f\|_w^2\|g\|^2$, that is, $fg\in \ell^2(X)$. 

The first equivalence in the second statement is clear by Lemma~\ref{lem:inclusion_ell_2}. Let us prove the second equivalence. 
If $w$ is bounded above, then by Lemma~\ref{lem:inclusion_ell_2}, $\ell^2_w(X)=\ell^2(X)$. So let us assume that $w$ is not bounded above.   Then, for each $k\in \mathbb{N}\setminus\{\, 0\,\}$, there exists $x_k\in X$ with $w(x_k)\geq k^2$. Let $X_k:=\{\, x\in X\colon k^2\leq w(x)\,\}$. So $X_k\not=\emptyset$ for each $k\geq 1$.  It is clear that $(X_k)_{k\geq 1}$ is a decreasing sequence of non-void sets. Moreover this sequence cannot stabilize, i.e., for no $i\geq 1$, $X_i=X_{i+n}$, for all $n\geq 0$, for if it would mean that if $w(x)\geq i^2$, then $w(x)\geq k^2$ for all $k\geq i$. So for each $i\geq 1$, there exists $n\geq 1$ such that $X_{i+n}\subsetneq X_i$. It follows that for each $i\geq 1$, $\{\, i\leq k\colon X_i=X_k\,\}$ is finite and non-void (it contains $i$). For each $i\geq 1$, let $M_i:=\max \{\, i\leq k\colon X_i=X_k\,\}+1$. Whence $M_i\geq i+1$ and $X_{M_i}\subsetneq X_i$. Choose $x_1\in X_1$ and for each $n\geq 1$,  choose $x_{n+1}\in X_{M_n}$. It follows that $x_i\not=x_j$, $i\not=j$, and $w(x_i)\geq i^2$. Define $f(x_n):=\frac{1}{n}$ and $f(x)=0$ when $x\not=x_n$, $n\geq 1$. It follows that $f\in \ell^2(X)$. But $w^{\frac{1}{2}}f\not\in \ell^2(X)$ since for each $n\geq 1$, $w(x_n)|f(x_n)|^2\geq \frac{n^2}{n^2}=1$. 
\end{proof}

Let us also provide a description of $(\ell^2_w(X),\mu_X)$ under another disguise. 
Under the unitary transformation $\Phi\colon f\mapsto \hat{f}$ from $\ell^2_{w}(X)$ to $\ell^2(X)$, with $\hat{f}(x):=\langle f,\frac{x}{w(x)^{\frac{1}{2}}}\rangle_w=w(x)^{\frac{1}{2}}f(x)$, $x\in X$, one may transport the multiplication $\mu_X$ on $\ell^2(X)$. In detail, the inverse $\Phi^{\dagger}$ of $\Phi$ is given by $\Phi^{\dagger}(f):=w^{-\frac{1}{2}}f$, that is, $(\Phi^{\dagger}(f))(x)=\frac{1}{w(x)^{\frac{1}{2}}}f(x)$, $x\in X$, and then one may define $\mu_{w,X}\colon \ell^2(X)\hat{\otimes}_2\ell^2(X)\to \ell^2(X)$ by $\Phi\circ \mu_X\circ (\Phi^{\dagger}\hat{\otimes}_2\Phi^{\dagger})$, so for each $f,g\in \ell^2(X)$, $\mu_{w,X}(f\otimes g)=w^{-\frac{1}{2}}fg$, that is, for each $x\in X$,   $(\mu_{w,X}(f\otimes g))(x)=w(x)^{-\frac{1}{2}}f(x)g(x)$. (Note that as $C\leq w(x)$, $\frac{1}{w(x)^{\frac{1}{2}}}\leq \frac{1}{C^{\frac{1}{2}}}$ for each $x\in X$, and thus pointwise multiplication of functions by $w^{-\frac{1}{2}}$ is an operator on $\ell^2(X)$, and as $\ell^2(X)$ is closed under pointwise product, given $f,g\in \ell^2(X)$, $w^{-\frac{1}{2}}fg\in \ell^2(X)$.) It is clear that $(\ell^2(X),\mu_{w,X})$ is a semisimple Frobenius semigroup, unitarily isomorphic to $(\ell^2_w(X),\mu_X)$. Note that $G(\ell^2(X),\mu_{w,X})=\{\, \frac{x}{w(x)^{\frac{1}{2}}}\colon x\in X\,\}$. 

\begin{remark}
Everything becomes trivial if $X=\emptyset$ (and thus $w$ is the empty map), that is, $\ell_w(X)$ thus is the zero algebra, which is Frobenius, semisimple and radical. 
\end{remark}

\section{Structure theorem for special Hilbertian algebras}\label{sec:structuretheoremgeneral}


This section serves as a sequel to~\cite{Poinsot2019} by providing new results about special Hilbertian algebras.

\subsection{The general situation}\label{subsec:generalsituation}

Given a non necessarily unital algebra $A$, $E(A)$ denotes the set of all its idempotent elements, that is, the members $e$ of $A$ such that 
$e^2=e$.  It is well-known that when $A$ is a commutative Banach algebra, then $E(A)\cap J(A)=(0)$ (see for instance~\cite[Proposition~4.3.12.(a), p.~479]{Palmer1}. 

%

%


Let us provide a corollary of~\cite[Corollary~28, p.~22]{Poinsot2019}. 
\begin{corollary}\label{cor:more_precise}
Let $({H},\mu)$ be a special Hilbertian algebra. Then, the orthogonal complement $J^{\perp}$ of the Jacobson radical of $({H},\mu)$ is a (closed) subalgebra of $({H},\mu)$.  More precisely for $x,y\in G({H},\mu)$, $xy=\delta_{x,y}x$. As a consequence $J$ is a {\em coideal}, that is, $\mu^{\dagger}(J)\subseteq (J^{\perp}\hat{\otimes}_2 J^{\perp})^{\perp}=(J\hat{\otimes}_2 J^{\perp})\oplus_2 (J\hat{\otimes}_2 J)\oplus_2 (J^{\perp}\hat{\otimes}_2 J)$. 
\end{corollary}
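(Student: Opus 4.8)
The plan is to establish the three assertions in the order (ii) the product formula $xy=\delta_{x,y}x$, then (i) the subalgebra property, then (iii) the coideal property, because the first essentially forces the second and the third is a purely formal transcription of the second. Before starting I would record the two facts that specialness buys us and that are recalled above: $G:=G({H},\mu)$ is an \emph{orthonormal basis} of $J^{\perp}$ (\cite[Lemma~27]{Poinsot2019}), and, via the correspondence $R\colon x\mapsto\langle\cdot,x\rangle$, every $z\in G$ yields a character of the underlying Banach algebra, so that $\langle uv,z\rangle=\langle u,z\rangle\langle v,z\rangle$ for all $u,v\in H$.

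To prove (ii), the diagonal case is free: for $x\in G$ one has $xy=x^{2}=x$ by the very definition of a group-like element. For distinct $x,y\in G$ I would combine multiplicativity of the characters with orthonormality to get, for every $z\in G$,
\[
\langle xy,z\rangle=\langle x,z\rangle\langle y,z\rangle=\delta_{x,z}\,\delta_{y,z}=0 ,
\]
whence $xy\in G^{\perp}=J$. The decisive observation is then that $xy$ is idempotent: by commutativity, associativity and $x^{2}=x$, $y^{2}=y$ one has $(xy)^{2}=x^{2}y^{2}=xy$. Since an idempotent lying in the radical of a commutative Banach algebra must vanish, i.e. $E(A)\cap J(A)=(0)$ (\cite[Proposition~4.3.12.(a)]{Palmer1}), this gives $xy=0$. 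Hence $xy=\delta_{x,y}x$ in all cases.

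Assertion (i) now follows by a continuity argument. For $u,v\in J^{\perp}=\overline{\langle G\rangle}$ I would expand $u=\sum_{x\in G}\langle u,x\rangle x$ and $v=\sum_{y\in G}\langle v,y\rangle y$; since $\hat{\otimes}_2$ and $\mu$ are bounded and the coefficients are square-summable (Cauchy--Schwarz bounds $\sum_{x}|\langle u,x\rangle\langle v,x\rangle|$ by $\|u\|\,\|v\|$), one may pass to the limit and use (ii) to obtain $\mu(u\otimes v)=\sum_{x\in G}\langle u,x\rangle\langle v,x\rangle\,x\in J^{\perp}$. Thus $\mu(J^{\perp}\hat{\otimes}_2 J^{\perp})\subseteq J^{\perp}$, so $J^{\perp}$ is a closed subalgebra. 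Finally, (iii) is immediate from (i) by Lemma~\ref{lem:restriction} applied to $f=\mu^{\dagger}$ with $V=J$ and $W=(J^{\perp}\hat{\otimes}_2 J^{\perp})^{\perp}$: since $\mu^{\dagger\dagger}=\mu$, $V^{\perp}=J^{\perp}$ and $W^{\perp}=J^{\perp}\hat{\otimes}_2 J^{\perp}$, the inclusion $\mu^{\dagger}(J)\subseteq(J^{\perp}\hat{\otimes}_2 J^{\perp})^{\perp}$ is \emph{equivalent} to the subalgebra property (i), and the stated orthogonal decomposition of $(J^{\perp}\hat{\otimes}_2 J^{\perp})^{\perp}$ is the tensor-complement formula recalled in the preliminaries (with $I=J$).

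The genuine obstacle is concentrated in (ii), and within it in the single idea that the product of two distinct group-like elements is both idempotent and radical, hence zero; everything else is bookkeeping. The only other point demanding care is the interchange of $\mu$ with the infinite expansions in (i), which is legitimate precisely because $\mu$ and $\hat{\otimes}_2$ are jointly continuous and the relevant families are $\ell^{2}$-summable — this is also the place where specialness is used in an essential way, through the fact that $G$ is an orthonormal basis rather than a mere orthogonal set.
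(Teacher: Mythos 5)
Your proposal is correct and follows essentially the same route as the paper's proof: the key step in both is that the product of two distinct group-like elements is an idempotent lying in the Jacobson radical (hence zero, by $E(A)\cap J(A)=(0)$), the subalgebra property then follows by continuity of $\mu$ on $\overline{\langle G(H,\mu)\rangle}=J^{\perp}$, and the coideal statement is obtained from Lemma~\ref{lem:restriction} together with the tensor-complement decomposition. The only difference is cosmetic: where the paper cites \cite[Corollary~28]{Poinsot2019} for the fact that $xy\in J$ when $x\neq y$, you re-derive it directly from the character property $\langle uv,z\rangle=\langle u,z\rangle\langle v,z\rangle$ and orthonormality of $G(H,\mu)$, which is a legitimate in-line proof of the cited result.
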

\begin{proof}
One knows from~\cite[Corollary~28, p.~22]{Poinsot2019} that $xy\in J$ for each $x,y\in G({H},\mu)$ with $x\not=y$. But $xy$ is  idempotent by commutativity of $\mu$, as both $x$ and $y$ are idempotents ($x=\mu(\mu^{\dagger}(x))=\mu(x\otimes x)=x^2$). Whence by the above, $xy=0$.  That $J^{\perp}$ is a subalgebra now follows from the fact that it is the closure of the linear span of the group-like elements ${G}({H},\mu)$ (\cite[Theorem~22, p.~17]{Poinsot2019}). Since $\mu(J^{\perp}\hat{\otimes}_2 J^{\perp})\subseteq J^{\perp}$, it follows by Lemma~\ref{lem:restriction} that $\mu^{\dagger}(J)\subseteq (J^{\perp}\hat{\otimes}_2 J^{\perp})^{\perp}=(J^{\perp}\hat{\otimes}_2 J^{\perp})^{\perp}=(J\hat{\otimes}_2 J^{\perp})\oplus_2 (J\hat{\otimes}_2 J)\oplus_2 (J^{\perp}\hat{\otimes}_2 J)$.
\end{proof}

\begin{corollary}\label{cor1:more_precise}
Under the same assumptions as Corollary~\ref{cor:more_precise}, $J^{\perp}$ is a semisimple special Hilbertian algebra under the restrictions of $\mu$ and of $\mu^{\dagger}$. 
\end{corollary}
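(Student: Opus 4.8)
The goal is to show that $J^{\perp}$, equipped with the restricted operations, is itself a special Hilbertian algebra that is semisimple.

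Corollary~\ref{cor:more_precise} has already established that $J^{\perp}$ is a closed subalgebra and that $J$ is a coideal, i.e. $\mu^{\dagger}(J)\subseteq (J^{\perp}\hat{\otimes}_2 J^{\perp})^{\perp}$. By Lemma~\ref{lem:restriction}, the coideal condition $\mu^{\dagger}(J)\subseteq (J^{\perp}\hat{\otimes}_2 J^{\perp})^{\perp}$ is equivalent to $\mu^{\dagger}((J^{\perp}\hat{\otimes}_2 J^{\perp}))\subseteq J^{\perp}$, wait — more directly, $J$ being a coideal means $J^{\perp}$ is a subcoalgebra: $\mu^{\dagger}(J^{\perp})\subseteq J^{\perp}\hat{\otimes}_2 J^{\perp}$. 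So $J^{\perp}$ is simultaneously a subalgebra and a subcoalgebra.

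The plan is as follows.

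\begin{proof}
By Corollary~\ref{cor:more_precise}, $J^{\perp}$ is a closed subalgebra and $J$ is a coideal, so by Lemma~\ref{lem:restriction} applied to $\mu^{\dagger}$ (equivalently, since $\mu(J^{\perp}\hat{\otimes}_2 J^{\perp})\subseteq J^{\perp}$), the subspace $J^{\perp}$ is also a subcoalgebra, that is, $\mu^{\dagger}(J^{\perp})\subseteq J^{\perp}\hat{\otimes}_2 J^{\perp}$. Thus $V:=J^{\perp}$ is a closed subspace which is at once a subalgebra and a subcoalgebra, and Lemma~\ref{lem:subalg_and_subcoalg} applies: it gives $(\mu_{|_{J^{\perp}}})^{\dagger}=(\mu^{\dagger})_{|_{J^{\perp}}}$ and $G(J^{\perp},\mu_{|_{J^{\perp}}})=G(H,\mu)\cap J^{\perp}=G(H,\mu)$, the last equality because $G(H,\mu)\subseteq J^{\perp}=\overline{\langle G(H,\mu)\rangle}$.

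It remains to verify the two defining properties. For \emph{semisimplicity}: the Jacobson radical of $(J^{\perp},\mu_{|_{J^{\perp}}})$ equals $G(J^{\perp},\mu_{|_{J^{\perp}}})^{\perp}$ computed inside $J^{\perp}$; since $G(J^{\perp},\mu_{|_{J^{\perp}}})=G(H,\mu)$ and $\overline{\langle G(H,\mu)\rangle}=J^{\perp}$, the orthogonal complement of $G(H,\mu)$ taken within $J^{\perp}$ is trivial, so $J(J^{\perp},\mu_{|_{J^{\perp}}})=(0)$. For \emph{speciality}: one must check that $\mu_{|_{J^{\perp}}}\circ(\mu_{|_{J^{\perp}}})^{\dagger}=id_{J^{\perp}}$. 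Using $(\mu_{|_{J^{\perp}}})^{\dagger}=(\mu^{\dagger})_{|_{J^{\perp}}}$ and the explicit description of $\mu$ and $\mu^{\dagger}$ on group-like elements from Corollary~\ref{cor:more_precise} (namely $xy=\delta_{x,y}x$ for $x,y\in G(H,\mu)$, whence $\mu^{\dagger}(x)=x\otimes x$ by speciality and the coideal property), one computes $\mu_{|_{J^{\perp}}}((\mu^{\dagger})_{|_{J^{\perp}}}(x))=\mu(x\otimes x)=x^2=x$ on the total orthonormal family $G(H,\mu)$ of $J^{\perp}$, and extends by continuity and linearity to all of $J^{\perp}$.

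The main point to be careful about is the identification of the radical and the (co)multiplication under restriction: one must ensure that the restricted comultiplication really is the Hilbert adjoint of the restricted multiplication and that group-like elements of the subalgebra coincide with those of the ambient algebra — both of which are supplied cleanly by Lemma~\ref{lem:subalg_and_subcoalg} once $J^{\perp}$ is known to be both a subalgebra and a subcoalgebra. The remaining computations are then routine evaluations on the orthonormal family $G(H,\mu)$ followed by a density argument, since $G(H,\mu)$ is an orthonormal basis of $J^{\perp}$ in the special case.
\end{proof}
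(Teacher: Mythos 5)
Your overall strategy is the same as the paper's: show that $J^{\perp}$ is simultaneously a closed subalgebra and a closed subcoalgebra, invoke Lemma~\ref{lem:subalg_and_subcoalg} to get $(\mu_{|_{J^{\perp}}})^{\dagger}=(\mu^{\dagger})_{|_{J^{\perp}}}$ and $G(J^{\perp},\mu_{|_{J^{\perp}}})=G(H,\mu)$, and deduce semisimplicity from the density of $\langle G(H,\mu)\rangle$ in $J^{\perp}$; your direct verification of speciality on the orthonormal family $G(H,\mu)$ (where the paper instead cites \cite[Lemma~13, p.~13]{Poinsot2019}) is a fine substitute. However, there is a genuine gap at the step on which everything else rests: you claim that the coideal property of $J$ (equivalently, via Lemma~\ref{lem:restriction}, the subalgebra property of $J^{\perp}$) yields that $J^{\perp}$ is a subcoalgebra. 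That is not what the duality gives. Lemma~\ref{lem:restriction} produces two \emph{different} equivalences: a closed subspace $V$ is a subalgebra if, and only if, $V^{\perp}$ is a coideal, whereas $V$ is a subcoalgebra if, and only if, $V^{\perp}$ is a closed ideal (this is exactly how the paper uses it in Corollaries~\ref{cor:more_precise} and~\ref{cor:bij_ideals_subcoalg}). So from Corollary~\ref{cor:more_precise} you can only travel back and forth between ``$J^{\perp}$ is a subalgebra'' and ``$J$ is a coideal''; neither statement gives $\mu^{\dagger}(J^{\perp})\subseteq J^{\perp}\hat{\otimes}_2 J^{\perp}$. And the implication you need is false in general: in $(\ell^2(\{1,2\}),\mu_{\{1,2\}})$ of Example~\ref{ex:premierexemple}, the line $\mathbb{C}(\delta_1+\delta_2)$ is a closed subalgebra, yet $\mu^{\dagger}(\delta_1+\delta_2)=\delta_1\otimes\delta_1+\delta_2\otimes\delta_2$ does not lie in its tensor square, so it is not a subcoalgebra.

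The gap is easy to close with tools you already use. Either argue directly: every $x\in G(H,\mu)$ satisfies $\mu^{\dagger}(x)=x\otimes x$ and $G(H,\mu)\subseteq J^{\perp}$, so linearity and continuity of $\mu^{\dagger}$, together with closedness of $J^{\perp}\hat{\otimes}_2 J^{\perp}$, give $\mu^{\dagger}(J^{\perp})=\mu^{\dagger}(\overline{\langle G(H,\mu)\rangle})\subseteq J^{\perp}\hat{\otimes}_2 J^{\perp}$ --- precisely the density argument you employ later for speciality; or note that the Jacobson radical $J$ is a \emph{closed ideal} and apply the correct half of the duality (closed ideal of $J$ $\Leftrightarrow$ subcoalgebra of $J^{\perp}$). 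Separately, a small point of logic in your speciality paragraph: $\mu^{\dagger}(x)=x\otimes x$ is not a consequence of $xy=\delta_{x,y}x$ ``by speciality and the coideal property''; it is the defining property of a group-like element as used throughout the paper (the displayed formula in Section~\ref{sect:grouplike} is a misprint), and speciality then yields $x^2=\mu(\mu^{\dagger}(x))=x$, not the other way around. With these two repairs your proof is correct and essentially coincides with the paper's, the only real difference being that you replace the citation of \cite[Lemma~13, p.~13]{Poinsot2019} by an explicit computation on $G(H,\mu)$ followed by a density argument.
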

\begin{proof}
$J^{\perp}$ is a special Hilbertian algebra by~\cite[Lemma~13, p.~13]{Poinsot2019} since $J^{\perp}$ is both a (closed) subcoalgebra and a subalgebra of $({H},\mu)$. By Lemma~\ref{lem:subalg_and_subcoalg}, $G(J^{\perp},\mu_{|_{J^{\perp}}})=G(H,\mu)$ which shows that $(J^{\perp},\mu_{|_{J^{\perp}}})$ is semisimple.  
%
\end{proof}

The above may be summarized into the following structure theorem which is a specialization of~\cite[Theorem~22, p.~17]{Poinsot2019} for special Hilbertian algebras. 
\begin{theorem}\label{thm:structthm4specialHilbAlg} (Structure theorem for special Hilbertian algebras.)
Let $({H},\mu)$ be a special Hilbertian algebra.  Then, ${H}=J\hat{\oplus}_2 J^{\perp}$ (orthogonal direct sum of Hilbert spaces) and $J^{\perp}$ is a closed subalgebra and subcoalgebra of $({H},\mu)$.
\end{theorem}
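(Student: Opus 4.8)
The plan is to read the statement directly off the two preceding corollaries, supplemented by the intrinsic description of the radical and the adjunction contained in Lemma~\ref{lem:restriction}; no essentially new computation is required, since the theorem merely packages what has already been established for special Hilbertian algebras.

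First I would settle the orthogonal direct sum. The only point to verify is that $J = J(H,\mu)$ is a closed subspace of $H$, and this is immediate from the intrinsic description recalled in Section~\ref{sect:grouplike}, namely $J(H,\mu) = G(H,\mu)^{\perp}$: an orthogonal complement is automatically closed (alternatively, $J$ is closed as an intersection of maximal modular ideals, each of which is closed in a complex Banach algebra). The projection theorem for Hilbert spaces then yields $H = J \oplus_2 J^{\perp}$, with $J^{\perp} = \overline{\langle G(H,\mu)\rangle}$.

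That $J^{\perp}$ is a closed subalgebra is precisely Corollary~\ref{cor:more_precise}: since the group-like elements obey $xy = \delta_{x,y}\,x$, their closed linear span $J^{\perp}$ is stable under $\mu$. It thus remains only to record the subcoalgebra property, that is, $\mu^{\dagger}(J^{\perp}) \subseteq J^{\perp} \hat{\otimes}_2 J^{\perp}$. For this I would invoke Lemma~\ref{lem:restriction} applied to the bounded map $\mu\colon H \hat{\otimes}_2 H \to H$ with $V := (J^{\perp} \hat{\otimes}_2 J^{\perp})^{\perp}$ and $W := J$. Using $W^{\perp} = J^{\perp}$ and $V^{\perp} = J^{\perp} \hat{\otimes}_2 J^{\perp}$ (valid since $J$ is closed), the condition $\mu^{\dagger}(W^{\perp}) \subseteq V^{\perp}$ reads exactly as the desired subcoalgebra inclusion, so the lemma reduces it to $\mu(V) \subseteq W$, i.e., $\mu\bigl((J^{\perp} \hat{\otimes}_2 J^{\perp})^{\perp}\bigr) \subseteq J$. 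This last inclusion is precisely the characterization recalled in Section~\ref{sec:preliminaries} that the closed subspace $J$ is an ideal of $(H,\mu)$, which holds because the Jacobson radical is always a closed ideal.

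The only mildly non-routine move is this final dualization through Lemma~\ref{lem:restriction}; it is the same adjunction used to establish that $J$ is a coideal in Corollary~\ref{cor:more_precise}, here simply run in the complementary direction, and indeed the subcoalgebra property was already noted inside the proof of Corollary~\ref{cor1:more_precise}. Everything else is a direct restatement of the two corollaries, so I expect no genuine obstacle: the substantive content (the product rule $xy = \delta_{x,y}\,x$ for group-like elements) has been imported from~\cite{Poinsot2019} and used in Corollary~\ref{cor:more_precise}, and the present theorem is its structural summary.
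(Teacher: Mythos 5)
Your proof is correct, and its skeleton is the paper's own: the theorem is presented there without a separate argument, as a summary of Corollary~\ref{cor:more_precise} and Corollary~\ref{cor1:more_precise} together with the general structure theorem of~\cite{Poinsot2019}, exactly as you read it — the decomposition $H=J\oplus_2 J^{\perp}$ is free once $J=G(H,\mu)^{\perp}$ is known to be closed, and the subalgebra claim is Corollary~\ref{cor:more_precise} verbatim. The one step where you genuinely diverge is the subcoalgebra inclusion $\mu^{\dagger}(J^{\perp})\subseteq J^{\perp}\hat{\otimes}_2 J^{\perp}$: the paper imports it from \cite[Theorem~22]{Poinsot2019}, where it comes from the fact that each group-like $x$ satisfies $\mu^{\dagger}(x)=x\otimes x$, combined with linearity, continuity of $\mu^{\dagger}$ and closedness of $J^{\perp}\hat{\otimes}_2 J^{\perp}$; you instead dualize the statement that $J$ is a closed ideal through Lemma~\ref{lem:restriction}, using the characterization of closed ideals recalled in the preliminaries. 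Your route is sound — it is precisely the equivalence ``$I$ is a closed ideal if and only if $I^{\perp}$ is a closed subcoalgebra'' of \cite[Lemmas~14 and~15]{Poinsot2019}, which the paper itself invokes later in Corollary~\ref{cor:bij_ideals_subcoalg} — and it has the minor advantage of making transparent that the subcoalgebra property of $J^{\perp}$ holds for \emph{every} commutative Hilbertian algebra, speciality being needed only for the subalgebra claim. Both routes concentrate all the substantive content in the product rule $xy=\delta_{x,y}x$ of Corollary~\ref{cor:more_precise}, so the two proofs are of equal strength.
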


\subsection{The unital case}\label{sec:unitalcase}

Let $(H,\mu)$ be a  special Hilbertian algebra which is assumed to have a unit $\mathbb{C}\xrightarrow{\eta}H$ identified with the member $e:=\eta(1)$ of $H$.  Thus $\mathsf{char}(H,\mu_{\mathsf{bil}})$, and so  $G(H,\mu)$ too, is compact as is the character space of any commutative and unital Banach algebra~\cite[Theorem~2.2.3, p.~52]{Kaniuth}. But $G(H,\mu)$ is discrete (see Section~\ref{sect:grouplike}). Therefore it is finite. As it is an orthonormal basis of $J(H,\mu)^{\perp}$, the latter is finite-dimensional with $\dim_\mathbb{C}J^{\perp}=|G(H,\mu)|$. One may even explicitly describes the unit of $(H,\mu)$.
\begin{lemma}\label{lem:idempotent_are_in_ortho_of_rad}
Let $(H,\mu)$  be a  special Hilbertian algebra with a unit $e$. Then, $E(H,\mu_{\mathsf{bil}})\subseteq J^{\perp}$ and in particular $e\in J^{\perp}$. More precisely, $e=\sum_{x\in G(H,\mu)}x$.
\end{lemma}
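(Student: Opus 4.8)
The plan is to exploit the concrete description of $J^{\perp}$ coming from the preceding discussion. Since $(H,\mu)$ is unital, $G:=G(H,\mu)$ is finite and is an orthonormal basis of $J^{\perp}$, and by Corollary~\ref{cor:more_precise} its elements multiply by the rule $xy=\delta_{x,y}x$. Thus the members of $G$ are orthogonal idempotents spanning $J^{\perp}$, and I would first record that $e':=\sum_{x\in G}x$ is a unit for the subalgebra $J^{\perp}$: indeed $e'y=\sum_{x\in G}xy=\sum_{x\in G}\delta_{x,y}x=y$ for every $y\in G$, hence $e'z=z$ for all $z\in J^{\perp}$ by linearity (the sum being finite). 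Throughout I would use the structure theorem $H=J\oplus_2 J^{\perp}$ together with the facts that $J$ is a closed ideal (the radical is always a two-sided ideal) and $J^{\perp}$ is a closed subalgebra.

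Next I would prove the inclusion $E(H,\mu_{\mathsf{bil}})\subseteq J^{\perp}$. Let $f=f^2$ and write $f=a+b$ with $a\in J$ and $b\in J^{\perp}$. Expanding $f^2=a^2+2ab+b^2$ and noting that $a^2,ab\in J$ (because $J$ is an ideal) while $b^2\in J^{\perp}$ (because $J^{\perp}$ is a subalgebra), the uniqueness of the orthogonal decomposition forces $b=b^2$ and $a=a^2+2ab$. The latter rewrites as $a(e-a-2b)=0$ using that $e$ is the unit. The key observation is that $e-2b$ is its own inverse: since $b^2=b$ and $e$ is the unit, $(e-2b)^2=e-4b+4b^2=e$. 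Hence $e-2b$ is invertible, and since $a$ lies in the radical, the perturbation $e-a-2b=(e-2b)-a$ is still invertible (for $u$ invertible and $j\in J$ one has $u+j=u(e+u^{-1}j)$ with $u^{-1}j\in J$, so $e+u^{-1}j$ is invertible). Multiplying $a(e-a-2b)=0$ by this inverse yields $a=0$, i.e. $f=b\in J^{\perp}$.

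Finally I would specialize and identify the unit. Being a unit, $e$ is idempotent, so by the inclusion just proved $e\in J^{\perp}$. But then $e$ is a unit of the algebra $J^{\perp}$ (it is the unit of the larger $H\supseteq J^{\perp}$), and $e'$ is also a unit of $J^{\perp}$; by uniqueness of units, $e=e'=\sum_{x\in G}x$.

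I expect the only genuine obstacle to be the vanishing of the radical component $a$; the rest is bookkeeping with the orthogonal splitting. The clean way around it is the involution identity $(e-2b)^2=e$, which sidesteps any spectral argument and reduces everything to the elementary fact that invertibility is preserved under perturbation by a radical element. (Alternatively, for the unit $e$ alone one could argue that its radical component is itself idempotent and then invoke $E(A)\cap J(A)=(0)$, but the involution trick handles all idempotents uniformly.)
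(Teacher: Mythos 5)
Your proof is correct, but it takes a genuinely different route from the paper's in both halves. For the inclusion $E(H,\mu_{\mathsf{bil}})\subseteq J^{\perp}$, the paper simply cites Bade--Dales (Lemma~3.3 of that reference) in combination with Corollary~\ref{cor:more_precise}, whereas you give a self-contained algebraic argument: decompose an idempotent $f=a+b$ along $H=J\oplus_2 J^{\perp}$, read off $b^2=b$ and $a=a^2+2ab$ from uniqueness of the decomposition, and kill $a$ via the involution identity $(e-2b)^2=e$ together with quasi-regularity of radical elements. For the identity $e=\sum_{x\in G(H,\mu)}x$, the paper computes $\langle e,x\rangle=\langle e\otimes x,\mu^{\dagger}(x)\rangle=\langle e,x\rangle\langle x,x\rangle$ to get $\langle e,x\rangle=1$ and then expands $e$ in the orthonormal basis $G(H,\mu)$ of $J^{\perp}$, whereas you avoid the adjoint entirely: $\sum_{x\in G}x$ is a unit of the finite-dimensional subalgebra $J^{\perp}$ (using $xy=\delta_{x,y}x$ from Corollary~\ref{cor:more_precise}), $e\in J^{\perp}$ is also a unit of $J^{\perp}$, and units are unique. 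What your approach buys is self-containedness (no external citation) and a purely ring-theoretic argument valid in any commutative unital Banach algebra splitting as (closed subalgebra) $\oplus$ (radical); what it costs is that your proof of the inclusion genuinely uses the unit, while the Bade--Dales route the paper leans on is unit-free and is reused later (Lemma~\ref{lem:support_of_idempotent_is_finite}) in the non-unital Frobenius setting. Within the hypotheses of this lemma, both arguments are complete.
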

\begin{proof}
The first assertion follows from~\cite[Lemma~3.3, p.~112]{BadeDales} in view of Corollary~\ref{cor:more_precise}. The second thus is immediate. Since $(H,\mu)$ is unital, one already knows that $|G(H,\mu)|=\dim_{\mathbb{C}}J^{\perp}<+\infty$. As $1=\langle x,x\rangle=\langle ex,x\rangle=\langle e\otimes x,x\otimes x\rangle=\langle e,x\rangle\langle x,x\rangle$, whence $\langle e,x\rangle=1$ for each $x\in G(H,\mu)$. So $e=\sum_{x\in G(H,\mu)}\langle e,x\rangle x=\sum_{x\in G(H,\mu)}x$.
\end{proof}


If $(H,\mu)$ is a finite-dimensional special Hilbertian algebra, then it is unital~\cite[Corollary~3.3, p.~47]{Dixon} as $H^2:=\mu(H\otimes_{\mathbb{C}} H)=H$ (because $\mu\circ\mu^{\dagger}=id$). One so obtains
\begin{proposition}\label{prop:unital}
Let $(H,\mu)$ be a special Hilbertian algebra. 
\begin{enumerate}
\item If $(H,\mu)$ has a unit, then $J(H,\mu)^{\perp}$ is finite-dimensional.
\item If $(H,\mu)$ is finite-dimensional, then it has a unit.
\item Let us assume $(H,\mu)$ semisimple. $(H,\mu)$ has a unit if, and only if, it is finite-dimensional.
\end{enumerate}
\end{proposition}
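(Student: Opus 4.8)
The statement to prove is Proposition~\ref{prop:unital}, which collects three facts about a special Hilbertian algebra $(H,\mu)$. The plan is to leverage the structure already established, namely that $G(H,\mu)$ is an orthonormal basis of $J^{\perp}$ (recalled in Section~\ref{sect:grouplike}), that it is a discrete space, and that $G(H,\mu)$ is homeomorphic to the character space $\mathsf{char}(H,\mu_{\mathsf{bil}})$ under the Riesz map $R$.

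For item~(1), I would argue exactly as in the opening paragraph of Section~\ref{sec:unitalcase}: if $(H,\mu)$ has a unit, then its underlying Banach algebra is unital and commutative, so its character space is \emph{compact} by~\cite[Theorem~2.2.3, p.~52]{Kaniuth}. Via the homeomorphism $R$, $G(H,\mu)$ is therefore compact. But $G(H,\mu)$ is also discrete, and a space that is simultaneously compact and discrete must be finite. Since $G(H,\mu)$ is an orthonormal basis of $J^{\perp}$, finiteness of the basis gives $\dim_{\mathbb{C}}J^{\perp}=|G(H,\mu)|<+\infty$.

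For item~(2), I would invoke the already-cited fact that a finite-dimensional algebra with $H^{2}=H$ is unital~\cite[Corollary~3.3, p.~47]{Dixon}. The hypothesis $\mu\circ\mu^{\dagger}=id$ (speciality) forces $\mu$ to be surjective, hence $H^{2}=\mu(H\otimes_{\mathbb{C}}H)=H$; combined with $\dim_{\mathbb{C}}H<+\infty$ this yields a unit. This is essentially the remark made just before the proposition, so the proof here is a one-line citation.

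For item~(3), assume $(H,\mu)$ semisimple, so $J=(0)$ and hence $H=J^{\perp}$. The forward implication is immediate from item~(1): a unit forces $\dim_{\mathbb{C}}J^{\perp}=\dim_{\mathbb{C}}H<+\infty$. The reverse implication is immediate from item~(2). The only genuine care needed is the identification $H=J^{\perp}$ under semisimplicity, which is exactly the statement $J^{\perp}=H$ equivalent to $J=(0)$. I expect no real obstacle here: all three parts reduce to already-available lemmas, and the single conceptual ingredient is the compact-plus-discrete-implies-finite argument in item~(1), which is standard point-set topology. The main thing to be careful about is citing the correct earlier results (the discreteness and orthonormal-basis properties of $G(H,\mu)$, and the compactness of the unital character space) rather than re-deriving them.
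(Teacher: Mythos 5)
Your proposal is correct and follows essentially the same route as the paper: the paper proves this proposition via the discussion preceding it in Section~\ref{sec:unitalcase}, namely the compactness of the character space of a unital commutative Banach algebra combined with discreteness of $G(H,\mu)$ and its being an orthonormal basis of $J^{\perp}$ for item~(1), the citation of~\cite[Corollary~3.3, p.~47]{Dixon} together with surjectivity of $\mu$ (from $\mu\circ\mu^{\dagger}=id$) for item~(2), and the identification $H=J^{\perp}$ under semisimplicity for item~(3). The citations, the compact-plus-discrete-implies-finite argument, and the reduction of item~(3) to the first two items all match the paper exactly.
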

Incidentally Proposition~\ref{prop:unital} shows that a special Hilbertian algebra of finite dimension fails to be radical if non-zero (since the unit is contained in no maximal ideals.) 

\section{Semisimplicity of commutative Hilbertian Frobenius semigroups}\label{sec:comHilbFromSemigroups}

The main results of this section are Theorem~\ref{thm:stHFs} and Theorem~\ref{thm:semisimplicityforFrob}. The former states that both the Jacobson radical  of a Hilbertian Frobenius semigroup, and its orthogonal complement are subalgebras and subcoalgebras, and that the Jacobson radical actually coincides with the annihilator of the semigroup. The latter provides the explicit conditions on the multiplication (or comultiplication) of a Hilbertian Frobenius semigroup for it to be semisimple. 

Theorem~\ref{thm:stHFs} is obtained in two steps: at first it is proved that the orthogonal complement of the Jacobson radical is a subalgebra, by analyzing precisely how the product of two group-like elements behaves, and secondly that the operators of multiplication by an element in a Frobenius algebra are forced to be normal operators, which in turn forces the Jacobson radical to be equal to the annihilator. 

\subsection{Commutative Hilbertian Frobenius algebras}

In this current section, $(H,\mu)$ stands for  a commutative Hilbertian Frobenius algebra and one denotes  $J:=J(H,\mu)$. 

Let us recall the following result from~\cite{Coecke}. Recall that in our terminology an orthogonal family  does not contain $0$. 
\begin{lemma}\label{lem:orthogonal}
$G(H,\mu)$ is an orthogonal family, that is, for each $x,y\in G(H,\mu)$, $x\not=y\Rightarrow\langle x,y\rangle=0$. In particular $G(H,\mu)$ is an orthogonal basis of $J^{\perp}$. 
\end{lemma}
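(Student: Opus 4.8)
The plan is to translate group-likeness into the language of characters and then extract orthogonality from multiplicativity, the only genuinely Frobenius-specific input being that the square of a group-like element lies on the line it spans. First I would attach to each $x\in G(H,\mu)$, via the Riesz bijection $R$ recalled above, the non-trivial character $\chi_x:=\langle\cdot,x\rangle\in\mathsf{char}(H,\mu_{\mathsf{bil}})$. Multiplicativity of $\chi_x$ reads $\langle\mu(u\otimes v),x\rangle=\langle u,x\rangle\langle v,x\rangle=\langle u\otimes v,x\otimes x\rangle$ for all $u,v\in H$, and since elementary tensors are dense in $H\hat{\otimes}_2 H$ this is exactly $\mu^{\dagger}(x)=x\otimes x$. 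Distinct group-like elements yield distinct characters by injectivity of $R$, a fact I will need at the very end.

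Next I would prove the crux, $x^2=\|x\|^2x$ for $x\in G(H,\mu)$, where $x^2:=\mu(x\otimes x)$. Feeding $x\otimes x$ into the two cells of Diagram~(\ref{diag:Frob}) and using $\mu^{\dagger}(x)=x\otimes x$ together with the associativity constraint $\alpha_{H,H,H}$, these give $\mu^{\dagger}(x^2)=x^2\otimes x$ and $\mu^{\dagger}(x^2)=x\otimes x^2$, so that $x^2\otimes x=x\otimes x^2$ with $x\neq 0$. A short rigidity argument for elementary tensors --- testing this equality against $u\otimes x$ for $u\perp x$ --- forces $x^2\in\mathbb{C}x$, and the pairing $\langle x^2,x\rangle=\langle x\otimes x,\mu^{\dagger}(x)\rangle=\|x\|^4$ pins the scalar, giving $x^2=\|x\|^2x$. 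This is the step I expect to be the \emph{main obstacle}: without the Frobenius relation one controls only the component of $x^2$ along $x$, and it is precisely this relation that upgrades that to collinearity (the point which is free in the isometric setting of~\cite{Heunen} but not here).

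With $x^2=\|x\|^2x$ available, the orthogonality is character bookkeeping. For distinct $x,y\in G(H,\mu)$, multiplicativity of $\chi_y$ gives $\chi_y(x)^2=\chi_y(x^2)=\|x\|^2\chi_y(x)$, whence $\langle x,y\rangle=\chi_y(x)\in\{0,\|x\|^2\}$, and symmetrically $\langle y,x\rangle\in\{0,\|y\|^2\}$. Assume $\langle x,y\rangle\neq 0$; then $\langle x,y\rangle=\|x\|^2$ is real, so $\langle y,x\rangle=\overline{\langle x,y\rangle}=\|x\|^2\neq 0$ forces $\langle y,x\rangle=\|y\|^2$, hence $\|x\|=\|y\|$ and $|\langle x,y\rangle|=\|x\|\,\|y\|$. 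The equality case of the Cauchy--Schwarz inequality then makes $x,y$ collinear, say $y=cx$; but then $\chi_y=\bar c\,\chi_x$, and comparing $\chi_y(uv)$ with $\chi_y(u)\chi_y(v)$ forces $\bar c=\bar c^2$, i.e.\ $c=1$ and $x=y$, contradicting $x\neq y$. Therefore $\langle x,y\rangle=0$.

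Finally, the elements of $G(H,\mu)$ are nonzero by definition and pairwise orthogonal by the above, so $G(H,\mu)$ is an orthogonal set; combined with the identity $\overline{\langle G(H,\mu)\rangle}=J^{\perp}$ recorded earlier, this exhibits $G(H,\mu)$ as an orthogonal basis of $J^{\perp}$, which is the ``in particular'' clause.
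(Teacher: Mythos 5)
Your proof is correct. Note first that the paper gives no in-house proof of this lemma at all: it is stated as a recollection from~\cite{Coecke}, so there is no argument in the text to compare with line by line. Your argument is self-contained and each step checks out: the two Frobenius cells applied to $x\otimes x$ do give $\mu^{\dagger}(x^2)=x^2\otimes x$ and $\mu^{\dagger}(x^2)=x\otimes x^2$; pairing $x^{2}\otimes x=x\otimes x^{2}$ against $u\otimes x$ with $u\in x^{\perp}$ gives $\|x\|^{2}\langle x^{2},u\rangle=\langle x,u\rangle\langle x^{2},x\rangle=0$, hence $x^{2}\in(x^{\perp})^{\perp}=\mathbb{C}x$, and $\langle x^{2},x\rangle=\langle x\otimes x,\mu^{\dagger}(x)\rangle=\|x\|^{4}$ fixes the scalar, so $x^{2}=\|x\|^{2}x$; the character dichotomy $\chi_y(x)\in\{0,\|x\|^{2}\}$, the Cauchy--Schwarz equality case, and the computation $\bar c=\bar c^{2}$ then rule out non-orthogonal distinct group-like elements. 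The ``in particular'' clause follows, as you say, from the recorded identity $\overline{\langle G(H,\mu)\rangle}=J^{\perp}$, since the orthogonal complement of $G(H,\mu)$ inside $J^{\perp}$ is $G(H,\mu)^{\perp}\cap J^{\perp}=J\cap J^{\perp}=0$.

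The natural comparison is with the machinery the paper develops \emph{after} the lemma. Your crux, $x^{2}=\|x\|^{2}x$, is precisely the content of the paper's Lemma~\ref{lem:Jperp_is_subcoalg} ($G'(H,\mu)=G(H,\mu)$), and it is proved there from the very same tensor identity $x^{2}\otimes x=x\otimes x^{2}$; but the paper tests that identity against $u\otimes x$ with $u\in J$, which only yields $x^{2}\in J^{\perp}$, and must then invoke Corollary~\ref{cor:orthogonal2} and Lemma~\ref{lem:preidem} --- hence the present lemma --- to upgrade this to $x^{2}=\|x\|^{2}x$. By testing against $u\in x^{\perp}$ instead, you obtain the collinearity $x^{2}\in\mathbb{C}x$ outright, with no appeal to orthogonality, and can then run the implication in the reverse direction (collinearity of squares implies orthogonality). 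This reversal of logical order is exactly what makes your proof non-circular, and what it buys is a self-contained treatment where the paper relies on an external citation. Two minor remarks: the injectivity of $R$ announced at the outset is never actually used, since your endgame produces $c=1$, i.e.\ $x=y$, directly; and in that endgame you could bypass characters altogether, since $y=cx$ and $\mu^{\dagger}(y)=y\otimes y$ give $c\,x\otimes x=c^{2}\,x\otimes x$, whence $c=1$.
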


Lemma~\ref{lem:orthogonal} has the important consequences listed below.
\begin{corollary}\label{cor:orthogonal2}
\begin{enumerate}
\item Let $x,y\in G(H,\mu)$ such that $x\not=y$. Then, 
$xy\in J$. 
\item Let $x\in G(H,\mu)$. Then, $p_{J^{\perp}}(x^2)=\|x\|^2 x$. 
\end{enumerate}
\end{corollary}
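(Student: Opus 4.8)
The plan is to read both items off Lemma~\ref{lem:orthogonal} together with the Riesz correspondence recalled in Section~\ref{sect:grouplike}. The single fact I would exploit is that for each $z\in G(H,\mu)$ the functional $\langle\cdot,z\rangle=R(z)$ is a non-trivial character of the underlying Banach algebra, so that it is multiplicative: $\langle uv,z\rangle=\langle u,z\rangle\langle v,z\rangle$ for all $u,v\in H$ (equivalently, $\mu^{\dagger}(z)=z\otimes z$). Combined with the fact that $G(H,\mu)$ is a pairwise orthogonal basis of $J^{\perp}$, this is all the two computations need; note that the Frobenius hypothesis itself enters only indirectly, through Lemma~\ref{lem:orthogonal}.

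For the first item I would simply test $xy=\mu(x\otimes y)$ against an arbitrary $z\in G(H,\mu)$. Multiplicativity of the character gives $\langle xy,z\rangle=\langle x,z\rangle\langle y,z\rangle$. If $z\neq x$ then $\langle x,z\rangle=0$ by orthogonality, while if $z=x$ then $z\neq y$ (since $x\neq y$) and $\langle y,z\rangle=0$; in either case $\langle xy,z\rangle=0$. Hence $xy$ is orthogonal to every group-like element, that is $xy\in G(H,\mu)^{\perp}=J$, which is the assertion.

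For the second item I would expand $p_{J^{\perp}}(x^2)$ along the orthogonal basis $G(H,\mu)$ of $J^{\perp}$, being careful that this basis is merely orthogonal and not, as in the special case, orthonormal, so that the expansion reads $p_{J^{\perp}}(x^2)=\sum_{z\in G(H,\mu)}\|z\|^{-2}\langle x^2,z\rangle\,z$. Since $z\in J^{\perp}$ one has $\langle p_{J^{\perp}}(x^2),z\rangle=\langle x^2,z\rangle$, and multiplicativity of $\langle\cdot,z\rangle$ gives $\langle x^2,z\rangle=\langle x,z\rangle^2$. Orthogonality annihilates every term with $z\neq x$ and leaves only the term $z=x$, whose coefficient is $\|x\|^{-2}\langle x,x\rangle^2=\|x\|^{-2}\|x\|^4=\|x\|^2$; therefore $p_{J^{\perp}}(x^2)=\|x\|^2 x$.

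The computations are routine and I do not anticipate a serious obstacle; the only two points deserving care are to invoke the character (multiplicativity) property supplied by the Riesz correspondence rather than the defining relation in its raw form, and to retain the normalizing factors $\|z\|^{-2}$ throughout, since outside the special case the group-like family is orthogonal but not orthonormal.
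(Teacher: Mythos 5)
Your proposal is correct and follows essentially the same route as the paper: both proofs reduce each inner product $\langle xy,z\rangle$ (resp. $\langle x^2,z\rangle$) for $z\in G(H,\mu)$ to $\langle x,z\rangle\langle y,z\rangle$ (resp. $\langle x,z\rangle^2$) — you via multiplicativity of the character $R(z)=\langle\cdot,z\rangle$, the paper via the equivalent identity $\langle \mu(x\otimes y),z\rangle=\langle x\otimes y,\mu^{\dagger}(z)\rangle=\langle x\otimes y,z\otimes z\rangle$ — and then invoke the orthogonality of $G(H,\mu)$ from Lemma~\ref{lem:orthogonal} together with the expansion of $p_{J^{\perp}}$ along the orthogonal basis $G(H,\mu)$ of $J^{\perp}$. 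Your normalization factors $\|z\|^{-2}$ and the conclusion $xy\in G(H,\mu)^{\perp}=J$ match the paper's computation exactly.
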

\begin{proof}
\begin{enumerate}
\item Let $x,y\in G(H,\mu)$ such that $x\not=y$.  Let $z\in G(H,\mu)$. Then, $\langle xy,z\rangle=\langle\mu(x\otimes y),z\rangle=\langle x\otimes y,\mu^{\dagger}(z)\rangle=
\langle x\otimes y,z\otimes z\rangle=\langle x,z\rangle\langle y,z\rangle=\|x\|^2\delta_{x,z}\|y\|^2\delta_{y,z}$ (by Lemma~\ref{lem:orthogonal}) $=0$ as by assumption $x\not=y$. Whence $p_{J^{\perp}}(xy)=0$ and thus $xy\in J$. 
\item Let $x\in G(H,\mu)$. Let $z\in G(H,\mu)$. Then, $\langle x^2,z\rangle=\langle \mu(x\otimes x),z\rangle=\langle x\otimes x,\mu^{\dagger}(z)\rangle=\langle x\otimes x,z\otimes z\rangle=\langle x,z\rangle^2=\|x\|^4\delta_{x,z}$ according to Lemma~\ref{lem:orthogonal}. As a result, $p_{J^{\perp}}(x^2)=\langle x^2,\frac{x}{\|x\|}\rangle \frac{x}{\|x\|}=\|x\|^2x$.
\end{enumerate}
\end{proof}

\begin{remark}\label{rem:description_of_semisimple_HilbFrob}
According to Corollary~\ref{cor:orthogonal2} a semisimple Hilbertian Frobenius semigroup $(H,\mu)$ is easily described: as $G(H,\mu)$ is an orthogonal basis of $H=J^{\perp}$, one has for each $u\in H$ a unique expansion as a summable family $\sum_{x\in X}\langle u,\frac{x}{\|x\|}\rangle \frac{x}{\|x\|}$ and given $u,v\in H$, $uv=\sum_{x\in G(H,\mu)}\frac{1}{\|x\|}\langle u,x\rangle\langle v,x\rangle \frac{x}{\|x\|}$ as $\langle uv,x\rangle=\langle u\otimes v,x\otimes x\rangle=\langle u,x\rangle\langle v,x\rangle$, $x\in G(H,\mu)$. 
\end{remark}

\begin{lemma}\label{lem:preidem}
Let $x\in G(H,\mu)$. $x^2\in J^{\perp}$ if, and only if, $x^2=\|x\|^2 x$. In this case, $\frac{1}{\|x\|^2}x$ is an idempotent element which belongs to $J^{\perp}$ and $\|x\|\leq \|\mu\|_{\mathsf{op}}$.
\end{lemma}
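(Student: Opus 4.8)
The plan is to derive the entire statement from the projection formula already recorded in Corollary~\ref{cor:orthogonal2}.2, namely $p_{J^{\perp}}(x^2) = \|x\|^2 x$ for every $x \in G(H,\mu)$. Since $x$ is group-like it is nonzero and, being a member of $G(H,\mu) \subseteq \overline{\langle G(H,\mu)\rangle} = J^{\perp}$, it already lies in $J^{\perp}$; these two facts together with the projection formula are essentially all that is needed.

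First I would establish the equivalence. As $J^{\perp}$ is a closed subspace, a vector belongs to $J^{\perp}$ exactly when it is fixed by the orthogonal projection $p_{J^{\perp}}$. Hence $x^2 \in J^{\perp}$ is equivalent to $p_{J^{\perp}}(x^2) = x^2$, and substituting $p_{J^{\perp}}(x^2) = \|x\|^2 x$ from Corollary~\ref{cor:orthogonal2}.2 this reads precisely $x^2 = \|x\|^2 x$. The converse direction is even more direct: if $x^2 = \|x\|^2 x$ then $x^2$ is a scalar multiple of $x \in J^{\perp}$, so $x^2 \in J^{\perp}$.

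Assuming now the equivalent condition $x^2 = \|x\|^2 x$, I would finish with two short computations. Setting $e := \frac{1}{\|x\|^2}x$ (legitimate since $x \neq 0$), one has $e^2 = \frac{1}{\|x\|^4}x^2 = \frac{1}{\|x\|^4}\|x\|^2 x = e$, so $e$ is idempotent, and $e \in J^{\perp}$ because it is a scalar multiple of $x \in J^{\perp}$. For the norm bound I would use the multiplicativity $\|x \hat{\otimes}_2 x\| = \|x\|^2$ of the Hilbertian tensor norm together with submultiplicativity of $\mu$: from $\|x\|^3 = \|x^2\| = \|\mu(x \otimes x)\| \leq \|\mu\|_{\mathsf{op}}\|x\|^2$ and $\|x\| > 0$, dividing by $\|x\|^2$ yields $\|x\| \leq \|\mu\|_{\mathsf{op}}$. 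There is no real obstacle here; the statement is an unpacking of Corollary~\ref{cor:orthogonal2}.2, and the only points requiring a moment's care are invoking closedness of $J^{\perp}$ for the fixed-point characterization of the projection and the isometry $\|x \hat{\otimes}_2 x\| = \|x\|^2$ for the final estimate.
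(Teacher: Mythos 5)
Your proof is correct and follows essentially the same route as the paper: the equivalence is read off from the projection formula $p_{J^{\perp}}(x^2)=\|x\|^2x$ of Corollary~\ref{cor:orthogonal2}, the idempotency of $\frac{x}{\|x\|^2}$ is an immediate computation, and the bound $\|x\|\leq\|\mu\|_{\mathsf{op}}$ comes from submultiplicativity of $\mu$ with respect to the cross norm. The only cosmetic difference is that the paper applies the estimate $\|e\|=\|\mu(e\otimes e)\|\leq\|\mu\|_{\mathsf{op}}\|e\|^2$ to the idempotent $e=\frac{x}{\|x\|^2}$ (yielding the lower bound $\frac{1}{\|\mu\|_{\mathsf{op}}}\leq\|e\|$), whereas you apply the same inequality directly to $x$ via $\|x\|^3=\|x^2\|\leq\|\mu\|_{\mathsf{op}}\|x\|^2$; these are the same computation up to rescaling.
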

\begin{proof}
The first equivalence is due to Corollary~\ref{cor:orthogonal2}. The second statement is immediate. 
%
Let $e$ be a non-zero idempotent element of $(H,\mu)$. Then, $\|e\|=\|e^2\|=\|\mu(e\otimes e)\|\leq \|\mu\|_{\mathsf{op}}\|e\|^2$. If $e\not=0$, then $\frac{1}{\|\mu\|_{\mathsf{op}}}\leq \|e\|$. The last statement thus is obtained by taking $e=\frac{x}{\|x\|^2}$. 
\end{proof}

Let $G'(H,\mu):=\{\, x\in G(H,\mu)\colon x^2\in J^{\perp}\,\}=\{\, x\in G(H,\mu)\colon x^2=\|x\|^2 x\,\}$ (by Lemma~\ref{lem:preidem}). 
\begin{lemma}\label{lem:preidem_2}
Let $x,y\in G'(H,\mu)$ such that $x\not=y$. Then, $xy=0$.
\end{lemma}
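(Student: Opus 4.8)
The plan is to exploit the fact that, for $x,y\in G'(H,\mu)$, the product $xy$ is forced to be a scalar multiple of an idempotent which at the same time lies in the radical, and is therefore zero. I would begin by recording where $xy$ sits: since $x,y\in G(H,\mu)$ with $x\neq y$, Corollary~\ref{cor:orthogonal2}(1) gives immediately that $xy\in J$.

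Next I would compute the square of $xy$. Because $x,y\in G'(H,\mu)$, the very definition of $G'(H,\mu)$ together with Lemma~\ref{lem:preidem} yields $x^2=\|x\|^2x$ and $y^2=\|y\|^2y$. Using commutativity and associativity of $\mu$ to rearrange $(xy)(xy)$ as $x^2y^2$, one then obtains the key identity $(xy)^2=x^2y^2=\|x\|^2\|y\|^2\,xy$. This is the one real step of the argument, and also the place I would be most careful to invoke associativity/commutativity correctly; it is however routine and I do not expect a genuine obstacle here.

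Finally I would set $\lambda:=\|x\|^2\|y\|^2$, which is strictly positive since $x$ and $y$, being group-like, are non-zero. The element $e:=\lambda^{-1}xy$ then satisfies $e^2=\lambda^{-2}(xy)^2=\lambda^{-1}xy=e$, so $e$ is an idempotent of the underlying Banach algebra $((H,\|-\|'),\mu_{\mathsf{bil}})$; and since $J$ is a linear subspace containing $xy$, we also have $e\in J$. But $E(A)\cap J(A)=(0)$ for a commutative Banach algebra, so $e=0$, whence $xy=\lambda e=0$, as desired.
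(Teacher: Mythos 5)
Your proof is correct and follows essentially the same route as the paper: the paper also starts from $xy\in J$ (Corollary~\ref{cor:orthogonal2}), observes that $\frac{x}{\|x\|^2}$ and $\frac{y}{\|y\|^2}$ are idempotents (Lemma~\ref{lem:preidem}), deduces by commutativity that $\frac{xy}{\|x\|^2\|y\|^2}$ is an idempotent lying in $J$, and kills it with the fact $E(A)\cap J(A)=(0)$ for commutative Banach algebras. Your explicit computation $(xy)^2=x^2y^2=\|x\|^2\|y\|^2\,xy$ is just an unwound form of the paper's "product of commuting idempotents is idempotent" step, so the two arguments coincide.
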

\begin{proof}
According to Corollary~\ref{cor:orthogonal2} $xy\in J$. As $x,y\in G'(H,\mu)$, $\frac{x}{\|x\|^2}$ and $\frac{y}{\|y\|^2}$ both are idempotent elements which belong to $J^{\perp}$ by Lemma~\ref{lem:preidem}. By commutativity of $\mu$, $\frac{xy}{\|x\|^2\|y\|^2}$ is also an idempotent element and it is a member of $J$ by the above. Therefore it reduces to zero (see the beginning of Section~\ref{subsec:generalsituation}), and thus $xy=0$ too. 
\end{proof}

\begin{lemma}\label{lem:Jperp_is_subcoalg}
$G'(H,\mu)=G(H,\mu)$. Consequently, $J^{\perp}$ is a closed subalgebra of $(H,\mu)$ and the map $G(H,\mu)\to ]0,+\infty[$, $x\mapsto \|x\|$ is bounded above by $\|\mu\|_{\mathsf{op}}$.
\end{lemma}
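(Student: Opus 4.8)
The plan is to reduce all three assertions to the single claim that $x^2 \in J^{\perp}$ for every $x \in G(H,\mu)$, i.e. that $G'(H,\mu)=G(H,\mu)$, since the rest then follows formally. Granting this, Lemma~\ref{lem:preidem_2} gives $xy=0\in J^{\perp}$ for distinct group-like $x,y$, while Lemma~\ref{lem:preidem} gives $x^2=\|x\|^2 x\in J^{\perp}$; because $J^{\perp}=\overline{\langle G(H,\mu)\rangle}$ is closed and $\mu$ is continuous, products of (limits of linear combinations of) group-like elements stay in $J^{\perp}$, so $\mu(J^{\perp}\hat{\otimes}_2 J^{\perp})\subseteq J^{\perp}$ and $J^{\perp}$ is a closed subalgebra. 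The norm bound $\|x\|\leq\|\mu\|_{\mathsf{op}}$ is then precisely the final clause of Lemma~\ref{lem:preidem}, now valid for every $x\in G(H,\mu)=G'(H,\mu)$.

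The whole weight therefore rests on showing $x^2\in J^{\perp}$, and this is where the Frobenius condition — used so far only through Lemma~\ref{lem:orthogonal} — must be exploited in full. Fix $x\in G(H,\mu)$, so $\mu^{\dagger}(x)=x\otimes x$. I would evaluate one commuting cell of Diag.~(\ref{diag:Frob}) (a single cell suffices by Remark~\ref{rem:left_or_right_Frob_no_matter}) on the vector $x\otimes x$: pushing $x\otimes x$ through $id\hat{\otimes}_2\mu^{\dagger}$, the associator, and $\mu\hat{\otimes}_2 id$ yields $\mu(x\otimes x)\otimes x=x^2\otimes x$, while the diagonal $\mu^{\dagger}\circ\mu$ yields $\mu^{\dagger}(x^2)$; hence $\mu^{\dagger}(x^2)=x^2\otimes x$. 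Since $\mu$ is commutative, $\mu^{\dagger}$ is cocommutative, that is $\sigma_{H,H}\circ\mu^{\dagger}=\mu^{\dagger}$, so applying $\sigma_{H,H}$ to the previous identity gives also $\mu^{\dagger}(x^2)=x\otimes x^2$. Thus $x^2\otimes x=x\otimes x^2$ in $H\hat{\otimes}_2 H$. Now set $r:=x^2-\|x\|^2 x$; Corollary~\ref{cor:orthogonal2} gives $p_{J^{\perp}}(x^2)=\|x\|^2 x$, so $r=p_J(x^2)\in J$. Substituting $x^2=\|x\|^2 x+r$ and cancelling the common term $\|x\|^2\,(x\otimes x)$ leaves $r\otimes x=x\otimes r$. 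Applying the bounded partial functional $id\hat{\otimes}_2\langle\cdot,x\rangle$ sends the left side to $\|x\|^2 r$ and the right side to $\langle r,x\rangle\,x=0$, the last equality because $r\in J$ and $x\in J^{\perp}$ are orthogonal. As $\|x\|^2>0$, this forces $r=0$, i.e. $x^2=\|x\|^2 x\in J^{\perp}$ and $x\in G'(H,\mu)$, establishing $G'(H,\mu)=G(H,\mu)$.

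The one delicate point is the evaluation of Diag.~(\ref{diag:Frob}) on $x\otimes x$: one must track the associativity (and symmetry) constraints of $\mathbb{Hilb}$ carefully so that the Frobenius identity really delivers $\mu^{\dagger}(x^2)=x^2\otimes x=x\otimes x^2$. This normalized form of the square of a group-like element is the crux, and it is exactly the content that is \emph{not} automatic in the non-special setting. Once it is secured, the closing computation is the elementary orthogonality argument above, and the consequences recorded in the first paragraph are immediate from the cited lemmas; I expect no further obstacle.
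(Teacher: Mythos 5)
Your proposal is correct and follows essentially the same route as the paper: both evaluate the Frobenius identity on $x\otimes x$ (using $\mu^{\dagger}(x)=x\otimes x$) to obtain $\mu^{\dagger}(x^2)=x^2\otimes x=x\otimes x^2$, then exploit orthogonality of $J$ against $x\in J^{\perp}$ to force $x^2\in J^{\perp}$, with the remaining assertions drawn from Lemmas~\ref{lem:preidem} and~\ref{lem:preidem_2} and continuity of $\mu$, exactly as in the paper. The only difference is cosmetic bookkeeping in the final step: the paper pairs $\mu^{\dagger}(x^2)$ against $u\otimes x$ for $u\in J$ and computes the pairing in two ways, while you decompose $x^2=\|x\|^2x+r$ with $r=p_J(x^2)$ and apply a slice map; this is the same orthogonality computation in different clothing.
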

\begin{proof}
Let $x\in G(H,\mu)$. Using one of the Frobenius conditions, one obtains $\mu^{\dagger}(x^2)=x^2\otimes x$ and with the other, $\mu^{\dagger}(x^2)=x\otimes x^2$. 
Let $u,v\in H$. Then, $\langle \mu^{\dagger}(x^2),u\otimes v\rangle=\langle x\otimes x^2,u\otimes v\rangle=\langle x,u\rangle\langle x^2,v\rangle$ and also $\langle \mu^{\dagger}(x^2),u\otimes v\rangle=\langle x^2\otimes x,u\otimes v\rangle=
\langle x^2,u\rangle\langle x,v\rangle$. In particular given $u\in J$, one has $\langle \mu^{\dagger}(x^2),u\otimes x\rangle=
\langle x^2,x\rangle \langle x,u\rangle=0$ and $\langle \mu^{\dagger}(x^2),u\otimes x\rangle=\langle x,x\rangle\langle x^2,u\rangle=\|x\|^2\langle x^2,u\rangle$. Therefore $x^2\in J^{\perp}$, that is, $x\in G'(H,\mu)$.

According to Lemma~\ref{lem:preidem} and Lemma~\ref{lem:preidem_2}, the linear span of $G'(H,\mu)=G(H,\mu)$ is closed under $\mu$. So is its closure, by continuity of $\mu$, which is nothing but $J^{\perp}$. The last assertion is a consequence of the last assertion of Lemma~\ref{lem:preidem}.
\end{proof}

\begin{remark}
It is clear from the definition of a group-like element, that $\mu=0\Rightarrow G(H,\mu)=\emptyset \Rightarrow (H,\mu)$ is radical. 

Actually we will prove below that the converse implications are also true (see Corollary~\ref{cor:radicalFrob}). 
\end{remark}

\begin{remark}\label{rem:description_of_Jperp_of_HilbFrob2}
According to Lemma~\ref{lem:Jperp_is_subcoalg} the multiplication of $J^{\perp}$ arising from  the restriction of $\mu$ is as easily described  as in Remark~\ref{rem:description_of_semisimple_HilbFrob}.  $G(H,\mu)$ is an orthogonal basis of $J^{\perp}$, and given $u,v\in H$, $uv=\sum_{x\in G(H,\mu)}\frac{1}{\|x\|}\langle u,x\rangle\langle v,x\rangle \frac{x}{\|x\|}+(p_{J^{\perp}}(u)p_{J}(v)+p_{J}(u)p_{J^{\perp}}(v)+p_{J}(u)p_{J}(v))$. 
\end{remark}

A more precise Structure Theorem for commutative Hilbertian Frobenius semigroups will be provided hereafter (Theorem~\ref{thm:stHFs}) but it is worth summarizing the above results. 
\begin{proposition}\label{prop:stHFs}
Let $(H,\mu)$ be a Hilbertian Frobenius semigroup. Then, ${H}=J{\oplus}_2 J^{\perp}$ (orthogonal direct sum of Hilbert spaces) and $J^{\perp}$ is both a closed subalgebra and subcoalgebra of $({H},\mu)$.
\end{proposition}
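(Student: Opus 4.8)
The plan is to recognize that the statement is essentially a summary of the lemmas just proved, and to assemble them. First I would dispose of the orthogonal splitting $H = J \oplus_2 J^{\perp}$: the Jacobson radical $J = G(H,\mu)^{\perp}$ is by its very description an orthogonal complement, hence a closed subspace of $H$, so $H$ decomposes as the Hilbert direct sum of $J$ and its orthogonal complement $J^{\perp}$. No Frobenius hypothesis is needed for this part.

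The genuinely substantial half of the claim --- that $J^{\perp}$ is a closed subalgebra --- has already been settled in Lemma~\ref{lem:Jperp_is_subcoalg}, where it is shown that every group-like element satisfies $x^2 = \|x\|^2 x$ (so that $G(H,\mu) = G'(H,\mu)$), that distinct group-like elements multiply to $0$, and hence that $\langle G(H,\mu)\rangle$, and by continuity its closure $J^{\perp} = \overline{\langle G(H,\mu)\rangle}$, is stable under $\mu$. This is exactly where the Frobenius condition does its work, through the two identities $\mu^{\dagger}(x^2) = x^2 \otimes x$ and $\mu^{\dagger}(x^2) = x \otimes x^2$. So here I would simply invoke Lemma~\ref{lem:Jperp_is_subcoalg}.

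It therefore remains only to prove that $J^{\perp}$ is a subcoalgebra, that is, $\mu^{\dagger}(J^{\perp}) \subseteq J^{\perp} \hat{\otimes}_2 J^{\perp}$. For this I would apply Lemma~\ref{lem:restriction} directly to $f = \mu^{\dagger}$ with $V = J^{\perp}$ and $W = J^{\perp} \hat{\otimes}_2 J^{\perp}$ (both closed). Since $(\mu^{\dagger})^{\dagger} = \mu$, $W^{\perp} = (J^{\perp} \hat{\otimes}_2 J^{\perp})^{\perp}$, and $V^{\perp} = J$ (as $J$ is closed), the desired inclusion is equivalent to $\mu\bigl((J^{\perp} \hat{\otimes}_2 J^{\perp})^{\perp}\bigr) \subseteq J$. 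But this last inclusion is precisely the criterion recalled in the Preliminaries for the closed subspace $J$ to be an ideal, and $J$, being the Jacobson radical, is indeed a closed ideal of $(H,\mu)$. This closes the argument.

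The only point requiring a little care --- and the step I would treat as the main, though modest, obstacle --- is matching the orthogonal complements correctly when reading off $V^{\perp}$ and $W^{\perp}$ in Lemma~\ref{lem:restriction}: one uses that $J$ is closed so that $J^{\perp\perp} = J$, together with $(J^{\perp} \hat{\otimes}_2 J^{\perp})^{\perp\perp} = J^{\perp} \hat{\otimes}_2 J^{\perp}$. It is worth emphasizing that this subcoalgebra half uses nothing beyond $J$ being a closed ideal, and hence holds for \emph{any} Hilbertian algebra; the Frobenius condition enters only through the subalgebra half inherited from Lemma~\ref{lem:Jperp_is_subcoalg}.
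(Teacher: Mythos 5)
Your proposal is correct and is essentially the paper's own route: the paper states this proposition without a separate proof, precisely as a summary of the preceding results, with the splitting $H=J\oplus_2 J^{\perp}$ being standard Hilbert-space theory, the subalgebra half being Lemma~\ref{lem:Jperp_is_subcoalg}, and the subcoalgebra half following from the duality of Lemma~\ref{lem:restriction} together with the fact that the closed ideal $J$ satisfies $\mu((J^{\perp}\hat{\otimes}_2 J^{\perp})^{\perp})\subseteq J$ (the same ideal/subcoalgebra duality the paper invokes elsewhere, e.g.\ in Corollary~\ref{cor:bij_ideals_subcoalg} via \cite[Lemmas~14 and~15]{Poinsot2019}). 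Your closing observation that the subcoalgebra half needs no Frobenius hypothesis, only that $J$ is a closed ideal, is also accurate and consistent with the paper.
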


\begin{corollary}\label{cor:stHFs0}
Let $(H,\mu)$ be a Hilbertian Frobenius semigroup. Under the  restriction of $\mu$, $J^{\perp}$ is a semisimple Hilbertian Frobenius semigroup. 
\end{corollary}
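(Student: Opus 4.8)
The plan is to verify that $(J^{\perp},\mu_{|_{J^{\perp}}})$ — which by Proposition~\ref{prop:stHFs} is a genuine Hilbertian algebra, being a closed subalgebra of $(H,\mu)$ — is at once Frobenius and semisimple. Everything rests on the fact, supplied again by Proposition~\ref{prop:stHFs}, that $J^{\perp}$ is \emph{simultaneously} a closed subalgebra and a closed subcoalgebra. This lets me invoke Lemma~\ref{lem:subalg_and_subcoalg}, which yields the two structural facts I will use repeatedly: the comultiplication of the restricted structure is again a restriction, $(\mu_{|_{J^{\perp}}})^{\dagger}=(\mu^{\dagger})_{|_{J^{\perp}}}$, and $G(J^{\perp},\mu_{|_{J^{\perp}}})=G(H,\mu)\cap J^{\perp}$.

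For semisimplicity I would first note that $G(H,\mu)\subseteq J^{\perp}$, since $J=G(H,\mu)^{\perp}$ forces $G(H,\mu)\subseteq(G(H,\mu)^{\perp})^{\perp}=J^{\perp}$ (equivalently $J^{\perp}=\overline{\langle G(H,\mu)\rangle}$). Hence the intersection above collapses to $G(J^{\perp},\mu_{|_{J^{\perp}}})=G(H,\mu)$. By the intrinsic description of the radical recalled in Section~\ref{sect:grouplike}, the radical of the restricted algebra is the orthogonal complement of its group-like set taken \emph{inside} $J^{\perp}$, namely $G(H,\mu)^{\perp}\cap J^{\perp}=J\cap J^{\perp}=(0)$; equivalently, $G(H,\mu)$ is an orthogonal basis of $J^{\perp}$ by Lemma~\ref{lem:orthogonal}, so no nonzero vector of $J^{\perp}$ is orthogonal to all of it. Thus $(J^{\perp},\mu_{|_{J^{\perp}}})$ is semisimple.

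The one point requiring genuine work — and where I expect the main, though still routine, effort to lie — is the Frobenius condition for the restricted structure. Because $J^{\perp}$ is a subalgebra one has $\mu\circ(i_{J^{\perp}}\hat{\otimes}_2 i_{J^{\perp}})=i_{J^{\perp}}\circ\mu_{|_{J^{\perp}}}$, because it is a subcoalgebra one has $\mu^{\dagger}\circ i_{J^{\perp}}=(i_{J^{\perp}}\hat{\otimes}_2 i_{J^{\perp}})\circ(\mu^{\dagger})_{|_{J^{\perp}}}$, and the associativity constraint $\alpha$ is natural with respect to $i_{J^{\perp}}$. Consequently every edge of one cell of Diag.~(\ref{diag:Frob}) written for $J^{\perp}$ is the restriction along $i_{J^{\perp}}$ of the corresponding edge of Diag.~(\ref{diag:Frob}) for $H$, so the commutativity of that cell for $(H,\mu)$ descends to its commutativity for $(J^{\perp},\mu_{|_{J^{\perp}}})$; by Remark~\ref{rem:left_or_right_Frob_no_matter} a single cell suffices.

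Alternatively, and more concretely, I could check the relation on the dense subspace $\langle G(H,\mu)\rangle\hat{\otimes}_2\langle G(H,\mu)\rangle$ and extend by continuity: for $x,y\in G(H,\mu)$, Lemma~\ref{lem:preidem} and Lemma~\ref{lem:preidem_2} give $\mu_{|_{J^{\perp}}}(x\otimes y)=\delta_{x,y}\|x\|^2 x$, while $(\mu^{\dagger})_{|_{J^{\perp}}}(x)=x\otimes x$ by the group-like property; substituting these into the two sides of a cell of Diag.~(\ref{diag:Frob}) makes both equal to $\delta_{x,y}\|x\|^2(x\otimes x)$, and continuity of $\mu$ and $\mu^{\dagger}$ propagates the identity to all of $J^{\perp}\hat{\otimes}_2 J^{\perp}$. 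Either route closes the argument, and neither presents a real obstacle once Proposition~\ref{prop:stHFs} and Lemma~\ref{lem:subalg_and_subcoalg} are in hand.
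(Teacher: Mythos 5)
Your proof is correct and is essentially the argument the paper intends: the paper states Corollary~\ref{cor:stHFs0} without proof, as an immediate consequence of Proposition~\ref{prop:stHFs} and Lemma~\ref{lem:subalg_and_subcoalg}, exactly mirroring the proof given for Corollary~\ref{cor1:more_precise} in the special case. Your two ingredients --- that $G(J^{\perp},\mu_{|_{J^{\perp}}})=G(H,\mu)$ forces the radical of the restriction to be $J\cap J^{\perp}=(0)$, and that the Frobenius cell for $J^{\perp}$ commutes because each of its edges is the (co)restriction along the isometric inclusion $i_{J^{\perp}}$ of the corresponding edge for $H$ (or, equivalently, your dense-span computation on group-like elements) --- are precisely the fleshed-out version of that omitted proof.
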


\subsection{Multiplication operators}

Let $(E,*)$ be a commutative Banach algebra. The {\em annihilator} $A(E,*)$ of $(E,*)$ is $\{\, u\in E\colon \forall v\in E,\ u*v=0\,\}$. For a Hilbertian algebra $(H,\mu)$ let $A(H,\mu):=A(H,\mu_{\mathsf{bil}})$. The following result is clear.
\begin{lemma}\label{lem:annihilator_included_in_jacobson}
$A(H,\mu)\subseteq J(H,\mu)$.
\end{lemma}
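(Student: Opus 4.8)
The plan is to invoke the characterization, recalled in the preliminaries (after \cite[Theorem~2.1.8]{Kaniuth}), of the Jacobson radical of a commutative Banach algebra as the intersection of the kernels of all its non-trivial characters; no Frobenius hypothesis is actually needed, the inclusion being a general fact about commutative Banach algebras. First I would fix $u\in A(H,\mu)$ and an arbitrary non-trivial character $\chi\in \mathsf{char}(H,\mu_{\mathsf{bil}})$. Since $\chi$ is multiplicative and $u$ annihilates every element, for each $v\in H$ one has $\chi(u)\chi(v)=\chi(uv)=\chi(0)=0$. Because $\chi$ is non-trivial there exists $v_0\in H$ with $\chi(v_0)\neq 0$, which forces $\chi(u)=0$. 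Hence $u\in\ker\chi$ for every non-trivial character $\chi$, so $u$ lies in their common intersection, which is $J(H,\mu)$.

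The only point deserving care is the degenerate case in which $(H,\mu)$ possesses no non-trivial character at all: then $(H,\mu)$ is radical, $J(H,\mu)=H$, and the asserted inclusion $A(H,\mu)\subseteq J(H,\mu)=H$ holds trivially, so the empty-intersection convention causes no difficulty. I do not expect any genuine obstacle here; the statement is elementary and the argument is a two-line computation once the character description of the radical is in place.

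Alternatively, and perhaps more in keeping with the machinery of this note, one can argue through the intrinsic description $J(H,\mu)=G(H,\mu)^{\perp}$. Given $u\in A(H,\mu)$ and $x\in G(H,\mu)$, the annihilator condition gives $ux=0$, whence $0=\langle ux,x\rangle=\langle u\otimes x,\mu^{\dagger}(x)\rangle$. Since $x$ is group-like, the Riesz correspondence $R$ identifies $\langle\cdot,x\rangle$ with a character, equivalently $\mu^{\dagger}(x)=x\otimes x$, so that $0=\langle u\otimes x,x\otimes x\rangle=\langle u,x\rangle\,\|x\|^{2}$. As $x\neq 0$ this yields $\langle u,x\rangle=0$, and since $x\in G(H,\mu)$ was arbitrary we conclude $u\in G(H,\mu)^{\perp}=J(H,\mu)$. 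Either route delivers the lemma, and I would present the character-based one as the main proof because it isolates the single algebraic fact that an annihilator element is killed by every multiplicative functional.
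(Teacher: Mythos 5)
Your proof is correct. In fact the paper supplies no argument at all for this lemma --- it is introduced with ``The following result is clear'' --- so your write-up fills in an omission rather than parallels an existing proof. Your main, character-based route is sound: it uses exactly the description of $J(H,\mu)$ as the intersection of the kernels of the non-trivial characters, recalled in the paper's preliminaries from \cite[Theorem~2.1.8, p.~49]{Kaniuth}, and your handling of the degenerate case where $(H,\mu)$ has no non-trivial character is the right precaution. Your second route is equally valid and closer in spirit to the paper's Hilbertian machinery, resting on $J(H,\mu)=G(H,\mu)^{\perp}$; note only that it requires reading ``group-like'' as $\mu^{\dagger}(x)=x\otimes x$ (as the introduction and every later proof in the paper do), not as the idempotency condition $\mu(x\otimes x)=x$ that the preliminaries literally print, which appears to be a typo. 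For completeness, the shortest argument, and probably the one the author regarded as clear, is that any $u\in A(H,\mu)$ satisfies $u^{2}=0$, hence is (quasi-)nilpotent, and in a commutative Banach algebra the Jacobson radical is precisely the set of quasi-nilpotent elements (\cite[Corollary~2.2.6, p.~55]{Kaniuth}, the very fact invoked later in the proof of Lemma~\ref{lem:mult_by_quasinil}); all three arguments are elementary and need no Frobenius hypothesis.
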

%

Let $(E,*)$ be a complex commutative Banach algebra. 
Let $M\colon E\to\mathcal{B}(E)$ be the regular representation of $(E,*)$, that is, $u\mapsto M_u$, where $M_u(v)=u*v$. One notices that $A(E,*)=\ker M$. 

{Let $(B,*)$ be a not necessarily commutative Banach algebra and let $u\in B$. $u$ is said to be a {\em quasi-nilpotent} element when its spectral radius is equal to zero, that is, when $\|u^n\|^{\frac{1}{n}}\to 0$ (\cite[p.~213]{Palmer1}). In general the Jacobson radical of $(B,*)$ is only contained into the set of all quasi-nilpotent elements but as soon as $(B,*)$ is commutative both sets are equal~\cite[Corollary~2.2.6, p.~55]{Kaniuth}. If $H$ is a Hilbert space a linear map $H\xrightarrow{f}H$ is referred to as a {\em quasi-nilpotent operator} when it is  a quasi-nilpotent element of $\mathcal{B}(H)$. 
\begin{lemma}\label{lem:mult_by_quasinil}
Let $u\in J(H,\mu)$. Then, $M_u$ is a quasi-nilpotent operator on $H$. In other words, $M$ maps the Jacobson radical of $(H,\mu)$ into the set of all quasi-nilpotent operators on $H$.
\end{lemma}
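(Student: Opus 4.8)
The plan is to exploit the fact that the regular representation $M\colon ((H,\|-\|'),\mu_{\mathsf{bil}})\to\mathcal{B}(H)$ is a bounded algebra homomorphism, together with the standard principle that a bounded homomorphism cannot increase spectral radii. First I would translate the hypothesis: by the characterization of the radical of a commutative Banach algebra recalled just above (\cite[Corollary~2.2.6, p.~55]{Kaniuth}), the assumption $u\in J(H,\mu)=J((H,\|-\|'),\mu_{\mathsf{bil}})$ says precisely that $u$ is a quasi-nilpotent element of the underlying Banach algebra, that is, $\|u^n\|'^{\frac{1}{n}}\to 0$. Since $\|-\|'$ and $\|-\|$ are equivalent (indeed $\|x\|'=\max\{\,1,\|\mu_{\mathsf{bil}}\|_{\mathsf{op}}\,\}\|x\|$), the constant relating them is killed upon extracting $n$th roots, so equivalently $\|u^n\|^{\frac{1}{n}}\to 0$; in other words the spectral radius of $u$ in $(H,\mu_{\mathsf{bil}})$ vanishes.

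Next I would use that $M$ is multiplicative. By associativity of $\mu$ one has $M_u^n(v)=u^n v$ for every $v\in H$ and every $n\geq 1$, hence $M_u^n=M_{u^n}$. Combined with the elementary bound $\|M_w\|_{\mathsf{op}}=\sup_{\|v\|\leq 1}\|\mu_{\mathsf{bil}}(w,v)\|\leq \|\mu_{\mathsf{bil}}\|_{\mathsf{op}}\|w\|$, valid for each $w\in H$, this gives $\|M_u^n\|_{\mathsf{op}}=\|M_{u^n}\|_{\mathsf{op}}\leq \|\mu_{\mathsf{bil}}\|_{\mathsf{op}}\|u^n\|$.

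Taking $n$th roots yields $\|M_u^n\|_{\mathsf{op}}^{\frac{1}{n}}\leq \|\mu_{\mathsf{bil}}\|_{\mathsf{op}}^{\frac{1}{n}}\,\|u^n\|^{\frac{1}{n}}$. Letting $n\to\infty$, the first factor tends to $1$ while the second tends to $0$ by the first paragraph, so the spectral radius $\lim_n\|M_u^n\|_{\mathsf{op}}^{\frac{1}{n}}$ of $M_u$ in $\mathcal{B}(H)$ equals $0$; that is, $M_u$ is a quasi-nilpotent operator on $H$, which is the claim.

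I do not expect a genuine obstacle here: the argument is the abstract fact that the continuous representation $M$ maps quasi-nilpotent elements to quasi-nilpotent operators. The only point deserving mild care is keeping the two notions of quasi-nilpotence distinct and correctly bridged — quasi-nilpotence of $u$ as an element of the abstract commutative Banach algebra $(H,\mu_{\mathsf{bil}})$ versus quasi-nilpotence of the operator $M_u\in\mathcal{B}(H)$ — which is exactly what the homomorphism bound $\|M_u^n\|_{\mathsf{op}}\leq\|\mu_{\mathsf{bil}}\|_{\mathsf{op}}\|u^n\|$ and the equivalence of $\|-\|$ with $\|-\|'$ accomplish.
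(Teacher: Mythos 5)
Your proposal is correct and follows essentially the same route as the paper's proof: both rest on the identification (via \cite[Corollary~2.2.6, p.~55]{Kaniuth}) of $J(H,\mu)$ with the quasi-nilpotent elements of the underlying Banach algebra $((H,\|-\|'),\mu_{\mathsf{bil}})$, together with the bound $\|M_u^n\|_{\mathsf{op}}\leq \|\mu_{\mathsf{bil}}\|_{\mathsf{op}}\|u^n\|$ and an $n$th-root passage to the limit. The only cosmetic difference is that the paper absorbs the constant by writing $\|\mu_{\mathsf{bil}}\|_{\mathsf{op}}\|u^n\|\leq\|u^n\|'$ and concludes directly in the $\|-\|'$ norm, whereas you first transfer quasi-nilpotence to $\|-\|$ by norm equivalence and let the factor $\|\mu_{\mathsf{bil}}\|_{\mathsf{op}}^{1/n}\to 1$; the two bookkeeping choices are interchangeable.
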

\begin{proof}
Let $v\in H$. Then, $\|M^n_u(v)\|=\|u^n v\|\leq \|\mu_{\mathsf{bil}}\|_{\mathsf{op}}\|u^n\|\|v\|\leq \|u^n\|'\|v\|$ so that $\|M^n_u\|_{\mathsf{op}}\leq \|u^n\|'$ for each $n\in\mathbb{N}\setminus\{\, 0\,\}$. Consequently, $\|M^n_u\|_{\mathsf{op}}^{\frac{1}{n}}\leq (\|u^n\|')^{\frac{1}{n}}\rightarrow 0$ as $u$ is a quasi-nilpotent element of the Banach algebra $((H,\|-\|'),\mu_{\mathsf{bil}})$ (\cite[Corollary~2.2.6, p.~55]{Kaniuth}).
\end{proof}

A bounded linear operator $H\xrightarrow{f}H$ over a Hilbert space $H$ is said to be {\em normal} when $f\circ f^{\dagger}=f^{\dagger}\circ f$. 
\begin{corollary}\label{cor:mult_by_quasinil}
Let $(H,\mu)$ be a Hilbertian algebra. Let $u\in J(H)$. $M_u$ is normal if, and only if, $u\in A(H,\mu)$. So $J({H},\mu)\cap\{\, u\in H\colon M_u\ \mbox{is normal}\,\}=A(H,\mu)$.
\end{corollary}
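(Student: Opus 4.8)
The plan is to prove the two implications separately, isolating the single non-routine ingredient in the forward direction. First I would dispose of the easy implication: if $u\in A(H,\mu)$, then $uv=0$ for every $v\in H$ by definition of the annihilator, so $M_u$ is the zero operator, which is trivially normal since it commutes with its adjoint $M_u^{\dagger}=0$. This implication does not even use the standing hypothesis $u\in J(H,\mu)$.

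For the converse, assume $u\in J(H,\mu)$ and that $M_u$ is normal. The crucial observation is that Lemma~\ref{lem:mult_by_quasinil} already guarantees $M_u$ to be quasi-nilpotent, i.e.\ its spectral radius vanishes: $r(M_u)=\lim_{n}\|M_u^n\|_{\mathsf{op}}^{1/n}=0$. I would then combine this with the classical fact that a normal operator $T$ on a Hilbert space satisfies $\|T\|_{\mathsf{op}}=r(T)$. The quickest route to the latter is the norm-doubling identity $\|T^2\|_{\mathsf{op}}=\|T\|_{\mathsf{op}}^2$ for normal $T$ (itself a consequence of the $C^*$-identity $\|S^{\dagger}S\|_{\mathsf{op}}=\|S\|_{\mathsf{op}}^2$ in $\mathcal{B}(H)$, using normality to rewrite $(T^{\dagger})^2T^2$ as $(T^{\dagger}T)^2$): by induction $\|T^{2^k}\|_{\mathsf{op}}=\|T\|_{\mathsf{op}}^{2^k}$, so evaluating Gelfand's formula along the subsequence $n=2^k$ gives $r(T)=\|T\|_{\mathsf{op}}$. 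Applying this to $T=M_u$ yields $\|M_u\|_{\mathsf{op}}=r(M_u)=0$, hence $M_u=0$; that is, $uv=0$ for all $v\in H$, which is exactly $u\in A(H,\mu)$.

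It then remains to read off the displayed equality. The equivalence just established shows that $J(H,\mu)\cap\{\,u\in H\colon M_u\ \mbox{is normal}\,\}$ equals $J(H,\mu)\cap A(H,\mu)$, and this in turn equals $A(H,\mu)$ because $A(H,\mu)\subseteq J(H,\mu)$ by Lemma~\ref{lem:annihilator_included_in_jacobson}.

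The only genuinely analytic step is the identity $\|T\|_{\mathsf{op}}=r(T)$ for normal operators, so I do not anticipate any real obstacle: it is a standard consequence of the $C^*$-structure of $\mathcal{B}(H)$, and the two cited lemmas supply everything else. The one point deserving a moment's care is simply to confirm that the spectral radius furnished by Lemma~\ref{lem:mult_by_quasinil} is computed for $M_u$ as an element of $\mathcal{B}(H)$ with the operator norm $\|-\|_{\mathsf{op}}$, which is exactly the norm for which the $C^*$-identity and the norm-doubling property hold.
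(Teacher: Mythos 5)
Your proposal is correct and follows exactly the same route as the paper's proof: the easy direction via the zero operator, then Lemma~\ref{lem:mult_by_quasinil} to get quasi-nilpotence, the identity $\|T\|_{\mathsf{op}}=r(T)$ for normal operators to conclude $M_u=0$, and Lemma~\ref{lem:annihilator_included_in_jacobson} for the displayed equality. The only difference is that you prove the normal-operator spectral radius identity inline from the $C^*$-identity, whereas the paper simply cites it.
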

\begin{proof}
The converse implication is clear since the zero operator is normal. So let us assume that $M_u$ is normal. By Lemma~\ref{lem:mult_by_quasinil}, $M_u$ is quasi-nilpotent, whence its spectral radius is equal to zero (\cite[p.~213]{Palmer1}). But for normal  operators the spectral radius coincides with the operator norm (\cite[II.1.6.3, p.~58]{Blackadar}). Whence $\|M_u\|_{\mathsf{op}}=0$, that is, $M_u=0$. 
The second statement follows from the first one and Lemma~\ref{lem:annihilator_included_in_jacobson}.
\end{proof}

\subsection{Frobenius algebras revisited: Hilbertian modules}\label{sec:frobalgrevisited}


Let $(H,\mu)$ be an object of $\mathbf{Sem}(\mathbb{Hilb})$ and let $K$ be a Hilbert space. Let $g\colon {H}\hat{\otimes}_2 {K}\to {K}$ be a bounded linear map such that the following diagram commutes. (The isomorphism arrow corresponds to the coherence constraint of associativity of $\mathbb{Hilb}$.)
\begin{equation}\label{diag:left_hilb_mod}
\xymatrix{
&{H}\hat{\otimes}_2({H}\hat{\otimes}_2 {K}) \ar[r]^-{id\hat{\otimes}_2 g}&{H}\hat{\otimes}_2{K}\ar[dd]^{g}\\
\eq[ur]({H}\hat{\otimes}_2{H})\hat{\otimes}_2{K} \ar[rd]_{\mu\hat{\otimes}_2 id}&\\
& {H}\hat{\otimes}_2{K} \ar[r]_{g}& {K} 
}
\end{equation}
The pair $({K},g)$ is referred to as a {\em (Hilbertian) left $(H,\mu)$-module} and $g$ is called the {\em left action} of $(H,\mu)$. The notion of a {\em (Hilbertian) right $(H,\mu)$-module} $(K,r)$, with $r\colon K\hat{\otimes}_2 H\to K$, is obtained by symmetry.    $r$ is the {\em right action} of $(H,\mu)$.
\begin{equation}\label{diag:right_hilb_mod}
\xymatrix{
&({K}\hat{\otimes}_2{H})\hat{\otimes}_2{H} \ar[r]^-{r\hat{\otimes}_2 id}&{K}\hat{\otimes}_2{H}\ar[dd]^{r}\\
\eq[ur]{K}\hat{\otimes}_2({H}\hat{\otimes}_2{H}) \ar[rd]_{id\hat{\otimes}_2 \mu}&\\
& {K}\hat{\otimes}_2{H} \ar[r]_{r}& {K} 
}
\end{equation}
Let a Hilbert space $K$ being both a left and a right $(H,\mu)$-module $(K,g)$ and $(K,r)$ on $(H,\mu)$. $(H,g,r)$ is said to be a {\em Hilbertian $(H,\mu)$-bimodule} when furthermore the following diagram commutes.
\begin{equation}\label{diag:bimod}
\xymatrix{
\eq[dd]_{\alpha_{H,K,H}}(H\hat{\otimes}_2 K)\hat{\otimes}_2 H \ar[r]^-{g\hat{\otimes}_2 id}& K\hat{\otimes}_2 H\ar[d]^{r}\\
&K&\\
H\hat{\otimes}_2(K\hat{\otimes}_2 H) \ar[r]_-{id\hat{\otimes}_2 r}&H\hat{\otimes}_2 K\ar[u]_{g}
}
\end{equation}
\begin{lemma}\label{lem:char_of_left_mod}
Let $(H,\mu)$ be an object of $\mathbf{Sem}(\mathbb{Hilb})$ and let $K$ be a Hilbert space. Let $g\colon H\hat{\otimes}_2 K\to K$ be a bounded linear map. $(K,g)$ is a left $(H,\mu)$-module if, and only if, for each $x,y\in H$, $z\in K$, $g_{\mathsf{bil}}(x,g_{\mathsf{bil}}(y,z))=g_{\mathsf{bil}}(xy,z)$. 
\end{lemma}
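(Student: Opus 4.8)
The plan is to recognize that the commutativity of Diag.~(\ref{diag:left_hilb_mod}) is nothing but the equality of two bounded linear maps $(H\hat{\otimes}_2 H)\hat{\otimes}_2 K\to K$, and then to test this equality on the dense subspace of elementary tensors, exactly as in the proof of the first point of Lemma~\ref{lem:semigroupiso}.

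First I would make the two composites explicit on an elementary tensor $(x\otimes y)\otimes z$, $x,y\in H$, $z\in K$. Tracing the lower path ($\mu\hat{\otimes}_2 id$ followed by $g$) yields $g(\mu(x\otimes y)\otimes z)=g_{\mathsf{bil}}(xy,z)$. Tracing the upper path requires first applying the associativity constraint $\alpha_{H,H,K}$, which sends $(x\otimes y)\otimes z\mapsto x\otimes(y\otimes z)$, then $id\hat{\otimes}_2 g$, and finally $g$; this yields $g(x\otimes g(y\otimes z))=g_{\mathsf{bil}}(x,g_{\mathsf{bil}}(y,z))$. Thus the diagram asserts precisely that these two bounded linear maps agree.

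Then I would invoke density: the algebraic tensor product $H\otimes_{\mathbb{C}}H\otimes_{\mathbb{C}}K$, that is, the linear span of the elementary tensors $(x\otimes y)\otimes z$, is dense in $(H\hat{\otimes}_2 H)\hat{\otimes}_2 K$. Since both composites are continuous, they coincide on all of $(H\hat{\otimes}_2 H)\hat{\otimes}_2 K$ if, and only if, they coincide on this dense span, and by (bi)linearity it suffices to check equality on the generating elementary tensors. This establishes both implications of the equivalence simultaneously.

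The only point requiring care, rather than a genuine obstacle, is the explicit action of the associativity constraint on elementary tensors; but since $\alpha_{H,H,K}$ is the canonical unitary of $\mathbb{Hilb}$, it maps $(x\otimes y)\otimes z$ to $x\otimes(y\otimes z)$ by its very definition, so the computation above is unambiguous.
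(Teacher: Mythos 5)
Your proof is correct: the paper states this lemma without proof, treating it as routine, and your argument---computing both composites of Diag.~(\ref{diag:left_hilb_mod}) on elementary tensors $(x\otimes y)\otimes z$ and then invoking density of the algebraic tensor product together with continuity of both bounded maps---is exactly the technique the paper uses in the analogous proofs of Lemma~\ref{lem:semigroupiso} (first point) and Lemma~\ref{lem:char_of_leftlinearmap}. Nothing is missing.
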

%

By symmetry, 
\begin{lemma}\label{lem:char_of_right_mod}
Let $(H,\mu)$ be an object of $\mathbf{Sem}(\mathbb{Hilb})$ and let $K$ be a Hilbert space. Let $r\colon K\hat{\otimes}_2 H\to K$ be a bounded linear map. $(K,r)$ is a right $(H,\mu)$-module over $(H,\mu)$ if, and only if, for each $x\in K$, $y,z\in H$, $r_{\mathsf{bil}}(r_{\mathsf{bil}}(x,y),z)=r_{\mathsf{bil}}(x,yz)$. 
\end{lemma}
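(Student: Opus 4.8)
The plan is to translate the commutativity of Diag.~(\ref{diag:right_hilb_mod}) into the stated bilinear identity by testing both composites on elementary tensors, mirroring the (elided) proof of Lemma~\ref{lem:char_of_left_mod}. First I would observe that the two legs of the square are bounded linear maps $K\hat{\otimes}_2(H\hat{\otimes}_2 H)\to K$, namely $r\circ(r\hat{\otimes}_2 id)\circ\alpha_{K,H,H}^{-1}$ along the top and $r\circ(id\hat{\otimes}_2\mu)$ along the bottom. Since the algebraic tensor product $K\otimes_{\mathbb{C}}(H\otimes_{\mathbb{C}}H)$ is dense in $K\hat{\otimes}_2(H\hat{\otimes}_2 H)$ and both composites are continuous, the square commutes if, and only if, the two composites agree on elementary tensors $x\otimes(y\otimes z)$ with $x\in K$ and $y,z\in H$; such tensors span a dense subspace.

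Next I would compute each leg on such a tensor. Using that $\alpha_{K,H,H}^{-1}(x\otimes(y\otimes z))=(x\otimes y)\otimes z$, that $(r\hat{\otimes}_2 id)((x\otimes y)\otimes z)=r_{\mathsf{bil}}(x,y)\otimes z$, and that $r(a\otimes b)=r_{\mathsf{bil}}(a,b)$, the top leg evaluates to $r_{\mathsf{bil}}(r_{\mathsf{bil}}(x,y),z)$. Likewise, since $(id\hat{\otimes}_2\mu)(x\otimes(y\otimes z))=x\otimes\mu(y\otimes z)=x\otimes yz$, the bottom leg evaluates to $r_{\mathsf{bil}}(x,yz)$. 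Hence the two legs agree on all elementary tensors precisely when $r_{\mathsf{bil}}(r_{\mathsf{bil}}(x,y),z)=r_{\mathsf{bil}}(x,yz)$ for all $x\in K$, $y,z\in H$, which is the asserted equivalence.

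Alternatively, one may render the phrase ``by symmetry'' literal: setting $g:=r\circ\sigma_{H,K}$, so that $g_{\mathsf{bil}}(h,k)=r_{\mathsf{bil}}(k,h)$, turns $(K,r)$ into a left module over the opposite semigroup $(H,\mu\circ\sigma_{H,H})$, which is still an object of $\mathbf{Sem}(\mathbb{Hilb})$, and Lemma~\ref{lem:char_of_left_mod} applied to this left action yields the claim after the obvious relabelling of variables. I expect the only point requiring care — rather than a genuine obstacle — to be bookkeeping the direction of the associativity constraint $\alpha_{K,H,H}^{-1}$ and its effect on elementary tensors, together with, in the symmetry route, the coherence interaction of $\sigma$ and $\alpha$. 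Since the density reduction makes the direct computation entirely routine while sidestepping that coherence bookkeeping, I would present the first argument above as the main proof.
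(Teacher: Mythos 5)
Your proof is correct and matches the paper's (essentially elided) argument: the paper states this lemma with no proof beyond the phrase ``By symmetry,'' and Lemma~\ref{lem:char_of_left_mod} itself is left as the routine density-plus-elementary-tensor verification that your first paragraph carries out, while your opposite-semigroup reduction via $\sigma_{H,K}$ makes the paper's ``by symmetry'' literal. Both routes are valid, and the density reduction from $K\otimes_{\mathbb{C}}(H\otimes_{\mathbb{C}}H)$ to $K\hat{\otimes}_2(H\hat{\otimes}_2 H)$ together with the evaluation of the two legs on elementary tensors is exactly the intended content.
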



Given  left $(H,\mu)$-modules $({K}_i,g_i)$, $i=1,2$, a bounded linear map ${K}_1\xrightarrow{f}{K}_2$ is a {\em left $(H,\mu)$-module map} or is said to be {\em left $(H,\mu)$-linear}, or simply {\em left linear} when the following diagram commutes.
\begin{equation}
\xymatrix{
{H}\hat{\otimes}_2{K}_1\ar[r]^{id\hat{\otimes}_2 f}\ar[d]_{g_1} & {H}\hat{\otimes}_2{K}_2\ar[d]^{g_2}\\
{K}_1 \ar[r]_f& {K}_2
}
\end{equation} 
By symmetry {\em left $(H,\mu)$-module} (or {\em left $(H,\mu)$-linear}) maps are obtained. 

\begin{example}\label{ex:module_structures}
Let $(H,\mu)$ be an object of ${}_c\mathbf{Sem}(\mathbb{Hilb})$. 
\begin{enumerate}
\item\label{item:1:ex:module_structures} ${H}$ is itself a left and right $(H,\mu)$-module under $g=\mu=r$, by associativity of $\mu$. Of course, $(H,\mu,\mu)$ thus is a bimodule over itself by associativity of $\mu$. 
\item\label{item:2:ex:module_structures} $H\hat{\otimes}_2H$ is a left $(H,\mu)$-module   under $H\hat{\otimes}_2({H}\hat{\otimes}_2{H})\xrightarrow{\alpha_{H,H,H}^{-1}}({H}\hat{\otimes}_2{H})\hat{\otimes}_2{H}\xrightarrow{\mu\hat{\otimes}_2 id}{H}\hat{\otimes}_2{H}$. It is also a right $(H,\mu)$-module with right action $({H}\hat{\otimes}_2{H})\hat{\otimes}_2{H}\xrightarrow{\alpha_{H,H,H}} {H}\hat{\otimes}_2({H}\hat{\otimes}_2{H})\xrightarrow{id\hat{\otimes}_2 \mu}{H}\hat{\otimes}_2{H}$, where one recalls that  $\alpha_{H,K,L}\colon (H\hat{\otimes}_2 K)\hat{\otimes}_2 L\simeq H\hat{\otimes}_2(K\hat{\otimes}_2 L)$ is the coherence constraint of associativity. Observe that $H\hat{\otimes}_2 H$ with the left and the right actions of $(H,\mu)$ as above, is a Hilbertian bimodule.
\end{enumerate}
In what follows, one tacitly assumes that ${H}$ and ${H}\hat{\otimes}_2{H}$ both have the above left or right module structures. 
\end{example}

\begin{remark}
It is clear that in view of Remark~\ref{rem:left_or_right_Frob_no_matter}, a  commutative Hilbertian algebra $(H,\mu)$ is Frobenius  if, and only if, $\mu^{\dagger}\colon H\to H\hat{\otimes}_2 H$ is either left or right $(H,\mu)$-linear. 
\end{remark}

Let us introduce the following notations. Let $H,K,L$ be Hilbert spaces. Let $\gamma\colon H\times K\to L$ be a bounded bililnear map.  One may define $H\xrightarrow{{\gamma_{\mathsf{left}}}}\mathcal{B}(K,L)$ and $K\xrightarrow{\gamma_{\mathsf{right}}}\mathcal{B}(H,L)$ by setting $({\gamma_{\mathsf{left}}}(x))(y):=\gamma(x,y)=:(\gamma_{\mathsf{right}}(y))(x)$, $x\in H$, $y\in K$. When $\gamma\colon H\hat{\otimes}_2 K\to L$ is a bounded linear map, or equivalently when $\gamma_{\mathsf{bil}}\colon H\times K\to L$ is a weak Hilbert-Schmidt map, then one also defines ${\gamma_{\mathsf{left}}}:=(\gamma_{\mathsf{bil}})_{\mathsf{left}}$ and $\gamma_{\mathsf{right}}:=(\gamma_{\mathsf{bil}})_{\mathsf{right}}$. 
\begin{lemma}
 Let $H,K,L$ be  Let $H,K,L$ be Hilbert spaces. Let $\gamma\colon H\times K\to L$ be a bounded bililnear map.  Then, ${\gamma_{\mathsf{left}}}$ and $\gamma_{\mathsf{right}}$ are bounded linear maps. 
\end{lemma}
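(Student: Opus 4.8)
The plan is to extract the single boundedness constant supplied by $\gamma$ and then verify, one at a time, the three things the statement packages together: that $\gamma_{\mathsf{left}}(x)$ genuinely lies in $\mathcal{B}(K,L)$ for every $x\in H$, that $\gamma_{\mathsf{left}}$ is linear, and that it is bounded. Since $\gamma\colon H\times K\to L$ is a bounded bilinear map I would set $M:=\|\gamma\|_{\mathsf{op}}$, so that $\|\gamma(x,y)\|\leq M\|x\|\|y\|$ for all $x\in H$ and $y\in K$. For a fixed $x\in H$, the map $\gamma_{\mathsf{left}}(x)\colon K\to L$, $y\mapsto\gamma(x,y)$, is linear by linearity of $\gamma$ in its second variable, and the estimate $\|(\gamma_{\mathsf{left}}(x))(y)\|=\|\gamma(x,y)\|\leq M\|x\|\|y\|$ shows it is bounded with $\|\gamma_{\mathsf{left}}(x)\|_{\mathsf{op}}\leq M\|x\|$. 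Hence $\gamma_{\mathsf{left}}(x)\in\mathcal{B}(K,L)$, and the assignment $\gamma_{\mathsf{left}}\colon H\to\mathcal{B}(K,L)$ is well defined.

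Next I would check that $\gamma_{\mathsf{left}}$ is itself linear. For $x_1,x_2\in H$, $\lambda\in\mathbb{C}$ and any $y\in K$, linearity of $\gamma$ in its first variable gives $(\gamma_{\mathsf{left}}(x_1+\lambda x_2))(y)=\gamma(x_1+\lambda x_2,y)=\gamma(x_1,y)+\lambda\gamma(x_2,y)=(\gamma_{\mathsf{left}}(x_1)+\lambda\gamma_{\mathsf{left}}(x_2))(y)$; since $y$ is arbitrary, $\gamma_{\mathsf{left}}(x_1+\lambda x_2)=\gamma_{\mathsf{left}}(x_1)+\lambda\gamma_{\mathsf{left}}(x_2)$ as elements of $\mathcal{B}(K,L)$. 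Boundedness of $\gamma_{\mathsf{left}}$ then drops out of the estimate already obtained: $\|\gamma_{\mathsf{left}}(x)\|_{\mathsf{op}}\leq M\|x\|$ yields $\|\gamma_{\mathsf{left}}\|_{\mathsf{op}}\leq M=\|\gamma\|_{\mathsf{op}}$.

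The assertion for $\gamma_{\mathsf{right}}$ requires no new idea: interchanging the roles of $H$ and $K$ and of the two slots of $\gamma$ reproduces the same argument verbatim, with $(\gamma_{\mathsf{right}}(y))(x)=\gamma(x,y)$ bounded by $M\|x\|\|y\|$, so $\|\gamma_{\mathsf{right}}\|_{\mathsf{op}}\leq M$ as well. I do not expect any genuine obstacle here, since the entire content is the one estimate $\|\gamma(x,y)\|\leq\|\gamma\|_{\mathsf{op}}\|x\|\|y\|$ coming from bounded bilinearity. The only point that warrants care is keeping the two operator norms in play distinct, namely that of $\gamma$ as a bilinear map and the norm on $\mathcal{B}(K,L)$ measuring $\gamma_{\mathsf{left}}(x)$, which the inequality $\|\gamma_{\mathsf{left}}(x)\|_{\mathsf{op}}\leq\|\gamma\|_{\mathsf{op}}\|x\|$ relates cleanly.
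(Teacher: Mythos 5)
Your proof is correct and is exactly the routine verification the paper has in mind---indeed the paper states this lemma without any proof, treating it as immediate from the single estimate $\|\gamma(x,y)\|\leq\|\gamma\|_{\mathsf{op}}\|x\|\|y\|$. Your argument, including the intermediate bound $\|\gamma_{\mathsf{left}}(x)\|_{\mathsf{op}}\leq\|\gamma\|_{\mathsf{op}}\|x\|$ and the symmetric treatment of $\gamma_{\mathsf{right}}$, supplies precisely the omitted details.
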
 

\begin{example}\label{ex:module_structures2} 
Let $({H},\mu)$ be an object of ${}_c\mathbf{Sem}(\mathbb{Hilb})$. 
\begin{enumerate}
\item\label{item:1:ex:module_structures2}  For the structure of left or right module over $({H},\mu)$, under $\lambda=\mu=\rho$, one has ${\mu}_{\mathsf{left}}=M=\mu_{\mathsf{right}}$ (by commutativity). 

\item\label{item:2:ex:module_structures2} For the structure $g\colon H\hat{\otimes}_2 (H\hat{\otimes}_2 H)\to H\hat{\otimes}_2 H$ of left  $({H},\mu)$-module on ${H}\hat{\otimes}_2{H}$ from Example~\ref{ex:module_structures}.\ref{item:1:ex:module_structures}, one has ${g}_{\mathsf{left}}(u)(v\otimes w)=(\mu\hat{\otimes}_2 id)((u\otimes v)\otimes w)=\mu(u\otimes v)\otimes w=M_u(v)\otimes w$, $u,v,w\in H$, so that ${g}_{\mathsf{left}}(u)=M_u\hat{\otimes}_2 id$ on $H\otimes_\mathbb{C} H$. As ${g}_{\mathsf{left}}(u)$ and $M_u\hat{\otimes}_2 id$ are both linear and  continuous, and $H\otimes_{\mathbb{C}}H$ is dense in $H\hat{\otimes}_2 H$, these maps are equal on the whole $H\hat{\otimes}_2 H$.

For the structure $r\colon ({H}\hat{\otimes}_2{H})\hat{\otimes}_2{H}\to {H}\hat{\otimes}_2{H}$ of right $({H},\mu)$-module   on ${H}\hat{\otimes}_2{H}$ also  from Example~\ref{ex:module_structures}.\ref{item:1:ex:module_structures},  one has $r_{\mathsf{right}}(u)(v\otimes w)=(id\hat{\otimes}_2 \mu)(v\otimes (w\otimes u))=v\otimes \mu(w\otimes u)=w\otimes \mu(u\otimes w)=w\otimes M_u(v)$ by commutativity of $\mu$. Therefore, $r_{\mathsf{right}}(u)=id\hat{\otimes}_2 M_u$.  
\end{enumerate}
\end{example}

\begin{lemma}\label{lem:char_of_leftlinearmap}
Let $({K},g),(K,g')$ be Hilbertian left $(H,\mu)$-modules, and let $K\xrightarrow{f}K'$ be a bounded linear map. It is left $(H,\mu)$-linear if, and only if, for each $u\in {H}$, 
$g'_{\mathsf{left}}(u)\circ f=f\circ g_{\mathsf{left}}(u)$.
\end{lemma}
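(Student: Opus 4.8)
The plan is to unpack both conditions on elementary tensors and then close the gap by the density argument used already in the first point of Lemma~\ref{lem:semigroupiso}. First I would record what each side means. By the defining diagram of a left module map, $f$ is left $(H,\mu)$-linear precisely when the two bounded linear maps $f\circ g$ and $g'\circ(id\hat{\otimes}_2 f)$ from $H\hat{\otimes}_2 K$ to $K'$ coincide. On the other hand, since by definition $g_{\mathsf{left}}(u)(y)=g(u\otimes y)$ and $g'_{\mathsf{left}}(u)(z)=g'(u\otimes z)$, the pointwise identity $g'_{\mathsf{left}}(u)\circ f=f\circ g_{\mathsf{left}}(u)$ is, after evaluation at an arbitrary $y\in K$, nothing but $g'(u\otimes f(y))=f(g(u\otimes y))$ for all $u\in H$ and $y\in K$.

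For the forward implication I would simply evaluate the operator equality $f\circ g=g'\circ(id\hat{\otimes}_2 f)$ at an elementary tensor $u\otimes y$, obtaining $f(g(u\otimes y))=g'((id\hat{\otimes}_2 f)(u\otimes y))=g'(u\otimes f(y))$; since $y$ is arbitrary this is exactly the operator identity $g'_{\mathsf{left}}(u)\circ f=f\circ g_{\mathsf{left}}(u)$, for every $u\in H$.

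For the converse, the hypothesis, read on elementary tensors, says that $f\circ g$ and $g'\circ(id\hat{\otimes}_2 f)$ agree on every $u\otimes y$, hence on the algebraic tensor product $H\otimes_{\mathbb{C}} K$. As this is dense in $H\hat{\otimes}_2 K$ and both maps are continuous, they agree on all of $H\hat{\otimes}_2 K$, which is the left $(H,\mu)$-linearity of $f$.

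I expect no real obstacle: the whole argument reduces to evaluation on elementary tensors together with the density-plus-continuity principle that recurs throughout the preliminaries. The only matter deserving a word is that $g_{\mathsf{left}}(u)$ and $g'_{\mathsf{left}}(u)$ are genuine bounded operators so that the composites make sense, but this is already guaranteed by the boundedness of $(-)_{\mathsf{left}}$ noted just before Example~\ref{ex:module_structures2}, $g$ and $g'$ being bounded linear maps out of Hilbertian tensor products and hence having weak Hilbert--Schmidt bilinear counterparts.
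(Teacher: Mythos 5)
Your proof is correct and follows essentially the same route as the paper's: evaluation of both sides on elementary tensors $u\otimes v$ for the forward direction, and density of $H\otimes_{\mathbb{C}}K$ in $H\hat{\otimes}_2 K$ together with continuity for the converse. The remark about boundedness of $g_{\mathsf{left}}(u)$ and $g'_{\mathsf{left}}(u)$ is a harmless extra precaution not spelled out in the paper.
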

\begin{proof}
Let $u\in{H}$ and $v\in {K}$. One has $f(g(u\otimes v))=
f(g_{\mathsf{left}}(u)(v))$ while $g'((id\hat{\otimes}_2 f)(u\otimes v))=g'(u\otimes f(v))={g}'_{\mathsf{left}}(u)(f(v))$. It is thus clear that if $f$ is left linear, then ${g}'_{\mathsf{left}}(u)\circ f=f\circ {g}_{\mathsf{left}}(u)$ for all $u$. Conversely, let us assume that ${g}'_{\mathsf{left}}(u)\circ f=f\circ {g}_{\mathsf{left}}(u)$ for all $u$. By the above, $f\circ g=g'\circ (id\hat{\otimes}_2 f)$ on $H\otimes_{\mathbb{C}} K$. By linearity and continuity, since ${H}\otimes_{\mathbb{C}}{K}$ is dense in ${H}\hat{\otimes}_2{K}$, the equality holds on ${H}\hat{\otimes}_2{K}$, so $f$ is left linear.
\end{proof}

By symmetry, one has
\begin{lemma}\label{lem:char_of_rightlinearmap}
Let $(K,r),(K',r')$ be Hilbertian right $(H,\mu)$-modules, and let ${K}\xrightarrow{f}{K}'$ be a bounded linear map. It is right $(H,\mu)$-linear if, and only if, for each $u\in {H}$, 
$r'_{\mathsf{right}}(u)\circ f=f\circ r_{\mathsf{right}}(u)$.
\end{lemma}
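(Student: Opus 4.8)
The statement to prove (Lemma~\ref{lem:char_of_rightlinearmap}) is the right-handed analogue of the preceding Lemma~\ref{lem:char_of_leftlinearmap}, whose proof is already displayed in full. The plan is to obtain it \emph{by symmetry}, mirroring that argument line by line with the roles of left and right actions interchanged; the substance of the work is purely bookkeeping, and there is no genuine obstacle beyond getting the tensor factors on the correct side.

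Concretely, I would unwind the defining diagram of a right $(H,\mu)$-module map. For $v\in K$ and $u\in H$, the map $f$ is right $(H,\mu)$-linear exactly when $f\circ r=r'\circ(f\hat{\otimes}_2 id)$ as maps $K\hat{\otimes}_2 H\to K'$. Evaluating on an elementary tensor $v\otimes u$, the left side reads $f(r(v\otimes u))=f(r_{\mathsf{right}}(u)(v))$, using the notation $r_{\mathsf{right}}(u)(v)=r_{\mathsf{bil}}(v,u)$ introduced just before the previous lemma; the right side reads $r'((f\hat{\otimes}_2 id)(v\otimes u))=r'(f(v)\otimes u)=r'_{\mathsf{right}}(u)(f(v))$. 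Thus on elementary tensors the two conditions $f\circ r=r'\circ(f\hat{\otimes}_2 id)$ and $r'_{\mathsf{right}}(u)\circ f=f\circ r_{\mathsf{right}}(u)$ (for every $u$) coincide.

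It then remains to upgrade the equality from elementary tensors to all of $K\hat{\otimes}_2 H$. For the forward implication this is immediate: right $(H,\mu)$-linearity means $f\circ r=r'\circ(f\hat{\otimes}_2 id)$ holds on the whole space, hence in particular on $K\otimes_{\mathbb{C}}H$, which by the computation above yields $r'_{\mathsf{right}}(u)\circ f=f\circ r_{\mathsf{right}}(u)$ for each $u$. For the converse, assuming the operator identity for all $u$, the two bounded linear maps $f\circ r$ and $r'\circ(f\hat{\otimes}_2 id)$ agree on the algebraic tensor product $K\otimes_{\mathbb{C}}H$; since $K\otimes_{\mathbb{C}}H$ is dense in $K\hat{\otimes}_2 H$ and both maps are continuous, they agree everywhere, so $f$ is right $(H,\mu)$-linear. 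This density-and-continuity closure is exactly the same device used at the end of the proof of Lemma~\ref{lem:char_of_leftlinearmap}, so in the write-up I would simply invoke symmetry with that lemma rather than repeat the argument verbatim.
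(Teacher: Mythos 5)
Your proposal is correct and matches the paper's approach: the paper proves the right-handed lemma simply ``by symmetry'' from Lemma~\ref{lem:char_of_leftlinearmap}, and your argument is exactly that symmetric version spelled out (evaluate on elementary tensors, then use density of $K\otimes_{\mathbb{C}}H$ in $K\hat{\otimes}_2 H$ and continuity). No gaps.
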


\begin{proposition}\label{prop:normality}
Let $({H},\mu)$ be a Hilbertian Frobenius algebra. Then, for each $u\in {H}$, $M_u$ is normal. In particular, $J(H,\mu)=A(H,\mu)$.
\end{proposition}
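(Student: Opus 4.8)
The statement has two parts, and the second is a quick corollary of the first. Indeed, once I know that every $M_u$ is normal, Lemma~\ref{lem:annihilator_included_in_jacobson} gives $A(H,\mu)\subseteq J(H,\mu)$, while Corollary~\ref{cor:mult_by_quasinil} — which identifies $J(H,\mu)\cap\{\,u\colon M_u\text{ normal}\,\}$ with $A(H,\mu)$ — forces the reverse inclusion, whence $J(H,\mu)=A(H,\mu)$. So the whole content lies in the normality of the multiplication operators, and that is what I would target first.

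The plan for normality is \emph{not} to attack $M_uM_u^{\dagger}$ head-on: writing $M_u^{\dagger}=(\langle u|\hat{\otimes}_2\mathrm{id})\mu^{\dagger}$ and trying to reduce $\mu\circ(|u\rangle\langle u|\hat{\otimes}_2\mathrm{id})\circ\mu^{\dagger}$ leads nowhere. Instead I would extract a single clean operator identity from the Frobenius hypothesis and then evaluate it at a well-chosen elementary tensor. Concretely, the commutative Frobenius condition says that $\mu^{\dagger}$ is a bimodule map; in particular it is right $(H,\mu)$-linear between the right-module structures of Example~\ref{ex:module_structures}. By Lemma~\ref{lem:char_of_rightlinearmap}, together with the explicit computations $\mu_{\mathsf{right}}(u)=M_u$ and $r_{\mathsf{right}}(u)=\mathrm{id}\hat{\otimes}_2 M_u$ of Example~\ref{ex:module_structures2}, this linearity reads
\[(\mathrm{id}\hat{\otimes}_2 M_u)\circ\mu^{\dagger}=\mu^{\dagger}\circ M_u,\qquad u\in H.\]

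Taking Hilbert adjoints of this identity, using $(\mathrm{id}\hat{\otimes}_2 M_u)^{\dagger}=\mathrm{id}\hat{\otimes}_2 M_u^{\dagger}$ and $(\mu^{\dagger})^{\dagger}=\mu$, yields
\[M_u^{\dagger}\circ\mu=\mu\circ(\mathrm{id}\hat{\otimes}_2 M_u^{\dagger}).\]
Now I would evaluate both sides at the elementary tensor $u\otimes v$, for arbitrary $v\in H$: the left-hand side becomes $M_u^{\dagger}(\mu(u\otimes v))=M_u^{\dagger}(M_u v)$, while the right-hand side becomes $\mu(u\otimes M_u^{\dagger}v)=M_u(M_u^{\dagger}v)$. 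Hence $M_u^{\dagger}M_u v=M_uM_u^{\dagger}v$ for every $v$, i.e. $M_u$ is normal, and the second assertion then follows as explained above.

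The only real difficulty I anticipate is bookkeeping: matching the source and target right-module structures so that right-linearity of $\mu^{\dagger}$ produces \emph{exactly} $(\mathrm{id}\hat{\otimes}_2 M_u)\circ\mu^{\dagger}=\mu^{\dagger}\circ M_u$ (it is commutativity that makes $\mu_{\mathsf{right}}(u)=M_u$), and then tracking which tensor factor $M_u^{\dagger}$ acts on after the adjoint is taken. The genuinely clever move — the reason the argument is short rather than computational — is feeding the adjoint identity the tensor $u\otimes v$ with the \emph{same} $u$, which collapses the two composites precisely into $M_u^{\dagger}M_u$ and $M_uM_u^{\dagger}$. As a sanity check, the symmetric computation from left-linearity instead gives $M_u^{\dagger}M_u=M_{M_u^{\dagger}u}$, consistent with normality but not needed for the argument.
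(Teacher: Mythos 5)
Your proof is correct and takes essentially the same route as the paper's: both arguments extract the identity $(\mathrm{id}\hat{\otimes}_2 M_u)\circ\mu^{\dagger}=\mu^{\dagger}\circ M_u$ from the Frobenius condition (via Lemma~\ref{lem:char_of_rightlinearmap} and Example~\ref{ex:module_structures2}), use adjunction to identify $M_uM_u^{\dagger}$ with $M_u^{\dagger}M_u$, and then deduce $J(H,\mu)=A(H,\mu)$ from Corollary~\ref{cor:mult_by_quasinil} together with Lemma~\ref{lem:annihilator_included_in_jacobson}. The only difference is presentational: the paper runs the computation as an inner-product chain $\langle M_uM_u^{\dagger}(v),w\rangle=\langle uv,uw\rangle=\langle M_u^{\dagger}M_u(v),w\rangle$, whereas you take the Hilbert adjoint of the operator identity and evaluate it at the elementary tensor $u\otimes v$.
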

\begin{proof}
Let $u,v,w\in \mathcal{H}$. Then, 
\begin{equation}
\begin{array}{lll}
\langle M_u^{\dagger}(M_u(v)),w\rangle&=&\langle M_u(v),M_u(w)\rangle\\
&=&\langle uv,uw\rangle.
\end{array}
\end{equation}
Now let us assume that $\mu^{\dagger}$ is right linear. One  has 
\begin{equation}
\begin{array}{lll}
\langle M_u(M_u^{\dagger}(v)),w\rangle&=&\langle M_u^{\dagger}(v),M_u^{\dagger}(w)\rangle\\
&=&\langle v,M_u(M_u^{\dagger}(w))\rangle\\
&=&\langle v,u M_u^{\dagger}(w)\rangle\\
&=&\langle\mu^{\dagger}(v),u\otimes M_u^{\dagger}(w)\rangle\\
&=&\langle (id\hat{\otimes}_2M_u)(\mu^{\dagger}(v)),u\otimes w\rangle\\
&=&\langle \mu^{\dagger}(uv),u\otimes w\rangle\\
&&\mbox{(according to Lemma~\ref{lem:char_of_leftlinearmap})}\\
&=&\langle uv,uw\rangle.
\end{array}
\end{equation}

The case of left linearity would be  treated similarly  using commutativity of $\mu$. The last statement is a direct consequence of Corollary~\ref{cor:mult_by_quasinil}.  
\end{proof}

We are now in position to state the following structure theorem which completes Proposition~\ref{prop:stHFs} and a corollary that extends Corollary~\ref{cor:stHFs0}.
\begin{theorem}\label{thm:stHFs}(Structure Theorem for Hilbertian Frobenius Semigroups)
Let $(H,\mu)$ be a commutative Hilbertian Frobenius semigroup. Then, ${H}=J{\oplus}_2 J^{\perp}$ (orthogonal direct sum of Hilbert spaces), $J^{\perp}$ is both a closed subalgebra and subcoalgebra of $({H},\mu)$, and $J=A(H,\mu)$ is also both a closed subalgebra and subcoalgebra. In particular, $J$ and $J^{\perp}$ are ideals. 
\end{theorem}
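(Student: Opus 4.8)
The plan is to assemble Theorem~\ref{thm:stHFs} from the pieces already established in the preceding subsections, so the proof should be quite short. The orthogonal decomposition $H = J \oplus_2 J^{\perp}$ is automatic since $J = J(H,\mu)$ is a closed subspace of the Hilbert space $H$ (being a Jacobson radical of a Banach algebra, it is closed by the results recalled in the Semisimplicity subsection). The fact that $J^{\perp}$ is both a closed subalgebra and a closed subcoalgebra is precisely the content of Proposition~\ref{prop:stHFs}, which I would simply invoke.

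First I would invoke Proposition~\ref{prop:normality} to replace $J$ by the annihilator: it gives $J(H,\mu) = A(H,\mu)$. This identification is the real upgrade of Theorem~\ref{thm:stHFs} over Proposition~\ref{prop:stHFs}, and it is what makes the \emph{symmetric} conclusion (that $J$ itself, not merely $J^{\perp}$, is a subalgebra and subcoalgebra) accessible. Concretely, $A(H,\mu)$ is a subalgebra in a trivial way: for $u,v \in A(H,\mu)$ one has $uv = 0 \in A(H,\mu)$ since every element of the annihilator kills every element of $H$; indeed the multiplication restricted to $J = A(H,\mu)$ is identically zero, so $J$ is closed under $\mu$ and $\mu(J \hat{\otimes}_2 J) = 0 \subseteq J$.

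Next I would handle the subcoalgebra claim for $J$. The clean route is Lemma~\ref{lem:restriction}: since $J^{\perp}$ is a subalgebra, i.e.\ $\mu(J^{\perp} \hat{\otimes}_2 J^{\perp}) \subseteq J^{\perp}$, applying the lemma to $f = \mu$ with the closed subspaces $J^{\perp} \hat{\otimes}_2 J^{\perp}$ and $J^{\perp}$ yields $\mu^{\dagger}((J^{\perp})^{\perp}) \subseteq (J^{\perp} \hat{\otimes}_2 J^{\perp})^{\perp}$, that is, $\mu^{\dagger}(J) \subseteq (J^{\perp} \hat{\otimes}_2 J^{\perp})^{\perp}$. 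To conclude that $J$ is a subcoalgebra I must instead show $\mu^{\dagger}(J) \subseteq J \hat{\otimes}_2 J$, which is the genuinely stronger statement. Symmetrically, that $J$ is a subcoalgebra is dual (via the dagger) to $J^{\perp}$ being a subalgebra of the adjoint structure; alternatively, by Lemma~\ref{lem:restriction} again, $\mu^{\dagger}(J) \subseteq J \hat{\otimes}_2 J$ is equivalent to $\mu\big((J \hat{\otimes}_2 J)^{\perp}\big) \subseteq J^{\perp}$, and $(J \hat{\otimes}_2 J)^{\perp} = (J^{\perp} \hat{\otimes}_2 H) \oplus_2 (J \hat{\otimes}_2 J^{\perp})$ by the tensor-product identity recalled after the definition of an ideal; each summand lands in $J^{\perp}$ because $J^{\perp}$ is a subalgebra and, using normality of the multiplication operators, left multiplication by an element of $J$ annihilates (as $J = A(H,\mu)$), forcing the relevant components into $J^{\perp}$.

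\textbf{The hard part} is exactly this last subcoalgebra claim for $J$: proving the comultiplication sends $J$ into $J \hat{\otimes}_2 J$ rather than merely into the complement of $J^{\perp} \hat{\otimes}_2 J^{\perp}$. The key leverage is the identity $J = A(H,\mu)$ from Proposition~\ref{prop:normality}: because left multiplication by any $u \in J$ is the zero operator, the Frobenius relation, read through $\mu^{\dagger}$ as a bimodule map (Lemma~\ref{lem:char_of_leftlinearmap} and Lemma~\ref{lem:char_of_rightlinearmap}), collapses the "mixed" tensor components and pins $\mu^{\dagger}(J)$ inside $J \hat{\otimes}_2 J$. Once both $J$ and $J^{\perp}$ are shown to be subalgebras and subcoalgebras, the final sentence is immediate: a closed subspace that is simultaneously a subalgebra and whose orthogonal complement is a subalgebra is a (closed) ideal, so both $J$ and $J^{\perp}$ are ideals, completing the proof.
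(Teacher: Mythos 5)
Your main line is correct and is essentially the paper's own proof. The paper argues: Proposition~\ref{prop:normality} gives $J=A(H,\mu)$; combined with Proposition~\ref{prop:stHFs} ($J^{\perp}$ a subalgebra) this makes $J^{\perp}$ a closed \emph{ideal} (mixed products vanish because $J$ annihilates), and then $J=(J^{\perp})^{\perp}$ is a subcoalgebra by a cited result of~\cite{Poinsot2019} asserting that orthogonal complements of closed ideals are subcoalgebras. Your computation --- $\mu$ maps $(J\hat{\otimes}_2 J)^{\perp}=(J\hat{\otimes}_2 J^{\perp})\oplus_2(J^{\perp}\hat{\otimes}_2 J)\oplus_2(J^{\perp}\hat{\otimes}_2 J^{\perp})$ into $J^{\perp}$ (the mixed summands die by $J=A(H,\mu)$ together with linearity and continuity, the last lands in $J^{\perp}$ since $J^{\perp}$ is a subalgebra), hence $\mu^{\dagger}(J)\subseteq J\hat{\otimes}_2 J$ by Lemma~\ref{lem:restriction} --- simply inlines that citation; it is the same argument. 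One stylistic point: your ``hard part'' paragraph re-invoking the Frobenius relation through Lemmas~\ref{lem:char_of_leftlinearmap} and~\ref{lem:char_of_rightlinearmap} is unnecessary; the Frobenius condition has already done all its work inside Propositions~\ref{prop:stHFs} and~\ref{prop:normality}, and your restriction-lemma argument needs nothing beyond those two facts.

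There is, however, one genuine flaw, in the very last step. You justify ``$J$ and $J^{\perp}$ are ideals'' by the general principle that a closed subspace which is a subalgebra and whose orthogonal complement is a subalgebra must be an ideal. That principle is false: in $\mathbb{C}[x]/(x^3)$ with the inner product making $\{1,x,x^2\}$ orthonormal, $V=\langle 1\rangle$ and $V^{\perp}=\langle x,x^2\rangle$ are both subalgebras, yet $V$ is not an ideal since $1\cdot x=x\notin V$. The conclusion you want follows instead, in one line, from what you have already established: since $J=A(H,\mu)$, every product with a factor in $J$ vanishes, so $\mu(J\hat{\otimes}_2 H)=0\subseteq J$, and $\mu(J^{\perp}\hat{\otimes}_2 H)=\mu(J^{\perp}\hat{\otimes}_2 J^{\perp})\subseteq J^{\perp}$ because $J^{\perp}$ is a subalgebra. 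Replace the false general principle by this direct verification and the proof is complete.
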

\begin{proof}
By Proposition~\ref{prop:normality}, $A(H,\mu)=J$, and since $J^{\perp}$ is a subalgebra by Proposition~\ref{prop:stHFs}, $J^{\perp}$ is actually an ideal. Whence $J$ is a subcoalgebra  by~\cite[Theorem~18, p.~15]{Poinsot2019}.
\end{proof}

\begin{remark}
As noticed  in the Introduction of~\cite{Poinsot2019}, the proof of~\cite[Proposition~23, p.~16]{Heunen} contains a mistake. However the statement of this proposition is valid since it is nothing but the special case of the above theorem when $\mu$ is counitary. But in fact, in this special case, more may be said (see Theorem~\ref{thm:semisimplicityforFrob} below). 
\end{remark}

\begin{corollary}\label{cor:stHFs}
Let $(H,\mu)$ be a commutative Hilbertian Frobenius semigroup. Under the corresponding restrictions of $\mu$, $J^{\perp}$ is a semisimple Hilbertian Frobenius semigroup and $J$ is a radical Hilbertian Frobenius semigroup. 
\end{corollary}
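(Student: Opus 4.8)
The plan is to show that $J^{\perp}$ and $J$, which by Theorem~\ref{thm:stHFs} are both closed subalgebras (indeed ideals) and subcoalgebras of $(H,\mu)$, inherit the Frobenius condition, and then to compute their radicals. First I would use Lemma~\ref{lem:subalg_and_subcoalg}: since each of $V=J^{\perp}$ and $V=J$ is simultaneously a subalgebra and a subcoalgebra, the restricted comultiplication satisfies $(\mu_{|_V})^{\dagger}=(\mu^{\dagger})_{|_V}$, so the coalgebra structure of the restricted semigroup really is the Hilbert adjoint of its algebra structure. The Frobenius condition for $(V,\mu_{|_V})$ is the assertion that $(\mu_{|_V})^{\dagger}$ is a bimodule map over $(V,\mu_{|_V})$; by Remark~\ref{rem:left_or_right_Frob_no_matter} it suffices to check one cell of the Frobenius diagram, and by the preceding identification this reduces to restricting the corresponding cell for $(H,\mu)$, which commutes by hypothesis. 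Thus both $(J^{\perp},\mu_{|_{J^{\perp}}})$ and $(J,\mu_{|_J})$ are Hilbertian Frobenius semigroups in their own right.

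Next I would identify the radicals. For $J^{\perp}$, by Lemma~\ref{lem:subalg_and_subcoalg} we have $G(J^{\perp},\mu_{|_{J^{\perp}}})=G(H,\mu)\cap J^{\perp}=G(H,\mu)$, the last equality because $G(H,\mu)$ is an orthogonal basis of $J^{\perp}$ (Lemma~\ref{lem:orthogonal}), so in particular every group-like element already lies in $J^{\perp}$. Using the intrinsic description from Section~\ref{sect:grouplike}, the radical of $(J^{\perp},\mu_{|_{J^{\perp}}})$ is $G(J^{\perp},\mu_{|_{J^{\perp}}})^{\perp_{J^{\perp}}}=G(H,\mu)^{\perp}\cap J^{\perp}=J\cap J^{\perp}=(0)$, so $J^{\perp}$ is semisimple, exactly as already recorded in Corollary~\ref{cor:stHFs0}. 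For $J$, the same lemma gives $G(J,\mu_{|_J})=G(H,\mu)\cap J$, which is empty since $G(H,\mu)\subseteq J^{\perp}$ and $J\cap J^{\perp}=(0)$ while $0\notin G(H,\mu)$. By the criterion in Section~\ref{sect:grouplike}, $(J,\mu_{|_J})$ is radical.

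There is moreover a very direct route for $J$ which I would mention, bypassing group-like elements entirely: Theorem~\ref{thm:stHFs} states $J=A(H,\mu)$, so for $u,v\in J$ one has $uv=0$, i.e.\ $\mu_{|_J}=0$. The zero multiplication makes $(J,\mu_{|_J})$ the trivial algebra, which is vacuously Frobenius (as in the Example following Diag.~(\ref{diag:Frob})) and radical since $G=\emptyset$. I expect the only genuine point requiring care to be the verification that the restricted structures are themselves Frobenius — that is, correctly invoking Lemma~\ref{lem:subalg_and_subcoalg} to identify $(\mu_{|_V})^{\dagger}$ with $(\mu^{\dagger})_{|_V}$ and then checking that the Frobenius diagram restricts cleanly. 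Everything else is a routine transcription of the general structure theorem to the two summands, so the corollary is essentially a bookkeeping consequence of Theorem~\ref{thm:stHFs} together with Lemma~\ref{lem:subalg_and_subcoalg}.
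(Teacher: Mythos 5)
Your proposal is correct and follows exactly the route the paper intends: the paper states Corollary~\ref{cor:stHFs} without proof, as an immediate consequence of Theorem~\ref{thm:stHFs} (and of Corollary~\ref{cor:stHFs0} for the $J^{\perp}$ half), and your filling-in — Lemma~\ref{lem:subalg_and_subcoalg} to identify $(\mu_{|_V})^{\dagger}$ with $(\mu^{\dagger})_{|_V}$, restriction of one Frobenius cell, and the computations $G(J^{\perp},\mu_{|_{J^{\perp}}})=G(H,\mu)$ and $G(J,\mu_{|_J})=\emptyset$ — is the intended bookkeeping, mirroring the paper's own proof of the analogous Corollary~\ref{cor1:more_precise} in the special case. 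Your direct observation that $J=A(H,\mu)$ forces $\mu_{|_J}=0$, so that $(J,\mu_{|_J})$ is the zero algebra and hence vacuously Frobenius and radical, is the cleanest way to handle the $J$ half and is fully consistent with the paper.
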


\begin{remark}\label{rem:description_of_HilbFrob}
Let $(H,\mu)$ be a Hilbertian Frobenius semigroup. By Theorem~\ref{thm:stHFs},  $uv=p_{J^{\perp}}(u)p_{J^{\perp}}(v)$ (since $J=A(H,\mu)$) $=\sum_{x\in G(H,\mu)}\langle u,x\rangle\langle v,x\rangle \frac{x}{\|x\|^2}$ as follows from Remark~\ref{rem:description_of_Jperp_of_HilbFrob2}. In particular for each $x\in G(H,\mu)$ and $u\in H$, $ux=p_{J^{\perp}}(u)x=\langle u,x\rangle x$ and thus $\mathbb{C}x$ is an ideal. (In particular, $xx=\langle x,x\rangle x=\|x\|^2x$ as already known.)
\end{remark}

\begin{corollary}\label{cor:radicalFrob}
Let $(H,\mu)$  be a commutative Hilbertian Frobenius semigroup. $(H,\mu)$ is radical if, and only if, $\mu=0$.
\end{corollary}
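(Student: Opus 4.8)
The plan is to prove the two implications separately, noting that one of them has essentially already been recorded. The implication $\mu=0\Rightarrow$ radical is immediate and was observed in the remark preceding the statement: if $\mu=0$ then $G(H,\mu)=\emptyset$, and by the definition of radicality in Section~\ref{sect:grouplike} this says exactly that $(H,\mu)$ is radical. Thus the entire content lies in the converse, and I would devote the proof to that direction.

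For the converse the key is to exploit the identification $J(H,\mu)=A(H,\mu)$ supplied by Proposition~\ref{prop:normality} (equivalently, by Theorem~\ref{thm:stHFs}), which is precisely where the Frobenius hypothesis enters through the normality of the multiplication operators. Concretely, I would assume $(H,\mu)$ radical, that is $J(H,\mu)=H$, and combine this with $J(H,\mu)=A(H,\mu)$ to obtain $A(H,\mu)=H$. By the very definition of the annihilator this means that for all $u,v\in H$ one has $\mu_{\mathsf{bil}}(u,v)=uv=0$. It then remains only to pass from the vanishing of the associated bilinear map to the vanishing of $\mu$ itself: since $\mu$ annihilates every elementary tensor it vanishes on the algebraic tensor product $H\otimes_{\mathbb{C}}H$, which is dense in $H\hat{\otimes}_2 H$, and continuity of $\mu$ forces $\mu=0$.

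I do not expect any genuine obstacle once Proposition~\ref{prop:normality} is available; the whole difficulty has been front-loaded into establishing the Frobenius-driven equality $J=A(H,\mu)$. That equality is what converts the ideal-theoretic condition ``radical'' (a statement about $J$) into the algebraic condition ``the whole space annihilates'' (a statement about $A$). Without the Frobenius condition one has in general only $A(H,\mu)\subseteq J(H,\mu)$ by Lemma~\ref{lem:annihilator_included_in_jacobson}, so the implication would break; this is consistent with the non-commutative counterexample announced for Section~\ref{sec:epilogue}, where radicality no longer forces triviality of the multiplication.
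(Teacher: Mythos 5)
Your proof is correct and follows essentially the same route as the paper, which leaves the corollary unproved precisely because it is immediate from the Frobenius-derived equality $J(H,\mu)=A(H,\mu)$ (Proposition~\ref{prop:normality}/Theorem~\ref{thm:stHFs}) together with the remark that $\mu=0\Rightarrow G(H,\mu)=\emptyset\Rightarrow(H,\mu)$ radical. Your density-and-continuity step passing from $\mu_{\mathsf{bil}}=0$ to $\mu=0$ is exactly the standard argument the paper uses implicitly.
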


\begin{theorem}\label{thm:semisimplicityforFrob}
Let $(H,\mu)$ be a commutative Hilbertian Frobenius semigroup. The following assertions are equivalent.
\begin{enumerate}
\item $(H,\mu)$ is semisimple.
\item $(H,\mu)$ is {\em faithful}, that is, $\ker M=(0)$.
\item $\mu$ has a dense range.
\item $\mu^{\dagger}$ is one-to-one.
\item $\mu\circ \mu^{\dagger}$ is one-to-one.
\end{enumerate}
In particular, any commutative special Hilbertian Frobenius semigroup is semisimple. 
\end{theorem}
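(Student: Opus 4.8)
The plan is to funnel all five conditions into the single vanishing statement $J = (0)$, by establishing the chain of equalities $J = A(H,\mu) = \ker M = \ker\mu^{\dagger} = \ker(\mu\circ\mu^{\dagger})$ and identifying this common subspace with $(\overline{\mathrm{range}(\mu)})^{\perp}$. The first link is already at hand: Proposition~\ref{prop:normality} gives $J = A(H,\mu)$, and since $A(H,\mu) = \ker M$ by the very definition of the regular representation, we obtain $J = \ker M$, which settles the equivalence of (1) and (2) immediately. I would then record two purely Hilbert-space facts valid for any bounded operator $T$: the closed-range duality $(\overline{\mathrm{range}(T)})^{\perp} = \ker T^{\dagger}$, and $\ker(TT^{\dagger}) = \ker T^{\dagger}$ (the latter because $\langle TT^{\dagger}u,u\rangle = \|T^{\dagger}u\|^2$). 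Applied to $T = \mu$ these read $(\overline{\mathrm{range}(\mu)})^{\perp} = \ker\mu^{\dagger}$ and $\ker(\mu\circ\mu^{\dagger}) = \ker\mu^{\dagger}$, so that (3), (4) and (5) are mutually equivalent with no appeal to the Frobenius structure whatsoever.

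The genuine content lies in bridging the two groups of conditions, namely in identifying $\overline{\mathrm{range}(\mu)}$ with $J^{\perp}$; this is where the Structure Theorem (Theorem~\ref{thm:stHFs}) is needed. For the inclusion $\mathrm{range}(\mu) \subseteq J^{\perp}$ I would invoke Remark~\ref{rem:description_of_HilbFrob}, where $uv = \sum_{x\in G(H,\mu)}\langle u,x\rangle\langle v,x\rangle\frac{x}{\|x\|^2}$ visibly lies in $J^{\perp} = \overline{\langle G(H,\mu)\rangle}$; since $J^{\perp}$ is closed this upgrades to $\overline{\mathrm{range}(\mu)} \subseteq J^{\perp}$. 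For the reverse inclusion I would use that each group-like $x$ satisfies $x^2 = \|x\|^2 x$ (Lemma~\ref{lem:preidem} together with Corollary~\ref{cor:orthogonal2}), whence $x = \|x\|^{-2}\mu(x\otimes x) \in \mathrm{range}(\mu)$; as $G(H,\mu)$ spans a dense subspace of $J^{\perp}$ this yields $J^{\perp} \subseteq \overline{\mathrm{range}(\mu)}$. Combining the two gives $\overline{\mathrm{range}(\mu)} = J^{\perp}$, and taking orthogonal complements produces $\ker\mu^{\dagger} = (\overline{\mathrm{range}(\mu)})^{\perp} = J$, completing the chain.

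With $J = \ker M = \ker\mu^{\dagger} = \ker(\mu\circ\mu^{\dagger})$ in place, semisimplicity is by definition the assertion $J = (0)$, and each of conditions (2)--(5) is precisely the vanishing of one of these coinciding subspaces, so all five are equivalent. For the final clause, if $(H,\mu)$ is special then $\mu\circ\mu^{\dagger} = \mathrm{id}$ is trivially one-to-one, so condition (5) holds and the algebra is semisimple. I expect the only real obstacle to be the identification $\overline{\mathrm{range}(\mu)} = J^{\perp}$, and within it the reverse inclusion, which rests on the nontrivial fact --- secured earlier through the Frobenius relation in Lemma~\ref{lem:Jperp_is_subcoalg} --- that the square of a group-like element stays a scalar multiple of itself rather than leaking into the radical; every other step is either Proposition~\ref{prop:normality} or elementary operator theory.
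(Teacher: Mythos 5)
Your proof is correct, and it coincides with the paper's own proof on the two routine halves: (1)$\Leftrightarrow$(2) via $J(H,\mu)=A(H,\mu)=\ker M$ (Proposition~\ref{prop:normality}), and (3)$\Leftrightarrow$(4)$\Leftrightarrow$(5) via the general operator identities $\ker \mu^{\dagger}=\ker(\mu\circ\mu^{\dagger})=\overline{ran(\mu)}^{\perp}$. Where you genuinely diverge is the bridge between the two groups. The paper argues by conditional implications in both directions: assuming $\mu^{\dagger}$ one-to-one, it uses the fact that $J=A(H,\mu)$ is a subcoalgebra (Theorem~\ref{thm:stHFs}), so $\mu^{\dagger}(J)\subseteq J\hat{\otimes}_2 J$ gives $\mu(\mu^{\dagger}(x))=0$ for every $x\in J$ and hence $J=(0)$; conversely, assuming semisimplicity, it expands $\mu^{\dagger}(u)=\sum_{x\in G(H,\mu)}u_x\, x\otimes x$ in the orthogonal basis $G(H,\mu)$ to conclude that $\mu^{\dagger}$ is one-to-one. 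You instead prove the unconditional subspace identity $\overline{ran(\mu)}=J^{\perp}$, equivalently $\ker\mu^{\dagger}=J$, so that all five conditions become the vanishing of one and the same subspace; your two inclusions rest on the product formula of Remark~\ref{rem:description_of_HilbFrob} and on $x=\|x\|^{-2}x^{2}\in ran(\mu)$ for group-like $x$ (Lemma~\ref{lem:Jperp_is_subcoalg}). (One small point of care: the Remark describes $\mu$ on elementary tensors, so the inclusion $ran(\mu)\subseteq J^{\perp}$ needs the standard density-and-continuity step; this is immediate.) Notably, the identity you establish is exactly the one the paper proves later, at the start of Section~\ref{sec:reformulation} --- there deduced from the existence of $H^{*}$-adjoints --- in order to construct the functor $P$; so your route yields a slightly stronger and reusable statement, at the cost of proving both inclusions, whereas the paper's cycle of implications gets by with a single consequence of the Structure Theorem in each direction. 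Both arguments ultimately rest on the same non-trivial inputs, namely $J=A(H,\mu)$ and the fact that squares of group-like elements do not leak into the radical, so neither has a logical advantage; yours is arguably the cleaner packaging.
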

\begin{proof}
The last statement is a consequence of the presumed equivalences. That the two first points are equivalent is clear as $J(H,\mu)=A(H,\mu)$ (Proposition~\ref{prop:normality}). That the three other assertions are equivalent is due to the general fact that for a bounded linear map $K\xrightarrow{f}L$ between Hilbert spaces, $\ker f^{\dagger}=\ker (f\circ f^{\dagger})=ran(f)^{\perp}$ (see e.g.,~\cite[Proposition~5.76, p.~390]{Kubrusly2ndEd}). It remains for instance to prove that semisimplicity is equivalent to injectivity of $\mu^{\dagger}$. So let us assume that $\mu^{\dagger}$ is one-to-one. According to Theorem~\ref{thm:stHFs}, $J(H,\mu)=A(H,\mu)$ is a subcoalgebra, that is, $\mu^{\dagger}(J(H,\mu))\subseteq J(H,\mu)\hat{\otimes}_2 J(H,\mu)$. Whence for each $x\in J(H,\mu)$, $\mu(\mu^{\dagger}(x))=0$. But  as $\mu\circ \mu^{\dagger}$ is one-to-one, $x=0$, that is, $J(H,\mu)=(0)$. Finally, let us assume that $(H,\mu)$ is semisimple. By the way $G(H,\mu)$ is an orthogonal basis of $(H,\mu)$ according to Lemma~\ref{lem:orthogonal}. Let $u=\sum_{x\in G(H,\mu)}u_x x$ be an arbitrary element of $H$ with $u_x=\frac{1}{\|x\|^2}\langle u,x\rangle$. Then, $\mu^{\dagger}(u)=\sum_{x\in G(H,\mu)}u_x x\otimes x$, and thus $\mu^{\dagger}(u)=0\Leftrightarrow u=0$, that is, $\mu^{\dagger}$ is one-to-one.
\end{proof}

{\begin{remark}
The last statement of Theorem~\ref{thm:semisimplicityforFrob} answers by the affirmative  the main question of~\cite{Heunen}, that is, are all special Hilbertian Frobenius semigroups semisimple? 
\end{remark}}

In the result below are used the notations from Section~\ref{sec:weighted_hilb}. It shows that the weighted Hilbert spaces are  the only semisimple Hilbertian Frobenius semigroups, up to unitary isomorphisms. 
\begin{proposition}\label{prop:onlyoneexample}
Let $(H,\mu)$ be a Hilbertian Frobenius semigroup. Then, $J(H,\mu)^{\perp}\simeq (\ell^2_{w_{(H,\mu)}}(G(H,\mu)),\mu_{G(H,\mu)})$ (unitarily so), where $w_{(H,\mu)}\colon G(H,\mu)\to [\frac{1}{\|\mu\|_{\mathsf{op}}^2},+\infty[$, $x\mapsto \frac{1}{\|x\|^2}$. (If $G(H,\mu)=\emptyset$, then $w_{(H,\mu)}$ stands for the empty map.)
\end{proposition}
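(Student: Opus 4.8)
The plan is to exhibit an explicit unitary from $\ell^2_{w}(G(H,\mu))$ onto $J(H,\mu)^{\perp}$ that is at the same time a semigroup morphism, and then to conclude via Lemma~\ref{lem:semigroupiso}. Throughout I would write $X := G(H,\mu)$ and $w := w_{(H,\mu)}$, so that $w(x) = 1/\|x\|^2$. First I would dispose of the degenerate case: if $X = \emptyset$, then $J^{\perp} = (0)$ by Lemma~\ref{lem:orthogonal} and the claim is just the zero-algebra identity. Otherwise Lemma~\ref{lem:Jperp_is_subcoalg} gives $\|x\| \le \|\mu\|_{\mathsf{op}}$ for every $x \in X$, hence $w(x) \ge 1/\|\mu\|_{\mathsf{op}}^2 =: C > 0$; thus $w$ is a legitimate weight, $(\ell^2_w(X),\mu_X)$ is a semisimple Hilbertian Frobenius semigroup by Proposition~\ref{prop:grplike_of_ell_2_alpha}, and (since $w(x)^{1/2} = 1/\|x\|$) its distinguished orthonormal basis is $\{\, \|x\|\delta_x : x \in X\,\}$.

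Next I would define the candidate isomorphism on orthonormal bases. By Lemma~\ref{lem:orthogonal}, $X$ is an orthogonal basis of $J^{\perp}$, so $\{\, x/\|x\| : x \in X\,\}$ is an orthonormal basis of $J^{\perp}$. Let $\Psi \colon \ell^2_w(X) \to J^{\perp}$ be the unique unitary extending the base-point bijection $\|x\|\delta_x \mapsto x/\|x\|$; equivalently $\Psi(\delta_x) = x/\|x\|^2$. The multiplication on $J^{\perp}$ is the restriction $\mu_{|_{J^{\perp}}}$, legitimate because $J^{\perp}$ is a subalgebra by Lemma~\ref{lem:Jperp_is_subcoalg}. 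It then remains to check that $\Psi$ is a semigroup morphism, for which, by the first part of Lemma~\ref{lem:semigroupiso} and density of the algebraic tensor product, it suffices to verify $\Psi(\mu_X(\delta_x \otimes \delta_y)) = \mu(\Psi(\delta_x) \otimes \Psi(\delta_y))$ for all $x,y \in X$.

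This verification is where the earlier structural results do the work. On the source side $\mu_X(\delta_x \otimes \delta_y) = \delta_{x,y}\delta_x$ (pointwise product), so the left-hand side equals $\delta_{x,y}\, x/\|x\|^2$. On the target side I would invoke the product formula for group-like elements recorded in Remark~\ref{rem:description_of_HilbFrob} (a consequence of Corollary~\ref{cor:orthogonal2} together with Lemmas~\ref{lem:preidem} and~\ref{lem:preidem_2}), namely $xy = \delta_{x,y}\|x\|^2 x$; this gives $\mu(\Psi(\delta_x)\otimes\Psi(\delta_y)) = \frac{1}{\|x\|^2\|y\|^2}\,xy = \delta_{x,y}\, x/\|x\|^2$, matching the left-hand side. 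Thus $\Psi$ is a unitary bijective semigroup morphism, so by the second part of Lemma~\ref{lem:semigroupiso} it is a semigroup isomorphism, and being unitary it intertwines adjoints, hence comultiplications, so it is a unitary isomorphism of Hilbertian Frobenius semigroups.

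I do not expect a genuine obstacle: the content has been front-loaded into the product formula $xy = \delta_{x,y}\|x\|^2 x$ and into the identification of $G(H,\mu)$ as an orthogonal basis of $J^{\perp}$. The only point demanding care is the normalization bookkeeping — keeping track of the factors $\|x\|$ and $w(x)^{1/2} = 1/\|x\|$ so that the two orthonormal bases match and $\Psi(\delta_x)$ comes out as exactly $x/\|x\|^2$ — since a misplaced scalar would break either unitarity or multiplicativity.
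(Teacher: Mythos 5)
Your proof is correct and is essentially the paper's own argument: the paper reduces to the semisimple case (so that $J^{\perp}=H$) and defines the unitary $\Lambda(\frac{x}{\|x\|}):=\|x\|\delta_x$, which is exactly the inverse of your $\Psi$, leaving the multiplicativity check as a "simple verification." Your write-up merely spells out that verification (via $xy=\delta_{x,y}\|x\|^2x$ and density of the algebraic tensor product) and handles the empty case explicitly, which is a fair rendering of the same proof.
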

\begin{proof}
It suffices to prove the result for $(H,\mu)$ semisimple. Let us consider the unitary transformation $\Lambda\colon H\to \ell^2_{w_{(H,\mu)}}(G(H,\mu))$  given by $\Lambda(\frac{x}{\|x\|}):=\frac{1}{w_{(H,\mu)}(x)^{\frac{1}{2}}}\delta_x=\|x\|\delta_x$, $x\in G(H,\mu)$. It is a matter of simple verification to prove that $\Lambda$ is an isomorphism of semigroups.\end{proof}

\section{Some direct consequences}\label{sec:directcons}

\subsection{The finite-dimensional case}

The following result explains why every finite-dimensional commutative Hilbertian Frobenius monoid is automatically semisimple (\cite{Heunen,Coecke}). 
\begin{corollary}\label{cor:semisimplicityforFrob1}
Let $(H,\mu)$ be a finite-dimensional commutative Hilbertian Frobenius semigroup. The following assertions are equivalent.
\begin{enumerate}
\item $(H,\mu)$ has a unit.
\item $\mu$ is onto.
\item $\mu^{\dagger}$ is one-to-one.
\item $(H,\mu)$ is semisimple.
\end{enumerate}
\end{corollary}
\begin{proof}
The last three assertions are already given by Theorem~\ref{thm:semisimplicityforFrob} in view of finite dimensionality. If $(H,\mu)$ has a unit, then of course $\mu$ is onto. Conversely, assuming $\mu$ onto, by~\cite[Corollary~3.3, p.~47]{Dixon} $(H,\mu)$ is unital. 
\end{proof}

By Proposition~\ref{prop:unital} and Theorem~\ref{thm:semisimplicityforFrob} one has 
\begin{corollary}\label{cor:semisimplicityforFrob2}(\cite[Lemma~3, p.~6]{Heunen})
Let $(H,\mu)$ be a commutative special Hilbertian Frobenius semigroup. It is unital if, and only if, it is finite-dimensional.
\end{corollary}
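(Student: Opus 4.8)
The plan is to deduce the statement almost immediately by combining the semisimplicity conclusion of Theorem~\ref{thm:semisimplicityforFrob} with part~(3) of Proposition~\ref{prop:unital}. The first thing I would note is that a commutative special Hilbertian Frobenius semigroup $(H,\mu)$ is, in particular, a special Hilbertian algebra (one simply forgets the Frobenius compatibility and retains $\mu\circ\mu^\dagger=id$), so that all three items of Proposition~\ref{prop:unital} are available. At the same time, the last sentence of Theorem~\ref{thm:semisimplicityforFrob} asserts that \emph{every} commutative special Hilbertian Frobenius semigroup is semisimple; this is exactly the extra input that the bare ``special Hilbertian algebra'' setting of Proposition~\ref{prop:unital} lacks, and it is what lets us upgrade to the sharp biconditional.

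With semisimplicity in hand, I would then invoke Proposition~\ref{prop:unital}.(3) verbatim: for a \emph{semisimple} special Hilbertian algebra, having a unit is equivalent to being finite-dimensional. Since $(H,\mu)$ satisfies both hypotheses (special, and semisimple by the above), the desired equivalence follows directly. If one prefers to avoid citing the packaged item~(3), I would instead argue each implication separately: for the forward direction, if $(H,\mu)$ has a unit then Proposition~\ref{prop:unital}.(1) gives $\dim_{\mathbb{C}} J^{\perp}<+\infty$, and semisimplicity means $J=(0)$, i.e. $J^{\perp}=H$, so $H$ is finite-dimensional; for the converse, Proposition~\ref{prop:unital}.(2) yields a unit whenever $(H,\mu)$ is finite-dimensional.

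There is no real obstacle here: the entire content has been pushed into the two cited results, and the only thing to verify is that their hypotheses are genuinely met. The one point deserving a word of care is precisely that verification, namely checking that the Frobenius hypothesis is what supplies semisimplicity (via Theorem~\ref{thm:semisimplicityforFrob}) and hence forces $J^{\perp}=H$, so that the finiteness of $J^{\perp}$ coming from Proposition~\ref{prop:unital}.(1) is the finiteness of the whole space. Everything else is a direct appeal to earlier statements.

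\begin{proof}
By the last assertion of Theorem~\ref{thm:semisimplicityforFrob}, the commutative special Hilbertian Frobenius semigroup $(H,\mu)$ is semisimple. Being special, it is in particular a special Hilbertian algebra, so Proposition~\ref{prop:unital}.(3) applies and shows that $(H,\mu)$ is unital if, and only if, it is finite-dimensional.
\end{proof}
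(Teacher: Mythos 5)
Your proof is correct and follows exactly the paper's own route: the paper derives this corollary precisely ``By Proposition~\ref{prop:unital} and Theorem~\ref{thm:semisimplicityforFrob}'', i.e.\ semisimplicity from the last assertion of Theorem~\ref{thm:semisimplicityforFrob} combined with Proposition~\ref{prop:unital}.(3). Your verification that the hypotheses of both cited results are met is exactly the (implicit) content of the paper's one-line justification.
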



\subsection{A dictionary of bases}

Let $H\xrightarrow{f}K$ be a bounded linear map between Hilbert spaces. $f$ is a {\em partial isometry} if $f\circ f^{\dagger}\circ f=f$. Actually $f$ is a partial isometry if, and only if, $f^{\dagger}$ is so (\cite[pp.~401--402]{Kubrusly2ndEd}). 

By a {\em structure} of a Hilbertian semigroup of some specific kind on a given Hilbert space $H$ is meant a bounded linear map $\mu\colon H\hat{\otimes}_2 H\to H$ which makes $(H,\mu)$ a Hilbertian semigroup of the desired kind.  
The following result may be considered as an extension of the summary~\cite[p.~565]{Coecke} to infinite-dimensional spaces.

Call {\em bounded above} (resp. {\em bounded below}) a set  $X$ of a Hilbert space $H$ such that there exists $C > 0$ with $\|x\|\leq C$ (resp. $C\leq \|x\|$) for each $x\in X$.  $C$ is referred to a {\em bound} of $X$. Observe that the empty set is bounded above and below, with any bound $C>0$. 
Recall also that following our terminology (Section~\ref{sec:preliminaries}),  an orthogonal set does not contain $0$.

\begin{theorem}\label{thm:dictionary}
Let $H$ be a Hilbert space. There are  one-one correspondences between 
\begin{enumerate}
\item Non void bounded above   orthogonal sets of $H$ and structures of commutative Hilbertian Frobenius semigroups on $H$ with a non-zero multiplication. 
\item\label{it:2:thm:dictionary} Bounded above orthogonal bases of $H$ and structures of semisimple commutative Hilbertian Frobenius semigroups on $H$. 
\item Non void  orthonormal sets of $H$ and structures of  commutative Hilbertian Frobenius semigroups on $H$, whose comultiplication is a non-zero partial isometry. 
\item Orthonormal bases of $H$ and structures of commutative special Hilbertian Frobenius semigroups. The corresponding semigroups all are  unitarily isomorphic.
\item The empty orthogonal set corresponds to the unique structure of radical Frobenius semigroup on $H$.
\end{enumerate}
\end{theorem}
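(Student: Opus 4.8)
The plan is to exhibit, for each of the five items, a pair of mutually inverse maps, all governed by a single construction. In the forward direction I send a commutative Hilbertian Frobenius semigroup $(H,\mu)$ to its group-like set $G(H,\mu)$; by Lemma~\ref{lem:orthogonal} this is an orthogonal set, and by Lemma~\ref{lem:Jperp_is_subcoalg} it is bounded above (by $\|\mu\|_{\mathsf{op}}$), so it lands in the asserted class. In the backward direction I send a bounded-above orthogonal set $X$ (with bound $C$) to the bounded linear map $\mu_X$ determined by $\mu_X(u\otimes v)=\sum_{x\in X}\langle u,x\rangle\langle v,x\rangle\frac{x}{\|x\|^2}$. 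The whole theorem then reduces to (a) checking that $\mu_X$ is a well-defined commutative Hilbertian Frobenius multiplication, (b) checking that the two maps are mutually inverse, and (c) reading off from the explicit formulas which extra property of $\mu_X$ corresponds to which strengthening of $X$.

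For (a), the crucial point --- and the place where the bounded-above hypothesis is used --- is boundedness. Writing $|\langle u,x\rangle|^2/\|x\|^2=|\langle u,x/\|x\|\rangle|^2$ and using $|\langle v,x\rangle|\le C\,|\langle v,x/\|x\|\rangle|$, Bessel's inequality over the orthonormal family $\{\,x/\|x\|\colon x\in X\,\}$ gives $\|\mu_X(u\otimes v)\|\le C\|u\|\|v\|$; in particular the defining series converges and $\mu_X$ is a weak Hilbert--Schmidt map with $\|\mu_X\|_{\mathsf{op}}\le C$. (An unbounded $X$ would break exactly this estimate, which is why no unbounded family can occur.) Commutativity is visible from the symmetric formula. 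For associativity I would first record the identity $\langle \mu_X(u\otimes v),y\rangle=\langle u,y\rangle\langle v,y\rangle$ for $y\in X$, from which both triple products collapse to $\sum_{x\in X}\langle u,x\rangle\langle v,x\rangle\langle w,x\rangle\frac{x}{\|x\|^2}$. For the Frobenius relation I would compute the adjoint explicitly, $\mu_X^{\dagger}(y)=\sum_{x\in X}\frac{\langle y,x\rangle}{\|x\|^2}\,x\otimes x$, and verify right $(H,\mu_X)$-linearity through Lemma~\ref{lem:char_of_rightlinearmap} and Example~\ref{ex:module_structures2}: both $(id\hat{\otimes}_2 M_u)\circ\mu_X^{\dagger}$ and $\mu_X^{\dagger}\circ M_u$ send $y$ to $\sum_{x\in X}\frac{\langle y,x\rangle\langle u,x\rangle}{\|x\|^2}x\otimes x$, so $(H,\mu_X)$ is Frobenius by Remark~\ref{rem:left_or_right_Frob_no_matter}. (Alternatively one identifies $\overline{\langle X\rangle}$ with the weighted space of Proposition~\ref{prop:onlyoneexample} and puts zero multiplication on $X^{\perp}$.)

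For (b), that $\mu_{G(H,\mu)}=\mu$ is precisely the multiplication formula of Remark~\ref{rem:description_of_HilbFrob}. For the identity $G(H,\mu_X)=X$, the inclusion $X\subseteq G(H,\mu_X)$ follows from the adjoint formula since $\mu_X^{\dagger}(x)=x\otimes x$ for $x\in X$; conversely, if $y\neq0$ satisfies $\langle\mu_X(u\otimes v),y\rangle=\langle u,y\rangle\langle v,y\rangle$ for all $u,v$, then testing with $u\in X^{\perp}$ forces $y\in\overline{\langle X\rangle}$, and testing with $u=v=x\in X$ forces each $\langle x,y\rangle$ to be $0$ or $\|x\|^2$ with at most one non-zero value, whence $y\in X$. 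This proves the correspondence of item~1, and item~5 is the degenerate case: $X=\emptyset$ gives $\mu_\emptyset=0$, which by Corollary~\ref{cor:radicalFrob} is the unique radical (equivalently zero-multiplication) structure, while non-emptiness of $X$ matches $\mu_X\neq0$ by the same corollary.

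The remaining items are read off from the two closed-form expressions. Since $J=G(H,\mu)^{\perp}=X^{\perp}$, the semigroup is semisimple iff $X^{\perp}=0$, i.e. iff $X$ is an orthogonal basis (item~2). Comparing $\|\mu_X^{\dagger}(y)\|^2=\sum_{x\in X}|\langle y,x\rangle|^2$ with $\|p_{J^{\perp}}(y)\|^2=\sum_{x\in X}|\langle y,x\rangle|^2/\|x\|^2$, the comultiplication is isometric on $(\ker\mu_X^{\dagger})^{\perp}=\overline{\langle X\rangle}$ --- hence a partial isometry --- exactly when $\|x\|=1$ for all $x$, i.e. $X$ is orthonormal (item~3). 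Combining, $\mu_X\circ\mu_X^{\dagger}(y)=\sum_{x\in X}\langle y,x\rangle x$ equals $y$ for all $y$ iff $X$ is simultaneously orthonormal and a basis, which gives the special case (item~4); there all group-like sets are orthonormal bases of $H$, hence of common cardinality $\dim_{\mathbb{C}}H$, yielding mutual unitary isomorphism via Example~\ref{ex:premierexemple}. The main obstacle is step (a): confirming that $\mu_X$ genuinely is a bounded, associative multiplication satisfying the Frobenius relation --- the boundedness estimate is where the bounded-above hypothesis is indispensable, and the Frobenius check is the most computational part; everything after that is bookkeeping with the explicit formulas for $\mu_X$ and $\mu_X^{\dagger}$.
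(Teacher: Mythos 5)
Your proposal is correct and follows essentially the same route as the paper's own proof: the same pair of mutually inverse assignments $X\mapsto\mu_X$ (with the identical formula $\mu_X(u\otimes v)=\sum_{x\in X}\langle u,x\rangle\langle v,x\rangle\frac{x}{\|x\|^2}$) and $(H,\mu)\mapsto G(H,\mu)$, the same use of the bounded-above hypothesis for the operator bound, and the same case-by-case reading of semisimplicity, partial isometry, speciality and the empty case from the explicit formulas for $\mu_X$ and $\mu_X^{\dagger}$; your only departures are cosmetic (verifying Frobenius via right $(H,\mu_X)$-linearity through Lemma~\ref{lem:char_of_rightlinearmap} instead of the paper's direct diagram computation, and characterizing partial isometries by isometry on $(\ker\mu_X^{\dagger})^{\perp}$ instead of checking $\mu^{\dagger}\circ\mu\circ\mu^{\dagger}=\mu^{\dagger}$). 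One tiny point of rigor: in showing $G(H,\mu_X)\subseteq X$, the fact that at most one coefficient $\langle x,y\rangle$ is non-zero does not follow from testing $u=v=x$ alone but from also testing $u=x$, $v=x'$ with $x\neq x'$, which gives $\langle x,y\rangle\langle x',y\rangle=0$ — an easy fix, and the paper itself leaves this direction unproved.
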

\begin{proof}
Let $X$ be a non void bounded above orthogonal family of $H$ with bound $C>0$. Let $u,v\in H$. Then, $\sum_{x\in X}\|x\|^2|\langle u,\frac{x}{\|x\|}\rangle|^2\langle v,\frac{x}{\|x\|}\rangle|^2\leq C^2\|u\|^2\|v\|^2$. One thus defines  $m_X\colon H\times H\to H$ by $m_X(u,v):=\sum_{x\in X}\langle u,x\rangle\langle v,x\rangle\frac{x}{\|x\|^2}$. As $m_X(x,x)=\|x\|^2x$, $x\in X$, $m_X$ is non-zero. One notices that $m_X(u,v)=0$ whenever $u\in X^{\perp}$ or $v\in X^{\perp}$. $m_X$ is a weak Hilbert-Schmidt mapping since $\sum_{x,y\in X}|\langle m_X(\frac{x}{\|x\|},\frac{y}{\|y\|}),u\rangle|^2\leq C^2\|u\|^2$, $u\in H$. Let $\mu_X\colon H\hat{\otimes}_2 H\to H$ be the unique linear extension of $m_X$, that is, $\mu_X(u\otimes v)=\sum_{x\in X}\langle u,x\rangle\langle v,x\rangle\frac{x}{\|x\|^2}$. $(H,\mu_X)$ is of course a Hilbertian semigroup, with a non-zero multiplication. As for $u\in H$, $\mu_X(\frac{x}{\|x\|}\otimes \frac{y}{\|y\|})=\delta_{x,y}x$, $x,y\in X$, and $\langle \mu_X^{\dagger}(u),v\otimes w\rangle=\langle u,\mu_X(v\otimes w)\rangle=0$ for $v\in X^{\perp}$ or $w\in X^{\perp}$, it follows that $\mu_X^{\dagger}(u)=\sum_{x\in X}\langle u,x\rangle \frac{x}{\|x\|}\otimes\frac{x}{\|x\|}$ from which  one sees that $G(H,\mu_X)=X$.  
 Moreover $\mu^{\dagger}_X(\mu_X(u\otimes v))=\mu^{\dagger}_X(\sum_{x\in X}\langle u,x\rangle\langle v,x\rangle \frac{x}{\|x\|^2})=\sum_{x\in X}\langle u,x\rangle\langle v,x\rangle \frac{x}{\|x\|}\otimes\frac{x}{\|x\|}$ while $(id\hat{\otimes}_2\mu_X)(\alpha_{H,H,H}(\mu_X^{\dagger}(u)\otimes v))=(id\otimes \mu_X)(\sum_{x\in X}\langle u,x\rangle \frac{x}{\|x\|}\otimes (\frac{x}{\|x\|}\otimes v))=\sum_{x\in X}\langle u,x\rangle\frac{x}{\|x\|}\otimes\mu_X(\frac{x}{\|x\|}\otimes v)=\sum_{x\in X}\langle u,x\rangle\langle v,x\rangle \frac{x}{\|x\|}\otimes \frac{x}{\|x\|}$ as $\mu(\frac{x}{\|x\|}\otimes v)=\langle v,x\rangle \frac{x}{\|x\|}$. This proves that $(H,\mu_X)$ is Frobenius.  By the way, if $X$ is an orthogonal basis, then $(H,\mu_X)$ is a semisimple commutative Hilbertian Frobenius semigroup. 
 
 Conversely, if $(H,\mu)$ is a (semisimple) commutative Hilbertian Frobenius semigroup with a non-zero multiplication, thus $(H,\mu)$ is not radical, then $G(H,\mu)$ is a non-void bounded orthogonal family (basis), with bound $\|\mu\|_{\mathsf{op}}>0$ by Lemmas~\ref{lem:orthogonal} and~\ref{lem:preidem}. It is easily checked that $\mu_{G(H,\mu)}=\mu$.  Therefore the first two statements of the theorem are proved. 

The third and fourth statements are proved similarly by considering orthonormal families (bases) rather than orthogonal families (bases), and by the following discussion. If $X$ is an orthonormal family, then for $x\in X$, $\mu_X^{\dagger}(\mu_X(\mu_X^{\dagger}(x)))=x\otimes x=\mu^{\dagger}_X(x)$ so that $\mu_X^{\dagger}$ is indeed a partial isometry since also for $u\in X^{\perp}$, $\mu^{\dagger}(u)=0=\mu_X^{\dagger}(\mu_X(\mu_X^{\dagger}(u)))$. Of course if $X\not=\emptyset$, then $\mu_X\not=0$ and thus so is $\mu_X^{\dagger}$.

Conversely assuming that $(H,\mu)$ is a commutative Hilbertian Frobenius semigroup with $\mu^{\dagger}$ (or $\mu$) a non-zero partial isometry, then for each $x\in G(H,\mu)$, $\|x\|^2x\otimes x=\mu^{\dagger}(x^2)=\mu^{\dagger}(\mu(\mu^{\dagger}(x)))=\mu^{\dagger}(x)=x\otimes x$, so that $\|x\|=1$, and $G(H,\mu)$ is indeed an orthonormal family. 

Let $X,Y$ be two orthonormal bases of $H$. Let $X\xrightarrow{\pi}Y$ be a bijection. Then it is easily check that $(H,\mu_X)\xrightarrow{\Pi}(H,\mu_Y)$ is a semigroup isomorphism where $\Pi(x):=\pi(x)$, $x\in X$. 

The last statement is obvious.
\end{proof}

Due to the Lemma below,  one may substitute in Theorem~\ref{thm:dictionary}, ``bounded above'' by ``bounded below'' and the resulting statements are still valid.
\begin{lemma}\label{lem:bounded_above_and_bounded_below}
Let $H$ be a Hilbert space. There is a one-one correspondence $\Theta$ between the  bounded above and the bounded below orthogonal sets of $H$, which preserves the cardinality (in particular it  sends $\emptyset$ to itself). More precisely for each  bounded above orthogonal set $X$ of $H$, $X\simeq \Theta(X)$ under $x\mapsto \frac{x}{\|x\|^2}$ and the corresponding orthonormal sets $\{\, \frac{x}{\|x\|}\colon x\in X\,\}$ and $\{\, \frac{x}{\|x\|}\colon x\in \Theta(X)\,\}$  are equal.
\end{lemma}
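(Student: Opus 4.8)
The plan is to exhibit $\Theta$ explicitly and to recognize that the underlying pointwise transform $x\mapsto x/\|x\|^2$ is a norm-inverting involution; everything then follows from that single observation. First I would define, for a bounded above orthogonal set $X$ with bound $C>0$, the set $\Theta(X):=\{\, x/\|x\|^2\colon x\in X\,\}$. Since $\|x/\|x\|^2\|=\|x\|/\|x\|^2=1/\|x\|$, the hypothesis $\|x\|\leq C$ gives $1/\|x\|\geq 1/C$, so $\Theta(X)$ is bounded below by $1/C$; symmetrically the very same formula sends a set bounded below by $c>0$ to one bounded above by $1/c$. This settles that $\Theta$ lands in the correct class of sets (and that the empty set, vacuously bounded above and below, is fixed).

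Next I would check that $\Theta(X)$ is again an orthogonal set in the sense of Section~\ref{sec:preliminaries}: its members are non-zero because each $x\in X$ is non-zero, and for distinct $x,y\in X$ one pulls the scalar $1/(\|x\|^2\|y\|^2)$ out of $\langle x/\|x\|^2,y/\|y\|^2\rangle$ to leave $\langle x,y\rangle=0$. The one point deserving a word is injectivity, which is what makes cardinality preserved: if $x/\|x\|^2=y/\|y\|^2$ then $x$ and $y$ are proportional, but distinct elements of an orthogonal set are orthogonal, hence linearly independent, forcing $x=y$.

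The decisive observation, which promotes $\Theta$ from a mere well-defined map to a genuine bijection, is that the pointwise transform is an involution: writing $y=x/\|x\|^2$ one has $\|y\|=1/\|x\|$, whence $y/\|y\|^2=\|x\|^2\cdot(x/\|x\|^2)=x$. Therefore $\Theta$, composed with the same construction applied to bounded below sets, is the identity in both directions, so $\Theta$ is a cardinality-preserving bijection between the bounded above and the bounded below orthogonal sets of $H$, with inverse given by the same formula.

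Finally I would verify the normalization claim. For $x\in X$ and its image $y=x/\|x\|^2\in\Theta(X)$, dividing $y$ by its norm $1/\|x\|$ yields $y/\|y\|=\|x\|\cdot(x/\|x\|^2)=x/\|x\|$, so the associated orthonormal sets agree element-by-element and hence are equal. I do not expect any substantive obstacle: the entire content of the lemma is condensed into the involution identity $(x/\|x\|^2)\mapsto x$ together with the elementary remark that this transform inverts norms while leaving the induced unit vectors $x/\|x\|$ untouched; the only care required is the injectivity argument guaranteeing that no cardinality is lost.
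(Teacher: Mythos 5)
Your proposal is correct and follows essentially the same route as the paper: the same definition $\Theta(X)=\{x/\|x\|^2 : x\in X\}$, the same norm-inversion observation exchanging bounds $C$ and $1/C$, the same involution identity $\Theta(\Theta(X))=X$, and the same element-wise check that the induced orthonormal sets coincide. Your explicit injectivity argument (via linear independence of orthogonal sets) is a small elaboration the paper leaves implicit, but it is not a different method.
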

\begin{proof}
Given an orthogonal basis $X$ of $H$, let $\Theta(X):=\{\, \frac{x}{\|x\|^2}\colon x\in X\,\}$ is  an orthogonal family too. Of course $\Theta(\emptyset)=\emptyset$. Note that $X\simeq \Theta(X)$ under $x\mapsto \frac{x}{\|x\|^2}$. Assume that $X\not=\emptyset$. Since for each $x\in X$, $\|\frac{x}{\|x\|^2}\|=\frac{1}{\|x\|}$, $X$ is bounded above (resp. below) by $C$ if, and only if, $\Theta(X)$ is bounded below (resp. above) by $\frac{1}{C}$. Observe that in any case, $\Theta(\Theta(X))=X$ so the required bijection is obtained. It is clear that for each  orthogonal set $X$ of $H$, $\{\, \frac{x}{\|x\|}\colon x\in X\,\}$ and $\{\, \frac{x}{\|x\|}\colon x\in \Theta(X)\,\}$  are equal. 
\end{proof}

\subsection{$H^*$-algebras}

It is possible to characterize Hilbertian Frobenius semigroups using Ambrose's concept of $H^*$-algebras~\cite{Ambrose} or conversely to characterize commutative $H^*$-algebras in an involution-free way. Let $(H,\mu)$ be a (commutative) Hilbertian algebra and let $u\in H$. By a {\em $H^*$-adjoint} of $u$ is meant a member $v$ of $H$ such that $M_u^{\dagger}=M_v$, that is, for every $w,w'\in H$, $\langle uw,w'\rangle=\langle w,vw'\rangle$. $(H,\mu)$ is a {\em Hilbertian $H^*$-algebra} when every element of $H$ has a $H^*$-adjoint. 
\begin{proposition}\label{prop:hstar}
Let $(H,\mu)$ be a Hilbertian algebra. $(H,\mu)$ is a Hilbertian $H^*$-algebra if, and only if, $(H,\mu)$ a Hilbertian Frobenius semigroup.
\end{proposition}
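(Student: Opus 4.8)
The plan is to translate both properties into statements about the regular representation $M\colon H\to\mathcal{B}(H)$ and the Hilbert adjoints of its values. Recall from Example~\ref{ex:module_structures2}.\ref{item:1:ex:module_structures2} that for the commutative semigroup $(H,\mu)$ one has $\mu_{\mathsf{left}}=M=\mu_{\mathsf{right}}$, so that the $H^*$-adjoint condition asks precisely that each operator $M_u^{\dagger}$ again lie in the range of $M$, i.e. $M_u^{\dagger}=M_v$ for some $v\in H$. On the other side $(H,\mu)$ is Frobenius exactly when $\mu^{\dagger}$ is right $(H,\mu)$-linear, which by Lemma~\ref{lem:char_of_rightlinearmap} and Example~\ref{ex:module_structures2}.\ref{item:2:ex:module_structures2} amounts to $(id\hat{\otimes}_2 M_u)\circ\mu^{\dagger}=\mu^{\dagger}\circ M_u$ for every $u\in H$.

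First I would recast the Frobenius relation as a commutation relation between $M$ and $M^{\dagger}$. Evaluating the displayed identity at $v$ gives $(id\hat{\otimes}_2 M_u)(\mu^{\dagger}(v))=\mu^{\dagger}(uv)$; pairing both sides with an elementary tensor $a\otimes b$ and using $\langle M_u d,b\rangle=\langle d,M_u^{\dagger}b\rangle$ on the second factor, the left-hand side becomes $\langle\mu^{\dagger}(v),a\otimes M_u^{\dagger}b\rangle=\langle v,a\,(M_u^{\dagger}b)\rangle=\langle v,M_aM_u^{\dagger}b\rangle$, while the right-hand side is $\langle uv,ab\rangle=\langle v,M_u^{\dagger}M_ab\rangle$. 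Since $a,b,v$ are arbitrary, this shows that $(H,\mu)$ is Frobenius if, and only if, $M_u^{\dagger}M_a=M_aM_u^{\dagger}$ for all $u,a\in H$, that is, every $M_u^{\dagger}$ commutes with the commutative family $\{M_a\}_{a\in H}$.

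With this reformulation the implication from $H^*$-algebra to Frobenius is immediate: if $M_u^{\dagger}=M_v$, then $M_u^{\dagger}M_a=M_vM_a=M_{va}=M_{av}=M_aM_v=M_aM_u^{\dagger}$, using only commutativity and associativity of $\mu$, so the commutation relation, hence the Frobenius condition, holds.

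The converse is where the real work lies, since one must produce, for each $u$, an honest element $v\in H$ with $M_v=M_u^{\dagger}$; the abstract fact that $M_u^{\dagger}$ commutes with all $M_a$ does not by itself exhibit it as a multiplication operator. Here I would invoke the Structure Theorem~\ref{thm:stHFs}, which (as $J=A(H,\mu)$) yields the explicit formula $uw=\sum_{x\in G(H,\mu)}\langle u,x\rangle\langle w,x\rangle\,\tfrac{x}{\|x\|^2}$ of Remark~\ref{rem:description_of_HilbFrob}, where $G(H,\mu)$ is an orthogonal basis of $J^{\perp}$. I would then set $v:=\sum_{x\in G(H,\mu)}\overline{\langle u,x\rangle}\,\tfrac{x}{\|x\|^2}$, which converges in $H$ because the vectors $x/\|x\|$ are orthonormal and, by Bessel's inequality, $\sum_{x}|\langle u,x\rangle|^2/\|x\|^2=\sum_x|\langle u,x/\|x\|\rangle|^2\le\|u\|^2$. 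Since $\langle v,x\rangle=\overline{\langle u,x\rangle}$ for each $x\in G(H,\mu)$, computing $\langle M_vw,w'\rangle$ and $\langle w,M_uw'\rangle$ from the multiplication formula shows they coincide for all $w,w'$, so $M_v=M_u^{\dagger}$ and $v$ is the required $H^*$-adjoint of $u$. This establishes that $(H,\mu)$ is a Hilbertian $H^*$-algebra and completes the equivalence; the main obstacle is precisely this last construction, whose control over convergence and over the values of $M_u^{\dagger}$ is supplied by the orthogonal-basis description of $J^{\perp}$.
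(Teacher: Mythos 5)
Your proof is correct. The hard direction (Frobenius $\Rightarrow$ $H^*$) is essentially the paper's own argument: the paper also invokes the Structure Theorem via Remark~\ref{rem:description_of_HilbFrob} and defines the adjoint by the same formula, $u^*=\sum_{x\in G(H,\mu)}\frac{1}{\|x\|^2}\langle x,u\rangle x$ (identical to your $v$, since $\langle x,u\rangle=\overline{\langle u,x\rangle}$), then verifies $\langle uv,w\rangle=\langle v,u^*w\rangle$ by the same orthogonal-expansion computation; your convergence check via Bessel is a point the paper leaves implicit. Where you genuinely depart is the easy direction: the paper proves it by a direct chain of equalities, pairing $\mu^{\dagger}(uw)$ against elementary tensors $w'\otimes w''$ and using an auxiliary identity $(M_u\hat{\otimes}_2 id)\circ\mu^{\dagger}=(\mu\hat{\otimes}_2 id)\circ\alpha^{-1}\circ(id\hat{\otimes}_2\mu^{\dagger})(u\otimes -)$ established with an orthonormal basis, whereas you first isolate a clean operator-theoretic characterization --- $(H,\mu)$ is Frobenius if, and only if, every $M_u^{\dagger}$ commutes with every $M_a$ --- after which the implication from the $H^*$-property is a one-line consequence of commutativity and associativity. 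The underlying mechanics (density of elementary tensors, moving adjoints across one tensor factor) are the same, but your reformulation buys something the paper's direct verification does not make explicit: it exhibits Proposition~\ref{prop:normality} (normality of all $M_u$, hence $J=A(H,\mu)$) as the special case $a=u$, so the commutation characterization unifies the Frobenius condition, the $H^*$-property, and normality in a single statement.
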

\begin{proof}
For each $u,w\in H$, $(M_u\hat{\otimes}_2 id)(\mu^{\dagger}(w))=(\mu\hat{\otimes}_2 id)(\alpha^{-1}((id\hat{\otimes}_2 \mu^{\dagger})(u\otimes w))$. Indeed let $X$ be an orthonormal basis of $H$. Then, $\mu^{\dagger}(w)=
\sum_{x,y\in X}\langle \mu^{\dagger}(w),x\otimes y\rangle x\otimes y$. Therefore, $(\mu\hat{\otimes}_2 id)(\alpha^{-1}((id\hat{\otimes}_2 \mu^{\dagger})(u\otimes w))=\sum_{x,y\in X}\langle\mu^{\dagger}(w),x\otimes y\rangle (ux)\otimes y=(M_u\hat{\otimes}_2 id)(\mu^{\dagger}(w))$. 

Let us assume that $(H,\mu)$ is a Hilbertian $H^*$-algebra. Let $u\in H$ and let $v$ be a $H^*$-adjoint of $u$. Let $w,w',w''\in H$. One has 
\begin{equation}
\begin{array}{lll}
\langle \mu^{\dagger}(uw),w'\otimes w''\rangle&=&\langle uw,w'w''\rangle\\
&=&\langle w,vw'w''\rangle\\
&=&\langle w,M_v(w')w''\rangle\\
&=&\langle w,\mu((M_v\hat{\otimes}_2 id)(w'\otimes w''))\rangle\\
&=&\langle\mu^{\dagger}(w),(M_v\hat{\otimes}_2 id)(w'\otimes w'')\rangle\\
&=&\langle (M_v^{\dagger}\hat{\otimes}_2 id)(\mu^{\dagger}(w)),w'\otimes w''\rangle\\
&=&\langle (M_u\hat{\otimes}_2 id)(\mu^{\dagger}(w)),w'\otimes w''\rangle\\
&=&\langle (\mu\hat{\otimes}_2 id)(\alpha^{-1}((id\hat{\otimes}_2 \mu^{\dagger})(u\otimes w)),w'\otimes w''\rangle.\\
&&\mbox{(by the above)}
\end{array}
\end{equation}
Therefore $\mu^{\dagger}(uw)- (\mu\hat{\otimes}_2 id)(\alpha^{-1}((id\hat{\otimes}_2 \mu^{\dagger})(u\otimes w))\in (H\otimes_{\mathbb{C}}H)^{\perp}=0$, and $(H,\mu)$ satisfies the Frobenius condition. 

Let us assume that $(H,\mu)$ is a Hilbertian Frobenius semigroup. Let $u\in H$. Let us define $u^*:=\sum_{x\in G(H,\mu)}\frac{1}{\|x\|^2}\langle x,u\rangle x$. Of course $p_J(u^*)=0$. Now, let $u,v,w\in H$. Then, 
\begin{equation}
\begin{array}{lll}
\langle uv,w\rangle&=&\langle p_{J^{\perp}}(u)p_{J^{\perp}}(v),w\rangle\\
&&\mbox{(according to Remark~\ref{rem:description_of_HilbFrob})}\\
&=&\langle p_{J^{\perp}}(u)p_{J^{\perp}}(v),p_{J^{\perp}}(w)\rangle\\
&&\mbox{(as $J^{\perp}$ is a subalgebra by Theorem~\ref{thm:stHFs})}\\
&=&\sum_{x\in G(H,\mu)}\frac{1}{\|x\|^2}\langle u,x\rangle\langle v,x\rangle\overline{\langle w,x\rangle}\\
&=&\sum_{x\in G(H,\mu)}\frac{1}{\|x\|^2}\langle v,x\rangle\overline{\langle x,u\rangle\langle w,x\rangle}\\
&=&\langle p_{J^{\perp}}(v),u^*p_{J^{\perp}}(w)\rangle\\
&=&\langle p_{J^{\perp}}(v),u^*w\rangle\\
&=&\langle v,u^*w\rangle.\\
&&\mbox{(as $u^*w\in J^{\perp}$)}
\end{array}
\end{equation}
Therefore $(H,\mu)$ is a Hilbertian $H^*$-algebra.
\end{proof}


\begin{remark}
The fact that a Hilbertian $H^*$-algebra is a Hilbertian Frobenius semigroup was already noticed in~\cite[Lemma~6, p.~9]{Heunen}.
\end{remark}

\begin{corollary}\label{cor:bij_ideals_subcoalg}
Let $(H,\mu)$ be a Hilbertian Frobenius semigroup. 
$(-)^{\perp}$ provides an order-reversing involution on the set of closed ideals of $(H,\mu)$. In particular a closed subspace of $H$ is an ideal if, and only if, it is a subcoalgebra. 
\end{corollary}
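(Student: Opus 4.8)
The plan is to reduce the stated equivalence to two symmetric facts about orthogonal complements, and to isolate the single place where the Frobenius hypothesis is really needed. On the lattice of closed subspaces of $H$ the assignment $V\mapsto V^{\perp}$ is \emph{already} an order-reversing involution (since $V\subseteq W$ forces $W^{\perp}\subseteq V^{\perp}$, and $V^{\perp\perp}=V$ for closed $V$); so the first assertion reduces to showing that $V^{\perp}$ is a closed ideal whenever $V$ is, and the ``in particular'' clause is the equivalence ``$V$ is an ideal iff $V$ is a subcoalgebra''. I would therefore establish (a) a closed subspace $V$ is a subcoalgebra iff $V^{\perp}$ is an ideal, and (b) $V$ is an ideal iff $V^{\perp}$ is an ideal. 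Chaining them gives $V\text{ ideal}\Leftrightarrow V^{\perp}\text{ ideal}\Leftrightarrow V\text{ subcoalgebra}$, which is everything.

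For (a), which does not even use the Frobenius condition, I would apply Lemma~\ref{lem:restriction} to $f=\mu^{\dagger}\colon H\to H\hat{\otimes}_2 H$ with the closed subspaces $V\subseteq H$ and $V\hat{\otimes}_2 V\subseteq H\hat{\otimes}_2 H$. The definition of a subcoalgebra reads $\mu^{\dagger}(V)\subseteq V\hat{\otimes}_2 V$, which by Lemma~\ref{lem:restriction} is equivalent to $(\mu^{\dagger})^{\dagger}\big((V\hat{\otimes}_2 V)^{\perp}\big)\subseteq V^{\perp}$, that is, since $(\mu^{\dagger})^{\dagger}=\mu$, to $\mu\big((V\hat{\otimes}_2 V)^{\perp}\big)\subseteq V^{\perp}$. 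Writing $W=V^{\perp}$ so that $W^{\perp}=V$, this last inclusion is precisely the characterization $\mu\big((W^{\perp}\hat{\otimes}_2 W^{\perp})^{\perp}\big)\subseteq W$ of a closed ideal recalled in Section~\ref{sec:preliminaries}. Hence $V$ is a subcoalgebra iff $V^{\perp}$ is an ideal.

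For (b) — the step where I expect the real work to lie — I would first note that a closed subspace $V$ is an ideal exactly when $M_u(V)\subseteq V$ for every $u\in H$ (commutativity makes left and right multiplication coincide). The Frobenius hypothesis now enters through Proposition~\ref{prop:hstar}: as $(H,\mu)$ is a Hilbertian $H^*$-algebra, every $u$ admits an $H^*$-adjoint $u^*$ with $M_u^{\dagger}=M_{u^*}$, so the adjoint of a multiplication operator is again a multiplication operator, and consequently the family $\{M_u:u\in H\}$ is stable under taking Hilbert adjoints (being an involution, it equals its own adjoint family). By Lemma~\ref{lem:restriction}, $M_u(V)\subseteq V$ is equivalent to $M_u^{\dagger}(V^{\perp})\subseteq V^{\perp}$; letting $u$ run over $H$ and invoking this stability, the condition ``$M_u^{\dagger}(V^{\perp})\subseteq V^{\perp}$ for all $u$'' says exactly that $V^{\perp}$ is invariant under every multiplication operator, i.e.\ that $V^{\perp}$ is an ideal. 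This yields (b).

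Assembling (a) and (b) proves the ``in particular'' statement and shows that $(-)^{\perp}$ carries closed ideals to closed ideals; together with the general properties of orthogonal complement this gives the full claim. The only genuine obstacle is (b): everything hinges on the family of multiplication operators being closed under adjunction, which is precisely the content of the $H^*$-algebra structure forced by the Frobenius relation (equivalently, the normality of the $M_u$ in Proposition~\ref{prop:normality}); without it there is no reason for $V^{\perp}$ to remain an ideal.
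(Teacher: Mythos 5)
Your proof is correct and follows essentially the same route as the paper: the Frobenius hypothesis enters exactly where the paper uses it, namely via Proposition~\ref{prop:hstar}, to show that the orthocomplement of a closed ideal is again a closed ideal (your operator-level argument with $M_u^{\dagger}=M_{u^*}$ and Lemma~\ref{lem:restriction} is the paper's elementwise computation $\langle ux,w\rangle=\langle u,yw\rangle=0$ in disguise). The only difference is presentational: where the paper simply cites Lemmas~14 and~15 of~\cite{Poinsot2019} for the duality ``$I$ is a closed ideal iff $I^{\perp}$ is a closed subcoalgebra'', you re-derive that duality from Lemma~\ref{lem:restriction} applied to $\mu^{\dagger}$ together with the characterization of closed ideals recalled in Section~\ref{sec:preliminaries}, which makes your argument self-contained but not substantively different.
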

\begin{proof}
The first statement follows from Proposition~\ref{prop:hstar} and the existence of $H^*$-adjoints: let $I$ be an ideal and $u\in I^{\perp}$. Let $x\in H$  with $H^*$-adjoint $y$, and let $w\in I$. Then, $\langle ux,w\rangle=\langle u,yw\rangle=0$. Whence $I^{\perp}$ is an ideal too. The second statement is due to~\cite[Lemmas~14 and~15, p.~14]{Poinsot2019} which jointly assert  that $I$ is a closed ideal if, and only if, $I^{\perp}$ is a closed subcoalgebra.  
\end{proof}

\subsection{Approximate (co)units}
Any semisimple commutative Hilbertian Frobenius semigroup $(H,\mu)$ has an approximate unit. (This was already noticed in~\cite{Heunen} for $H$ separable and $\mu$ isometric, under the assumption, redundant by Theorem~\ref{thm:semisimplicityforFrob}, of semisimplicity of $(H,\mu)$.)  

To see that, let us first denote by $\mathfrak{P}_{\mathsf{fin}}(X)$ the set of all finite subsets of some set $X$ directed under inclusion. Let $(H,\mu)$ be a commutative Hilbertian Frobenius semigroup.  For $F\in \mathfrak{P}_{\mathsf{fin}}(G(H,\mu))$, let $e_F:=\sum_{x\in F}\frac{x}{\|x\|^2}$. Then, for each $u\in H$, $u$ is the limit of the directed net $(ue_F)_F=(\sum_{x\in F}\langle u,\frac{x}{\|x\|}\rangle \frac{x}{\|x\|})_F$, that is, for each $\epsilon >0$, there exists $F_{\epsilon}\in \mathfrak{P}_{\mathsf{fin}}(G(H,\mu))$ such that for all $F\in\mathfrak{P}_{\mathsf{fin}}(G(H,\mu))$ with $F_{\epsilon}\subseteq F$, $\|p_{J^{\perp}}(u)-ue_F\|<\epsilon$. (This is clear since $p_{J^{\perp}}(u)$ is precisely the sum of the summable family $(\langle u,\frac{x}{\|x\|}\rangle \frac{x}{\|x\|})_{x\in G(H,\mu)}$.) 

More generally let $(H,\mu)$ be a Hilbertian semigroup and let  $(e_{\lambda})_{\lambda\in \Lambda}$ be a directed net on $H$, that is, $\Lambda$ is a directed set and $e_\lambda\in H$, $\lambda\in \Lambda$. $(e_\lambda)_\lambda$ is an {\em approximate unit} when $ue_\lambda\to u$ in the norm topology, for each $u\in H$. By its very definition the existence of an approximate unit forces the multiplication $\mu$ to have a dense range. 

Let $(H,\mu)$ be a Hilbertian semigroup and let $(H\xrightarrow{\epsilon_\lambda}\mathbb{C})_{\lambda\in \Lambda}$ where $\Lambda$ is a directed set. Call $(\epsilon_\lambda)_\lambda$ an {\em approximate counit} when for each $u\in H$, $\|(id\otimes \epsilon_{\lambda})(\mu^{\dagger}(u))-u\otimes 1\|_{H\hat{\otimes}_2 H}\to 0$. The existence of such an approximate counit forces $\mu^{\dagger}$ to be one-to-one. Conversely given a Hilbertian Frobenius semigroup $(H,\mu)$, let $\epsilon_F(u):=\sum_{x\in F}\langle u,\frac{x}{\|x\|}\rangle$ for $F\in\mathfrak{P}_{\mathsf{fin}}(G(H,\mu))$ and $u\in H$. Then, $(id\hat{\otimes}_2\epsilon_F)(\mu^{\dagger}(u))=\sum_{x\in F}\frac{x}{\|x\|}\otimes \langle u,\frac{x}{\|x\|}\rangle\to p_{J^{\perp}}(u)\otimes 1$. Whence if $(H,\mu)$ is semisimple, then $\epsilon_F$ is an approximate counit. 

All of this may be combined as a corollary of Theorem~\ref{thm:semisimplicityforFrob}.
\begin{corollary}\label{cor:semisimplicityforFrob}
A Hilbertian Frobenius semigroup is semisimple if, and only if, it has an approximate unit if, and only if, it has an approximate counit. In particular each special Hilbertian Frobenius semigroup has an approximate unit and an approximate counit. 
\end{corollary}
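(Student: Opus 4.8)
The plan is to assemble into a single circular argument the implications already isolated in the discussion preceding the statement, using Theorem~\ref{thm:semisimplicityforFrob} as the pivot. That theorem already supplies the equivalence of semisimplicity with the density of the range of $\mu$ and with the injectivity of $\mu^{\dagger}$, so it suffices to interpose the two approximate structures between these purely operator-theoretic conditions.

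For the forward direction I would invoke the explicit constructions carried out above: if $(H,\mu)$ is semisimple then $G(H,\mu)$ is an orthogonal basis of $H=J^{\perp}$, so the net $(e_F)_{F\in\mathfrak{P}_{\mathsf{fin}}(G(H,\mu))}$ with $e_F=\sum_{x\in F}\frac{x}{\|x\|^2}$ satisfies $ue_F\to p_{J^{\perp}}(u)=u$ for every $u\in H$ and is therefore an approximate unit; symmetrically the net $(\epsilon_F)_F$ with $\epsilon_F(u)=\sum_{x\in F}\langle u,\frac{x}{\|x\|}\rangle$ is an approximate counit. Thus semisimplicity implies the existence of both structures.

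For the converse I would use that each approximate structure forces the corresponding operator-theoretic condition, and then close the loop through the theorem. Indeed, if $(e_\lambda)_\lambda$ is an approximate unit then $u=\lim_\lambda ue_\lambda$ lies in the closure of the range of $\mu$ for every $u\in H$, so $\mu$ has dense range and $(H,\mu)$ is semisimple by Theorem~\ref{thm:semisimplicityforFrob}; likewise, if $(\epsilon_\lambda)_\lambda$ is an approximate counit and $\mu^{\dagger}(u)=0$, then $u\otimes 1=\lim_\lambda(id\hat{\otimes}_2\epsilon_\lambda)(\mu^{\dagger}(u))=0$, whence $u=0$, so $\mu^{\dagger}$ is one-to-one and again semisimplicity follows from the theorem. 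Together these give the three claimed equivalences.

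Finally the ``in particular'' clause is immediate: by the last assertion of Theorem~\ref{thm:semisimplicityforFrob} every special Hilbertian Frobenius semigroup is semisimple, hence possesses both an approximate unit and an approximate counit by the equivalences just established. I do not expect a genuine obstacle here, since all the analytic content was discharged in the constructions preceding the statement; the only point requiring care is to match ``dense range of $\mu$'' and ``injectivity of $\mu^{\dagger}$'' precisely to the two conditions of Theorem~\ref{thm:semisimplicityforFrob}, which are stated there verbatim.
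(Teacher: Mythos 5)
Your proposal is correct and follows essentially the same route as the paper: the paper's "proof" of this corollary is exactly the discussion preceding it (the explicit nets $e_F=\sum_{x\in F}\frac{x}{\|x\|^2}$ and $\epsilon_F(u)=\sum_{x\in F}\langle u,\frac{x}{\|x\|}\rangle$ giving the approximate (co)unit in the semisimple case, and the observations that an approximate unit forces $\mu$ to have dense range while an approximate counit forces $\mu^{\dagger}$ to be injective), closed up through the equivalences of Theorem~\ref{thm:semisimplicityforFrob}, with the special case handled by that theorem's last assertion. Your assembly of these pieces, including the two converse implications, matches the paper's argument step for step.
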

%
%
%

\section{Reformulation of the Structure Theorem as an equivalences of categories}\label{sec:reformulation}

In this section is showed that the splitting of an Hilbertian Frobenius semigroup into the orthogonal direct sum of a semisimple and a radical Hilbertian Frobenius semigroups may in fact be recasted into an  equivalence between ${}_c\mathbf{FrobSem}(\mathbb{Hilb})$ and the product category ${}_{\mathsf{semisimple},c}\mathbf{FrobSem}(\mathbb{Hilb})\times\mathbf{Hilb}$. 

Let $(H,\mu)$ be a commutative Hilbertian Frobenius algebra. The closure $\overline{ran(\mu)}$ of the range of $\mu$ is also equal to $\overline{H^2}$, with $H^2:=\langle xy\colon x,y\in H\rangle$.  Now $(H^2)^{\perp}=A(H,\mu)$ as it follows easily from the existence of $H^*$-adjoints. Consequently, $\overline{ran(\mu)}=J^{\perp}$ by Proposition~\ref{prop:normality}. One now defines $P(H,\mu):=(J^{\perp},\mu_{|_{J^{\perp}}})$ (see Corollary~\ref{cor:stHFs}).

\begin{proposition}
$(H,\mu)\mapsto P(H,\mu)$ extends to a functor $P$ from ${}_c\mathbf{FrobSem}(\mathbb{Hilb})$ to ${}_{\mathsf{semisimple},c}\mathbf{FrobSem}(\mathbb{Hilb})$ which is a right adjoint left inverse of the full embedding functor $E\colon {}_{\mathsf{semisimple},c}\mathbf{FrobSem}(\mathbb{Hilb})\hookrightarrow {}_c\mathbf{FromSem}(\mathbb{Hilb})$. 
\end{proposition}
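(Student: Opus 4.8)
The plan is to extend $P$ to morphisms, verify functoriality together with the left-inverse identity, and then establish the adjunction via the universal property of a counit. The one substantive input, recalled just before the statement, is the identity $\overline{ran(\mu)}=\overline{H^2}=J^{\perp}$.

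First I would define $P$ on morphisms. The key point is that every semigroup morphism $f\colon(H,\mu)\to(K,\gamma)$ automatically satisfies $f(J(H,\mu)^{\perp})\subseteq J(K,\gamma)^{\perp}$: since $f$ preserves products one has $f(H^2)\subseteq K^2$, and as $f$ is bounded, $f(\overline{H^2})\subseteq\overline{K^2}$; invoking $\overline{H^2}=J(H,\mu)^{\perp}$ and $\overline{K^2}=J(K,\gamma)^{\perp}$ gives the inclusion. I would therefore let $P(f)$ be the restriction and corestriction of $f$ to a map $J(H,\mu)^{\perp}\to J(K,\gamma)^{\perp}$. Because $J^{\perp}$ is a subalgebra on both sides (Theorem~\ref{thm:stHFs}), $P(f)$ is again a semigroup morphism: for $x,y\in J(H,\mu)^{\perp}$ one has $P(f)(\mu_{|}(x\otimes y))=f(xy)=f(x)f(y)=\gamma_{|}(P(f)(x)\otimes P(f)(y))$. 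Functoriality is then immediate from compatibility of restriction and corestriction with composition, and Corollary~\ref{cor:stHFs} guarantees $P(H,\mu)$ semisimple, so $P$ lands in ${}_{\mathsf{semisimple},c}\mathbf{FrobSem}(\mathbb{Hilb})$. The left-inverse identity $P\circ E=\mathrm{id}$ is immediate as well: a semisimple $(S,\sigma)$ has $J(S,\sigma)=(0)$, hence $J(S,\sigma)^{\perp}=S$ and $P(S,\sigma)=(S,\sigma)$, while restriction and corestriction to the whole space leaves a morphism between semisimple objects unchanged.

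For the adjunction $E\dashv P$ I would exhibit an explicit counit and verify its universal property. The candidate counit at $(H,\mu)$ is the inclusion $\varepsilon_{(H,\mu)}:=i_{J^{\perp}}\colon(J^{\perp},\mu_{|})\hookrightarrow(H,\mu)$, a semigroup morphism precisely because $J^{\perp}$ is a subalgebra. Its universal property reads: for every semisimple $(S,\sigma)$ and every morphism $f\colon E(S,\sigma)\to(H,\mu)$ there is a unique $\bar f\colon(S,\sigma)\to P(H,\mu)$ with $\varepsilon_{(H,\mu)}\circ\bar f=f$. Both existence and uniqueness follow from semisimplicity of the domain: since $\overline{ran(\sigma)}=S$, one gets $f(S)=f(\overline{S^2})\subseteq\overline{H^2}=J^{\perp}$, so $f$ factors through $i_{J^{\perp}}$ by the unique corestriction $\bar f$, itself a semigroup morphism by the same subalgebra argument. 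This universal property is exactly the assertion that $P$ is right adjoint to $E$, with unit the identity (consistent with $P\circ E=\mathrm{id}$).

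The only genuinely nontrivial step is the containment $f(J(H,\mu)^{\perp})\subseteq J(K,\gamma)^{\perp}$, together with its sharper form $f(S)\subseteq J^{\perp}$ for semisimple domain; everything else is formal. Both rest solely on $\overline{ran(\mu)}=\overline{H^2}=J^{\perp}$ combined with boundedness and multiplicativity of $f$, after which the functoriality, the left-inverse identity, and the adjunction all fall out with no further computation.
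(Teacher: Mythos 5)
Your proposal is correct and follows essentially the same route as the paper: $P(f)$ is defined as the (co)restriction of $f$ justified by $f(\overline{H^2})\subseteq\overline{K^2}$ together with $\overline{H^2}=J^{\perp}$, and the adjunction is established through the universal property of the inclusion $i_{J^{\perp}}$ as counit, which is precisely the paper's bijection $f\mapsto f^{\sharp}:=i_{P(K,\gamma)}\circ f$ with its uniqueness argument. The only cosmetic difference is that you check multiplicativity of $P(f)$ directly from the subalgebra property of $J^{\perp}$, whereas the paper writes $P(f)=\pi_{P(K,\gamma)}\circ f\circ i_{P(H,\mu)}$ and invokes that the orthogonal projection $\pi_{P(K,\gamma)}=i_{P(K,\gamma)}^{\dagger}$ is itself a semigroup morphism (via the subcoalgebra property); both arguments are valid.
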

\begin{proof}
Let $(H,\mu)\xrightarrow{f}(K,\gamma)$ be a semigroup map between Hilbertian Frobenius semigroups. As $f(H^2)\subseteq K^2$, it follows that $f(\overline{H^2})\subseteq \overline{f(H^2)}\subseteq \overline{K^2}$. Whence $P(H,\mu)\xrightarrow{f}P(K,\gamma)$ is defined as the co-restriction of $f$. In particular, $P(f)=\pi_{P(K,\gamma)}\circ f\circ i_{P(H,\mu)}$ and thus $P(f)$ is bounded. By Corollary~\ref{cor:stHFs}, $P(H,\mu)=J^{\perp}$ is a semisimple Hilbertian Frobenius semigroup, under the co-restriction of $\mu$. $i_{P(H,\mu)}$ is a semigroup morphism since $J(H,\mu)^{\perp}$ is a subalgebra, and $\pi_{P(K,\gamma)}=i_{P(K,\gamma)}^{\dagger}$ is a semigroup morphism too since $J(K,\gamma)^{\perp}$ is a subcoalgebra, so that $P(f)$ is indeed a morphism of semigroups, and thus one obtains the desired functor. $P$ is of course a left inverse of $E$. 

Now let $(H,\mu)$ be a semisimple Hilbertian Frobenius semigroup and let $(K,\gamma)$ be a  Hilbertian Frobenius semigroup. Let $(H,\mu)\xrightarrow{f}P(K,\gamma)$ be a morphism of semigroups. Define $f^{\sharp}:=(H,\mu)\xrightarrow{f}P(K,\gamma)\xrightarrow{i_{P(K,\gamma)}}(K,\gamma)$. Then $P(f^{\sharp})=f$ as $i_{P(K,\gamma)}\circ P(f^{\sharp})=f^{\sharp}\circ i_{P(H,\mu)}=f^{\sharp}=i_{P(K,\gamma)}\circ f$ (as $P(H,\mu)=(H,\mu)$) and  $i_{P(K,\gamma)}$ is a monomorphism. Now let $(H,\mu)=P(H,\mu)\xrightarrow{g}(K,\gamma)$ such that $P(g)=f$. Then  $f^{\sharp}=i_{P(K,\gamma)}\circ f=i_{P(K,\gamma)}\circ P(g)=g\circ i_{P(H,\mu)}=g$.
\end{proof}


One may also define a functor $J\colon {}_c\mathbf{FrobSem}(\mathbb{Hilb})\to\mathbf{Hilb}$ as follows. Let $f\colon (H,\mu)\to (K,\gamma)$ be a semigroup map. Then, $f^{\dagger}\colon (K,\gamma^{\dagger})\to (H,\mu^{\dagger})$ is a cosemigroup map, and thus $f^{\dagger}(G(K,\gamma))\subseteq G(H,\mu)\cup\{\, 0\,\}$. By linearity and continuity, $f^{\dagger}(J(K,\gamma)^{\perp})\subseteq J(H,\mu)^{\perp}$ and thus $f(J(H,\mu))\subseteq J(K,\gamma)$ by Lemma~\ref{lem:restriction}. Then let $J(f)$ be the co-restriction of $f$ thus obtained. Clearly this provides a functor $J\colon {}_c\mathbf{FrobSem}(\mathbb{Hilb})\to\mathbf{Hilb}$. 

In the opposite direction let $T\colon \mathbf{Hilb}\to {}_c\mathbf{FrobSem}(\mathbb{Hilb})$ be the full embedding functor, $T(H\xrightarrow{f}K):=(H,0)\xrightarrow{f}(K,0)$. 
\begin{lemma}\label{lem:trivial_is_equivalence}
The full subcategory ${}_{\mathsf{radical},c}\mathbf{FrobSem}(\mathbb{Hilb})$ of ${}_c\mathbf{FrobSem}(\mathbb{Hilb})$ spanned by the  radical commutative Hilbertian Frobenius algebras, is isomorphic to $\mathbf{Hilb}$. 
\end{lemma}
\begin{proof}
The co-restriction $\mathbf{Hilb}\xrightarrow{T}{}_{\mathsf{radical},c}\mathbf{FrobSem}(\mathbb{Hilb})$ of $T$ is the inverse of the obvious forgetful functor ${}_{\mathsf{radical},c}\mathbf{FrobSem}(\mathbb{Hilb})\xrightarrow{|-|}\mathbf{Hilb}$.
\end{proof}

\begin{remark}
The functor $P$ should not be confused with the functor  $J(-)^{\perp}$ occurring in~\cite[Theorem~24, p.~17]{Poinsot2019}. The functor $J$ as introduced above corresponds to $|-|\circ J$, where $J$ is as in~\cite[Theorem~24, p.~17]{Poinsot2019} and $|-|$ is the forgetful functor from radical Frobenius algebras to Hilbert spaces.  
\end{remark}

In the two results below one identifies external and internal orthogonal direct sums.

Let $(H,\mu)$ and $(K,\gamma)$ be two commutative Hilbertian algebras. By additivity of $\hat{\otimes}_2$, $(H\oplus_2 K)\hat{\otimes}_2 (H\oplus_2 K)$ has the following coproduct presentation.
\begin{equation}
\xymatrix{
H\hat{\otimes}_2 H \ar[rd]^{i_H\hat{\otimes}_2 i_H}&&H\hat{\otimes}_2 K\ar[ld]_{i_H\hat{\otimes}_2 i_K}\\
&(H\oplus_2 K)\hat{\otimes}_2 (H\oplus_2 K)&\\
\ar[ru]_{i_{J}\hat{\otimes}_2 i_H}K\hat{\otimes}_2 H &&K\hat{\otimes}_2 K \ar[lu]^{i_K\hat{\otimes}_2 i_K}
}
\end{equation}
\begin{proposition}\label{prop:construction_of_Frob}
Let $(H,\mu)$ and $(K,\gamma)$ be Hilbertian semigroups. Let $\rho\colon (H\oplus_2 K)\hat{\otimes}_2 (H\oplus_2 K)\to H\oplus_2 K$ be defined by $\rho\circ (i_H\hat{\otimes}_2 i_H):=i_H\circ \mu$, $\rho\circ (i_K\hat{\otimes}_2 i_K)=i_K\circ \gamma$ and $\rho\circ ((i_{H}\hat{\otimes}_2 i_K)\oplus_2(i_{K}\hat{\otimes}_2i_H))=0$. Then, $(H\oplus_2 K,\rho)$ is a Hilbertian algebra, $(H,\mu)\xrightarrow{i_H}(H\oplus_2 K,\rho)\xleftarrow{i_K}(K,\gamma)$ are morphisms of semigroups, and $\rho^{\dagger}\circ i_H=(i_{H}\hat{\otimes}_2 i_H)\circ \mu^{\dagger}$, $\rho^{\dagger}\circ i_K=(i_K\hat{\otimes}_2 i_K)\circ \gamma^{\dagger}$, that is, $(H,\mu^{\dagger})\xrightarrow{i_H}(H\oplus_2 K,\rho^{\dagger})\xleftarrow{i_K}(K,\gamma^{\dagger})$ are morphisms of cosemigroups as well.

Moreover if both $(H,\mu)$ and $(K,\gamma)$ are Frobenius, then so is $(H\oplus_2 K,\rho)$, and in this case, $J(H\oplus_2 K,\rho)=J(H,\mu)\oplus_2 J(K,\gamma)$ and $J(H\oplus_2 H,\rho)^{\perp}=J(H,\mu)^{\perp}\oplus_2 J(K,\gamma)^{\perp}$. In particular, if $(H,\mu)$ is semisimple and $(K,\gamma)$ is radical, then $J(H\oplus_2 K,\rho)=K$ and $J(H\oplus_2 K,\rho)^{\dagger}=H$.
\end{proposition}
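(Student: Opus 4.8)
The plan is to exploit the orthogonal block decomposition
$(H\oplus_2 K)\hat{\otimes}_2(H\oplus_2 K)=(H\hat{\otimes}_2 H)\oplus_2(H\hat{\otimes}_2 K)\oplus_2(K\hat{\otimes}_2 H)\oplus_2(K\hat{\otimes}_2 K)$ furnished by additivity of $\hat{\otimes}_2$ and the coproduct presentation displayed just above the statement. First I would observe that, on elements, the three defining relations for $\rho$ amount to $\rho_{\mathsf{bil}}((a,b),(a',b'))=(\mu_{\mathsf{bil}}(a,a'),\gamma_{\mathsf{bil}}(b,b'))$; thus $(H\oplus_2 K,\rho_{\mathsf{bil}})$ is the direct product of the Banach algebras underlying $(H,\mu)$ and $(K,\gamma)$, and commutativity and associativity are inherited componentwise, giving the Hilbertian algebra claim. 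The relation $\rho\circ(i_H\hat{\otimes}_2 i_H)=i_H\circ\mu$ is literally the statement that $i_H$ is a semigroup morphism (Lemma~\ref{lem:semigroupiso}), and likewise for $i_K$.

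Next I would compute $\rho^{\dagger}$ by taking Hilbert adjoints of the three defining relations, using $i_H^{\dagger}=\pi_H$, $i_K^{\dagger}=\pi_K$, $\pi_H\circ i_H=id$, and $\pi_H\circ i_K=0$. Decomposing $\rho^{\dagger}\circ i_H$ into its four blocks via the projections $\pi_H\hat{\otimes}_2\pi_H$, etc., shows it is supported on the $H\hat{\otimes}_2 H$ block and there equals $\mu^{\dagger}$, yielding $\rho^{\dagger}\circ i_H=(i_H\hat{\otimes}_2 i_H)\circ\mu^{\dagger}$, and symmetrically $\rho^{\dagger}\circ i_K=(i_K\hat{\otimes}_2 i_K)\circ\gamma^{\dagger}$. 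This is exactly the assertion that $i_H,i_K$ are cosemigroup morphisms; in particular $\rho^{\dagger}$ is itself block-diagonal, sending $H$ into $H\hat{\otimes}_2 H$ by $\mu^{\dagger}$ and $K$ into $K\hat{\otimes}_2 K$ by $\gamma^{\dagger}$.

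The crux is the Frobenius claim, for which I would use the characterisation (Remark following Example~\ref{ex:module_structures2}) that a commutative Hilbertian algebra is Frobenius if, and only if, its comultiplication is right module-linear, together with Lemma~\ref{lem:char_of_rightlinearmap} and the identifications $r_{\mathsf{right}}(u)=M_u$ on $H\oplus_2 K$ and $r_{\mathsf{right}}(u)=id\hat{\otimes}_2 M_u$ on the tensor square (Example~\ref{ex:module_structures2}). Thus I must verify $(id\hat{\otimes}_2 M_u)\circ\rho^{\dagger}=\rho^{\dagger}\circ M_u$ for every $u=i_H(a)+i_K(b)$. Here $M_u$ is block-diagonal with blocks $M^{H}_{a}$ and $M^{K}_{b}$ (the multiplication operators of $(H,\mu)$ and $(K,\gamma)$), since $M_u\circ i_H=i_H\circ M^{H}_{a}$ and $M_u\circ i_K=i_K\circ M^{K}_{b}$. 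Combining this block-diagonality of $M_u$ with that of $\rho^{\dagger}$ from the previous step, the required identity restricts on the $H$-summand to $(id\hat{\otimes}_2 M^{H}_{a})\circ\mu^{\dagger}=\mu^{\dagger}\circ M^{H}_{a}$ and on the $K$-summand to $(id\hat{\otimes}_2 M^{K}_{b})\circ\gamma^{\dagger}=\gamma^{\dagger}\circ M^{K}_{b}$; these hold precisely because $(H,\mu)$ and $(K,\gamma)$ are Frobenius (same Remark and Lemma~\ref{lem:char_of_rightlinearmap}). Hence $\rho^{\dagger}$ is right-linear and $(H\oplus_2 K,\rho)$ is Frobenius. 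I expect this step to be the main obstacle, since it requires keeping the two module structures and the block-diagonality aligned, even though it collapses cleanly once set up.

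Finally, the sum being Frobenius, Proposition~\ref{prop:normality} gives $J(H\oplus_2 K,\rho)=A(H\oplus_2 K,\rho)$, and I would compute the annihilator directly: from $\rho_{\mathsf{bil}}((a,b),(a',b'))=(\mu_{\mathsf{bil}}(a,a'),\gamma_{\mathsf{bil}}(b,b'))$ one reads off $A(H\oplus_2 K,\rho)=A(H,\mu)\oplus_2 A(K,\gamma)$, which by Proposition~\ref{prop:normality} applied to each summand equals $J(H,\mu)\oplus_2 J(K,\gamma)$. Taking orthogonal complements inside the orthogonal direct sum then yields $J(H\oplus_2 K,\rho)^{\perp}=J(H,\mu)^{\perp}\oplus_2 J(K,\gamma)^{\perp}$. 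The particular case is immediate: semisimplicity of $(H,\mu)$ forces $J(H,\mu)=(0)$ and radicality of $(K,\gamma)$ forces $J(K,\gamma)=K$, so $J(H\oplus_2 K,\rho)=K$ and its complement is $H$.
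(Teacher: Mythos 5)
Your proof is correct, and it reaches the Frobenius property of the sum by a genuinely different route than the paper, so a comparison is in order. The paper verifies the Frobenius identity
$\rho^{\dagger}\circ\rho=(id\hat{\otimes}_2\rho)\circ\alpha\circ(\rho^{\dagger}\hat{\otimes}_2 id)$
by hand on elementary tensors $(u+v)\otimes(u'+v')$, expanding both sides through a chain of equalities that uses the Frobenius identities of $(H,\mu)$ and $(K,\gamma)$ and the vanishing of $\rho$ on the mixed blocks; you instead invoke the module-theoretic characterisation of the Frobenius condition (the unnumbered Remark that a commutative Hilbertian algebra is Frobenius precisely when its comultiplication is one-sided linear, which in fact follows Example~\ref{ex:module_structures}, not Example~\ref{ex:module_structures2}, together with Lemma~\ref{lem:char_of_rightlinearmap} and the identification $r_{\mathsf{right}}(u)=id\hat{\otimes}_2 M_u$). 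This reduces the Frobenius claim to the commutation relations $(id\hat{\otimes}_2 M_u)\circ\rho^{\dagger}=\rho^{\dagger}\circ M_u$, which split along the summands because both $M_u$ and $\rho^{\dagger}$ are block-diagonal — the block-diagonality of $\rho^{\dagger}$ being your formal-adjoint rendering of the computation the paper performs with inner products, and the restricted relations on $i_H(H)$ and $i_K(K)$ being exactly the right-linearity of $\mu^{\dagger}$ and $\gamma^{\dagger}$. The remaining parts of your argument — the componentwise description $\rho_{\mathsf{bil}}((a,b),(a',b'))=(\mu_{\mathsf{bil}}(a,a'),\gamma_{\mathsf{bil}}(b,b'))$, the annihilator computation $A(H\oplus_2 K,\rho)=A(H,\mu)\oplus_2 A(K,\gamma)$, and the appeal to Proposition~\ref{prop:normality} to identify radical with annihilator — coincide with the paper's proof. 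What your route buys is conceptual economy: the crux becomes two instances of an equivalence already on record plus block bookkeeping, instead of a long display of tensor manipulations; what it demands in exchange is the careful alignment of the two right-module structures and the injectivity of $i_H\hat{\otimes}_2 i_H$ to cancel it from both sides, which you correctly identify as the delicate point.
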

\begin{proof}
That $(H\oplus_2 K,\rho)$ is indeed a Hilbertian algebra is easily checked. It is then clear, from the very definition of $\rho$, that  $(H,\mu)\xrightarrow{i_H}(H\oplus_2 K,\rho)\xleftarrow{i_K}(K,\gamma)$ are morphisms of semigroups.

Let $x\in H$, $u,u'\in H$, $v,v'\in K$. Then, 
\begin{equation}
\begin{array}{lll}
\langle \rho^{\dagger}(x),(u+v)\otimes (u'+v')\rangle&=&\langle \rho^{\dagger}(x),u\otimes u'+u\otimes v'+v\otimes u'+v\otimes v'\rangle\\
&=&\langle x,\mu(u\otimes u')+\gamma(v\otimes v')\rangle\\
&=&\langle x,\mu(u\otimes u')\rangle\\
&&\mbox{(since $\gamma(v,v')\in K$)}\\
&=&\langle \mu^{\dagger}(x),u\otimes u'\rangle\\
&=&\langle i_{H\hat{\otimes}_2 H}(\mu^{\dagger}(x)),u\otimes u'+u\otimes v'+v\otimes u'+v\otimes v'\rangle\\
&=&\langle i_{H\hat{\otimes}_2 H}(\mu^{\dagger}(x)),(u+v)\otimes(u'+v')\rangle.
\end{array}
\end{equation}
As a matter of fact, $\rho^{\dagger}(x)\in ((H\hat{\otimes}_2K)\oplus_2 (K\hat{\otimes}_2 H)\oplus_2 (K\hat{\otimes}_2 K))^{\perp}=H\hat{\otimes}_2 H$. Likewise $\rho^{\dagger}(x)\in K\hat{\otimes}_2 K$ for each $x\in K$. Consequently, $\rho^{\dagger}\circ i_H=(i_{H}\hat{\otimes}_2 i_H)\circ \mu^{\dagger}$, $\rho^{\dagger}\circ i_K=(i_K\hat{\otimes}_2 i_K)\circ \gamma^{\dagger}$,  and thus $(H,\mu^{\dagger})\xrightarrow{i_H}(H\oplus_2 K,\rho^{\dagger})\xleftarrow{i_K}(K,\gamma^{\dagger})$ are morphisms of cosemigroups.

Let us now assume that both $(H,\mu)$ and $(K,\gamma)$ are Frobenius. Let $u,u'\in H$, $v,v'\in K$. 
\begin{equation}
\begin{array}{lll}
\rho^{\dagger}(\rho((u+v)\otimes (u'+v')))&=&\rho^{\dagger}(\mu(u\otimes u')+\gamma(v\otimes v'))\\
&=&\rho^{\dagger}(\mu(u\otimes u'))+\rho^{\dagger}(\gamma(v\otimes v'))\\
&=&\mu^{\dagger}(\mu(u\otimes u'))+\gamma^{\dagger}(\gamma(v\otimes v'))\\
&=&(id\hat{\otimes}_2 \mu)(\alpha((\mu^{\dagger}\hat{\otimes}id)(u\otimes u')))\\
&+&(id\hat{\otimes}_2 \gamma)(\alpha((\gamma^{\dagger}\hat{\otimes}id)(u\otimes u')))\\
&=&(id\hat{\otimes}_2\rho)(\alpha((\mu^{\dagger}\hat{\otimes}id)(u\otimes u'))+\alpha((\gamma^{\dagger}\hat{\otimes}id)(u\otimes u')))\\
&=&(id\hat{\otimes}_2\rho)(\alpha(((\mu^{\dagger}+\gamma^{\dagger})\hat{\otimes}_2 id)(u\otimes u'+v\otimes v')))\\
&=&(id\hat{\otimes}_2\rho)(\alpha(((\mu^{\dagger}+\gamma^{\dagger})\hat{\otimes}_2 id)((u+v)\hat{\otimes}_2(u'+v'))))\\
&&\mbox{(as $\gamma((H\hat{\otimes}_2K)\oplus_2 (K\hat{\otimes}_2 H))=0$)}\\
&=&(id\hat{\otimes}_2 \rho)(\alpha((\rho^{\dagger}\hat{\otimes}_2 id)((u+v)\hat{\otimes}_2(u'+v')))).\\
\end{array}
\end{equation}
Consequently, $(H\oplus_2 K,\rho)$ is Frobenius as well. Let $u+v\in A(H\oplus_2 K,\rho)$, $u\in H$, $v\in K$. Then $0=\rho_{\mathsf{bil}}(u+v,u'+v')=\mu_{\mathsf{bil}}(u,u')+\gamma_{\mathsf{bil}}(v,v')$. In particular, $u\in A(H,\mu)=J(H,\mu)$ and $v\in A(K,\gamma)=J(K,\gamma)$, and $u+v\in J(H,\mu)\oplus_2 J(K\gamma)$. Conversely, let $u\in J(H,\mu)$, $v\in J(K,\gamma)$. Then, $u+v\in A(H\oplus_2 K,\rho)$ since $\rho_{\mathsf{bil}}(u+v,u'+v')=\mu_{\mathsf{bil}}(u,u')+\gamma_{\mathsf{bil}}(v,v')=0$ for all $u'\in H$, $v'\in K$. Thus, $J(H\oplus_2 K,\rho)=A(H\oplus_2 K,\rho)=J(H,\mu)\oplus_2 J(K,\gamma)$. As a result, $J(H\oplus_2 K,\rho)^{\perp}=(J(H,\mu)\oplus_2 J(K,\gamma))^{\perp}=J(H,\mu)^{\perp}\hat{\oplus}_2 J(K,\gamma)^{\perp}$. The last assertion is immediate.
\end{proof}

\begin{remark}\label{rem:orthogonal_sum_functor}
The construction $O((H,\mu),(K,\gamma)):=(H\oplus_2 K,\rho)$ extends to a functor $O\colon {}_c\mathbf{Sem}(\mathbb{Hilb})\times{}_{c}\mathbf{Sem}(\mathbb{Hilb})\to {}_c\mathbf{Sem}(\mathbb{Hilb})$ as follows: let $(H,\mu)\xrightarrow{f}(H',\mu')$ and $(K,\gamma)\xrightarrow{g}(K',\gamma')$ be morphisms of semigroups, let $f\oplus_2 g\colon H\oplus_2 K\to H'\oplus K'$, be given by $(f\oplus_2 g)(u+v)=f(u)+g(v)$, $u\in H$, $v\in K$. Then, $O(f,g):=(H\oplus_2 K,\rho)\xrightarrow{f\oplus_2 g}(H'\oplus_2 K',\rho')$ is a morphism of semigroups. 
One also has a functor  $O\colon {}_c\mathbf{FrobSem}(\mathbb{Hilb})\times{}_{c}\mathbf{FrobSem}(\mathbb{Hilb})\to {}_c\mathbf{FrobSem}(\mathbb{Hilb})$ given by co-restriction.
\end{remark}

\begin{lemma}\label{lem:faithful}
Let $f\colon(H,\mu)\to (K,\gamma)$ be a ${}_c\mathbf{FrobSem}(\mathbb{Hilb})$-morphism, then $f=P(f){\oplus}_2 J(f)$. 
\end{lemma}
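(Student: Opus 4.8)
The plan is to reduce the claim to the two preservation properties of $f$ that were already recorded when the functors $P$ and $J$ were defined, namely that $f(J(H,\mu)^{\perp})\subseteq J(K,\gamma)^{\perp}$ (which follows from $f(H^2)\subseteq K^2$ upon passing to closures, since $\overline{ran(\mu)}=J^{\perp}$) and that $f(J(H,\mu))\subseteq J(K,\gamma)$ (which follows from Lemma~\ref{lem:restriction} applied to the cosemigroup map $f^{\dagger}$, using $f^{\dagger}(G(K,\gamma))\subseteq G(H,\mu)\cup\{0\}$). These two inclusions say precisely that $f$ carries each summand of the orthogonal decomposition $H=J(H,\mu)^{\perp}\oplus_2 J(H,\mu)$ of Theorem~\ref{thm:stHFs} into the corresponding summand of $K=J(K,\gamma)^{\perp}\oplus_2 J(K,\gamma)$.

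First I would fix $x\in H$ and write $x=u+w$ with $u:=p_{J(H,\mu)^{\perp}}(x)\in J(H,\mu)^{\perp}$ and $w:=p_{J(H,\mu)}(x)\in J(H,\mu)$, so that by linearity $f(x)=f(u)+f(w)$. Next I would observe that, under the identification of external and internal orthogonal direct sums, $P(f)(u)=\pi_{J(K,\gamma)^{\perp}}(f(u))=f(u)$, the last equality holding because $f(u)$ already lies in $J(K,\gamma)^{\perp}$ by the first inclusion; similarly $J(f)(w)=f(w)$, since $f(w)\in J(K,\gamma)$ by the second inclusion and $J(f)$ is merely the co-restriction of $f$. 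Substituting gives $f(x)=P(f)(u)+J(f)(w)=(P(f)\oplus_2 J(f))(x)$, which is the desired identity.

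There is no genuine obstacle here beyond unwinding the definitions of $P(f)$ and $J(f)$ and keeping track of the internal/external identification; the entire force of the statement is carried by the two inclusions above, both of which are already available. The only point deserving a line of care is that the orthogonal projections $\pi_{J(K,\gamma)^{\perp}}$ and $\pi_{J(K,\gamma)}$ act as the identity on $f(u)$ and $f(w)$ respectively, precisely because each image has already been placed in the correct summand.
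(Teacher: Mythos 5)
Your proposal is correct and is precisely the argument the paper leaves implicit: Lemma~\ref{lem:faithful} is stated without proof, being regarded as immediate from the two inclusions $f(J(H,\mu)^{\perp})\subseteq J(K,\gamma)^{\perp}$ and $f(J(H,\mu))\subseteq J(K,\gamma)$ established when the functors $P$ and $J$ were defined. Your unwinding of the definitions --- decomposing $x=u+w$ along $H=J(H,\mu)^{\perp}\oplus_2 J(H,\mu)$ and noting that the projections act as the identity on $f(u)$ and $f(w)$ because each image already lies in the correct summand --- is exactly the intended verification.
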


\begin{proposition}\label{prop:structuretheoremrevisited1}
${}_c\mathbf{FrobSem}(\mathbb{Hilb})$ is equivalent to ${}_{\mathsf{semisimple},c}\mathbf{FrobSem}(\mathbb{Hilb})\times \mathbf{Hilb}$.
\end{proposition}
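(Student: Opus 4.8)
The plan is to exhibit mutually quasi-inverse functors between $\mathcal{C}:={}_c\mathbf{FrobSem}(\mathbb{Hilb})$ and $\mathcal{D}:={}_{\mathsf{semisimple},c}\mathbf{FrobSem}(\mathbb{Hilb})\times \mathbf{Hilb}$. In one direction I would use the pair of functors already constructed above: define $\Phi\colon \mathcal{C}\to\mathcal{D}$ on objects by $\Phi(H,\mu):=(P(H,\mu),J(H,\mu))$ and on morphisms by $\Phi(f):=(P(f),J(f))$; this is a functor because both $P$ and $J$ are. In the reverse direction I would use the orthogonal-sum construction, defining $\Psi\colon\mathcal{D}\to\mathcal{C}$ by $\Psi((S,\sigma),K):=O((S,\sigma),(K,0))=(S\oplus_2 K,\rho)$, where $(K,0)$ is the radical Frobenius semigroup attached to $K$ via $T$ (Lemma~\ref{lem:trivial_is_equivalence}), and on morphisms by $\Psi((g,h)):=O(g,T(h))=g\oplus_2 h$. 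By Remark~\ref{rem:orthogonal_sum_functor} this $O$ restricts to a functor on Frobenius semigroups, so $\Psi$ is well defined and functorial.

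\begin{proof}
Let $\Phi\colon {}_c\mathbf{FrobSem}(\mathbb{Hilb})\to {}_{\mathsf{semisimple},c}\mathbf{FrobSem}(\mathbb{Hilb})\times\mathbf{Hilb}$ be the functor $\Phi:=\langle P,J\rangle$, that is, $\Phi(H,\mu)=(P(H,\mu),J(H,\mu))$ and $\Phi(f)=(P(f),J(f))$ on a morphism $f$. Let $\Psi$ be the functor in the other direction sending an object $((S,\sigma),K)$ to $O((S,\sigma),(K,0))=(S\oplus_2 K,\rho)$ and a morphism $(g,h)$ to $g\oplus_2 h$; this is well defined since $(K,0)$ is a (radical) Hilbertian Frobenius semigroup and $O$ restricts to Frobenius semigroups by Remark~\ref{rem:orthogonal_sum_functor}.

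We first check $\Phi\circ\Psi\simeq \mathrm{id}$. Given $((S,\sigma),K)$ with $(S,\sigma)$ semisimple, Proposition~\ref{prop:construction_of_Frob} (last assertion) gives $J(S\oplus_2 K,\rho)=K$ and, since $J(S\oplus_2 K,\rho)^\perp=S$, one has $P(S\oplus_2 K,\rho)=(S,\sigma)$. Hence $\Phi(\Psi((S,\sigma),K))=((S,\sigma),K)$ on objects, and the identity components of $\Phi\Psi$ on morphisms follow from the coproduct presentation of $\rho$ together with the formulas $P(g\oplus_2 h)=g$ and $J(g\oplus_2 h)=h$, obtained by restricting $g\oplus_2 h$ to the respective summands. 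Thus $\Phi\Psi=\mathrm{id}$ (a strict equality).

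For $\Psi\circ\Phi\simeq \mathrm{id}$, let $(H,\mu)$ be a commutative Hilbertian Frobenius semigroup. By Theorem~\ref{thm:stHFs}, $H=J\oplus_2 J^\perp$ with $J=A(H,\mu)$, and by Corollary~\ref{cor:stHFs} the summand $J^\perp$ is semisimple while $J$ is radical; moreover $J$ carries the zero multiplication since $J=A(H,\mu)$. Hence $\Psi(\Phi(H,\mu))=\Psi((P(H,\mu),J(H,\mu)))=(J^\perp\oplus_2 J,\rho)$, and by the defining property of $\rho$ together with $\mu(J^\perp\hat\otimes_2 J^\perp)\subseteq J^\perp$, $\mu(J^\perp\hat\otimes_2 J)=\mu(J\hat\otimes_2 J^\perp)=\mu(J\hat\otimes_2 J)=0$ (the latter because $J$ is an annihilator ideal), the transported multiplication $\rho$ agrees with $\mu$ under the canonical unitary $J^\perp\oplus_2 J\simeq H$. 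This gives a natural unitary semigroup isomorphism $\eta_{(H,\mu)}\colon \Psi\Phi(H,\mu)\xrightarrow{\ \sim\ }(H,\mu)$. Naturality is exactly the content of Lemma~\ref{lem:faithful}, which asserts $f=P(f)\oplus_2 J(f)$ for every morphism $f$; reading this equality under the unitaries $\eta$ shows that $\eta_{(K,\gamma)}\circ\Psi\Phi(f)=f\circ\eta_{(H,\mu)}$ for every $f\colon(H,\mu)\to(K,\gamma)$. Therefore $\eta\colon \Psi\Phi\Rightarrow \mathrm{id}$ is a natural isomorphism, and $\Phi$ and $\Psi$ are mutually quasi-inverse, establishing the claimed equivalence.
\end{proof}

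\noindent\textbf{Main obstacle.} The delicate point is the natural isomorphism $\Psi\Phi\simeq\mathrm{id}$: one must verify that the orthogonal-sum multiplication $\rho$ reconstructed from the pieces $(J^\perp,\mu_{|_{J^\perp}})$ and $(J,0)$ genuinely recovers $\mu$, which hinges on the vanishing of all ``mixed'' products $\mu(J\hat\otimes_2 J^\perp)$ and $\mu(J\hat\otimes_2 J)$ — this is precisely where the identification $J=A(H,\mu)$ from Proposition~\ref{prop:normality} is indispensable. The remaining work, checking naturality, reduces cleanly to Lemma~\ref{lem:faithful}, so I expect no difficulty there.
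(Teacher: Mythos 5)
Your proof is correct and is essentially the paper's argument: both rest on the functor $\langle P,J\rangle$, the orthogonal-sum construction of Proposition~\ref{prop:construction_of_Frob}, Lemma~\ref{lem:faithful}, and the identification $J=A(H,\mu)$ from the Structure Theorem. The only difference is packaging — you exhibit $\Psi=O(-,T(-))$ as an explicit quasi-inverse and verify the two natural isomorphisms, whereas the paper checks that $\langle P,J\rangle$ is fully faithful and essentially surjective; in particular your verification that the reconstructed multiplication $\rho$ coincides with $\mu$ (via vanishing of the mixed products) is the same computation as the paper's fullness check that $f\oplus_2 g$ is a semigroup map.
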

\begin{proof}
It suffices to prove that the functor $\langle P,J\rangle$ from ${}_c\mathbf{FrobSem}(\mathbb{Hilb})$ to ${}_{\mathsf{semisimple},c}\mathbf{FrobSem}(\mathbb{Hilb})\times \mathbf{Hilb}$, $(H,\mu)\mapsto (P(H,\mu),J(H,\mu))$,  is the required equivalence of categories. Let $((H,\mu),K)$ be an object of ${}_{\mathsf{semisimple},c}\mathbf{FrobSem}(\mathbb{Hilb})\times \mathbf{Hilb}$. Then, $((H,\mu),K)\simeq (P(H\oplus_2 K,\rho),J(H\oplus_2 K,\rho))$ by Proposition~\ref{prop:construction_of_Frob}. By Lemma~\ref{lem:faithful}, $\langle P,V\rangle$ is faithful.  Let $(H,\mu),(K,\gamma)$ be Frobenius semigroups and let $(P(H,\mu)\xrightarrow{f}P(K,\gamma),J(H,\mu)\xrightarrow{g}J(K,\gamma))$ be a pair consisting of a morphism of semigroups and a bounded linear map respectively. Let us consider the bounded linear map $(f\oplus_2 g)\colon H\to K$ given by $(f\oplus_2 g)(u+v)=f(u)+g(v)$, $u\in J(H,\mu)^{\dagger}$, $v\in J(H,\mu)$. Let $u'\in J(H,\mu)^{\dagger}$ and $v'\in J(H,\mu)$. Then, $(f\oplus _2 g)((u+v)(u'+v'))=(f\oplus_2 g)(uu')=f(uu')=f(u)f(u')=(f(u)+g(v))(f(u')+g(v'))=((f\oplus_2 g)(u+v))((f\oplus_2 g)(u'+v'))$. Consequently $(f\oplus_2 g)$ is a semigroup map. Of course, $P(f\oplus_2 g)=f$ and $J(f\oplus_2 g)=g$, so that $\langle P,J\rangle$ is full. 
\end{proof}


\begin{corollary}
The categories ${}_c\mathbf{FrobSem}(\mathbb{FdHilb})$ and ${}_{\mathsf{semisimple},c}\mathbf{FrobSem}(\mathbb{FdHilb})\times\mathbf{FdHilb}$ are equivalent.
\end{corollary}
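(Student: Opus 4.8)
The plan is to show that the equivalence $\langle P,J\rangle$ produced in Proposition~\ref{prop:structuretheoremrevisited1} simply restricts to the finite-dimensional subcategories, since every construction entering its proof preserves finite-dimensionality. Recall that ${}_c\mathbf{FrobSem}(\mathbb{FdHilb})$ is the full subcategory of ${}_c\mathbf{FrobSem}(\mathbb{Hilb})$ spanned by the Frobenius semigroups whose underlying space is finite-dimensional. First I would check that $P$ and $J$ send finite-dimensional objects to finite-dimensional ones: if $(H,\mu)$ is finite-dimensional, then the splitting $H=J\oplus_2 J^{\perp}$ of Theorem~\ref{thm:stHFs} forces both $J=J(H,\mu)$ and $J^{\perp}$ to be finite-dimensional, whence $P(H,\mu)=(J^{\perp},\mu_{|_{J^{\perp}}})$ lies in ${}_{\mathsf{semisimple},c}\mathbf{FrobSem}(\mathbb{FdHilb})$ and $J(H,\mu)\in\mathbf{FdHilb}$. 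Consequently $\langle P,J\rangle$ corestricts to a functor from ${}_c\mathbf{FrobSem}(\mathbb{FdHilb})$ to ${}_{\mathsf{semisimple},c}\mathbf{FrobSem}(\mathbb{FdHilb})\times\mathbf{FdHilb}$.

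Next I would verify that this corestriction is still full, faithful and essentially surjective, by rerunning the argument of Proposition~\ref{prop:structuretheoremrevisited1} verbatim inside the finite-dimensional world. Faithfulness is immediate from Lemma~\ref{lem:faithful}, whose identity $f=P(f)\oplus_2 J(f)$ is insensitive to the dimension. For fullness, given a pair $(f,g)$ in the product category, the map $f\oplus_2 g$ built in that proof is a semigroup morphism between finite-dimensional spaces, hence a morphism of ${}_c\mathbf{FrobSem}(\mathbb{FdHilb})$, and it satisfies $P(f\oplus_2 g)=f$ and $J(f\oplus_2 g)=g$. For essential surjectivity, given $((H,\mu),K)$ with $(H,\mu)$ a finite-dimensional semisimple Frobenius semigroup and $K\in\mathbf{FdHilb}$, the orthogonal sum $O((H,\mu),(K,0))=(H\oplus_2 K,\rho)$ of Proposition~\ref{prop:construction_of_Frob} is again finite-dimensional, and that proposition identifies its image under $\langle P,J\rangle$ with $((H,\mu),K)$ up to isomorphism.

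I do not expect any genuine obstacle here: the single point deserving (trivial) attention is that finite-dimensionality is preserved at every stage, and this follows because the only operations used are orthogonal direct sums, restrictions/corestrictions, and orthogonal complements, all of which keep a finite-dimensional Hilbert space finite-dimensional. Thus the restricted $\langle P,J\rangle$ remains an equivalence, establishing the corollary. Since ${}_c\mathbf{FrobSem}(\mathbb{FdHilb})$ is self-dual under the dagger functor, one could alternatively phrase the same restriction argument dually, but the direct route above is the most economical.
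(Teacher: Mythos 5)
Your proposal is correct and is exactly the argument the paper intends: the corollary is stated without proof as an immediate restriction of Proposition~\ref{prop:structuretheoremrevisited1}, and your verification that $P$, $J$, the sum $f\oplus_2 g$, and the construction $O((H,\mu),(K,0))$ of Proposition~\ref{prop:construction_of_Frob} all preserve finite-dimensionality is precisely the (routine) content being left implicit. Nothing is missing.
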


Let ${}_{\mathsf{partiso},c}\mathbf{FrobSem}(\mathbb{Hilb})$ be  the full subcategory of ${}_c\mathbf{FrobSem}(\mathbb{Hilb})$ spanned by those Hilbertian Frobenius algebras $(H,\mu)$ where $\mu^{\dagger}$ is a partial isometry. Recall also that ${}_{c}^{\dagger}\mathbf{FrobSem}(\mathbb{Hilb})$ is the category of special commutative Hilbertian Frobenius semigroups. Let $(H,\mu)$ be an object of ${}_{\mathsf{partiso},c}\mathbf{FrobSem}(\mathbb{Hilb})$. As $\mu^{\dagger}$ restricts to an isometry from $(\ker \mu^{\dagger})^{\perp}=(ran\ \mu)^{\perp\perp}=J^{\perp}$ to $H\hat{\otimes}_2 H$ (\cite[p.~404]{Kubrusly2ndEd}), it is clear that $P(H,\mu)$ is an object of ${}_c^{\dagger}\mathbf{FrobSem}(\mathbb{Hilb})$. The following result then follows easily.
\begin{corollary}
The equivalence from Proposition~\ref{prop:structuretheoremrevisited1} restricts to an equivalence between 
${}_{\mathsf{partiso},c}\mathbf{FrobSem}(\mathbb{Hilb})$ and ${}_{c}^{\dagger}\mathbf{FrobSem}(\mathbb{Hilb})\times\mathbf{Hilb}$.
\end{corollary}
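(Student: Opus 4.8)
The plan is to invoke the general categorical fact that an equivalence restricts to an equivalence between full subcategories as soon as each of its two mutually quasi-inverse functors carries the relevant subcategory into the other. Writing $\langle P,J\rangle$ for the equivalence of Proposition~\ref{prop:structuretheoremrevisited1} and writing its quasi-inverse as the functor sending $((H,\mu),K)$ to $O((H,\mu),(K,0))=(H\oplus_2 K,\rho)$ (with $K$ carrying the zero multiplication, see Remark~\ref{rem:orthogonal_sum_functor} and Proposition~\ref{prop:construction_of_Frob}), and recalling that all four categories in play are \emph{full} subcategories, it suffices to verify two containments: that $\langle P,J\rangle$ sends ${}_{\mathsf{partiso},c}\mathbf{FrobSem}(\mathbb{Hilb})$ into ${}_{c}^{\dagger}\mathbf{FrobSem}(\mathbb{Hilb})\times\mathbf{Hilb}$, and that $O$ sends ${}_{c}^{\dagger}\mathbf{FrobSem}(\mathbb{Hilb})\times\mathbf{Hilb}$ into ${}_{\mathsf{partiso},c}\mathbf{FrobSem}(\mathbb{Hilb})$. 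Once both hold, the unit and counit isomorphisms of the ambient equivalence have components between objects of the respective subcategories, and by fullness these components and their inverses already live in the subcategories; hence they restrict to natural isomorphisms witnessing the desired equivalence.

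For the first containment I will rely on the discussion immediately preceding the statement: if $\mu^{\dagger}$ is a partial isometry then it restricts to an isometry on $(\ker\mu^{\dagger})^{\perp}=(\mathrm{ran}\,\mu)^{\perp\perp}=J^{\perp}$, whence $P(H,\mu)=(J^{\perp},\mu_{|_{J^{\perp}}})$ is special, i.e.\ an object of ${}_{c}^{\dagger}\mathbf{FrobSem}(\mathbb{Hilb})$, while the $J$-component lands in $\mathbf{Hilb}$ trivially. I will also record that ${}_{c}^{\dagger}\mathbf{FrobSem}(\mathbb{Hilb})$ is genuinely a full subcategory of ${}_{\mathsf{semisimple},c}\mathbf{FrobSem}(\mathbb{Hilb})$, since every special Hilbertian Frobenius semigroup is semisimple by Theorem~\ref{thm:semisimplicityforFrob}; this guarantees that the codomain of the restricted functor sits correctly inside the codomain of the ambient one.

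The second containment is the only computation. Given a special $(H,\mu)$ and a Hilbert space $K$, the semigroup $O((H,\mu),(K,0))=(H\oplus_2 K,\rho)$ is Frobenius by Proposition~\ref{prop:construction_of_Frob}, so it remains to see that $\rho^{\dagger}$ is a partial isometry. That proposition gives $\rho^{\dagger}\circ i_H=(i_H\hat{\otimes}_2 i_H)\circ\mu^{\dagger}$ and $\rho^{\dagger}\circ i_K=0$; hence for $u\in H$ and $v\in K$ one has $\|\rho^{\dagger}(u+v)\|=\|\mu^{\dagger}(u)\|=\|u\|$, the last equality because $\mu^{\dagger}$ is an isometry ($(H,\mu)$ being special means $\mu\circ\mu^{\dagger}=id$). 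Thus $\ker\rho^{\dagger}=K$ and $\rho^{\dagger}$ is isometric on $(\ker\rho^{\dagger})^{\perp}=H$, which is precisely the condition for $\rho^{\dagger}$ to be a partial isometry (equivalently $\rho^{\dagger}\circ(\rho^{\dagger})^{\dagger}\circ\rho^{\dagger}=\rho^{\dagger}$). Therefore $(H\oplus_2 K,\rho)\in{}_{\mathsf{partiso},c}\mathbf{FrobSem}(\mathbb{Hilb})$.

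With both containments established, the restricted pair of functors is an equivalence by the opening remark. I expect no genuine obstacle here: the conceptual content is carried entirely by Proposition~\ref{prop:structuretheoremrevisited1}, and the only place a slip could occur is the partial-isometry verification for $\rho^{\dagger}$, where one should use the explicit action of $\rho^{\dagger}$ on the two summands together with the isometry of $\mu^{\dagger}$ rather than attempt to manipulate $\rho^{\dagger}(\rho^{\dagger})^{\dagger}\rho^{\dagger}$ by brute force.
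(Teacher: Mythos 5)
Your proof is correct and takes essentially the same route as the paper: the paper's entire justification consists of the observation you quote, namely that a partial-isometric $\mu^{\dagger}$ restricts to an isometry on $(\ker \mu^{\dagger})^{\perp}=J^{\perp}$ so that $P$ lands in ${}_{c}^{\dagger}\mathbf{FrobSem}(\mathbb{Hilb})$, after which it simply declares that the corollary ``follows easily.'' Your two containments --- in particular the check that $\rho^{\dagger}$ vanishes on $K$ and is isometric on $(\ker\rho^{\dagger})^{\perp}=H$ when $(H,\mu)$ is special, hence is a partial isometry --- are precisely the details the paper leaves to the reader, and they are carried out correctly.
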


\section{Equivalences of semisimple Frobenius semigroups and weighted pointed sets}\label{sec:mainequiv}

The one-one correspondence between structures of semisimple Hilbertian Frobenius semigroups on a given Hilbert space and its bounded below (or above) orthogonal bases (Theorem~\ref{thm:dictionary}) may be upgraded to an equivalence between ${}_{\mathsf{semsimple},c}\mathbf{FrobSem}(\mathbb{Hilb})$ and a category of pointed sets.

\subsection{Categories of pointed sets with a weight function}

Let $\mathbf{WSet}_{\bullet}$ be the following category of {\em weighted pointed sets}. Its objects are pointed sets $(X,x_0,\alpha)$ together with a {\em weight function}, i.e., a map $\alpha\colon X\setminus\{\, x_0\,\}\to [C_{\alpha},+\infty[$ for some $C_{\alpha}>0$. A morphism $(X,x_0,\alpha)\xrightarrow{f}(Y,y_0,\beta)$ is a base-point preserving map $(X,x_0)\xrightarrow{f}(Y,y_0)$ such that
\begin{itemize}
\item For each $y\not=y_0$, $|f^{-1}(\{\, y\,\})|<+\infty$,
\item There exists a real number $M_f\geq 0$ such that for all $y\not=y_0$, $\sum_{x\in f^{-1}(\{\, y\,\})}\alpha(x)\leq M_f\beta(y)$.
\end{itemize}
Under the usual composition this indeed forms a category. 

In~\cite{Poinsot2019} is introduced the category $\mathbf{Set}_{\bullet,<+\infty}$ the objects of which are pointed sets $(X,x_0)$ and morphisms $(X,x_0)\xrightarrow{f}(Y,y_0)$ are those base-point preserving maps such that (1) $|f^{-1}(\{\, y\,\})|$ is finite for each $y\not=y_0$ and (2) $\mathsf{B}_f:=\sup_{y\not=y_0}|f^{-1}(\{\, y\,\})|<+\infty$. 
\begin{lemma}\label{lem:fullembedding}
$\mathbf{Set}_{\bullet,<+\infty}$ fully embeds into $\mathbf{WSet}_{\bullet}$.
\end{lemma}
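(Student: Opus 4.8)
The plan is to realize the embedding concretely by decorating every object of $\mathbf{Set}_{\bullet,<+\infty}$ with the constant weight $1$. Explicitly, I would define a functor $I\colon \mathbf{Set}_{\bullet,<+\infty}\to\mathbf{WSet}_{\bullet}$ on objects by $I(X,x_0):=(X,x_0,\mathbf{1}_X)$, where $\mathbf{1}_X\colon X\setminus\{\,x_0\,\}\to[1,+\infty[$ is the function constantly equal to $1$ (so that $C_{\mathbf{1}_X}=1$ witnesses the positivity requirement in the definition of a weighted pointed set; when $X=\{\,x_0\,\}$ this is just the empty map, which is harmless), and on morphisms by $I(f):=f$, the very same underlying base-point preserving map. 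Since $I$ leaves the underlying maps untouched, preservation of identities and of composition is immediate, $I$ is faithful, and $I$ is visibly injective on objects; so the whole content reduces to checking that $I$ actually lands in $\mathbf{WSet}_{\bullet}$ and that it is full.

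Both of these reduce to translating between the uniform fibre bound $\mathsf{B}_f$ of $\mathbf{Set}_{\bullet,<+\infty}$ and the weighted bound $M_f$ of $\mathbf{WSet}_{\bullet}$ under the constant weight, where the two quantities literally coincide. First, if $f\colon(X,x_0)\to(Y,y_0)$ is a morphism of $\mathbf{Set}_{\bullet,<+\infty}$, then its fibres over points $\neq y_0$ are finite by its defining condition~(1), which is exactly the first defining condition of a $\mathbf{WSet}_{\bullet}$-morphism; and taking $M_f:=\mathsf{B}_f<+\infty$ (finite by condition~(2)), one has for each $y\neq y_0$ that $\sum_{x\in f^{-1}(\{\,y\,\})}\mathbf{1}_X(x)=|f^{-1}(\{\,y\,\})|\leq\mathsf{B}_f=M_f\cdot\mathbf{1}_Y(y)$, so the second defining condition holds as well. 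Hence $I(f)$ is indeed a morphism of $\mathbf{WSet}_{\bullet}$.

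For fullness I would run the same computation in reverse. Let $g\colon(X,x_0,\mathbf{1}_X)\to(Y,y_0,\mathbf{1}_Y)$ be an arbitrary $\mathbf{WSet}_{\bullet}$-morphism between two objects in the image of $I$. Its first property is precisely the finiteness of the fibres over points $\neq y_0$, i.e.\ condition~(1); and its second property furnishes some $M_g\geq 0$ with $|g^{-1}(\{\,y\,\})|=\sum_{x\in g^{-1}(\{\,y\,\})}\mathbf{1}_X(x)\leq M_g\,\mathbf{1}_Y(y)=M_g$ for every $y\neq y_0$, whence $\mathsf{B}_g=\sup_{y\neq y_0}|g^{-1}(\{\,y\,\})|\leq M_g<+\infty$, which is condition~(2). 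Thus $g$ is already a morphism of $\mathbf{Set}_{\bullet,<+\infty}$ and so equals $I(g)$, proving fullness. There is no genuine obstacle here: the only point needing a moment's care is the edge-case bookkeeping (the cases $X=\{\,x_0\,\}$ or empty fibres, for which the displayed inequalities hold vacuously), and the single observation that makes \emph{both} directions work is that, under the constant-weight convention, the weighted bound $M_f$ and the cardinality bound $\mathsf{B}_f$ are one and the same number.
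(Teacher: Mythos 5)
Your proposal is correct and follows exactly the paper's approach: the paper's proof defines the same constant-weight functor $E(X,x_0):=(X,x_0,\mathbf{1})$, $E(f):=f$, and simply asserts it is injective on objects and fully faithful. Your write-up merely makes explicit the routine verification (the identification $M_f=\mathsf{B}_f$ under the constant weight) that the paper leaves to the reader.
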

\begin{proof}
The functor $E\colon \mathbf{Set}_{\bullet,<+\infty}\to\mathbf{WSet}_{\bullet}$, $E(X,x_0):=(X,x_0,\mathbf{1})$ with $\mathbf{1}(x)=1$, $x\not=x_0$, $E(f):=f$, is an injective on objects, fully faithful functor. 
\end{proof}

\begin{lemma}\label{lem:precurseur_du_forgetful}
Let $f\in\mathbf{WSet}_{\bullet}((X,x_0,\alpha),(Y,y_0,\beta))$ with $\beta$ bounded above. Then, $f\in \mathbf{Set}_{\bullet,<+\infty}((X,x_0),(Y,y_0))$.
\end{lemma}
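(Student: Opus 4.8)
The plan is to check directly the two conditions that define a morphism in $\mathbf{Set}_{\bullet,<+\infty}$, namely finiteness of every non-trivial fiber and finiteness of the supremum $\mathsf{B}_f=\sup_{y\neq y_0}|f^{-1}(\{\,y\,\})|$. The first condition is immediate: it is literally part of the hypothesis that $f$ is a $\mathbf{WSet}_{\bullet}$-morphism, so no work is required there, and all the content concerns the uniform bound $\mathsf{B}_f<+\infty$.

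To obtain the uniform bound I would combine three facts. First, the weight function $\alpha$ on the domain takes values in $[C_\alpha,+\infty[$, so $\alpha(x)\geq C_\alpha>0$ for every $x\neq x_0$. Second, by hypothesis $\beta$ is bounded above, say $\beta(y)\leq D$ for all $y\neq y_0$. Third, being a $\mathbf{WSet}_{\bullet}$-morphism supplies a constant $M_f\geq 0$ with $\sum_{x\in f^{-1}(\{\,y\,\})}\alpha(x)\leq M_f\beta(y)$ for every $y\neq y_0$. Fixing an arbitrary $y\neq y_0$ and summing the lower bound $\alpha(x)\geq C_\alpha$ over the (finite) fiber yields $C_\alpha\,|f^{-1}(\{\,y\,\})|\leq\sum_{x\in f^{-1}(\{\,y\,\})}\alpha(x)$; chaining this with the morphism inequality and $\beta(y)\leq D$ gives $C_\alpha\,|f^{-1}(\{\,y\,\})|\leq M_f D$, hence $|f^{-1}(\{\,y\,\})|\leq M_f D/C_\alpha$. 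Since the right-hand side does not depend on $y$, taking the supremum over $y\neq y_0$ gives $\mathsf{B}_f\leq M_f D/C_\alpha<+\infty$, as required.

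The lemma is elementary and the only point deserving emphasis is that the hypothesis ``$\beta$ bounded above'' is precisely what converts the weighted estimate into a bound on cardinalities: it is used exactly in the step $\beta(y)\leq D$, and without it the fiber sizes could grow without bound even though each weighted sum stays controlled. I therefore expect no genuine obstacle, only the bookkeeping of assembling the uniform lower bound on $\alpha$, the uniform upper bound on $\beta$, and the morphism constant $M_f$ into a single chain of inequalities.
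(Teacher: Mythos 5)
Your proof is correct and is essentially identical to the paper's: both bound $|f^{-1}(\{y\})|$ below via $C_\alpha\,|f^{-1}(\{y\})|\leq\sum_{x\in f^{-1}(\{y\})}\alpha(x)$, chain this with the morphism inequality $\sum_{x\in f^{-1}(\{y\})}\alpha(x)\leq M_f\beta(y)$ and the upper bound on $\beta$, and then take the supremum over $y\neq y_0$. Nothing is missing.
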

\begin{proof}
Let $y\not=y_0$. $|f^{-1}(\{\, y\,\})|\leq \frac{1}{C_\alpha}\sum_{x\in f^{-1}(\{\, y\,\})}\alpha(x) \leq \frac{M_f}{C_{\alpha}}\beta(y)\leq  \frac{M_f}{C_{\alpha}}\sup_{y\not=y_0}\beta(y)$ so that $\sup_{y\not=y_0}|f^{-1}(\{\, y\,\})|\leq  \frac{1}{C_{\alpha}}M_f\sup_{y\not=y_0}\beta(y)<+\infty$.
\end{proof}

Let us define ${}_{\mathsf{bnd}}\mathbf{WSet}_{\bullet}$ (resp. ${}_{\mathsf{unbnd}}\mathbf{WSet}_{\bullet}$) be the full subcategory of $\mathbf{WSet}_{\bullet}$ spanned by those objects $(X,x_0,\alpha)$ with $\alpha$ bounded above (resp. unbounded).

\begin{lemma}\label{lem:equivalence_bounded_normalized}
${}_{\mathsf{bnd}}\mathbf{WSet}_{\bullet}$ is equivalent to $\mathbf{Set}_{\bullet,<+\infty}$. Moreover no object of ${}_{\mathsf{bnd}}\mathbf{WSet}_{\bullet}$ is isomorphic to an object of  ${}_{\mathsf{unbnd}}\mathbf{WSet}_{\bullet}$.
\end{lemma}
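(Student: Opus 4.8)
The plan is to show that the full embedding $E$ of Lemma~\ref{lem:fullembedding} co-restricts to an equivalence onto ${}_{\mathsf{bnd}}\mathbf{WSet}_{\bullet}$, and then to rule out cross-isomorphisms by a direct boundedness computation. First I would note that the constant weight $\mathbf{1}$ is bounded above, so $E$ actually takes its values in ${}_{\mathsf{bnd}}\mathbf{WSet}_{\bullet}$; write $E'\colon \mathbf{Set}_{\bullet,<+\infty}\to {}_{\mathsf{bnd}}\mathbf{WSet}_{\bullet}$ for the resulting co-restriction. Since ${}_{\mathsf{bnd}}\mathbf{WSet}_{\bullet}$ is a \emph{full} subcategory of $\mathbf{WSet}_{\bullet}$, the relevant hom-sets are unchanged, so $E'$ inherits full faithfulness from Lemma~\ref{lem:fullembedding}. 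It then remains only to check essential surjectivity.

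For essential surjectivity, given $(X,x_0,\alpha)$ in ${}_{\mathsf{bnd}}\mathbf{WSet}_{\bullet}$ I claim the set-theoretic identity $\mathrm{id}_X$ is an isomorphism $(X,x_0,\mathbf{1})=E'(X,x_0)\to (X,x_0,\alpha)$. Every fibre $\mathrm{id}_X^{-1}(\{y\})=\{y\}$ is a singleton, so the finiteness clause is automatic in both directions. In the forward direction the estimate $\sum_{x\in\{y\}}\mathbf{1}(x)=1\le C_\alpha^{-1}\alpha(y)$ (using $\alpha\ge C_\alpha$) exhibits a valid multiplier $M=C_\alpha^{-1}$, so $\mathrm{id}_X$ is a morphism; in the reverse direction $\alpha(y)=\sum_{x\in\{y\}}\alpha(x)\le D_\alpha\,\mathbf{1}(y)$, where $D_\alpha:=\sup_{y\neq x_0}\alpha(y)<+\infty$, exhibits the multiplier $M'=D_\alpha$, so the inverse is a morphism too. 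This step is exactly where boundedness above of $\alpha$ is indispensable. Hence $(X,x_0,\alpha)\cong E'(X,x_0)$ inside ${}_{\mathsf{bnd}}\mathbf{WSet}_{\bullet}$, and $E'$ is the desired equivalence.

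For the second assertion I would argue by contradiction: suppose $f\colon (X,x_0,\alpha)\to (Y,y_0,\beta)$ is an isomorphism with $\alpha$ bounded above (say by $D_\alpha$) and $\beta$ unbounded. Because composition in $\mathbf{WSet}_{\bullet}$ is the ordinary composition of maps, $f$ is a base-point preserving bijection whose inverse $g=f^{-1}$ is again a morphism, and for $x\neq x_0$ one has $g^{-1}(\{x\})=\{f(x)\}$. The morphism bound for $g$ then yields a constant $M_g\ge 0$ with $\beta(f(x))\le M_g\,\alpha(x)\le M_g D_\alpha$ for every $x\neq x_0$; as $f$ restricts to a bijection from $X\setminus\{x_0\}$ onto $Y\setminus\{y_0\}$, this gives $\sup_{y\neq y_0}\beta(y)\le M_g D_\alpha<+\infty$, contradicting the unboundedness of $\beta$. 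The only (mild) obstacle is bookkeeping: one must apply the morphism bound for the \emph{inverse} $g$ rather than for $f$, since it is that bound which transports the boundedness of $\alpha$ over to $\beta$; the rest is a routine singleton-fibre calculation.
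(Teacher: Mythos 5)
Your proposal is correct; its second half is essentially identical to the paper's argument (the paper also passes to the inverse $\theta=\phi^{-1}$, observes that its fibres over $x\neq x_0$ are the singletons $\{\phi(x)\}$, and lets the morphism bound for $\theta$ transport the boundedness of $\alpha$ over to $\beta$). For the equivalence itself, however, you travel in the opposite direction from the paper. You co-restrict the embedding $E$ of Lemma~\ref{lem:fullembedding} to $E'\colon\mathbf{Set}_{\bullet,<+\infty}\to{}_{\mathsf{bnd}}\mathbf{WSet}_{\bullet}$, inherit full faithfulness because ${}_{\mathsf{bnd}}\mathbf{WSet}_{\bullet}$ is a \emph{full} subcategory, and prove essential surjectivity via the normalization isomorphism $\mathrm{id}_X\colon(X,x_0,\mathbf{1})\to(X,x_0,\alpha)$, whose inverse direction is exactly where boundedness of $\alpha$ is needed. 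The paper instead constructs the forgetful functor $U\colon{}_{\mathsf{bnd}}\mathbf{WSet}_{\bullet}\to\mathbf{Set}_{\bullet,<+\infty}$ (well defined on morphisms by Lemma~\ref{lem:precurseur_du_forgetful}) and checks that it is faithful, full and surjective on objects; its fullness computation, $\sum_{x\in f^{-1}(\{y\})}\alpha(x)\leq\sup_{x\neq x_0}\alpha(x)\cdot\mathsf{B}_f\cdot\frac{1}{C_\beta}\beta(y)$, shows that between bounded objects the weighted morphism condition is automatic, i.e.\ that the hom-sets of ${}_{\mathsf{bnd}}\mathbf{WSet}_{\bullet}$ and $\mathbf{Set}_{\bullet,<+\infty}$ coincide on the nose. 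The two functors are quasi-inverse to each other, so the content is the same and only the distribution of work differs: the paper's route records the slightly sharper fact that the forgetful functor is a hom-set-preserving, surjective-on-objects equivalence, while your route reuses the already established full faithfulness of $E$ and isolates the concrete fact that every bounded weight is isomorphic, via the identity map, to the constant weight $\mathbf{1}$, which is arguably the more transparent explanation of why bounded weights carry no categorical information. Your bookkeeping (applying the bound to the inverse $g$, and the implicit symmetry reduction on the direction of the isomorphism) is sound, so both proofs are complete.
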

\begin{proof}
By Lemma~\ref{lem:precurseur_du_forgetful}, for $(X,x_0,\alpha)\xrightarrow{f}(Y,y_0,\beta)$ with $\alpha,\beta$ bounded above, $(X,x_0)\xrightarrow{f}(Y,y_0)$ is a morphism in $\mathbf{Set}_{\bullet,<+\infty}$. This defines in an obvious way a functor $U\colon {}_{\mathsf{bnd}}\mathbf{WSet}_{\bullet}\to \mathbf{Set}_{\bullet,<+\infty}$. This functor is readily faithful. 

Let $(X,x_0,\alpha),(Y,y_0,\beta)$ with bounded $\alpha,\beta$ and let $(X,x_0)\xrightarrow{f}(Y,y_0)$ in $\mathbf{Set}_{\bullet,<+\infty}$. Then, for $y\not=y_0$, $\sum_{x\in f^{-1}(\{\, y\,\})}\alpha(x)\leq \sup_{x\not=x_0}\alpha(x)\|\mathsf{B}_f\|_{\infty}\leq \sup_{x\not=x_0}\alpha(x)\|\mathsf{B}_f\|_{\infty}\frac{1}{C_\beta}\beta(y)$. Whence $U$ is full. Of course, $U$ is surjective on objects because $U(X,x_0,\mathbf{1})=(X,x_0)$. 

Now, let $(X,x_0,\alpha)$ and $(Y,y_0,\beta)$ with $\alpha$ bounded, and $\beta$ unbounded. Let $(X,x_0,\alpha)\xrightarrow{\phi}(Y,y_0,\beta)$ be an isomorphism in $\mathbf{WSet}_{\bullet}$. In particular, $X\xrightarrow{\phi}Y$ is a bijection with $\phi(x_0)=y_0$. Let $\theta:=\phi^{-1}$ which is a morphism in $\mathbf{WSet}_{\bullet}$ too. Then, for all $x\not=x_0$, $\beta(\phi(x))=\sum_{y\in \theta^{-1}(\{\, x\,\})}\beta(y)\leq M_f\alpha(x)$. In particular, $\sup_{y\not=y_0}\beta(y)=\sup_{x\not=x_0}\beta(\phi(x))\leq M_f\sup_{x\not=x_0}\alpha(x)<+\infty$ which is a contradiction.
\end{proof}


\subsection{The set of minimal ideals functor}\label{sec:minimal_ideal_functor}

Let $(H,\mu)$ be a  Hilbertian Frobenius algebra.

\begin{lemma}\label{lem:support_of_idempotent_is_finite}
$E(H,\mu)\subseteq \langle G(H,\mu)\rangle\subseteq J(H,\mu)^{\dagger}$. 
More precisely, if $e$ is an idempotent element, then the {\em support of $e$} $S_e:=\{\, g\in G(H,\mu)\colon \langle e,g\rangle\not=0\,\}$ is the (unique) finite subset of $G(H,\mu)$  such that $e=\sum_{g\in S_e}\frac{g}{\|g\|^2}$.
\end{lemma}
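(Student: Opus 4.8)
The plan is to reduce everything to the orthogonal basis $G(H,\mu)$ of $J(H,\mu)^{\perp}$ furnished by Lemma~\ref{lem:orthogonal}, and then to read off the coefficients of an idempotent directly from the idempotency equation. First I would show that every idempotent $e\in E(H,\mu)$ already lies in $J(H,\mu)^{\perp}$: by Remark~\ref{rem:description_of_HilbFrob} one has $uv=p_{J^{\perp}}(u)p_{J^{\perp}}(v)$ for all $u,v\in H$, so $e=e^2=p_{J^{\perp}}(e)^2$, and since $J^{\perp}$ is a closed subalgebra by Theorem~\ref{thm:stHFs}, this product stays in $J^{\perp}$; hence $e\in J^{\perp}$. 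This also settles the second inclusion $\langle G(H,\mu)\rangle\subseteq J(H,\mu)^{\perp}$ for free, as each group-like element lies in $J^{\perp}$.

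Next, since $G(H,\mu)$ is an orthogonal basis of $J^{\perp}$, I would expand $e=\sum_{g\in G(H,\mu)}c_g\,g$ as a summable family, with $c_g=\langle e,g\rangle/\|g\|^2$. Using the multiplication rule $gh=\delta_{g,h}\|g\|^2 g$ recorded in Corollary~\ref{cor:orthogonal2}, Lemma~\ref{lem:preidem_2} and Remark~\ref{rem:description_of_HilbFrob} (distinct group-like elements annihilate one another, and $g^2=\|g\|^2 g$), together with bilinearity of $\mu$, I would compute $e^2=\sum_{g}c_g^2\|g\|^2\,g$. Equating $e^2=e$ and comparing coefficients in the orthogonal basis yields $c_g^2\|g\|^2=c_g$ for every $g$, that is $c_g(c_g\|g\|^2-1)=0$, so $c_g\in\{\,0,\,1/\|g\|^2\,\}$. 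Consequently the support $S_e=\{\,g:\langle e,g\rangle\neq 0\,\}=\{\,g:c_g\neq 0\,\}$ is exactly the set of $g$ with $c_g=1/\|g\|^2$, whence $e=\sum_{g\in S_e}g/\|g\|^2$. Uniqueness of $S_e$ is immediate from the linear independence of the orthogonal set $G(H,\mu)$.

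It remains to prove that $S_e$ is finite, which I expect to be the crux. The decisive ingredient is the uniform bound $\|g\|\le\|\mu\|_{\mathsf{op}}$ for all $g\in G(H,\mu)$ established in Lemma~\ref{lem:preidem} (see also Lemma~\ref{lem:Jperp_is_subcoalg}). From $e=\sum_{g\in S_e}g/\|g\|^2$ and orthogonality I obtain $\|e\|^2=\sum_{g\in S_e}1/\|g\|^2\ge |S_e|/\|\mu\|_{\mathsf{op}}^2$, and since $\|e\|<+\infty$ this forces $|S_e|\le\|\mu\|_{\mathsf{op}}^2\,\|e\|^2<+\infty$. Finiteness of $S_e$ then yields $e\in\langle G(H,\mu)\rangle$, completing the first inclusion $E(H,\mu)\subseteq\langle G(H,\mu)\rangle$. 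The main obstacle is really this last step: without the boundedness of the norms of the group-like elements, nothing a priori prevents an idempotent from having infinite support, so the argument genuinely relies on the Banach-algebra continuity input that produced the bound $\|g\|\le\|\mu\|_{\mathsf{op}}$.
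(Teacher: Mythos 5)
Your proof is correct and follows essentially the same route as the paper: expand the idempotent in the orthogonal basis $G(H,\mu)$ of $J(H,\mu)^{\perp}$, compare coefficients in $e^{2}=e$ to conclude $\langle e,g\rangle\in\{\,0,1\,\}$, and deduce finiteness of $S_e$ from the uniform bound $\|g\|\leq\|\mu\|_{\mathsf{op}}$ of Lemma~\ref{lem:preidem}, exactly as the paper does. The only (harmless) divergence is the first step: the paper invokes \cite[Lemma~3.3, p.~112]{BadeDales} to place idempotents in $J(H,\mu)^{\perp}$, whereas you derive this directly from $uv=p_{J^{\perp}}(u)p_{J^{\perp}}(v)$ (Remark~\ref{rem:description_of_HilbFrob}) and the fact that $J^{\perp}$ is a subalgebra (Theorem~\ref{thm:stHFs}), which is, if anything, more self-contained.
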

\begin{proof}
That $E(H,\mu)\subseteq J(H,\mu)^{\dagger}$  follows from~\cite[Lemma~3.3, p.~112]{BadeDales} since $H=J(H,\mu)^{\perp}\oplus_2 J(H,\mu)$ and $J(H,\mu)^{\perp}$ is a subalgebra by the Structure Theorem for Hilbertian Frobenius semigroups (Theorem~\ref{thm:stHFs}). 
 
Let $e=\sum_{g\in G(H,\mu)}\langle e,\frac{g}{\|g\|}\rangle \frac{g}{\|g\|}$ be an idempotent element. $e^2=\sum_{g\in G(H,\mu)}\frac{1}{\|g\|^2}\langle e,g\rangle^2 g$. But $e^2=e$ so that for each $g\in G(H,\mu)$, $\langle e,g\rangle\in \{\, 0,1\,\}$. Let $S_e:=\{\, g\in G(H,\mu)\colon \langle e,g\rangle\not=0\,\}$. Then, $e=\sum_{g\in S_e}\frac{1}{\|g\|}\frac{g}{\|g\|}$ and thus $(\frac{1}{\|g\|^2})_{g\in S_e}$ is summable and $\|e\|^2=\sum_{g\in S_e}\frac{1}{\|g\|^2}$ (since $G(H,\mu)$ is an orthogonal family). But for each group-like element $g$, $\frac{1}{\|\mu\|_{\mathsf{op}}^2}\leq \frac{1}{\|g\|^2}$ (by Lemma~\ref{lem:Jperp_is_subcoalg}) and $\sum_{g\in S_e}\frac{1}{\|\mu\|_{\mathsf{op}}^2}=\frac{|S_e|}{\|\mu\|^2_{\mathsf{op}}}\leq \|e\|^2$, so that $|S_e|<+\infty$. Uniqueness of $S_e$ is clear since $(\frac{g}{\|g\|})_g$ is an orthonormal basis.
\end{proof}

As usually let $e\leq f$ be defined as $ef=e$, for $e,f\in E(H,\mu)$, and call {\em minimal} an idempotent which is minimal in $(E(H,\mu),\leq)$. Let $Min(E(H,\mu))$ be the set of all these minimal idempotent elements. 

As Lemma~\ref{lem:support_of_idempotent_is_finite} actually establishes a one-one correspondence $e\mapsto S_e$, between $E(H,\mu)$ and the set $\mathfrak{P}_{\mathsf{fin}}(G(H,\mu))$ of finite subsets of group-like elements, and as under this bijection the product of idempotents corresponds to the intersection of their supports, $\leq$ corresponds to the usual inclusion of sets. Consequently $Min(E(H,\mu))=\{\, \frac{g}{\|g\|^2}\colon g\in G(H,\mu)\,\}$. 
\begin{remark}\label{rem:someremarksonminidempotents}
Let $(H,\mu)$ be a Hilbertian Frobenius algebra. Lemma~\ref{lem:bounded_above_and_bounded_below} tells us that (1) $G(H,\mu)\simeq Min(E(H,\mu))$ under $g\mapsto \frac{g}{\|g\|^2}$ with inverse $e\mapsto \frac{e}{\|e\|^2}$, (2) $Min(E(H,\mu))$ is a bounded below orthogonal family. More precisely if $G(H,\mu)$ is non void, that is, $\mu\not=0$ (by Corollary~\ref{cor:radicalFrob}), $Min(E(H,\mu))$ is a non void  orthogonal family. (3) $\{\, \frac{e}{\|e\|}\colon e\in Min(E(H,\mu))\,\}=\{\, \frac{g}{\|g\|}\colon g\in G(H,\mu)\,\}$.
%
%
%
%
%
\end{remark}

\begin{lemma}\label{lem:fromIdempotentgenerated_to_Iperp_mod_unit}
Let $I=He$ for some idempotent $e$ of $(H,\mu)$. Then, $I\subseteq J(H,\mu)^{\perp}$ and  $I^{\perp}$ is a modular ideal with modular unit $e$.
\end{lemma}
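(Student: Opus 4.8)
The plan is to reduce everything to the explicit description of idempotents and of the multiplication already established for a Hilbertian Frobenius semigroup.

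First I would invoke Lemma~\ref{lem:support_of_idempotent_is_finite} to write $e=\sum_{g\in S_e}\frac{g}{\|g\|^2}$ for a finite support $S_e\subseteq G(H,\mu)$; in particular $e\in\langle G(H,\mu)\rangle\subseteq J(H,\mu)^{\perp}$. Next I would compute the principal ideal $I=He$. Using the formula $ug=\langle u,g\rangle g$ valid for every $g\in G(H,\mu)$ (Remark~\ref{rem:description_of_HilbFrob}), one gets, for any $a\in H$,
\[
ae=\sum_{g\in S_e}\frac{1}{\|g\|^2}\,(a g)=\sum_{g\in S_e}\frac{\langle a,g\rangle}{\|g\|^2}\,g\in\langle S_e\rangle .
\]
Taking $a=g_0$ for each $g_0\in S_e$ and using orthogonality of $G(H,\mu)$ (Lemma~\ref{lem:orthogonal}) recovers $g_0e=g_0$, so in fact $I=\langle S_e\rangle$: a finite-dimensional, hence closed, subspace contained in $J(H,\mu)^{\perp}$. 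Since each $\mathbb{C}g$ is an ideal (Remark~\ref{rem:description_of_HilbFrob}) and $I$ is their finite orthogonal sum, $I$ is a closed ideal, which settles the first assertion $I\subseteq J(H,\mu)^{\perp}$.

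The key observation is then that right multiplication by $e$ is nothing but the orthogonal projection onto $I$. Indeed, as $\{\, g\colon g\in S_e\,\}$ is an orthogonal basis of $I$, the displayed formula for $ae$ is exactly $p_I(a)=\sum_{g\in S_e}\frac{\langle a,g\rangle}{\|g\|^2}g$, so $M_e=p_I$. Consequently, for every $a\in H$,
\[
a-ae=a-p_I(a)=p_{I^{\perp}}(a)\in I^{\perp}.
\]
Because $I$ is a closed ideal, Corollary~\ref{cor:bij_ideals_subcoalg} guarantees that $I^{\perp}$ is again a closed ideal; the line above then says precisely that $e$ is a modular unit modulo $I^{\perp}$, that is, $I^{\perp}$ is a modular ideal with modular unit $e$.

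The only real content, and the step I would expect to be the crux, is the identification $M_e=p_I$; everything else is bookkeeping on top of the Structure Theorem (Theorem~\ref{thm:stHFs}). The point requiring attention is that $G(H,\mu)$ is merely orthogonal and not orthonormal, so the weights $\frac{1}{\|g\|^2}$ must be carried consistently through both the expansion of $e$ and the projection formula; once they match, modularity is immediate from the orthogonal decomposition $H=I\oplus_2 I^{\perp}$.
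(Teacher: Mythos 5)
Your proof is correct, and it takes a more explicit route than the paper's. The paper argues structurally: the inclusion $I\subseteq J(H,\mu)^{\perp}$ is deduced from the three facts that $e\in J(H,\mu)^{\perp}$ (Lemma~\ref{lem:support_of_idempotent_is_finite}), that $J(H,\mu)^{\perp}$ is a subalgebra (Lemma~\ref{lem:Jperp_is_subcoalg}), and that $J(H,\mu)J(H,\mu)^{\perp}=0$ (Proposition~\ref{prop:normality}), i.e.\ from $He=(J\oplus_2 J^{\perp})e\subseteq J^{\perp}$; it then cites Corollary~\ref{cor:bij_ideals_subcoalg} for $I^{\perp}$ being an ideal and \emph{leaves to the reader} the verification $\langle u-ue,ve\rangle=0$ that establishes modularity. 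You instead expand $e=\sum_{g\in S_e}\frac{g}{\|g\|^2}$ and use the multiplication formula $ag=\langle a,g\rangle g$ of Remark~\ref{rem:description_of_HilbFrob} to identify $I=He=\langle S_e\rangle$ exactly, and then observe that $M_e=p_I$, which makes both assertions immediate: $I\subseteq\langle G(H,\mu)\rangle\subseteq J(H,\mu)^{\perp}$, and $u-ue=p_{I^{\perp}}(u)\in I^{\perp}$ for every $u$. Your route costs a computation but buys more: an explicit description of the principal ideal $He$ as the finite-dimensional (hence closed) span of the support of $e$, and a complete proof of the modularity step that the paper omits — indeed your identity $M_e=p_I$ is precisely what makes the paper's deferred check $\langle u-ue,ve\rangle=0$ trivial, since $u-ue\perp I\ni ve$. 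Both arguments rest on the same foundations (the Structure Theorem~\ref{thm:stHFs} and its consequences), so the difference is one of presentation rather than of substance, but yours is self-contained where the paper's is not.
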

\begin{proof}
That $I\subseteq J(H,\mu)^{\perp}$ is clear   since $e\in J(H,\mu)^{\perp}$ (Lemma~\ref{lem:support_of_idempotent_is_finite}), $J(H,\mu)^{\perp}$ is a subalgebra (Lemma~\ref{lem:Jperp_is_subcoalg}) and $J(H,\mu)J(H,\mu)^{\perp}=0$ (Proposition~\ref{prop:normality}). $I^{\perp}$ is indeed an ideal (Corollary~\ref{cor:bij_ideals_subcoalg}). To prove that $e$ is a modular unit of $I^{\perp}$ it suffices to check that $\langle u-ue,ve\rangle=0$ for each $u,v\in H$, which is left to the readers. 
%
%
%
%
\end{proof}

Let $I$ be an ideal of $(H,\mu)$ contained in $J(H,\mu)^{\perp}$ and such that $I\not=(0)$. Then, $I^2\not=(0)$. (Indeed if $I^2=(0)$, then for each $x\in I$, $x^2=0$. But then $x\in J(H,\mu)\cap I\subseteq J(H,\mu)\cap J(H,\mu)^{\perp}=(0)$, that is, $x=0$.)

 \begin{lemma}\label{lem:minid_gpgen_idem_gen}
Let $I$ be a minimal ideal. Then, $I=\mathbb{C}g=\mathbb{C}\frac{g}{\|g\|^2}$ for a unique group-like element $g$, in particular $I\subseteq J(H,\mu)^{\perp}$. Equivalently, $I=\mathbb{C}e=\mathbb{C}\frac{e}{\|e\|^2}$ for a unique minimal  idempotent element $e$. $e$ may be characterized as the unit of $I$ qua a semigroup. Finally, $(-)^{\perp}$ establishes a one-one correspondence between the set of all maximal regular ideals of $(H,\mu)$ and the set $Min(H,\mu)$ of all its minimal ideals. 
 \end{lemma}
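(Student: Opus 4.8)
The plan is to assemble the statement from facts already in hand: that $J=A(H,\mu)$ (Proposition~\ref{prop:normality}), that $G(H,\mu)$ is an orthogonal basis of $J^{\perp}$ (Lemma~\ref{lem:orthogonal}) with the product formula $uv=p_{J^{\perp}}(u)p_{J^{\perp}}(v)=\sum_{x\in G(H,\mu)}\langle u,x\rangle\langle v,x\rangle\frac{x}{\|x\|^{2}}$ and in particular $ug=\langle u,g\rangle g$, $\mathbb{C}g$ an ideal (Remark~\ref{rem:description_of_HilbFrob}), together with the minimal-idempotent dictionary $Min(E(H,\mu))=\{g/\|g\|^{2}:g\in G(H,\mu)\}$ (established after Lemma~\ref{lem:support_of_idempotent_is_finite}) and the character correspondence. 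First I would locate a minimal ideal $I$. Since $J=A(H,\mu)$, the product formula gives $uv\in J^{\perp}$ for all $u,v$, so $IH$ is an ideal contained in $I\cap J^{\perp}$; by minimality $IH=I$ or $IH=(0)$. The second alternative means $I\subseteq A(H,\mu)=J$, i.e.\ $I$ is a line in the radical with $I^{2}=(0)$, the degenerate case set aside in $Min(H,\mu)$; hence $IH=I$ and $I\subseteq J^{\perp}$, which already yields the asserted inclusion.

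Next I would identify $I$ explicitly. Choosing $0\neq x\in I\subseteq J^{\perp}$ and expanding it along the orthogonal basis $G(H,\mu)$, there is some $g$ with $\langle x,g\rangle\neq 0$; then $xg=\langle x,g\rangle g\in I$ forces $g\in I$, so the nonzero ideal $\mathbb{C}g$ sits inside $I$ and minimality gives $I=\mathbb{C}g$. Uniqueness of $g$ is immediate once one notes that the only group-like element of the line $\mathbb{C}g$ is $g$ itself: $\mu^{\dagger}(\lambda g)=\lambda\,g\otimes g$ can equal $(\lambda g)\otimes(\lambda g)=\lambda^{2}\,g\otimes g$ only for $\lambda\in\{0,1\}$, and $\lambda=0$ is excluded. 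Setting $e:=g/\|g\|^{2}\in Min(E(H,\mu))$ one has $\mathbb{C}e=\mathbb{C}g=\mathbb{C}\frac{g}{\|g\|^{2}}$ and $e/\|e\|^{2}=g$, giving the idempotent reformulation, and the one-line check $e\cdot(\lambda g)=\frac{\lambda}{\|g\|^{2}}g^{2}=\lambda g$ shows $e$ is the unit of the one-dimensional algebra $I$.

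For the final correspondence I would chain the bijections already available. Maximal regular (modular) ideals of the commutative Banach algebra $(H,\mu_{\mathsf{bil}})$ are exactly the kernels of its non-trivial characters and are closed (Section~\ref{sec:preliminaries}), while $R\colon g\mapsto\langle\cdot,g\rangle$ identifies $G(H,\mu)$ with $\mathsf{char}(H,\mu_{\mathsf{bil}})$ (Section~\ref{sect:grouplike}). Hence every maximal regular ideal is $\ker\langle\cdot,g\rangle=(\mathbb{C}g)^{\perp}$ for a unique $g$. Applying the order-reversing involution $(-)^{\perp}$ of Corollary~\ref{cor:bij_ideals_subcoalg} to this closed ideal returns $(\mathbb{C}g)^{\perp\perp}=\mathbb{C}g\in Min(H,\mu)$; injectivity is the involution property and surjectivity is the first part of the lemma, so $(-)^{\perp}$ restricts to the asserted bijection between maximal regular ideals and $Min(H,\mu)$.

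I expect the main obstacle to be the very first step, namely controlling where a minimal ideal can live. This is precisely where the Frobenius hypothesis is used, through the identity $J=A(H,\mu)$ (Proposition~\ref{prop:normality}) and the resulting product formula, which together force a minimal ideal either into the annihilator (the radical, degenerate case) or cleanly into the semisimple part $J^{\perp}$; without normality of the multiplication operators this dichotomy, and hence the clean description $I=\mathbb{C}g$, would not be available. The remaining steps are then routine bookkeeping with the orthogonal basis $G(H,\mu)$ and the already-proved involution $(-)^{\perp}$.
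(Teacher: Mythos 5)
Your proof is correct, but it reaches the key identification $I=\mathbb{C}g$ by a genuinely different route than the paper. The paper first invokes Brauer's lemma to write $I=He$ for a nonzero idempotent $e$ (the alternative $I^{2}=(0)$ being set aside), then passes to $I^{\perp}$, which is maximal by the involution of Corollary~\ref{cor:bij_ideals_subcoalg} and modular with modular unit $e$ by Lemma~\ref{lem:fromIdempotentgenerated_to_Iperp_mod_unit}, and finally uses the chain maximal modular ideals $\leftrightarrow$ characters $\leftrightarrow$ group-like elements to get $I^{\perp}=(\mathbb{C}g)^{\perp}$, whence $\overline{I}=I^{\perp\perp}=\mathbb{C}g$ and then $I=\mathbb{C}g$; the bijection with maximal regular ideals is obtained in the same stroke. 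You instead work directly inside $I$: the identity $J=A(H,\mu)$ (Proposition~\ref{prop:normality}) together with the product formula of Remark~\ref{rem:description_of_HilbFrob} gives the dichotomy $I=IH\subseteq J^{\perp}$ or $I\subseteq J$, and in the first case an orthogonal expansion of any $0\neq x\in I$ produces a group-like $g$ with $xg=\langle x,g\rangle g\in I$, so $\mathbb{C}g\subseteq I$ and minimality finishes; the correspondence with maximal regular ideals then becomes a corollary, chained through the character bijection rather than used as an engine. Your route is more elementary---it avoids Brauer's lemma and the modular-unit lemma entirely, and it makes visible exactly where the Frobenius hypothesis enters (through $J=A(H,\mu)$)---while the paper's route yields along the way the extra fact that $I^{\perp}$ is modular with unit $e$. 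One caveat is common to both arguments: lines $\mathbb{C}v$ with $0\neq v\in J$ are, on the literal definition, minimal ideals with zero square, and neither proof covers them; the paper excludes this case tacitly (its appeal to the observation preceding the lemma strictly applies only to ideals already contained in $J^{\perp}$), whereas you flag it explicitly as ``set aside,'' so your treatment is no less rigorous than the paper's on this point.
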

 \begin{proof}
 By the above observation and by Brauer's lemma (\cite[p.~162]{Lam}), $I=He$ for a non-zero idempotent $e$ of $(H,\mu)$. 
 
 Since $I$ is minimal, $I^{\perp}$ is maximal  (Corollary~\ref{cor:bij_ideals_subcoalg}). It is modular by Lemma~\ref{lem:fromIdempotentgenerated_to_Iperp_mod_unit}. Therefore $I^{\perp}=(\mathbb{C}g)^{\perp}$ for a unique group-like element $g$ of $(H,\mu)$. Since $\mathbb{C}g$ is finite dimensional, it is closed and thus $\overline{I}=I^{\perp\perp}=(\mathbb{C}g)^{\perp\perp}=\mathbb{C}g$. Since $I\subseteq \mathbb{C}g$, $I$ also is finite dimensional so $I=\mathbb{C}g$. (This in particular shows that $I\mapsto I^{\perp}$ is a bijection from the set of all minimal ideals of $(H,\mu)$ to the set of all its maximal regular ideals since $\mathbb{C}g$ certainly is a minimal ideal for each group-like element $g$ by Remark~\ref{rem:description_of_HilbFrob}.) 
 
 Let $h\in G(H,\mu)$ such that $\mathbb{C}\frac{g}{\|g\|^2}=I=\mathbb{C}\frac{h}{\|h\|^2}$, then $h\in \mathbb{C}g$ but this is possible only if $g=h$ since $G(H,\mu)$ is linearly independent (it is an orthogonal family). This proves uniqueness of the group-like generator $g$ of $I$, and thus also of its  idempotent generator $\frac{g}{\|g\|^2}$. That $\frac{g}{\|g\|^2}$ is a unit element for $I$ is also clear.
 %
%
 \end{proof}
 
Let $I$ be a minimal ideal of $(H,\mu)$. Let $\mycirc{g}(I)$ and $\mycirc{e}(I)$ be respectively its group-like and its idempotent generators, provided by Lemma~\ref{lem:minid_gpgen_idem_gen} and which are related by the equalities $\mycirc{e}(I)=\frac{\mycirc{g}(I)}{\|\mycirc{g}(I)\|^2}$  and thus also $\mycirc{g}(I)=\frac{\mycirc{e}(I)}{\|\mycirc{e}(I)\|^2}$.   This defines maps ${Min}(H,\mu)\xrightarrow{\mycirc{g}}G(H,\mu)$ and $Min(H,\mu)\xrightarrow{\mycirc{e}}Min(E(H,\mu))$. 


The proof of the next result is essentially provided by the proof of Lemma~\ref{lem:minid_gpgen_idem_gen}. 
\begin{lemma}\label{lem:bij_minimal_id_and_grplike}
$\mycirc{g}$ is a bijection with inverse $G(H,\mu)\xrightarrow{\mathbb{C}(-)}Min(H,\mu)$, $g\mapsto \mathbb{C}g$, and $\mycirc{e}$ is a bijection from $Min(H,\mu)$ to $MinE(H,\mu)$ with inverse $Min(E(H,\mu))\xrightarrow{\mathbb{C}(-)}Min(H,\mu)$, $e\mapsto \mathbb{C}e$. 
\end{lemma}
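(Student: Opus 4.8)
The plan is to read everything off the uniqueness assertions already obtained in the proof of Lemma~\ref{lem:minid_gpgen_idem_gen}, so that essentially no new computation is needed; the task reduces to checking that the two stated pairs of maps compose to the identity in both orders.

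First I would check that $G(H,\mu)\xrightarrow{\mathbb{C}(-)}Min(H,\mu)$, $g\mapsto\mathbb{C}g$, and $Min(E(H,\mu))\xrightarrow{\mathbb{C}(-)}Min(H,\mu)$, $e\mapsto\mathbb{C}e$, are well defined, that is, that they really land in $Min(H,\mu)$. For a group-like element $g$, the parenthetical remark inside the proof of Lemma~\ref{lem:minid_gpgen_idem_gen} already records that $\mathbb{C}g$ is a minimal ideal (via Remark~\ref{rem:description_of_HilbFrob}). For a minimal idempotent $e$, Remark~\ref{rem:someremarksonminidempotents} gives $e=\frac{g}{\|g\|^2}$ for a group-like $g$, whence $\mathbb{C}e=\mathbb{C}g$ is again a minimal ideal. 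So both candidate inverses are legitimate.

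Next I would verify that $\mycirc{g}$ and $g\mapsto\mathbb{C}g$ are mutually inverse. In one direction, for a minimal ideal $I$ Lemma~\ref{lem:minid_gpgen_idem_gen} gives $I=\mathbb{C}\,\mycirc{g}(I)$ by the very definition of $\mycirc{g}(I)$ as the group-like generator of $I$, so $\mathbb{C}(-)\circ\mycirc{g}=\mathrm{id}_{Min(H,\mu)}$. In the other direction, for $g\in G(H,\mu)$ the element $g$ is a group-like member of the minimal ideal $\mathbb{C}g$, and since Lemma~\ref{lem:minid_gpgen_idem_gen} asserts that such a generator is unique, it must coincide with $\mycirc{g}(\mathbb{C}g)$; thus $\mycirc{g}\circ\mathbb{C}(-)=\mathrm{id}_{G(H,\mu)}$. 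Hence $\mycirc{g}$ is a bijection with the announced inverse.

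The case of $\mycirc{e}$ is entirely parallel, replacing ``group-like generator'' by ``idempotent generator'' throughout and invoking the uniqueness of the latter from Lemma~\ref{lem:minid_gpgen_idem_gen}: for a minimal ideal $I$ one has $I=\mathbb{C}\,\mycirc{e}(I)$ by the definition of $\mycirc{e}$, and for $e\in Min(E(H,\mu))$ the idempotent $e$ lies in $\mathbb{C}e$ and is its unique idempotent generator, so $\mycirc{e}(\mathbb{C}e)=e$. Since there is no genuine analytic content here, the principal (and only mild) obstacle is purely bookkeeping: keeping the three ambient bijections $G(H,\mu)\simeq Min(E(H,\mu))$ via $g\mapsto\frac{g}{\|g\|^2}$, together with the two maps $\mathbb{C}(-)$ out of $G(H,\mu)$ and out of $Min(E(H,\mu))$, mutually compatible, which is exactly the content of the relation $\mycirc{e}(I)=\frac{\mycirc{g}(I)}{\|\mycirc{g}(I)\|^2}$ recorded just before the statement.
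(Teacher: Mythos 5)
Your proposal is correct and follows essentially the same route as the paper, whose entire proof is the remark that everything is already contained in the proof of Lemma~\ref{lem:minid_gpgen_idem_gen}: you simply make explicit the two composition checks, using the existence/uniqueness of the group-like (resp.\ idempotent) generator from that lemma together with the fact that $\mathbb{C}g$ is a minimal ideal (Remark~\ref{rem:description_of_HilbFrob}) and the relation $\mycirc{e}(I)=\frac{\mycirc{g}(I)}{\|\mycirc{g}(I)\|^2}$. No gap.
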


\begin{remark}\label{rem:extension_of_notations}
For convenience one still denotes by $\mycirc{g}$ (resp. $\mycirc{e}$) the bijection from $Min(H,\mu)\cup\{\,0\,\}$ (resp. $Min(H,\mu)\cup\{\,0\,\}$) onto $G(H,\mu)\cup\{\, 0\,\}$  (resp. $Min(E(H,\mu))\cup\{\, 0\,\}$) obtained from the original $\mycirc{g}$ (resp. $\mycirc{e}$) by setting $\mycirc{g}(0):=0$ (resp. $\mycirc{e}(0):=0$).
\end{remark}

Let $(H,\mu)$ be a  Hilbertian Frobenius semigroup. One  recalls from Lemma~\ref{lem:preidem}, that for each $g\in G(H,\mu)$, $\|g\|\leq \|\mu\|_{\mathsf{op}}$. (This is true even for $G(H,\mu)=\emptyset$, or equivalently for $\mu=0$.) So in particular for each $e\in Min(E(H,\mu))$, $\frac{1}{\|e\|}=\|\frac{e}{\|e\|^2}\|\leq \|\mu\|_{\mathsf{op}}$. This is equivalent to $\frac{1}{\|\mu\|_{\mathsf{op}}}\leq \|e\|$, $e\in Min(E(H,\mu))$ but only for a non void $Min(E(H,\mu))$ or equivalently for $\mu\not=0$. Nevertheless even when $\mu=0$, $Min(E(H,\mu))$ is bounded below since it is void. In order to avoid statements by cases where $\mu\not=0$ or $\mu=0$, one defines the bound of $(H,\mu)$. 
\begin{definition}
Let $(H,\mu)$ be a  Hilbertian Frobenius semigroup. Define the {\em bound} $b(H,\mu)>0$ of $(H,\mu)$ by $b(H,\mu):=\left\{\begin{array}{ll}
1 & \mbox{if}\ \mu=0\\
\|\mu\|_{\mathsf{op}} & \mbox{if}\ \mu\not=0
\end{array}\right.$. For each $g\in G(H,\mu)$, $\|g\|\leq b(H,\mu)$. So in particular for each $e\in Min(E(H,\mu))$, $\frac{1}{\|e\|}=\|\frac{e}{\|e\|^2}\|\leq b(H,\mu)$. This is equivalent, even for $\mu=0$, to $\frac{1}{b(H,\mu)}\leq \|e\|$, $e\in Min(E(H,\mu))$.  One defines $w_{(H,\mu)}\colon Min(H,\mu)\to [\frac{1}{b(H,\mu)^{2}},+\infty[$  by $w_{(H,\mu)}(I):=\frac{1}{\|\mycirc{g}(I)\|^2}=\|\mycirc{e}(I)\|^2$. Let $Min_{\bullet}(H,\mu):=(Min(H,\mu)\cup\{\, 0\,\},0,w_{(H,\mu)})$. One defines ${}_{\mathsf{bnd},c}\mathbf{FrobSem}(\mathbb{Hilb})$ and ${}_{\mathsf{unbnd},c}\mathbf{FrobSem}(\mathbb{Hilb})$ as the full subcategories of ${}_c\mathbf{FrobSem}(\mathbb{Hilb})$ spanned by the semigroups $(H,\mu)$ with $w_{(H,\mu)}$ bounded above, and respectively unbounded above.   We  also consider  ${}_{\mathsf{semisimple},\mathsf{bnd},c}\mathbf{FrobSem}(\mathbb{Hilb})$ and ${}_{\mathsf{semisimple},\mathsf{unbnd},c}\mathbf{FrobSem}(\mathbb{Hilb})$.
\end{definition}

Let $(H,\mu)\xrightarrow{f}(K,\gamma)$ be a semigroup map between  Hilbertian Frobenius semigroups. Therefore $(K,\gamma^{\dagger})\xrightarrow{f^{\dagger}}(H,\mu^{\dagger})$ is a coalgebra map and thus $f^{\dagger}(G(K,\gamma))\subseteq G(H,\mu)\cup\{\, 0\,\}$. 

Let $\ell\colon Min(K,\gamma)\to Min(H,\mu)\cup\{\, 0\,\}$ be given by $\ell(J)=0$ if, and only if, $f^{\dagger}(\mycirc{g}(J))=0$,  and $\ell(J)=I$ if, and only if, $f^{\dagger}(\mycirc{g}(J))=\mycirc{g}(I)$. Therefore, for each $J\in Min(K,\gamma)\cup\{\, 0\,\}$, $\ell(J)=\mathbb{C}f^{\dagger}(\mycirc{g}(J))$. (Observe that for $J=0$ or for $f^{\dagger}(\mycirc{g}(J))=0$, $\ell(J)=\mathbb{C}0=0$.)

Let $I\in Min(H,\mu)$. Then, 
\begin{equation}
\begin{array}{lll}
\langle f(\mycirc{e}(I)),\frac{\mycirc{e}(J)}{\|\mycirc{e}(J)\|}\rangle&=&\langle \mycirc{e}(I),\|\mycirc{e}(J)\|f^{\dagger}(\frac{\mycirc{e}(J)}{\|\mycirc{e}(J)\|^2})\rangle.
\end{array}
\end{equation}
Consequently, for $J\in Min(K,\gamma)$, $\langle f(\mycirc{e}(I)),\frac{\mycirc{e}(J)}{\|\mycirc{e}(J)\|}\rangle=0$ if, and only if, $f^{\dagger}(g(J))=0$ or $f^{\dagger}(g(J))=g(I')$ with $I'\not=I$, and $\langle f(\mycirc{e}(I)),\frac{\mycirc{e}(J)}{\|\mycirc{e}(J)\|}\rangle=\langle \mycirc{e}(I),\|\mycirc{e}(J)\|g(I)\rangle=\frac{\|\mycirc{e}(J)\|}{\|\mycirc{e}(I)\|^2}\langle \mycirc{e}(I),\mycirc{e}(I)\rangle=\|\mycirc{e}(J)\|$ if, and only if, $f^{\dagger}(\mycirc{g}(J))=\mycirc{g}(I)$ if, and only if, $\ell(J)=I$. 

Since $f$ is a semigroup map, it sends an idempotent element of $(H,\mu)$ to one of $(K,\gamma)$. Whence the image by $f$ of an idempotent belongs to $J(K,\gamma)^{\perp}$. Therefore, for each $I\in Min(H,\mu)$, $f(\mycirc{e}(I))=\sum_{J\in Min(K,\gamma)} \langle f(\mycirc{e}(I)),\frac{\mycirc{e}(J)}{\|\mycirc{e}(J)\|}\rangle \frac{\mycirc{e}(J)}{\|\mycirc{e}(J)\|}=\sum_{J\in \ell^{-1}(\{\, I\,\})}\|\mycirc{e}(J)\|\frac{\mycirc{e}(J)}{\|\mycirc{e}(J)\|}$. 

Now for  $I\in Min(H,\mu)$, $\sum_{J\in \ell^{-1}(\{\, I\,\})}b(K,\gamma)^2\|\mycirc{e}(J)\|^2=b(K,\gamma)^2\|f(\mycirc{e}(I))\|^2\leq b(K,\gamma)^2\|f\|_{\mathsf{op}}^{2}\|\mycirc{e}(I)\|^2<+\infty$. But $1\leq b(K,\gamma)^2\|\mycirc{e}(J)\|^2$ for each $J\in Min(K,\gamma)$, so necessarily $|\ell^{-1}(\{\, I\,\})|$ is finite. 


From the equality $f(\mycirc{e}(I))=\sum_{J\in \ell^{-1}(\{\, I\,\})}\|\mycirc{e}(J)\|\frac{\mycirc{e}(J)}{\|\mycirc{e}(J)\|}=\sum_{J\in \ell^{-1}(\{\, I\,\})}\mycirc{e}(J)$ it follows that $\sum_{J\in \ell^{-1}(\{\, I\,\})}\|\mycirc{e}(J)\|^2=\|f(\mycirc{e}(I))\|^2\leq\|f\|^2\|\mycirc{e}(I)\|^2$ for each $I\in Min(H,\mu)$. 

Consequently, $\ell\in \mathbf{WSet}_{\bullet}(Min_{\bullet}(K,\gamma),Min_{\bullet}(H,\mu))$, where $\ell$ is extended to the whole of $Min(K,\gamma)\cup\{\, 0\,\}$ by setting $\ell(0):=0$.

Contravariance of $f\mapsto \ell$ is clear so that one gets the {\em set of minimal ideals} functor $Min_{\bullet}\colon {}_c\mathbf{FrobSem}(\mathbb{Hilb})\to \mathbf{WSet}_{\bullet}^{\mathsf{op}}$. One also has the following restrictions $Min_{\bullet}\colon {}_{\mathsf{semisimple},c}\mathbf{FrobSem}(\mathbb{Hilb})\to\mathbf{WSet}_{\bullet}^{\mathsf{op}}$, $Min_{\bullet}\colon {}_{?_1,c}\mathbf{FrobSem}(\mathbb{Hilb})\to{}_{?_2}\mathbf{WSet}_{\bullet}^{\mathsf{op}}$ where either $?_1=?_2\in\{\, \mathsf{bnd},\mathsf{unbnd}\,\}$, or $?_1= (\mathsf{semisimple},?_2)$ with $?_2\in\{\, \mathsf{bnd},\mathsf{unbnd}\,\}$.

\subsection{The $\ell^2_{\bullet}$ functor}
Let $(X,x_0,\alpha)$ be an object of $\mathbf{WSet}_{\bullet}$. Define $\alpha^{+}\colon X\to[C_{\alpha},+\infty[$, $0<C_{\alpha}$,  by $\alpha^{+}(x)=\alpha(x)$, $x\not=x_0$, $\alpha^{+}(x_0):=C_\alpha$. One may consider the semisimple  Hilbertian Frobenius algebra $((\ell^2_{\alpha^+}(X),\langle\cdot,\cdot\rangle_{\alpha^{+}}),\mu_X)$ (Proposition~\ref{prop:grplike_of_ell_2_alpha}). 

Let $\ell^2_{\bullet}(X,x_0,\alpha):=\{\, u\in \ell^2_{\alpha^{+}}(X)\colon u(x_0)=0\,\}=\delta_{x_0}^{\perp}$. It is a closed subalgebra, since it is even a closed (maximal) ideal as the kernel of $\langle\cdot ,\delta_{x_0}\rangle$, and $x_0\in G(\ell^2_{\alpha^+}(X),\mu_X)$. 

The Hilbertian algebra $((\ell^2_{\bullet}(X,x_0,\alpha),\langle\cdot,\cdot\rangle_{\alpha^+}),(\mu_{X})_{|_{\delta_{x_0}^{\perp}}})$ is clearly unitarily isomorphic to $((\ell^2_{\alpha}(X\setminus\{\, x_0\,\}),\langle\cdot,\cdot\rangle_{\alpha}),\mu_{X\setminus\{\, x_0\,\}})$. As a matter of fact, $\ell^2_{\bullet}(X,x_0,\alpha)$ is an object of ${}_{\mathsf{semisimple},c}\mathbf{FrobSem}(\mathbb{Hilb})$, with $G(\ell^2_{\bullet}(X,x_0,\alpha))=\{\, \frac{x}{\alpha(x)}\colon x\in X\setminus\{\, x_0\,\}\,\}$ by {Proposition}~\ref{prop:grplike_of_ell_2_alpha}. 

In particular, when $\alpha$ is bounded above (resp. unbounded), then $\ell^2_{\bullet}(X,x_0,\alpha)$ is an object of ${}_{\mathsf{bnd},\mathsf{semisimple},c}\mathbf{FrobSem}(\mathbb{Hilb})$ (resp. ${}_{\mathsf{unbnd},\mathsf{semisimple},c}\mathbf{FrobSem}(\mathbb{Hilb})$).

Let $f\in \mathbf{WSet}_{\bullet}((X,x_0,\alpha),(Y,y_0,\beta))$. Let $u\in \ell^2_{\bullet}(Y,y_0,\beta)$. Then, $u\circ f\in \ell^2_{\bullet}(X,x_0,\alpha)$. Indeed, $u(f(x_0))=u(y_0)=0$. Let $A\subseteq X$ be a finite set. Then, 
\begin{equation}
\begin{array}{lll}\sum_{x\in A\setminus\{\, x_0\,\}}\alpha(x)|u(f(x))|^2&=&
\sum_{y\in f(A)\setminus\{\, y_0\,\}}\left (\sum_{x\in f^{-1}(\{\, y\,\})}\alpha(x)\right)|u(y)|^2\\
&\leq& \sum_{y\not=y_0}M_f\beta(y)|u(y)|^2\\
&=&M_f\|u\|_{\beta}^2.
\end{array}
\end{equation} In particular $\|u\circ f\|_{\alpha}\leq M_f^{\frac{1}{2}}\|u\|_{\beta}$. 

Since $\ell^2_{\bullet}(f)\colon \ell^2_{\bullet}(Y,y_0,\beta)\to \ell^2_{\bullet}(X,x_0,\alpha)$, $u\mapsto u\circ f$, is clearly a semigroup morphism, it follows easily that one has a functor $\ell^2_{\bullet}\colon \mathbf{WSet}_{\bullet}^{\mathsf{op}}\to {}_c\mathbf{FrobSem}(\mathbb{Hilb})$ and thus also the following co-restrictions  $\ell^2_{\bullet}\colon\mathbf{WSet}_{\bullet}^{\mathsf{op}}\to{}_{\mathsf{semisimple},c}\mathbf{FrobSem}(\mathbb{Hilb})$, $\ell^2_{\bullet}\colon {}_{?}\mathbf{WSet}_{\bullet}\to {}_{?,c}\mathbf{FrobSem}(\mathbb{Hilb})$, and  $\ell^2_{\bullet}\colon {}_{?}\mathbf{WSet}_{\bullet}\to {}_{\mathsf{semisimple},?,c}\mathbf{FrobSem}(\mathbb{Hilb})$ where $?$ stands  either for $\mathsf{bnd}$ or $\mathsf{unbnd}$.

\begin{remark}
The $\ell^2_{\bullet}$ functor from~\cite{Poinsot2019} may be regarded as a functor with values in ${}_c^{\dagger}\mathbf{FrobSem}(\mathbb{Hilb})$ or even in ${}_{\mathsf{part iso},c}\mathbf{FrobSem}(\mathbb{Hilb})$, rather than in ${}_c^{\dagger}\mathbf{Sem}(\mathbb{Hilb})$, and as such may be seen as the following co-restriction.
\begin{equation}
\xymatrix{
\mathbf{WSet}_{\bullet} \ar[r]^{\ell^2_{\bullet}}&{}_c\mathbf{FrobSem}(\mathbb{Hilb})\\
&{}_{\mathsf{partiso},c}\mathbf{FrobSem}(\mathbb{Hilb})\ar@{^{(}->}[u]\\
\ar@{^{(}->}[uu]\mathbf{Set}_{\bullet,<+\infty}\ar[r]_{\ell^2_{\bullet}}&{}_{c}^{\dagger}\mathbf{FrobSem}(\mathbb{Hilb})\ar@{^{(}->}[u]
}
\end{equation}
\end{remark}

\subsection{The main equivalences}

\begin{theorem}\label{thm:equivalence_weighted_sets_and_semisimple_Frob}
One has an adjunction $Min_{\bullet}\dashv \ell^2_{\bullet}\colon \mathbf{WSet}_{\bullet}^{\mathsf{op}}\to {}_c\mathbf{FrobSem}(\mathbb{Hilb})$ which restricts to an adjoint equivalence $Min_{\bullet}\dashv \ell^2_{\bullet}\colon \mathbf{WSet}_{\bullet}^{\mathsf{op}}\simeq {}_{\mathsf{semisimple},c}\mathbf{FrobSem}(\mathbb{Hilb})$. In particular, ${}_{\mathsf{semisimple},c}\mathbf{FrobSem}(\mathbb{FdHilb})\simeq\mathbf{FinSet}_{\bullet}^{\mathsf{op}}$, where $\mathbf{FinSet}_{\bullet}$ is the category of finite pointe sets and base-point preserving maps.
\end{theorem}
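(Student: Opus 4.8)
The plan is to realise the stated adjunction through an explicit unit and counit and then to locate exactly where each of them becomes invertible. For the counit, start from an object $(X,x_0,\alpha)$ of $\mathbf{WSet}_{\bullet}$: by Proposition~\ref{prop:grplike_of_ell_2_alpha} the group-like elements of $\ell^2_{\bullet}(X,x_0,\alpha)$ are the $\frac{x}{\alpha(x)}$ for $x\in X\setminus\{\,x_0\,\}$, so Lemma~\ref{lem:bij_minimal_id_and_grplike} identifies its minimal ideals with the $\mathbb{C}\delta_x$, and a one-line weight computation gives $w_{\ell^2_{\bullet}(X,x_0,\alpha)}(\mathbb{C}\delta_x)=\frac{1}{\|\delta_x/\alpha(x)\|_{\alpha}^2}=\alpha(x)$. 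Hence $x\mapsto \mathbb{C}\delta_x$ (and $x_0\mapsto 0$) is a weight-preserving base-point preserving bijection $\zeta_{(X,x_0,\alpha)}\colon (X,x_0,\alpha)\xrightarrow{\cong}Min_{\bullet}(\ell^2_{\bullet}(X,x_0,\alpha))$ in $\mathbf{WSet}_{\bullet}$, i.e. an isomorphism, which plays the role of the counit (read inside $\mathbf{WSet}_{\bullet}^{\mathsf{op}}$).

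For the unit, given $(H,\mu)$, combine Proposition~\ref{prop:onlyoneexample} with the weight-preserving bijection $G(H,\mu)\cong Min(H,\mu)$ of Lemma~\ref{lem:bij_minimal_id_and_grplike} to obtain a unitary Frobenius isomorphism $P(H,\mu)=J^{\perp}\cong \ell^2_{\bullet}(Min_{\bullet}(H,\mu))$. Precomposing with the corestriction $\pi_{J^{\perp}}\colon (H,\mu)\to P(H,\mu)$—which is a semigroup morphism, since $\pi_{J^{\perp}}=i_{J^{\perp}}^{\dagger}$ is the map used in building the functor $P$ and is $\mu$-compatible because $J=A(H,\mu)$ forces $uv=p_{J^{\perp}}(u)p_{J^{\perp}}(v)$ (Remark~\ref{rem:description_of_HilbFrob})—defines the unit $\eta_{(H,\mu)}\colon (H,\mu)\to \ell^2_{\bullet}(Min_{\bullet}(H,\mu))$ in ${}_c\mathbf{FrobSem}(\mathbb{Hilb})$. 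The adjunction bijection is then the usual $\phi\mapsto Min_{\bullet}(\phi)\circ\zeta$ with inverse $f\mapsto \ell^2_{\bullet}(f)\circ\eta$.

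The heart of the argument, and the step I expect to be the main obstacle, is checking naturality of $\eta$ and $\zeta$ together with the two triangle identities. This is where the dagger must be tracked carefully: $Min_{\bullet}$ acts on a morphism $f$ through $f^{\dagger}$ and the explicit formula $f(\mycirc{e}(I))=\sum_{J\in\ell^{-1}(\{\,I\,\})}\mycirc{e}(J)$ established in Section~\ref{sec:minimal_ideal_functor}; pushing this description through $\ell^2_{\bullet}$ and comparing it with the action of $\pi_{J^{\perp}}$ yields the naturality squares, while the triangle identities reduce to the fact that $\zeta$ and $\eta$ undo one another's passage through group-like elements. The verification is routine but delicate precisely because of the contravariance on both sides and the intervening adjoints, so it should be organised as a single bookkeeping computation on group-like/idempotent generators rather than on arbitrary elements.

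Once the adjunction is in place, restriction to an equivalence is immediate: $\zeta$ is always an isomorphism, whereas $\eta$ is an isomorphism if and only if $\pi_{J^{\perp}}$ is, that is, if and only if $J=(0)$, i.e. exactly when $(H,\mu)$ is semisimple (Theorem~\ref{thm:semisimplicityforFrob} and Corollary~\ref{cor:stHFs}). Thus the adjunction cuts down to an adjoint equivalence $\mathbf{WSet}_{\bullet}^{\mathsf{op}}\simeq {}_{\mathsf{semisimple},c}\mathbf{FrobSem}(\mathbb{Hilb})$. For the final assertion, note that a semisimple Frobenius semigroup is finite-dimensional precisely when $G(H,\mu)$ is finite, equivalently when $Min_{\bullet}(H,\mu)$ has finite underlying set, in which case its weight is automatically bounded above; restricting the equivalence to these finite objects and applying Lemma~\ref{lem:equivalence_bounded_normalized}—whose forgetful equivalence ${}_{\mathsf{bnd}}\mathbf{WSet}_{\bullet}\simeq\mathbf{Set}_{\bullet,<+\infty}$ collapses on finite sets to $\mathbf{FinSet}_{\bullet}$—gives ${}_{\mathsf{semisimple},c}\mathbf{FrobSem}(\mathbb{FdHilb})\simeq\mathbf{FinSet}_{\bullet}^{\mathsf{op}}$.
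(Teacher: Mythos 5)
Your constructions coincide exactly with the paper's: your counit $\zeta$ is the paper's $\epsilon_{(X,x_0,\alpha)}$ (with the same weight computation $w_{\ell^2_{\bullet}(X,x_0,\alpha)}(\mathbb{C}\delta_x)=\alpha(x)$), your unit is the paper's $\Phi^{-1}_{(H,\mu)}\circ\pi_{J(H,\mu)^{\perp}}$, the criterion for invertibility of the unit (semisimplicity) is the same, and your finiteness argument for the last claim is equivalent to the paper's (the paper routes through Lemma~\ref{lem:fullembedding} and the equivalence $\mathbf{FinSet}_{\bullet}\simeq\mathbf{WFinSet}_{\bullet}$, you through Lemma~\ref{lem:equivalence_bounded_normalized}; both work because on a finite set any two weight functions give isomorphic objects of $\mathbf{WSet}_{\bullet}$). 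The one structural difference is how the adjunction is certified: you propose naturality of both transformations plus the two triangle identities, whereas the paper uses the universal-arrow formulation -- for each $f\colon(X,x_0,\alpha)\to Min_{\bullet}(H,\mu)$ it produces $f^{\sharp}:=\ell^2_{\bullet}(f)\circ\Phi^{-1}_{(H,\mu)}\circ\pi_{J(H,\mu)^{\perp}}$, proves $Min_{\bullet}(f^{\sharp})\circ\epsilon_{(X,x_0,\alpha)}=f$ together with uniqueness of $f^{\sharp}$, and then proves naturality of $\epsilon$ -- which dispenses with the triangle identities.

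The genuine gap is that what you set aside as ``routine but delicate bookkeeping'' is, in fact, essentially the entire content of the paper's proof, and it is asserted rather than carried out. Concretely, the paper must compute $(f^{\sharp})^{\dagger}(u)=\sum_{g\in G(H,\mu)}\left(\sum_{x\in f^{-1}(\{\,\mathbb{C}g\,\})}\alpha(x)u(x)\right)g$ to conclude $(f^{\sharp})^{\dagger}(\frac{\delta_x}{\alpha(x)})=\mycirc{g}(f(x))$, i.e.\ $Min_{\bullet}(f^{\sharp})\circ\epsilon=f$; it must prove uniqueness of $f^{\sharp}$ using that the $\frac{\delta_x}{\alpha(x)}$ span a dense subspace of $\ell^2_{\bullet}(X,x_0,\alpha)$; and it must verify naturality of $\epsilon$ via the formula $(\ell^2_{\bullet}(f))^{\dagger}(u)=\sum_{y\neq y_0}\left(\sum_{x\in f^{-1}(\{\,y\,\})}\alpha(x)u(x)\right)\frac{\delta_y}{\beta(y)}$. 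These dagger computations are exactly where the two defining conditions on a $\mathbf{WSet}_{\bullet}$-morphism (finiteness of fibres, and $\sum_{x\in f^{-1}(\{\,y\,\})}\alpha(x)\leq M_f\beta(y)$) are actually consumed: they guarantee that the displayed sums converge and define bounded maps. Nothing in your plan would fail if executed -- the strategy you describe (compute on group-like/idempotent generators, track the daggers) is the correct one and is what the paper does -- but as written the adjunction, which is the theorem, is not proved.
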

\begin{proof}
Let $(X,x_0,\alpha)$ be a weighted pointed set, with $X\setminus\{\, x_0\,\}\xrightarrow{\alpha}[C_{\alpha},+\infty[$, where $C_{\alpha}>0$. Then, $Min(\ell^2_{\bullet}(X,x_0,\alpha))\simeq Min(\ell^2_{\alpha}(X\setminus\{\, x_0\,\},\mu_{X\setminus\{\,x_0\,\}}))=\{\, \mathbb{C}\delta_x\colon x\not=x_0\,\}$. 

Moreover $w_{\ell^2_{\bullet}(X,x_0,\alpha)}=w_{(\ell^2_{\alpha}(X\setminus\{\, x_0\,\}),\mu_{X\setminus\{\, x_0\,\}})}$ so $w_{\ell^2_{\bullet}(X,x_0,\alpha)}(\mathbb{C}\delta_x)=\|\mycirc{e}(\mathbb{C}\delta_x)\|^2_{\alpha}=\|\delta_x\|^2_{\alpha}=\alpha(x)$.

 Let $\epsilon_{(X,x_0,\alpha)}\colon (X,x_0)\to (Min(\ell^2_{\bullet}(X,x_0,\alpha))\cup\{\, 0\,\},0)$ be given by $\epsilon_{(X,x_0,\alpha)}(x):=\mathbb{C}\delta_x$, $x\not=x_0$, and $\epsilon_{(X,x_0,\alpha)}(x_0):=0$. $\epsilon_{(X,x_0,\alpha)}$ is clearly a pointed bijection. By the above, $\epsilon_{(X,x_0,\alpha)}$ is clearly a $\mathbf{WSet}_{\bullet}$-isomorphism.  
 
Let $(H,\mu)$ be a Hilbertian Frobenius algebra. An orthonormal basis for the semigroup $\ell^2_{\bullet}(Min_{\bullet}(H,\mu))$ is given by $(\frac{\delta_I}{\sqrt{w_{(H,\mu)}(I)}})_{I\in Min(H,\mu)}=(\frac{\delta_I}{\|\mycirc{e}(I)\|})_{I\in Min(H,\mu)}$. Whence as a Hilbert space, 
 $\ell^2_{\bullet}(Min_{\bullet}(H,\mu))$ is  unitary isomorphic to $J(H,\mu)^{\dagger}$ because an orthonormal basis of the latter is given by $\{\, \frac{e}{\|e\|}\colon e\in Min(E(H,\mu))\,\}$ (Remark~\ref{rem:someremarksonminidempotents}). Let $\Phi_{(H,\mu)}\colon \ell^2_{\bullet}(Min_{\bullet}(H,\mu))\simeq J(H,\mu)^{\perp}$ be the corresponding unitary transformation. For each minimal ideal $I$ of $(H,\mu)$, $\Phi_{(H,\mu)}(\delta_I)=\mycirc{e}(I)$, and  one has $\Phi_{(H,\mu)}(IJ)=\Phi_{(H,\mu)}(\delta_{I,J}I)=\delta_{I,J}\mycirc{e}(I)=\mycirc{e}(I)\mycirc{e}(J)=\Phi_{(H,\mu)}(\delta_I)\Phi_{(H,\mu)}(\delta_J)$ for a minimal ideal $J$, from which it follows that $\Phi_{(H,\mu)}$ is actually an isomorphism of semigroups. 

Now let $(X,x_0,\alpha)\xrightarrow{f}Min_{\bullet}(H,\mu)$ be a $\mathbf{WSet}_{\bullet}$-morphism, where $(H,\mu)$ is an  Hilbertian Frobenius semigroup. Let  $(H,\mu)\xrightarrow{f^{\sharp}}\ell^2_{\bullet}(X,x_0,\alpha):=(H,\mu)\xrightarrow{\pi_{J(H,\mu)^{\perp}}}J(H,\mu)^{\perp}\xrightarrow{\Phi^{-1}_{(H,\mu)}}\ell^2_{\bullet}(Min_{\bullet}(H,\mu))\xrightarrow{\ell^2_{\bullet}(f)}\ell_\bullet^{2}(X,x_0,\alpha)$. By construction $f^{\sharp}$ is a morphism of semigroups. 

For $g\in G(H,\mu)$, $f^{\sharp}(\frac{g}{\|g\|})=\ell^2_{\bullet}(f)(\Phi^{-1}_{(H,\mu)}(\frac{g}{\|g\|}))=\ell^2_{\bullet}(f)(\frac{\delta_{\mathbb{C}g}}{\|e(\mathbb{C}g)\|})=\ell^2_{\bullet}(f)(\|g\|\delta_{\mathbb{C}g})=\|g\|\delta_{\mathbb{C}g}\circ f$. Let $u\in \ell^2_{\bullet}(X,x_0,\alpha)$ and $g\in G(H,\mu)$. Then, $\langle (f^{\sharp})^{\dagger}(u),\frac{g}{\|g\|}\rangle=
\langle u,\|g\|\delta_{\mathbb{C}g}\circ f\rangle_{\alpha}=\sum_{x\not=x_0}\alpha(x)u(x)\|g\|\delta_{\mathbb{C}g}(f(x))$. Consequently, \begin{equation}(f^{\sharp})^{\dagger}(u)=\sum_{g\in G(H,\mu)}\|g\|\left(\sum_{x\in f^{-1}(\{\, \mathbb{C}g\,\})}\alpha(x)u(x)\right)\frac{g}{\|g\|}=\sum_{g\in G(H,\mu)}\left(\sum_{x\in f^{-1}(\{\, \mathbb{C}g\,\})}\alpha(x)u(x)\right){g}.\end{equation} In particular, for $x\in X\setminus\{\, x_0\,\}$, $(f^{\sharp})^{\dagger}(\frac{\delta_x}{\alpha(x)})=\frac{\alpha(x)}{\alpha(x)}\mycirc{g}(f(x))=\mycirc{g}(f(x))$. (Recall that $\mycirc{g}(0)=0$ by Remark~\ref{rem:extension_of_notations}.) This is equivalent to $Min_{\bullet}(f^{\sharp})(\mathbb{C}\delta_x)=f(x)$, $x\in X\setminus\{\, x_0\,\}$. In other words, $Min_{\bullet}(f^{\sharp})\circ \epsilon_{(X,x_0,\alpha)}=f$.

Now let $h\colon (H,\mu)\to \ell^2_{\bullet}(X,x_0,\alpha)$ be a semigroup map such that $Min_{\bullet}(h^{\dagger})\circ \epsilon_{(X,x_0,\alpha)}=f$. Then, for each $x\not=x_0$, $x\in X$, $h^{\dagger}(\frac{\delta_x}{\alpha(x)})=\mycirc{g}(f(x))=(f^{\sharp})^{\dagger}(\frac{\delta_x}{\alpha(x)})$. Since $\{\, \frac{\delta_x}{\alpha(x)}\colon x\in X\setminus\{\, x_0\,\}\,\}$ is dense into $\ell^2_{\bullet}(X,x_0,\alpha_x)$ it follows that $h^{\dagger}=(f^{\sharp})^{\dagger}$, that is, $h=f^{\sharp}$. 

The proof for the adjunction will be concluded when naturality of $(\epsilon_{(X,x_0,\alpha)})_{(X,x_0,\alpha)}$ will be proved. This is equivalent to the requirement that for each $(X,x_0,\alpha)\xrightarrow{f}(Y,y_0,\beta)$, $Min_{\bullet}(\ell^2_{\bullet}(f))(\mathbb{C}\delta_x)=\mathbb{C}\delta_{f(x)}$, $x\in X\setminus\{\, x_0\,\}$ with $f(x)\not=y_0$, and also that $Min_{\bullet}(\ell^2_{\bullet}(f))(\mathbb{C}\delta_x)=0$, $x\in X\setminus\{\,x_0\,\}$ with $f(x)=y_0$. 

But $Min_{\bullet}(\ell^2_{\bullet}(f))(\mathbb{C}\delta_x)=\mathbb{C}((\ell^2_{\bullet}(f))^{\dagger}(\mycirc{g}(\mathbb{C}\delta_x)))$.  Naturality thus is equivalent to 
$(\ell^2_{\bullet}(f))^{\dagger}(\frac{\delta_x}{\alpha(x)})=\frac{\delta_{f(x)}}{\beta(f(x))}$, $x\in X\setminus\{\, x_0\,\}$, $f(x)\not=y_0$, since $\frac{\delta_x}{\alpha(x)}=\mycirc{g}(\mathbb{C}\delta_x)$ and in this case $\mycirc{g}(\mathbb{C}\delta_{f(x)})=\frac{\delta_{f(x)}}{\beta(f(x)}$, and $(\ell^2_{\bullet}(f))^{\dagger}(\frac{\delta_x}{\alpha(x)})=0$, $x\in X\setminus\{\, x_0\,\}$, $f(x)=y_0$.  So one has to compute $(\ell^2_\bullet(f))^{\dagger}(\frac{\delta_x}{\alpha(x)})$.  Let $u\in \ell^2_{\bullet}(X,x_0,\alpha)$ and let $y\in Y\setminus\{\, y_0\,\}$. Then, 
\begin{equation}
\begin{array}{lll}
\langle (\ell^2_\bullet(f))^{\dagger}(u),\frac{\delta_y}{\beta(y)^{\frac{1}{2}}}\rangle_\beta&=&\langle u,\ell^2_{\bullet}(f)(\frac{\delta_y}{\beta(y)^{\frac{1}{2}}})\rangle_\alpha\\
&=&\sum_{x\in X\setminus\{\, x_0\,\}}\alpha(x)u(x)\frac{\delta_y(f(x))}{\beta(y)^{\frac{1}{2}}}.
\end{array}
\end{equation}
Therefore 
\begin{equation}
\begin{array}{lll}
(\ell^2_{\bullet}(f))^{\dagger}(u)&=&\sum_{y\in Y\setminus\{\, y_0\,\}}\frac{1}{\beta(y)^{\frac{1}{2}}}\left (\sum_{x\in f^{-1}(\{\, y\,\})}\alpha(x)u(x)\right)\frac{\delta_y}{\beta(y)^{\frac{1}{2}}}\\
&=&\sum_{y\in Y\setminus\{\, y_0\,\}}\left (\sum_{x\in f^{-1}(\{\, y\,\})}\alpha(x)u(x)\right)\frac{\delta_y}{\beta(y)}.
\end{array}
\end{equation}
In particular, for each $x\in X\setminus\{\, x_0\,\}$ with $f(x)\not=y_0$
\begin{equation}
\begin{array}{lll}
(\ell^2_{\bullet}(f))^{\dagger}(\frac{\delta_x}{\alpha(x)})&=&\sum_{y\not=y_0}\frac{\delta_y}{\beta(y)}\left (\sum_{x'\in f^{-1}(\{\, y\,\})}\alpha(x')\frac{\delta_x(x')}{\alpha(x)}\right)\\
&=&\frac{\delta_{f(x)}}{\beta(f(x))}.
\end{array}
\end{equation}
For each $x\in X\setminus \{\, x_0\,\}$ with $f(x)=y_0$, the same computation as above leads as expected to $(\ell^2_{\bullet}(f))^{\dagger}(\frac{\delta_x}{\alpha(x)})=0$.

It remains to prove the statement about the equivalence of categories. The component at $(H,\mu)$ of the unit of the above adjunction is by definition, $id_{Min_{\bullet}(H,\mu)}^{\sharp}=\Phi^{-1}_{(H,\mu)}\circ \pi_{J(H,\mu)^{\perp}}$. Since the counit $\epsilon$ is an isomorphism, the above equivalence restricts to an equivalence between $\mathbf{WSet}_{\bullet}^{\mathsf{op}}$ and the full subcategory of ${}_c\mathbf{FrobSem}(\mathbb{Hilb})$ spanned by those algebras $(H,\mu)$ such that $\pi_{J(H,\mu)^{\perp}}$ is an isomorphism, that is, the semisimple Hilbertian Frobenius semigroups. 

Concerning the last statement one first notices that the adjunction $min_{\bullet}\dashv \ell^2_\bullet$ co-resricts to the adjunction 
$min_{\bullet}\dashv \ell^2_\bullet\colon {}_c\mathbf{FrobSem}(\mathbb{FdHilb})\to \mathbf{WFinSet}_{\bullet}$, where $\mathbf{WFinSet}_{\bullet}$ stands for the full subcategory of $\mathbf{WSet}_{\bullet}$ spanned by the pointed weighted sets $(X,x_0,\alpha)$ where $X$ is finite. By finiteness the embedding functor $E\colon \mathbf{Set}_{\bullet,<+\infty}\to \mathbf{WSet}_{\bullet}$ from Lemma~\ref{lem:fullembedding}, provides an equivalence  $\mathbf{FinSet}_\bullet\simeq \mathbf{WFinSet}_{\bullet}$. By restriction again one obtains the expected equivalence. 
\end{proof}

\begin{remark}
It is a consequence of Theorem~\ref{thm:equivalence_weighted_sets_and_semisimple_Frob} and of Lemma~\ref{lem:equivalence_bounded_normalized} that not all the semisimple Hilbertian Frobenius semigroups $(H,\mu),(H,\gamma)$ on the same Hilbert space $H$ are isomorphic. (In view of Theorem~\ref{thm:dictionary} it suffices to consider bounded above orthogonal bases of $H$, one also bounded below and the other not.)
\end{remark}

Using some previous results, one obtains the following easily (in particular Proposition~\ref{prop:structuretheoremrevisited1}), by restrictions of the equivalences from Theorem~\ref{thm:equivalence_weighted_sets_and_semisimple_Frob}. (Item~\ref{it:corollary:item2} below is proved as follows: one has an adjoint equivalence $G_{\bullet}\dashv \ell^2_{\bullet}\colon {}_{\mathsf{semisimple},}{{}_{c}^{\dagger}}\mathbf{Sem}(\mathbb{Hilb})\simeq \mathbf{Set}^{\mathsf{op}}_{\bullet,<+\infty}$ by~\cite[Theorem~41, p.~28]{Poinsot2019}. But ${}_{\mathsf{semisimple},}{{}_{c}^{\dagger}}\mathbf{Sem}(\mathbb{Hilb})={}_c^{\dagger}\mathbf{FrobSem}(\mathbb{Hilb})$. So the first equivalence is proved. Lemma~\ref{lem:equivalence_bounded_normalized} provides the second equivalence. For the  last one it suffices to consider the adjunction $Min_{\bullet}\dashv \ell^2_{\bullet}\colon {}_{\mathsf{bnd},c}\mathbf{FrobSem}(\mathbb{Hilb})\to {}_{\mathsf{bnd}}\mathbf{WSet}_{\bullet}^{\mathsf{op}}$ obtained by restriction, which itself restricts to the expected equivalence.)
\begin{corollary}
One has the following equivalences of categories.
\begin{enumerate}
\item ${}_{\mathsf{unbnd}}\mathbf{WSet}^{\mathsf{op}}_{\bullet}\simeq {}_{\mathsf{semisimple},\mathsf{unbnd},c}\mathbf{FrobSem}(\mathbb{Hilb})$.
\item\label{it:corollary:item2} ${}_c^{\dagger}\mathbf{FrobSem}(\mathbb{Hilb})\simeq \mathbf{Set}^{\mathsf{op}}_{\bullet,<+\infty}\simeq {}_{\mathsf{bnd}}\mathbf{WSet}_{\bullet}^{\mathsf{op}}\simeq {}_{\mathsf{semisimple},\mathsf{bnd},c}\mathbf{FrobSem}(\mathbb{Hilb})$.
\item $\mathbf{WSet}_{\bullet}^{\mathsf{op}}\times \mathbf{Hilb}\simeq {}_c\mathbf{FrobSem}(\mathbb{Hilb})$.
\item ${}_{\mathsf{unbnd}}\mathbf{WSet}_{\bullet}^{\mathsf{op}}\times \mathbf{Hilb}\simeq {}_{\mathsf{semisimple},\mathsf{unbnd},c}\mathbf{FrobSem}(\mathbb{Hilb})\times\mathbf{Hilb}\simeq {}_{\mathsf{unbnd},c}\mathbf{FrobSem}(\mathbb{Hilb})$.
\item ${}_{\mathsf{partiso},c}\mathbf{FrobSem}(\mathbb{Hilb})\simeq {}_{c}^{\dagger}\mathbf{FrobSem}(\mathbb{Hilb})\times\mathbf{Hilb}\simeq \mathbf{Set}^{\mathsf{op}}_{\bullet,<+\infty}\times\mathbf{Hilb}\simeq{}_{\mathsf{bnd}}\mathbf{WSet}_{\bullet}^{\mathsf{op}}\times \mathbf{Hilb}\simeq {}_{\mathsf{semisimple},\mathsf{bnd},c}\mathbf{FrobSem}(\mathbb{Hilb})\times\mathbf{Hilb}\simeq {}_{\mathsf{bnd},c}\mathbf{FrobSem}(\mathbb{Hilb})$.
\item $\mathbf{FinSet}_{\bullet}^{\mathsf{op}}\simeq {}_c^{\dagger}\mathbf{FrobSem}(\mathbb{FdHilb})$ and $\mathbf{FinSet}_{\bullet}^{\mathsf{op}}\times \mathbf{FdHilb}\simeq {}_c\mathbf{FrobSem}(\mathbb{FdHilb})\simeq {}_{\mathsf{partiso},c}\mathbf{FrobSem}(\mathbb{FdHilb})$.
\end{enumerate}
\end{corollary}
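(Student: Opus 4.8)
The plan is to derive all six equivalences mechanically from four inputs already at our disposal: the adjoint equivalence $Min_{\bullet}\dashv \ell^2_{\bullet}\colon \mathbf{WSet}_{\bullet}^{\mathsf{op}}\simeq {}_{\mathsf{semisimple},c}\mathbf{FrobSem}(\mathbb{Hilb})$ of Theorem~\ref{thm:equivalence_weighted_sets_and_semisimple_Frob}, the splitting equivalence $\langle P,J\rangle\colon {}_c\mathbf{FrobSem}(\mathbb{Hilb})\simeq {}_{\mathsf{semisimple},c}\mathbf{FrobSem}(\mathbb{Hilb})\times\mathbf{Hilb}$ of Proposition~\ref{prop:structuretheoremrevisited1}, the equivalence ${}_{\mathsf{bnd}}\mathbf{WSet}_{\bullet}\simeq\mathbf{Set}_{\bullet,<+\infty}$ of Lemma~\ref{lem:equivalence_bounded_normalized}, and the equivalence ${}_{\mathsf{semisimple},}{{}_{c}^{\dagger}}\mathbf{Sem}(\mathbb{Hilb})\simeq\mathbf{Set}_{\bullet,<+\infty}^{\mathsf{op}}$ of~\cite[Theorem~41, p.~28]{Poinsot2019}. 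Two structural facts make everything fit together. First, the identification ${}_{\mathsf{semisimple},}{{}_{c}^{\dagger}}\mathbf{Sem}(\mathbb{Hilb})={}_c^{\dagger}\mathbf{FrobSem}(\mathbb{Hilb})$: a special Hilbertian algebra is semisimple by Theorem~\ref{thm:semisimplicityforFrob}, and conversely a semisimple special Hilbertian algebra is Frobenius, as recalled in Section~\ref{sect:grouplike}. Second, the weight function $w_{(H,\mu)}$ depends only on $G(H,\mu)$, hence $w_{P(H,\mu)}=w_{(H,\mu)}$, so passing to the semisimple part $P(H,\mu)=J^{\perp}$ preserves boundedness. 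Throughout I will use that the restriction of an equivalence to full subcategories closed under isomorphism is again an equivalence, and that a finite product of equivalences is an equivalence.

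Items~(3), (1) and (2) live entirely on the semisimple/weighted-set side. For~(3) I take the product of the Theorem~\ref{thm:equivalence_weighted_sets_and_semisimple_Frob} equivalence with the identity functor of $\mathbf{Hilb}$ and compose with Proposition~\ref{prop:structuretheoremrevisited1}, giving $\mathbf{WSet}_{\bullet}^{\mathsf{op}}\times\mathbf{Hilb}\simeq {}_{\mathsf{semisimple},c}\mathbf{FrobSem}(\mathbb{Hilb})\times\mathbf{Hilb}\simeq {}_c\mathbf{FrobSem}(\mathbb{Hilb})$. For~(1) I restrict the adjoint equivalence to the full unbounded subcategories; since $\ell^2_{\bullet}$ sends $(X,x_0,\alpha)$ with $\alpha$ unbounded into ${}_{\mathsf{semisimple},\mathsf{unbnd},c}\mathbf{FrobSem}(\mathbb{Hilb})$ and $Min_{\bullet}$ sends that subcategory back to ${}_{\mathsf{unbnd}}\mathbf{WSet}_{\bullet}^{\mathsf{op}}$ (both noted when the functors were built), and since by the second assertion of Lemma~\ref{lem:equivalence_bounded_normalized} these subcategories are closed under isomorphism, the restriction is again an adjoint equivalence. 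For~(2) I chain the Poinsot equivalence, rewritten via the identification above as ${}_c^{\dagger}\mathbf{FrobSem}(\mathbb{Hilb})\simeq\mathbf{Set}_{\bullet,<+\infty}^{\mathsf{op}}$, then the opposite of Lemma~\ref{lem:equivalence_bounded_normalized} giving $\mathbf{Set}_{\bullet,<+\infty}^{\mathsf{op}}\simeq {}_{\mathsf{bnd}}\mathbf{WSet}_{\bullet}^{\mathsf{op}}$, then the bounded restriction of Theorem~\ref{thm:equivalence_weighted_sets_and_semisimple_Frob}, namely ${}_{\mathsf{bnd}}\mathbf{WSet}_{\bullet}^{\mathsf{op}}\simeq {}_{\mathsf{semisimple},\mathsf{bnd},c}\mathbf{FrobSem}(\mathbb{Hilb})$, obtained exactly as for~(1).

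Items~(4), (5), (6) adjoin the radical factor $\mathbf{Hilb}$, and here lies the one genuine verification. The first link of~(4) is the product of~(1) with $\mathbf{Hilb}$; its second link, ${}_{\mathsf{semisimple},\mathsf{unbnd},c}\mathbf{FrobSem}(\mathbb{Hilb})\times\mathbf{Hilb}\simeq {}_{\mathsf{unbnd},c}\mathbf{FrobSem}(\mathbb{Hilb})$, and the last link of~(5), ${}_{\mathsf{semisimple},\mathsf{bnd},c}\mathbf{FrobSem}(\mathbb{Hilb})\times\mathbf{Hilb}\simeq {}_{\mathsf{bnd},c}\mathbf{FrobSem}(\mathbb{Hilb})$, both require that $\langle P,J\rangle$ restrict to the bounded (resp. unbounded) subcategories. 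This is where I expect the real work: I must check that $(H,\mu)$ is bounded (resp. unbounded) if and only if its semisimple part $P(H,\mu)$ is, which is precisely the identity $w_{P(H,\mu)}=w_{(H,\mu)}$ above together with the fact that $J(H,\mu)$ carries no group-like element. Granting this, $\langle P,J\rangle$ maps ${}_{\mathsf{bnd},c}\mathbf{FrobSem}(\mathbb{Hilb})$ into ${}_{\mathsf{semisimple},\mathsf{bnd},c}\mathbf{FrobSem}(\mathbb{Hilb})\times\mathbf{Hilb}$ and back, so the equivalence restricts. The remaining links of~(5) are the already proved ${}_{\mathsf{partiso},c}\mathbf{FrobSem}(\mathbb{Hilb})\simeq {}_c^{\dagger}\mathbf{FrobSem}(\mathbb{Hilb})\times\mathbf{Hilb}$ and the product of the chain~(2) with $\mathbf{Hilb}$. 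Finally~(6) is the finite-dimensional shadow: restricting~(2) identifies finite-dimensional special Frobenius semigroups with finite pointed sets, giving ${}_c^{\dagger}\mathbf{FrobSem}(\mathbb{FdHilb})\simeq\mathbf{FinSet}_{\bullet}^{\mathsf{op}}$; combining the $\mathbb{FdHilb}$-form of Proposition~\ref{prop:structuretheoremrevisited1} with the last assertion of Theorem~\ref{thm:equivalence_weighted_sets_and_semisimple_Frob} yields ${}_c\mathbf{FrobSem}(\mathbb{FdHilb})\simeq\mathbf{FinSet}_{\bullet}^{\mathsf{op}}\times\mathbf{FdHilb}$, and the finite-dimensional restriction of~(5) gives the same description for ${}_{\mathsf{partiso},c}\mathbf{FrobSem}(\mathbb{FdHilb})$. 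The pleasant point I would flag is that in finite dimensions every weight is automatically bounded, so Lemma~\ref{lem:equivalence_bounded_normalized} collapses weighted and unweighted finite pointed sets and the semisimple, special and partial-isometry variants all coincide up to equivalence.
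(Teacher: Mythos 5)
Your proof is correct and follows essentially the same route as the paper: the paper likewise obtains all six items by restricting the equivalences of Theorem~\ref{thm:equivalence_weighted_sets_and_semisimple_Frob} and Proposition~\ref{prop:structuretheoremrevisited1}, proving item~\ref{it:corollary:item2} exactly via the identification ${}_{\mathsf{semisimple},}{{}_{c}^{\dagger}}\mathbf{Sem}(\mathbb{Hilb})={}_c^{\dagger}\mathbf{FrobSem}(\mathbb{Hilb})$, the adjoint equivalence of~\cite[Theorem~41, p.~28]{Poinsot2019}, and Lemma~\ref{lem:equivalence_bounded_normalized}. Your explicit check that $w_{P(H,\mu)}=w_{(H,\mu)}$ (so that boundedness survives passage to the semisimple part) merely spells out what the paper leaves implicit in the word ``restrictions''.
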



\section{Some other equivalences}\label{sec:otherequiv}

\subsection{More on Frobenius semigroups with a partial isometric comultiplication}\label{sec:moreonpartiso}

\begin{proposition}\label{prop:bisemigroup}
Let $(H,\mu)$ be a Hilbertian Frobenius semigroup. Let us consider the following (in general non commutative) diagram, where $\sigma_{2,3}\colon (H\hat{\otimes}_2 H)\hat{\otimes}_2 (H\hat{\otimes}_2 H)\to (H\hat{\otimes}_2 H)\hat{\otimes}_2 (H\hat{\otimes}_2 H)$ is the unitary isomorphism given by $(u_1\otimes u_2)\otimes(u_3\otimes u_4)\mapsto (u_1\otimes u_3)\otimes (u_2\otimes u_4)$. 
\begin{equation}\label{diag:bisem}
\xymatrix{
H\ar[r]^{\mu^{\dagger}} & H\hat{\otimes}_2 H\\
&(H\hat{\otimes}_2 H)\hat{\otimes}_2 (H\hat{\otimes}_2 H)\ar[u]_{\mu\hat{\otimes}_2 \mu}\eq[d]^{\sigma_{2,3}}\\
\ar[uu]^{\mu}H\hat{\otimes}_2 H \ar[r]_-{\mu^{\dagger}\hat{\otimes}_2 \mu^{\dagger}}&(H\hat{\otimes}_2 H)\hat{\otimes}_2 (H\hat{\otimes}_2 H)
}
\end{equation}
$((H,\mu),\mu^{\dagger})$  is a bisemigroup in $\mathbb{Hilb}$, that is, Diag.~(\ref{diag:bisem}) above commutes, if, and only if, for each $g\in G(H,\mu)$, $\|g\|=1$ if, and only if, $\mu^{\dagger}$ is a partial isometry.
\end{proposition}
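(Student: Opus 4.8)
The plan is to read Diagram~(\ref{diag:bisem}) as the single operator identity
$$\mu^{\dagger}\circ\mu = (\mu\hat{\otimes}_2\mu)\circ\sigma_{2,3}\circ(\mu^{\dagger}\hat{\otimes}_2\mu^{\dagger})$$
on $H\hat{\otimes}_2 H$, and to test both sides against a convenient orthogonal basis. First I would invoke the Structure Theorem (Theorem~\ref{thm:stHFs}) to write $H = J\oplus_2 J^{\perp}$ with $J = A(H,\mu)$, and recall that $\overline{ran(\mu)} = J^{\perp}$, so that $\ker\mu^{\dagger} = ran(\mu)^{\perp} = J$ and hence $\mu^{\dagger}$ vanishes on $J$. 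Since $\mu(u\otimes v) = p_{J^{\perp}}(u)\,p_{J^{\perp}}(v)$ (Remark~\ref{rem:description_of_HilbFrob}), the map $\mu$ kills every summand of $H\hat{\otimes}_2 H$ other than $J^{\perp}\hat{\otimes}_2 J^{\perp}$, and likewise $\mu^{\dagger}\hat{\otimes}_2\mu^{\dagger}$ annihilates every summand carrying a $J$-component; consequently both sides of the identity vanish off $J^{\perp}\hat{\otimes}_2 J^{\perp}$, and it suffices to compare them there. By Lemma~\ref{lem:orthogonal}, $G(H,\mu)$ is an orthogonal basis of $J^{\perp}$, so the elementary tensors $\{x\otimes y : x,y\in G(H,\mu)\}$ form an orthogonal basis of $J^{\perp}\hat{\otimes}_2 J^{\perp}$; as all maps in sight are bounded, it is enough to evaluate both sides on these tensors.

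The computation itself is short. Using $\mu(x\otimes y) = \delta_{x,y}\|x\|^2 x$ and $\mu^{\dagger}(x) = x\otimes x$ for $x,y\in G(H,\mu)$ (both recorded in Remark~\ref{rem:description_of_HilbFrob}), the left-hand side gives $\mu^{\dagger}(\mu(x\otimes y)) = \delta_{x,y}\|x\|^2(x\otimes x)$, whereas the right-hand side, after applying $\mu^{\dagger}\hat{\otimes}_2\mu^{\dagger}$, then $\sigma_{2,3}$ (which sends $(x\otimes x)\otimes(y\otimes y)$ to $(x\otimes y)\otimes(x\otimes y)$), then $\mu\hat{\otimes}_2\mu$, yields $(xy)\otimes(xy) = \delta_{x,y}\|x\|^4(x\otimes x)$. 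The two expressions agree for every pair precisely when $\|x\|^2 = \|x\|^4$ for all $x\in G(H,\mu)$, that is, when $\|x\| = 1$ for every group-like element. By density of the span of the elementary tensors and continuity, this establishes the equivalence of item~(1) with item~(2); the radical case $G(H,\mu) = \emptyset$ (equivalently $\mu = 0$, Corollary~\ref{cor:radicalFrob}) is handled trivially, both conditions holding vacuously.

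For the equivalence of items~(2) and~(3) I would argue that $\mu^{\dagger}$ is a partial isometry exactly when it is isometric on $(\ker\mu^{\dagger})^{\perp} = J^{\perp}$. For $u\in J^{\perp}$ one has $\mu^{\dagger}(u) = \sum_{x\in G(H,\mu)}\|x\|^{-2}\langle u,x\rangle\,(x\otimes x)$, whence $\|\mu^{\dagger}(u)\|^2 = \sum_{x}|\langle u,x\rangle|^2$, while $\|u\|^2 = \sum_{x}\|x\|^{-2}|\langle u,x\rangle|^2$; these coincide for all $u$ if, and only if, $\|x\| = 1$ for all $x$. (Alternatively, this equivalence is already contained in the partial-isometry discussion inside the proof of Theorem~\ref{thm:dictionary}.) The only genuinely delicate point is the bookkeeping that guarantees both operators vanish off $J^{\perp}\hat{\otimes}_2 J^{\perp}$ and that evaluation on elementary tensors suffices; once that reduction is in place, the remaining content is nothing but the one-line comparison of $\|x\|^2$ with $\|x\|^4$.
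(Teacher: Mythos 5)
Your proof is correct and follows essentially the same route as the paper: both reduce, via the Structure Theorem (the vanishing of $\mu$ and of $\mu^{\dagger}$ on $J=A(H,\mu)$), to the expansion over the group-like elements, where the bisemigroup identity becomes exactly the comparison of $\|g\|^{2}$ with $\|g\|^{4}$. The only cosmetic difference is in the third equivalence: you invoke the characterization of partial isometries as maps isometric on $(\ker\mu^{\dagger})^{\perp}=J^{\perp}$ and compare norms, whereas the paper verifies $\mu^{\dagger}\circ\mu\circ\mu^{\dagger}=\mu^{\dagger}$ directly by the same group-like expansion -- both come down to the identical coefficient comparison.
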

\begin{proof}
As a preambule, since $(H,\mu)$ is Frobenius, one knows that for each $u,v\in H$, $\mu(u\otimes v)=\mu(p_{J(H,\mu)}(u)\otimes p_{J(H,\mu)}(v))=\sum_{g\in G(H,\mu)}\langle u,\frac{g}{\|g\|}\rangle\langle v,\frac{g}{\|g\|}\rangle g$ while $\mu^{\dagger}(u)=\mu^{\dagger}(p_{J(H,\mu)^{\perp}(u)})=\sum_{g\in G(H,\mu)}\langle u,\frac{g}{\|g\|}\rangle\frac{1}{\|g\|}g\otimes g$ (since for each $u\in J(H,\mu)$, $v,w\in H$, $\langle \mu^{\dagger}(u),v\otimes w\rangle=\langle u,p_{J(H,\mu)^{\perp}}(v)p_{J(H,\mu)^{\perp}}(w)\rangle=0$). This being said, one has for each $u,v\in H$, 
\begin{equation}
\begin{array}{lll}
(\mu\hat{\otimes}_2\mu)(\sigma_{2,3}((\mu^{\dagger}\hat{\otimes}_2 \mu^{\dagger})(u\otimes v)))&=&(\mu\hat{\otimes}_2\mu)(\sum_{g,h}\langle u,\frac{g}{\|g\|^2}\rangle\langle v,\frac{h}{\|h\|^2}\rangle(g\otimes h)\otimes (g\otimes h))\\
&=&\sum_{g}\langle u,g\rangle\langle v,g\rangle g\otimes g
\end{array}
\end{equation}
and 
\begin{equation}
\begin{array}{lll}
\mu^{\dagger}(\mu(u\otimes v))&=&\mu^{\dagger}(\sum_{g\in G(H,\mu)}\langle u,\frac{g}{\|g\|}\rangle\langle v,\frac{g}{\|g\|}\rangle g)\\
&=&\sum_{g\in G(H,\mu)}\langle u,\frac{g}{\|g\|}\rangle\langle v,\frac{g}{\|g\|}\rangle g\otimes g.
\end{array}
\end{equation}
As a matter of fact, $((H,\mu),\mu^{\dagger})$ is a bisemigroup in $\mathbb{Hilb}$ if, and only if, for each $g\in G(H,\mu)$, $\|g\|=1$. (The direct implication is clear, while the converse is due to the fact that $\|g\|^2g\otimes g=\mu^{\dagger}(\mu(g\otimes g))=(\mu\hat{\otimes}_2 \mu)(\sigma_{2,3}((\mu^{\dagger}\hat{\otimes}_2\mu^{\dagger})(g\otimes g)))=\|g\|^4g\otimes g$ for each $g\in G(H,\mu)$.)

Now let $u\in H$, then
\begin{equation}
\begin{array}{lll}
\mu^{\dagger}(\mu(\mu^{\dagger}(u)))&=&\mu^{\dagger}(\mu(\sum_{g\in G(H,\mu)}\langle u,\frac{g}{\|g\|}\rangle g\otimes g))\\
&=&\mu^{\dagger}(\sum_{g\in G(H,\mu)}\langle u,\frac{g}{\|g\|}\rangle\|g\|^2g)\\
&=&\sum_{g\in G(H,\mu)}\langle u,g\rangle\|g\|g\otimes g.
\end{array}
\end{equation} 
Therefore $\mu^{\dagger}$ is a partial isometry if, and only if, $\|g\|=1$ for each $g\in G(H,\mu)$. (The converse implication is due to the fact that for each $g\in G(H,\mu)$, $\|g\|^2g\otimes g=\mu^{\dagger}(\mu(\mu^{\dagger}(g)))=\mu^{\dagger}(g)=g\otimes g$.)
\end{proof}

Let ${}_{\mathsf{partiso},c}\mathbf{FrobBisem}(\mathbb{Hilb})$ be the subcategory of ${}_{\mathsf{partiso},c}\mathbf{FrobSem}(\mathbb{Hilb})$ with the same objects but with morphisms preserving both the algebra and the coalgebra structures, that is, with morphism of bisemigroups.  Let ${}_{c}^{\dagger}\mathbf{FrobBisem}(\mathbb{Hilb})$ be its full subcategory spanned by the Frobenius algebras with an isometric comultiplication. 

Being a morphism of bisemigroups is rather restrictive as show the following result and remark below. 
\begin{proposition}Let $f\in {}_{\mathsf{partiso},c}\mathbf{FrobBisem}(\mathbb{Hilb})((H,\mu),(K,\gamma))$ where $\mu^{\dagger}$ is an isometry (that is, $(H,\mu)$ is semisimple). Then, $f$ is a partial isometry.
\end{proposition}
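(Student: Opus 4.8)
The plan is to exploit the fact that a morphism of bisemigroups respects \emph{both} the algebra and the coalgebra structure, and to read off its behaviour on the canonical orthonormal basis of $H$ supplied by the group-like elements. Recall that a bounded linear map is a partial isometry precisely when it is isometric on the orthogonal complement of its kernel, and this is the formulation I would aim for.

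First I would record the descriptions of the two endpoints. Since $\mu^{\dagger}$ is an isometry, $(H,\mu)$ is special, hence semisimple by Theorem~\ref{thm:semisimplicityforFrob}, so that $X:=G(H,\mu)$ is an orthogonal basis of $H=J(H,\mu)^{\perp}$ (Lemma~\ref{lem:orthogonal}); moreover $\|g\|=1$ for every $g\in X$ by Proposition~\ref{prop:bisemigroup}, so $X$ is in fact an \emph{orthonormal} basis of $H$. Likewise, as $(K,\gamma)$ is an object of ${}_{\mathsf{partiso},c}\mathbf{FrobSem}(\mathbb{Hilb})$, Proposition~\ref{prop:bisemigroup} together with Lemma~\ref{lem:orthogonal} shows that $G(K,\gamma)$ is an orthonormal family. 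Being a bisemigroup morphism, $f$ is both a semigroup and a cosemigroup morphism; as a cosemigroup morphism $f\colon(H,\mu^{\dagger})\to(K,\gamma^{\dagger})$, Lemma~\ref{lem:cosemigroupmorphisms} gives $f(X)\subseteq G(K,\gamma)\cup\{\,0\,\}$.

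The crux of the argument — the step I expect to be the real content — is that $f$ is injective on the set $X_{1}:=\{\,g\in X\colon f(g)\neq 0\,\}$. Indeed, if $g\neq g'$ in $X$, then $gg'\in J(H,\mu)=(0)$ by Corollary~\ref{cor:orthogonal2} (using semisimplicity), whence $f(g)f(g')=f(gg')=0$ because $f$ is a semigroup morphism. Were $f(g)=f(g')=:h\in G(K,\gamma)$, this would force $h^{2}=f(g)f(g')=0$; but $h^{2}=\|h\|^{2}h=h\neq 0$ since $\|h\|=1$, a contradiction. Hence distinct elements of $X_{1}$ are sent to distinct members of the orthonormal family $G(K,\gamma)$, so the family $(f(g))_{g\in X_{1}}$ is itself orthonormal. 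Note that it is precisely here that the multiplicativity of $f$ is indispensable; comultiplicativity alone only places the images inside $G(K,\gamma)\cup\{\,0\,\}$ and cannot rule out collisions.

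Finally I would expand an arbitrary $u\in H$ as $u=\sum_{g\in X}\langle u,g\rangle g$ and use boundedness and linearity of $f$ to obtain $f(u)=\sum_{g\in X_{1}}\langle u,g\rangle f(g)$; orthonormality of $(f(g))_{g\in X_{1}}$ then yields $\|f(u)\|^{2}=\sum_{g\in X_{1}}|\langle u,g\rangle|^{2}$. This identifies $\ker f$ with the closed linear span of $X_{0}:=X\setminus X_{1}$ and $(\ker f)^{\perp}$ with the closed linear span of $X_{1}$. For $u\in(\ker f)^{\perp}$ one has $\langle u,g\rangle=0$ for every $g\in X_{0}$, so $\|f(u)\|^{2}=\sum_{g\in X_{1}}|\langle u,g\rangle|^{2}=\sum_{g\in X}|\langle u,g\rangle|^{2}=\|u\|^{2}$. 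Thus $f$ restricts to an isometry on $(\ker f)^{\perp}$, i.e.\ $f\circ f^{\dagger}\circ f=f$, and $f$ is a partial isometry. Apart from the injectivity step singled out above, everything reduces to a routine orthonormal-basis computation.
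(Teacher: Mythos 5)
Your proof is correct, and its engine is genuinely different from the paper's. The paper never isolates injectivity of $f$ on group-likes; instead it uses that both $f$ and $f^{\dagger}$ are cosemigroup maps ($f^{\dagger}$ because $f$ is a semigroup map), so both send group-like elements into group-likes together with $0$, and then the inner-product duality $f(g)=h\Leftrightarrow\langle f(g),h\rangle=1\Leftrightarrow\langle g,f^{\dagger}(h)\rangle=1\Leftrightarrow f^{\dagger}(h)=g$ gives $f(f^{\dagger}(h))=h$ whenever $f^{\dagger}(h)\neq 0$; expanding along $G(K,\gamma)$ then yields $f^{\dagger}\circ f\circ f^{\dagger}=f^{\dagger}$, so $f^{\dagger}$, and hence $f$, is a partial isometry. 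You instead use multiplicativity of $f$ itself ($gg'=0$ for distinct group-likes of the semisimple domain, while a unit-norm group-like $h$ of the codomain satisfies $h^{2}=h\neq 0$) to rule out collisions, obtain an orthonormal image family, and conclude via the ``isometric on $(\ker f)^{\perp}$'' characterization. Both routes are sound and of comparable length. Yours has the small advantage that the basis expansion takes place in the semisimple domain $H$, which is entirely spanned by its group-likes; the paper's expansion takes place in $K$, which may have a nonzero radical, and silently uses that $f^{\dagger}$ annihilates $J(K,\gamma)$ (true, since $ran(f)\subseteq J(K,\gamma)^{\perp}$, so $J(K,\gamma)\subseteq \ker f^{\dagger}$) --- the paper even writes ``$u\in H$'' where $u\in K$ is meant. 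What the paper's route buys is the explicit correspondence $f(g)=h\Leftrightarrow f^{\dagger}(h)=g$, exhibiting $f^{\dagger}$ as the inverse partial injection of $f$ on group-likes, which is exactly the structure exploited immediately afterwards in the equivalence with $\mathbf{PInj}_{\bullet}^{\mathsf{op}}$. Your side remark that comultiplicativity alone cannot exclude collisions is also correct: a coalgebra map may merge two group-likes, and such a map fails to be a partial isometry.
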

\begin{proof}
Since both $f$ and $f^{\dagger}$ are coalgebra maps, $f(G(H,\mu))\subseteq G(K,\gamma)\cup\{\, 0\,\}$ and $f^{\dagger}(G(K,\gamma))\subseteq G(H,\mu)\cup\{\, 0\,\}$. Therefore for each $g\in G(H,\mu)$ and $h\in G(K,\gamma)$, 
$f(g)=h\Leftrightarrow \langle f(g),h\rangle=1\Leftrightarrow \langle g,f^{\dagger}(h)\rangle=1\Leftrightarrow g=f^{\dagger}(h)$. Consequently, for each $h\in G(K,\gamma)$, $f(f^{\dagger}(h))=h$ when $f^{\dagger}(h)\not=0$ and $f(f^{\dagger}(h))=0$ when $f^{\dagger}(h)=0$ and in any case $f^{\dagger}(f(f^{\dagger}(h)))=f^{\dagger}(h)$. Let $u\in H$. Then, $f^{\dagger}(u)=\sum_{h\in G(K,\gamma)}\langle u,h\rangle f^{\dagger}(h)=\sum_{h\in G(K,\gamma)}\langle u,h\rangle f^{\dagger}(f(f^{\dagger}(h)))=f^{\dagger}(f(f^{\dagger}(u)))$. Then, $f^{\dagger}$ is a partial isometry and so is also $f$.
\end{proof}

\begin{remark}\label{rem:pre_partial_injection}
In general for $f\colon (Y,y_0,\beta)\to (X,x_0,\alpha)$, $\ell^2_{\bullet}(f)$ is not a coalgebra morphism. Indeed if it was the case, then for each $x\in X\setminus\{\, x_0\,\}$, $\frac{\delta_x\circ f}{\alpha(x)}=\ell^2(f)(\frac{\delta_x}{\alpha(x)})=0$ or $\ell^2(f)(\frac{\delta_x}{\alpha(x)})\in G(\ell^2_{\bullet}(Y,y_0,\beta))$, that is, $\frac{\delta_x\circ f}{\alpha(x)}=\frac{\delta_y}{\beta(y)}$ for some $y\in Y\setminus\{\, y_0\,\}$. Equivalently, for each $x\in X\setminus\{\, x_0\,\}$, $f^{-1}(\{\, x\,\})=\emptyset$ or there exists $y\in Y\setminus\{\, y_0\,\}$ such that $f(y)=x$ and $\alpha(f(y))=\beta(y)$, and for each $y'\not=y$, $y'\in Y\setminus\{\, y_0\,\}$, $f(y')\not=x$. In particular, $|f^{-1}(\{\, x\,\})|\leq 1$ for each $x\in X\setminus \{\, x_0\,\}$. 
\end{remark}

Let $\mathbf{PInj}_{\bullet}$ be the category of {\em partial injections}, that is, the objects are pointed sets and morphism $(X,x_0)\xrightarrow{f}(Y,y_0)$ are base-point preserving maps such that for all $y\in Y\setminus\{\, y_0\,\}$, $|f^{-1}(\{\, y\,\})|\leq 1$. 
\begin{remark}
While not being identically presented our category $\mathbf{PInj}_{\bullet}$ is isomorphic to the category $\mathbf{PInj}$  from in~\cite{HeunenL2}.
\end{remark}
$\mathbf{PInj}_{\bullet}$ embeds into $\mathbf{WSet}_{\bullet}$ under $E(X,x_0):=(X,x_0,\mathbf{1})$ (where $\mathbf{1}(x)=1$, $x\not=x_0$) and $E(f):=f$ as for each $y\not=y_0$, (1) $|f^{-1}(\{\, y\,\})|\leq 1$, and (2) $\sum_{x\in f^{-1}(\{\, y\,\})}\mathbf{1}(x)\leq 1=\mathbf{1}(y)$. So one may consider the functor $\mathbf{PInj}_{\bullet}^{\mathsf{op}}\xrightarrow{\ell^2_{\bullet}\circ E}{}_c\mathbf{FrobSem}(\mathbb{Hilb})$. Of course it factors  through ${}_{c}^{\dagger}\mathbf{FrobSem}(\mathbb{Hilb})\hookrightarrow {}_{\mathsf{partiso},c}\mathbf{FrobSem}(\mathbb{Hilb})\hookrightarrow {}_c\mathbf{FrobSem}(\mathbb{Hilb})$. But actually, for each partial injection $(X,x_0)\xrightarrow{f}(Y,y_0)$, $\ell^2_{\bullet}$ is even a coalgebra map as  $\mu_X^{\dagger}(\ell^2_{\bullet}(f)(u))=\sum_{x\not=x_0}u(f(x))\delta_x\otimes \delta_x$ and $(\ell^2(f)\hat{\otimes}_2\ell^2(f))(\delta_Y^{\dagger}(u))=(\ell^2(f)\hat{\otimes}_2\ell^2(f))(\sum_{y\not=y_0}u(y)\delta_y\otimes\delta_y)=
\sum_{y\not=y_0}u(y)(\delta_y\circ f)\otimes (\delta_y\circ f)=\sum_{x\not=x_0}u(f(x))\delta_x\otimes\delta_x$ for each $u\in \ell^2_{\bullet}(Y,y_0,\mathbf{1})$. This provides a functor $\ell^2_{\bullet}\colon \mathbf{PInj}_{\bullet}\to {}_{\mathsf{partiso},c}\mathbf{FrobBisem}(\mathbb{Hilb})$ together with its co-restriction $\ell^2_{\bullet}\colon \mathbf{PInj}_{\bullet}\to {}_{c}^{\dagger}\mathbf{FrobBisem}(\mathbb{Hilb})$.

In the opposite direction one has a functor $G_{\bullet}\colon {}_{\mathsf{partiso},c}\mathbf{FrobBisem}(\mathbb{Hilb})\to \mathbf{PInj}_{\bullet}^{\mathsf{op}}$ given as follows: $G_{\bullet}(H,\mu):=(G(H,\mu)\cup\{\, 0\,\},0)$ and given a morphism of bisemigroups $f\colon ((H,\mu),\mu^{\dagger})\to ((K,\gamma),\gamma^{\dagger})$, $G_{\bullet}(f)\colon G_{\bullet}(K,\gamma)\to G_{\bullet}(H,\mu)$ is the restriction of $f^{\dagger}$. $G_\bullet(f)\colon G_{\bullet}(K,\gamma)\to G_{\bullet}(H,\mu)$ is indeed a partial injection  because for each $g,h\in (f^{\dagger})^{-1}(G(H,\mu))\cap G(K,\gamma)$, $g\not=h$, $0=f^{\dagger}(gh)=f^{\dagger}(g)f^{\dagger}(h)$ (since $f^{\dagger}$ is also a semigroup map), so that $f^{\dagger}(g)\not=f^{\dagger}(h)$ as $f^{\dagger}(g),f^{\dagger}(h)\in G(H,\mu)$.

\begin{proposition}
One has an adjunction $G_\bullet\dashv \ell_\bullet^2\colon  {}_{\mathsf{partiso},c}\mathbf{FrobBisem}(\mathbb{Hilb})\to \mathbf{PInj}_{\bullet}^{\mathsf{op}}$ which restricts to an equivalence of categories ${}_{c}^{\dagger}\mathbf{FrobBisem}(\mathbb{Hilb})\simeq\mathbf{PInj}_{\bullet}^{\mathsf{op}}$.
\end{proposition}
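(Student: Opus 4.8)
The plan is to mimic the proof of Theorem~\ref{thm:equivalence_weighted_sets_and_semisimple_Frob}, keeping track of the extra coalgebra-compatibility now required of the morphisms and exploiting the fact, from Proposition~\ref{prop:bisemigroup}, that every object $(H,\mu)$ of ${}_{\mathsf{partiso},c}\mathbf{FrobBisem}(\mathbb{Hilb})$ has $\|g\|=1$ for all $g\in G(H,\mu)$, so that its weight $w_{(H,\mu)}\equiv 1$ and $\mu^{\dagger}(u)=\sum_{g\in G(H,\mu)}\langle u,g\rangle\,g\otimes g$. Concretely I would establish a bijection
\[
\mathbf{PInj}_{\bullet}\bigl((X,x_0),G_{\bullet}(H,\mu)\bigr)\;\cong\;{}_{\mathsf{partiso},c}\mathbf{FrobBisem}(\mathbb{Hilb})\bigl((H,\mu),\ell^2_{\bullet}(X,x_0)\bigr),
\]
natural in both arguments; since $\mathbf{PInj}_{\bullet}^{\mathsf{op}}(G_{\bullet}(H,\mu),(X,x_0))=\mathbf{PInj}_{\bullet}((X,x_0),G_{\bullet}(H,\mu))$, this is exactly the adjunction $G_{\bullet}\dashv\ell^2_{\bullet}$. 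This is precisely why the set side is $\mathbf{PInj}_{\bullet}$ (trivially weighted) rather than the full $\mathbf{WSet}_{\bullet}$.

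Given a partial injection $f\colon(X,x_0)\to G_{\bullet}(H,\mu)$, I would take $f^{\sharp}$ to be literally the map built in Theorem~\ref{thm:equivalence_weighted_sets_and_semisimple_Frob}, namely the composite
\[
(H,\mu)\xrightarrow{\pi_{J(H,\mu)^{\perp}}}J(H,\mu)^{\perp}\xrightarrow{\Phi^{-1}_{(H,\mu)}}\ell^2_{\bullet}\bigl(G_{\bullet}(H,\mu)\bigr)\xrightarrow{\ell^2_{\bullet}(f)}\ell^2_{\bullet}(X,x_0),
\]
where $\Phi_{(H,\mu)}$ is the unitary semigroup isomorphism of that theorem (identifying $G_{\bullet}(H,\mu)$ with $Min_{\bullet}(H,\mu)$ by Lemma~\ref{lem:bij_minimal_id_and_grplike}). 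That $f^{\sharp}$ is a semigroup morphism is inherited verbatim; the new content is that it is also a coalgebra morphism, which I would obtain by checking that each of the three factors is a bisemigroup morphism. Indeed $\pi_{J(H,\mu)^{\perp}}$ is an algebra map because $H=J^{\perp}\oplus_2 J$ is an orthogonal direct sum of ideals (Theorem~\ref{thm:stHFs}), and a coalgebra map because it equals $i_{J^{\perp}}^{\dagger}$ with $i_{J^{\perp}}$ an algebra inclusion (Lemma~\ref{lem:semigroupiso}); $\Phi^{-1}_{(H,\mu)}=\Phi_{(H,\mu)}^{\dagger}$ is both a semigroup isomorphism (as the inverse of one) and a coalgebra map (being the adjoint of the semigroup map $\Phi_{(H,\mu)}$, by Lemma~\ref{lem:semigroupiso}); and $\ell^2_{\bullet}(f)$ is a coalgebra map \emph{precisely} because $f$ is a partial injection, which is the exact content of Remark~\ref{rem:pre_partial_injection}. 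Hence $f^{\sharp}$ is a bisemigroup morphism.

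For the bijection I would reuse the computations of Theorem~\ref{thm:equivalence_weighted_sets_and_semisimple_Frob} with all weights set to $1$: the counit $\epsilon_{(X,x_0)}\colon x\mapsto\delta_x$, $x_0\mapsto 0$, is a pointed bijection onto $G(\ell^2_{\bullet}(X,x_0))$, hence an isomorphism of $\mathbf{PInj}_{\bullet}$, and one has $G_{\bullet}(f^{\sharp})\circ\epsilon_{(X,x_0)}=f$, giving injectivity of $f\mapsto f^{\sharp}$ and the naturality triangle. Surjectivity is the only genuinely new step: a bisemigroup morphism $h$ is in particular a semigroup morphism, so $h=f^{\sharp}$ for a unique weighted-set morphism $f$; since $\Phi^{-1}_{(H,\mu)}\circ\pi_{J^{\perp}}$ is a surjective coalgebra map and $h$ is a coalgebra map, $\ell^2_{\bullet}(f)$ must be a coalgebra map, whence $f$ is a partial injection by Remark~\ref{rem:pre_partial_injection}. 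This yields the adjunction. Finally, to obtain the equivalence I would restrict to ${}_{c}^{\dagger}\mathbf{FrobBisem}(\mathbb{Hilb})$: there the comultiplication is isometric, so by Theorem~\ref{thm:semisimplicityforFrob} the algebra is semisimple, $\pi_{J^{\perp}}=\mathrm{id}$, and the unit $\eta_{(H,\mu)}=\mathrm{id}^{\sharp}=\Phi^{-1}_{(H,\mu)}$ is an isomorphism; together with the counit being always an isomorphism, this gives the adjoint equivalence ${}_{c}^{\dagger}\mathbf{FrobBisem}(\mathbb{Hilb})\simeq\mathbf{PInj}_{\bullet}^{\mathsf{op}}$. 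The main obstacle is the identification, in both directions, of \emph{bisemigroup (coalgebra) morphism} on the algebra side with \emph{partial injection} on the set side: the forward direction is the composite argument above, and the backward direction is the surjectivity step, both resting on Remark~\ref{rem:pre_partial_injection}.
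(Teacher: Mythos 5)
Your proof is correct and follows essentially the same route as the paper's: the same counit $\epsilon_{(X,x_0)}\colon x\mapsto\delta_x$, the same lift $f^{\sharp}=\ell^2_{\bullet}(f)\circ\Phi_{(H,\mu)}\circ\pi_{J(H,\mu)^{\perp}}$ verified factor-by-factor to be both a semigroup and a cosemigroup map, and the same identification of the unit $\Phi_{(H,\mu)}\circ\pi_{J(H,\mu)^{\perp}}$ as an isomorphism precisely on the semisimple (equivalently, special) objects. One citation nitpick: Remark~\ref{rem:pre_partial_injection} only proves the implication you use in the surjectivity step (coalgebra map $\Rightarrow$ partial injection), whereas the converse (partial injection $\Rightarrow$ $\ell^2_{\bullet}(f)$ is a coalgebra map), which you also invoke, is not in that remark but is the explicit computation the paper performs just before the proposition when constructing the functor $\ell^2_{\bullet}\colon \mathbf{PInj}_{\bullet}\to {}_{\mathsf{partiso},c}\mathbf{FrobBisem}(\mathbb{Hilb})$.
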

\begin{proof}
For each pointed set $(X,x_0)$, $\epsilon_{(X,x_0)}\colon (X,x_0)\to G_\bullet(\ell^2_{\bullet})=(\{\,\delta_x\colon x\not=x_0\,\}\cup\{\, 0\,\},0)$ given by $\epsilon_{(X,x_0)}(x):=\delta_x$, $\epsilon_{(X,x_0)}(x_0):=0$, is an isomorphism and so in particular a partial injection. 

Let $(H,\mu)$ be a Hilbertian Frobenius algebra with  $\mu^{\dagger}$ a partial isometry. Let $(X,x_0)\xrightarrow{f}G_\bullet(H,\mu)$ be a partial injection. Let us define $(H,\mu)\xrightarrow{f^{\sharp}}\ell^2_{\bullet}(X,x_0)$ by $f^{\sharp}:=\ell_\bullet^{2}(f)\circ \Phi_{(H,\mu)}\circ \pi_{J(H,\mu)^{\perp}}$ where $\Phi_{(H,\mu)}\colon J(H,\mu)^{\perp}\simeq \ell^2_{\bullet}(G_{\bullet}(H,\mu))$ is the unitary isomorphism $\Phi_{(H,\mu)}(g):=\delta_g$, $g\in G(H,\mu)$.  $\Phi_{(H,\mu)}(gh)=\Phi(\delta_{g,h}g)=\delta_{g,h}\delta_g=\Phi(\delta_g)\Phi(\delta_h)$ so $\Phi_{(H,\mu)}$ is a semigroup map and in a same way one sees that $\Phi^{-1}_{(H,\mu)}=\Phi^{\dagger}_{(H,\mu)}$ is also a semigroup map so that $\Phi_{(H,\mu)}$ is a cosemigroup map. By the way $f^{\sharp}$ is a semigroup and a cosemigroup map because so are $\ell^2_{\bullet}(f)$, $\Phi_{(H,\mu)}$ and $\pi_{J(H,\mu)^{\perp}}$. (Indeed, $\pi_{J(H,\mu)^{\perp}}=i_{J(H,\mu)^{\perp}}^{\dagger}$ and $J(H,\mu)^{\perp}$ is both a semigroup and a cosemigroup map.) Whence $f^{\sharp}\in {}_{\mathsf{partiso},c}\mathbf{FrobBisem}(\mathbb{Hilb})((H,\mu),\ell^2_{\bullet}(X,x_0))$. 

Let $x\in X\setminus\{\, x_0\,\}$ and let $g\in G(H,\mu)$. One has $\langle (f^{\sharp})^{\dagger}(\delta_x),g\rangle=\langle \delta_x,f^\sharp(g)\rangle=\langle \delta_x,\delta_g\circ f\rangle$. Whence $(f^{\sharp})^{\dagger}(\delta_x)=g$ if, and only if, $\delta_g\circ f=\delta_x$ if, and only if, $f(x)=g$. In other words $G_\bullet(f^{\dagger})\circ \epsilon_{(X,x_0)}=f$. If there is another algebra/coalgebra map $(H,\mu)\xrightarrow{f'}\ell^2_{\bullet}(X,x_0)$ such that $G_{\bullet}(f')\circ \epsilon_{(X,x_0)}=f=G_{\bullet}(f^{\sharp})\circ \epsilon_{(X,x_0)}$, then $(f')^{\dagger}=(f^{\sharp})^{\dagger}$ on $G(\ell_\bullet^{2}(X,x_0))$ it follows that $(f')^{\dagger}=(f^{\sharp})^{\dagger}$  on $\ell_\bullet^{2}(X,x_0)$ and so $f'=f^{\sharp}$. 

It remains to prove the equivalence of categories. The counit $\epsilon=(\epsilon_{(X,x_0)})_{(X,x_0)}$ is already an isomorphism so it suffices to check the conditions under which the unit $(\eta_{(H,\mu)})_{(H,\mu)}$, $\eta_{(H,\mu)}=\Phi_{(H,\mu)}\circ \pi_{J(H,\mu)^{\perp}}$, is an isomorphism. This is clear that it will be so if, and only if, $(H,\mu)$ is semisimple, that is, if, and only if, $(H,\mu)$ is a ${}_c^{\dagger}\mathbf{FrobBisem}(\mathbb{Hilb})$-object. 
\end{proof}

\subsection{Ambidextrous morphisms: Algebra-and-coalgebra maps}\label{sec:ambi}

Even in the non partial isometric case, that is, even if $((H,\mu),\mu^{\dagger})$ is not a bisemigroup,  it is tempting  to see what happens when morphisms of Frobenius semigroups are chosen as those bounded linear maps which are both semigroup and cosemigroup morphisms. 
Let ${}_c\mathbf{Frob}(\mathbb{Hilb})_{\mathsf{ambi}}$ be the corresponding non full subcategory of ${}_c\mathbf{FrobSem}(\mathbb{Hilb})$. (One drops the suffix ``$\mathbf{Sem}$'' to emphasize the fact that both the semigroup and the cosemigroup structures are of equal importance.) Observe that ${}_{\mathsf{partiso},c}\mathbf{FrobBisem}(\mathbb{Hilb})$ is a full subcategory of ${}_c\mathbf{Frob}(\mathbb{Hilb})_{\mathsf{ambi}}$. In view to Remark~\ref{rem:pre_partial_injection} one introduces the category $\mathbf{Pinj}_{\bullet,\mathsf{w}}$ with
\begin{enumerate}
\item objects the weighted pointed  sets as in $\mathbf{WSet}_{\bullet}$,
\item arrows $(X,x_0,\alpha)\xrightarrow{f}(Y,y_0,\beta)$ the partial injections $(X,x_0)\xrightarrow{f}(Y,y_0)$ such that for each $x\in f^{-1}(Y\setminus\{\, y_0\,\})$, $\alpha(x)=\beta(f(x))$. 
\end{enumerate}
It is  clear that $\mathbf{Pinj}_{\bullet,\mathsf{w}}$ embeds (while not fully) into $\mathbf{WSet}_\bullet$.

\begin{proposition}\label{prop:equiv_partial_injection}
One has an adjunction $Min_{\bullet}\dashv \ell^2_{\bullet}\colon {}_c\mathbf{Frob}(\mathbb{Hilb})_{\mathsf{ambi}}\to \mathbf{Pinj}_{\bullet,\mathsf{w}}^{\mathsf{op}}$ which restricts to an equivalence  ${}_{\mathsf{semisimple},c}\mathbf{Frob}(\mathbb{Hilb})_{\mathsf{ambi}}\simeq \mathbf{Pinj}_{\bullet,\mathsf{w}}^{\mathsf{op}}$.
\end{proposition}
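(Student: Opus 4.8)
The plan is to obtain both the adjunction and the equivalence by restricting the adjunction $Min_{\bullet}\dashv \ell^2_{\bullet}\colon \mathbf{WSet}_{\bullet}^{\mathsf{op}}\to {}_c\mathbf{FrobSem}(\mathbb{Hilb})$ of Theorem~\ref{thm:equivalence_weighted_sets_and_semisimple_Frob} to the non-full subcategories $\mathbf{Pinj}_{\bullet,\mathsf{w}}^{\mathsf{op}}$ and ${}_c\mathbf{Frob}(\mathbb{Hilb})_{\mathsf{ambi}}$. Since both have the same objects as the ambient categories, the only thing to verify at the level of functors is that morphisms are sent to morphisms. First I would check that $\ell^2_{\bullet}$ restricts: an arrow $f$ of $\mathbf{Pinj}_{\bullet,\mathsf{w}}$ is, by definition, a weighted partial injection, and the computation already carried out in Remark~\ref{rem:pre_partial_injection} shows that these are exactly the arrows for which $\ell^2_{\bullet}(f)$ is a coalgebra map; as $\ell^2_{\bullet}(f)$ is always a semigroup map, $\ell^2_{\bullet}(f)$ is then ambidextrous. (Here one uses that the codomain $\ell^2_{\bullet}(X,x_0,\alpha)$ is semisimple, so that by Lemma~\ref{lem:cosemigroupmorphisms} being a coalgebra map is equivalent to sending group-like elements into group-like elements and $0$.)

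Next I would check that $Min_{\bullet}$ restricts. Let $f\colon (H,\mu)\to (K,\gamma)$ be ambidextrous, so that both $f$ and $f^{\dagger}$ are simultaneously algebra and coalgebra maps (Lemma~\ref{lem:semigroupiso}). That the induced $\ell=Min_{\bullet}(f)$ is a partial injection is proved verbatim as for the functor $G_{\bullet}$ in Section~\ref{sec:moreonpartiso}: for distinct $h,h'\in G(K,\gamma)$ one has $hh'=0$ by Remark~\ref{rem:description_of_HilbFrob}, whence $f^{\dagger}(h)f^{\dagger}(h')=f^{\dagger}(hh')=0$, which forces $f^{\dagger}(h)\neq f^{\dagger}(h')$ whenever both are group-like. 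The genuinely new point is the weight condition. If $\ell(J)=I\neq 0$, then $f^{\dagger}(\mycirc{g}(J))=\mycirc{g}(I)$, and writing $h:=\mycirc{g}(J)$, $g:=\mycirc{g}(I)$, applying the semigroup map $f^{\dagger}$ to $h^2=\|h\|^2h$ gives $\|h\|^2 g=f^{\dagger}(h^2)=f^{\dagger}(h)^2=g^2=\|g\|^2 g$, so $\|h\|=\|g\|$ and hence $w_{(K,\gamma)}(J)=\|h\|^{-2}=\|g\|^{-2}=w_{(H,\mu)}(I)$. Thus $\ell$ preserves weights and $Min_{\bullet}(f)$ is a $\mathbf{Pinj}_{\bullet,\mathsf{w}}$-morphism.

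With the functors restricted, I would assemble the adjunction from the unit and counit of Theorem~\ref{thm:equivalence_weighted_sets_and_semisimple_Frob}. The counit $\epsilon_{(X,x_0,\alpha)}\colon x\mapsto \mathbb{C}\delta_x$ is a weight-preserving pointed bijection, hence an isomorphism of $\mathbf{Pinj}_{\bullet,\mathsf{w}}$. The unit $\eta_{(H,\mu)}=\Phi^{-1}_{(H,\mu)}\circ\pi_{J(H,\mu)^{\perp}}$ is a composite of ambidextrous maps: $\pi_{J^{\perp}}=i_{J^{\perp}}^{\dagger}$ is both an algebra and a coalgebra map because $J^{\perp}$ is both a subalgebra and a subcoalgebra (Theorem~\ref{thm:stHFs} together with Lemma~\ref{lem:semigroupiso}), and $\Phi_{(H,\mu)}$ is an ambidextrous isomorphism since it is a unitary semigroup isomorphism, so $\Phi^{-1}=\Phi^{\dagger}$ is again a semigroup map, making $\Phi$ a coalgebra map too. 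Hence $\eta_{(H,\mu)}$ lies in ${}_c\mathbf{Frob}(\mathbb{Hilb})_{\mathsf{ambi}}$. Because the triangle identities are inherited from the ambient adjunction and all maps occurring in them now live in the respective subcategories, $Min_{\bullet}\dashv \ell^2_{\bullet}$ holds between $\mathbf{Pinj}_{\bullet,\mathsf{w}}^{\mathsf{op}}$ and ${}_c\mathbf{Frob}(\mathbb{Hilb})_{\mathsf{ambi}}$; equivalently, the bijection $f\leftrightarrow f^{\sharp}$, with $f^{\sharp}=\ell^2_{\bullet}(f)\circ\Phi^{-1}_{(H,\mu)}\circ\pi_{J(H,\mu)^{\perp}}$, restricts because $f^{\sharp}$ is ambidextrous precisely when $\ell^2_{\bullet}(f)$ is, i.e. precisely when $f\in \mathbf{Pinj}_{\bullet,\mathsf{w}}$.

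Finally, for the equivalence I would restrict the algebra side to the full subcategory of semisimple objects. The counit is always an isomorphism, while the unit $\eta_{(H,\mu)}=\Phi^{-1}_{(H,\mu)}\circ\pi_{J(H,\mu)^{\perp}}$ is an ambidextrous isomorphism exactly when $\pi_{J(H,\mu)^{\perp}}$ is invertible, that is, exactly when $(H,\mu)$ is semisimple; and $\ell^2_{\bullet}$ always takes values among semisimple algebras. Hence both unit and counit are isomorphisms on these subcategories, giving the adjoint equivalence ${}_{\mathsf{semisimple},c}\mathbf{Frob}(\mathbb{Hilb})_{\mathsf{ambi}}\simeq \mathbf{Pinj}_{\bullet,\mathsf{w}}^{\mathsf{op}}$. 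I expect the main obstacle to be the weight-preservation step for $Min_{\bullet}$: unlike the bisemigroup situation, where all group-like elements have norm $1$ and the weights are constantly $1$, here the weights genuinely vary, and it is only the ambidexterity of $f$ — precisely the hypothesis that $f^{\dagger}$ be multiplicative — that forces $\|f^{\dagger}(h)\|=\|h\|$ on group-like elements and thereby the preservation of weights.
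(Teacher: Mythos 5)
Your proof is correct and takes essentially the same route as the paper's: restrict the adjunction of Theorem~\ref{thm:equivalence_weighted_sets_and_semisimple_Frob}, and check that its counit $\epsilon_{(X,x_0,\alpha)}$ and its unit $\Phi^{-1}_{(H,\mu)}\circ\pi_{J(H,\mu)^{\perp}}$ land in $\mathbf{Pinj}_{\bullet,\mathsf{w}}$ and ${}_c\mathbf{Frob}(\mathbb{Hilb})_{\mathsf{ambi}}$ respectively, the unit being invertible exactly at the semisimple objects. Your write-up is in fact more complete at one point: the paper only asserts, via two commuting squares, that $Min_{\bullet}$ and $\ell^2_{\bullet}$ restrict to these non-full subcategories, whereas you verify this explicitly; in particular your weight-preservation step for $Min_{\bullet}(f)$ --- applying the semigroup map $f^{\dagger}$ to $h^{2}=\|h\|^{2}h$ to force $\|f^{\dagger}(h)\|=\|h\|$ whenever $f^{\dagger}(h)$ is group-like --- is spelled out nowhere in the paper, which only records the partial-injectivity half (for $G_{\bullet}$ in Section~\ref{sec:moreonpartiso}). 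The single blemish is notational: when invoking Lemma~\ref{lem:cosemigroupmorphisms} you call $\ell^2_{\bullet}(X,x_0,\alpha)$ the ``codomain,'' while the converse in that lemma requires semisimplicity of the \emph{domain} of the would-be coalgebra map; this is harmless here, since every algebra in the image of $\ell^2_{\bullet}$ is semisimple.
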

\begin{proof}
The functor $\ell^2_{\bullet}$ occurring in the statement of the proposition is the only one which makes commute the following diagram commutes.
\begin{equation}
\xymatrix{ 
\mathbf{WSet}_{\bullet}^{\mathsf{op}}\ar[r]^{\ell^2_{\bullet}} & {}_c\mathbf{FrobSem}(\mathbb{Hilb})\\
\ar@{^{(}->}[u]\mathbf{Pinj}_{\bullet,\mathsf{w}}^{\mathsf{op}} \ar[r]_{\ell^2_{\bullet}}& {}_c\mathbf{Frob}(\mathbb{Hilb})_{\mathsf{ambi}}\ar@{^{(}->}[u]
}
\end{equation}

Conversely, $Min_{\bullet}$ as defined in the statement is the only functor which makes commute the diagram below.
\begin{equation}
\xymatrix{
{}_c\mathbf{FrobSem}(\mathbb{Hilb})\ar[r]^-{Min_{\bullet}} &\mathbf{WSet}_{\bullet}^{\mathsf{op}} \\
\ar@{^{(}->}[u]{}_c\mathbf{Frob}(\mathbb{Hilb})_{\mathsf{ambi}} \ar[r]_-{Min_{\bullet}}&\mathbf{Pinj}_{\bullet,\mathsf{w}}^{\mathsf{op}} \ar@{^{(}->}[u]
}
\end{equation}

One now uses the notations from the proof of Theorem~\ref{thm:equivalence_weighted_sets_and_semisimple_Frob}. One notes immediately that since $\epsilon_{(X,x_0)}$ is an isomorphism in $\mathbf{WSet}_{\bullet}$, it is also an isomorphism in $\mathbf{Pinj}_{\bullet,\mathsf{w}}$.

The unit of the adjunction from Theorem~\ref{thm:equivalence_weighted_sets_and_semisimple_Frob} is given by $\Phi^{-1}_{(H,\mu)}\circ \pi_{J(H,\mu)^{\perp}}$ which is actually both a semigroup map (already known) and a cosemigroup map. To see this one first observes that $i_{J(H,\mu)^{\perp}}$ is a morphism of semigroups since $J(H,\mu)^{\perp}$ is a subalgebra, and thus $\pi_{J(H,\mu)^{\perp}}=i^{\dagger}_{J(H,\mu)^{\perp}}$ is a cosemigroup map. $\Phi^{-1}_{(H,\mu)}\colon J(H,\mu)^{\perp}\to \ell^2_{\bullet}(Min_{\bullet}(H,\mu))$ is also a cosemigroup map as follows directly from $\Phi^{-1}_{(H,\mu)}(g)=\|g\|\Phi^{-1}_{(H,\mu)}(\frac{g}{\|g\|})=\|g\|\Phi^{-1}(\frac{\mycirc{e}(\mathbb{C}g)}{\|\mycirc{e}(\mathbb{C}g)\|})=\|g\|^2\delta_{\mathbb{C}g}=\frac{\delta_{\mathbb{C}g}}{w_{(H,\mu)}(\mathbb{C}g)}$ for each $g\in G(H,\mu)$ and the fact that $J(H,\mu)^{\perp}$ is semisimple (Lemma~\ref{lem:cosemigroupmorphisms}). Note that $\Phi^{-1}_{(H,\mu)}$ being bijective and both a semigroup and a cosemigroup map, it is an isomorphism of ${}_c\mathbf{Frob}(\mathbb{Hilb})_{\mathsf{ambi}}$ (by Lemma~\ref{lem:semigroupiso}). 

It then follows that the adjunction from Theorem~\ref{thm:equivalence_weighted_sets_and_semisimple_Frob} provides the desired adjunction. It remains to obtain the equivalence of categories by restriction. But the unit of the adjunction $\Phi^{-1}_{(H,\mu)}\circ \pi_{J(H,\mu)^{\perp}}$ is an isomorphism (both of semigroups and cosemigroups) if, and only if, so is $\pi_{J(H,\mu)^{\perp}}$ if, and only if, $\pi_{J(H,\mu)^{\perp}}$ is a bijection (by Lemma~\ref{lem:semigroupiso}) if, and only if, $(H,\mu)$ is semisimple. 
\end{proof}

\subsection{Proper morphisms}\label{sec:proper}

Call a  $\mathbf{WSet}_{\bullet}$-morphism $(X,x_0,\alpha)\xrightarrow{f}(Y,y_0,\beta)$ {\em proper} when $f^{-1}(\{\, y_0\,\})=\{\, x_0\,\}$ or equivalently $f(X\setminus \{\, x_0\,\})\subseteq Y\setminus\{\, y_0\,\}$. It is clear that every $\mathbf{WSet}_{\bullet}$-isomorphism is proper. 

Let $\mathbf{WSet}$ be the category with
\begin{enumerate}
\item objects the pairs $(X,\alpha\colon X\to [C,+\infty[)$, $C>0$. 
\item arrows $(X,\alpha)\xrightarrow{f}(Y,\beta)$ the maps $X\xrightarrow{f}Y$ such that 
\begin{enumerate}
\item $|f^{-1}(\{\,y\,\})|<+\infty$ for each $y\in Y$,
\item there exists  $M_f\geq 0$ such that for each $y\in Y$, $\sum_{x\in f^{-1}(\{\, y\,\})}\alpha(x)\leq M_f\beta(y)$.
\end{enumerate}
\end{enumerate}

For a set $X$, let $X^+:=X+1$, where $1:=\{\, 0\,\}$ and $+$ denotes the disjoint union. Let $X\xrightarrow{f}Y$ be a map. Define $X^+\xrightarrow{f^+}Y^+$ by $f^+:=f+id_1$, that is, roughly speaking, $f^+(x)=f(x)$, $x\in X$, $f^+(0):=0$. This provides a functor $\mathbf{WSet}\xrightarrow{(-)^+}\mathbf{WSet}_{\bullet}$ which acts on objects as $(X,\alpha)^+:=(X^+,0,\alpha)$, and which is injective on objects and faithful. Under this functor $\mathbf{WSet}$ is clearly equivalent to the (non full)  subcategory $\mathbf{WSet}_{\bullet,\mathsf{proper}}$ of $\mathbf{WSet}_{\bullet}$ whose objects are those of $\mathbf{WSet}_{\bullet}$ but with proper morphisms between them.

Let $(H,\mu)\xrightarrow{f}(K,\gamma)$ be a semigroup morphism  between Hilbertian Frobenius semigroups. It is said to be {\em proper} when $ran(f)$ is not included in any maximal modular ideals of $(K,\gamma)$ or alternatively for each $y\in G(K,\gamma)$, there exists $u\in H$ such that $\langle f(u),y\rangle\not=0$.  Properness for $f$ implies that $f^{\dagger}(y)\not=0$ for each $y\in G(K,\gamma)$, and since $f^{\dagger}(G(K,\gamma))\subseteq G(H,\mu)\cup\{\, 0\,\}$, it follows that actually $f^{\dagger}(G(K,\gamma))\subseteq G(H,\mu)$. Conversely if $f^{\dagger}(G(K,\gamma))\subseteq G(H,\mu)$, then for each $y\in G(K,\gamma)$, $\langle f(f^{\dagger}(y)),y\rangle\not=0$ and thus $f$ is proper. One observes that every semigroup isomorphism is proper.

Let ${}_{c}\mathbf{FrobSem}(\mathbb{Hilb})_{\mathsf{proper}}$ be the category whose objects are Hilbertian Frobenius semigroups and morphisms are the proper semigroup morphisms. As usually let ${}_{\mathsf{semisimple},c}\mathbf{FrobSem}(\mathbb{Hilb})_{\mathsf{proper}}$ be its full subcategory spanned by the semisimple objects. 

Let $(X,\alpha)$ be a $\mathbf{WSet}$-object. Define $\ell^2(X,\alpha):=(\ell^2_{\alpha}(X),\mu_X)$ as given in Section~\ref{sec:weighted_hilb}. Let $(X,\alpha)\xrightarrow{f}(Y,\beta)$ be a $\mathbf{WSet}$-morphism. Define $\ell^2(f)\colon (\ell^2_\beta(Y),\mu_Y)\to (\ell^2_\alpha(X),\mu_X)$ by $\ell^2(f)(u):=u\circ f$. 

\begin{lemma}\label{lem:ell_2_f_is_proper}
$\ell^2(f)$ is a proper morphism.
\end{lemma}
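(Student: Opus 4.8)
The plan is to reduce the claim to the characterization of properness recorded just before the lemma, namely that a semigroup morphism $g\colon (H,\mu)\to(K,\gamma)$ is proper precisely when $g^{\dagger}(G(K,\gamma))\subseteq G(H,\mu)$. First I would note that $\ell^2(f)$ is indeed a bounded semigroup morphism: writing $\ell^2(f)(u)=u\circ f$, one has $\|u\circ f\|_{\alpha}^2=\sum_{x\in X}\alpha(x)|u(f(x))|^2=\sum_{y\in Y}\bigl(\sum_{x\in f^{-1}(\{y\})}\alpha(x)\bigr)|u(y)|^2\leq M_f\|u\|_{\beta}^2$ by the defining inequality of a $\mathbf{WSet}$-morphism, while $\ell^2(f)(uv)=(uv)\circ f=(u\circ f)(v\circ f)=\ell^2(f)(u)\,\ell^2(f)(v)$ since both $\mu_X$ and $\mu_Y$ are pointwise multiplications. (Equivalently, $\ell^2$ is the composite of $(-)^{+}$ with $\ell^2_{\bullet}$ up to the canonical identification $\ell^2_{\bullet}(X^{+},0,\alpha)\cong\ell^2_{\alpha}(X)$, so functoriality is inherited.) Thus it remains only to verify properness, and since $\ell^2(f)\colon\ell^2(Y,\beta)\to\ell^2(X,\alpha)$ is contravariant, I must show that $(\ell^2(f))^{\dagger}$ sends $G(\ell^2_{\alpha}(X))$ into $G(\ell^2_{\beta}(Y))$.

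The core step is an explicit computation of the adjoint on group-like elements. By Proposition~\ref{prop:grplike_of_ell_2_alpha} (applied with weight $\alpha$, resp.\ $\beta$) the group-like elements are exactly $G(\ell^2_{\alpha}(X))=\{\,\delta_x/\alpha(x)\colon x\in X\,\}$ and $G(\ell^2_{\beta}(Y))=\{\,\delta_y/\beta(y)\colon y\in Y\,\}$. For $v\in\ell^2_{\alpha}(X)$ and $u\in\ell^2_{\beta}(Y)$ one has $\langle(\ell^2(f))^{\dagger}(v),u\rangle_{\beta}=\langle v,u\circ f\rangle_{\alpha}=\sum_{x\in X}\alpha(x)v(x)\overline{u(f(x))}$, and regrouping the sum over the fibres $f^{-1}(\{y\})$ identifies $(\ell^2(f))^{\dagger}(v)=\sum_{y\in Y}\beta(y)^{-1}\bigl(\sum_{x\in f^{-1}(\{y\})}\alpha(x)v(x)\bigr)\delta_y$. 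Evaluating at $v=\delta_x/\alpha(x)$, the inner sum collapses to $1$ for $y=f(x)$ and vanishes otherwise, so that $(\ell^2(f))^{\dagger}(\delta_x/\alpha(x))=\delta_{f(x)}/\beta(f(x))$.

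Then I would conclude: because $f$ is a genuine (unpointed) map $X\to Y$, every $x\in X$ satisfies $f(x)\in Y$, so $\delta_{f(x)}/\beta(f(x))$ is a bona fide group-like element of $\ell^2_{\beta}(Y)$ and is in particular never zero. Hence $(\ell^2(f))^{\dagger}(G(\ell^2_{\alpha}(X)))\subseteq G(\ell^2_{\beta}(Y))$, and by the characterization above $\ell^2(f)$ is proper. The only point requiring care — and the true content of the lemma — is precisely this non-vanishing: it is what separates the present situation from the pointed one, where a $\mathbf{WSet}_{\bullet}$-morphism may send a point to the base point and the corresponding group-like element would be sent to $0$, so that $\ell^2_{\bullet}(f)$ need not be proper.
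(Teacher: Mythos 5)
Your proof is correct, and it is essentially the intended argument: the paper states Lemma~\ref{lem:ell_2_f_is_proper} without proof, and your verification uses exactly the tools the surrounding text sets up, namely the characterization of properness as $g^{\dagger}(G(K,\gamma))\subseteq G(H,\mu)$ recorded just before the lemma, together with the fibrewise computation of the adjoint, which is the unpointed analogue of the formula $(\ell^2_{\bullet}(f))^{\dagger}\bigl(\frac{\delta_x}{\alpha(x)}\bigr)=\frac{\delta_{f(x)}}{\beta(f(x))}$ derived in the proof of Theorem~\ref{thm:equivalence_weighted_sets_and_semisimple_Frob}. Your closing remark also correctly isolates the actual content: since $f$ is a total (unpointed) map, no group-like element can be sent to $0$, which is precisely what fails in the pointed setting and why properness holds here.
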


One obtains a functor $\ell^2\colon \mathbf{WSet}\to {}_c\mathbf{FrobSem}(\mathbb{Hilb})_{\mathsf{proper}}$ and a co-restriction still denoted $\ell^2$ from $\mathbf{WSet}$ to ${}_{\mathsf{semisimple},c}\mathbf{FrobSem}(\mathbb{Hilb})_{\mathsf{proper}}$.

Now let $f\colon (H,\mu)\to (K,\gamma)$ be a proper morphism between Hilbertian Frobenius semigroups. As $f^{\dagger}(G(K,\gamma))\subseteq G(H,\mu)$, one defines a map $\ell\colon Min(K,\gamma)\to Min(H,\mu)$ by the relation $\ell(J):=\mathbb{C}f^{\dagger}(\mycirc{g}(J))$, $J\in Min(K,\gamma)$ as in Section~\ref{sec:minimal_ideal_functor}. Consequently, $\ell_0\in \mathbf{WSet}_{\bullet}(Min_\bullet(K,\gamma),Min_{\bullet}(H,\mu))$, where $\ell_0$ is the extension of $\ell$ obtained by setting $\ell_0(0):=0$. As $\ell_0(J)=\ell(J)\not=0$, $J\in Min(K,\gamma)$, it follows that actually $\ell\in \mathbf{WSet}((Min(K,\gamma),w_{(K,\gamma)}),(Min(H,\mu),w_{(H,\mu)}))$, and from that one has a functor $Min\colon {}_c\mathbf{FrobSem}(\mathbb{Hilb})_{\mathsf{proper}}\to \mathbf{WSet}$. 


Let $\mathbf{WFinSet}$ be the full subcategory of $\mathbf{WSet}$ spanned by the {\em finite} weighted sets. 
\begin{proposition}
One has an adjunction $Min\dashv \ell^2\colon {}_c\mathbf{FrobSem}(\mathbb{Hilb})_{\mathsf{proper}}\to \mathbf{WSet}^{\mathsf{op}}$ that restricts to an equivalence ${}_{\mathsf{semisimple},c}\mathbf{FrobSem}(\mathbb{Hilb})_{\mathsf{proper}}\simeq\mathbf{WSet}^{\mathsf{op}}$. In particular, ${}_{\mathsf{semisimple},c}\mathbf{FrobSem}(\mathbb{FdHilb})_{\mathsf{proper}}\simeq\mathbf{WFinSet}^{\mathsf{op}}\simeq \mathbf{FinSet}^{\mathsf{op}}$, where $\mathbf{FinSet}$ is the category of finite sets with all maps between them. 
\end{proposition}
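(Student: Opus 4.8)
The plan is to derive everything by restriction from the adjoint equivalence $Min_{\bullet}\dashv\ell^2_{\bullet}$ of Theorem~\ref{thm:equivalence_weighted_sets_and_semisimple_Frob}, using the equivalence $(-)^+\colon\mathbf{WSet}\simeq\mathbf{WSet}_{\bullet,\mathsf{proper}}$ recorded above together with the characterisation of proper Frobenius morphisms $f$ as those with $f^{\dagger}(G(K,\gamma))\subseteq G(H,\mu)$. First I would record the compatibility of the two pairs of functors with the passage to base points. Under the identification $(X,\alpha)^+=(X^+,0,\alpha)$ one has a canonical unitary $\ell^2_{\bullet}((X,\alpha)^+)\cong\ell^2(X,\alpha)$ (both underlying spaces being $\ell^2_{\alpha}(X)$), natural in $(X,\alpha)$, so that $\ell^2$ is, up to this unitary and co-restriction, the composite $\ell^2_{\bullet}\circ((-)^+)^{\mathsf{op}}$. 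Dually, for a proper semigroup morphism $f$ the inclusion $f^{\dagger}(G(K,\gamma))\subseteq G(H,\mu)$ guarantees that the $\mathbf{WSet}_{\bullet}$-morphism $Min_{\bullet}(f)$, given by $J\mapsto\mathbb{C}f^{\dagger}(\mycirc{g}(J))$, never hits the base point and is therefore proper; hence $Min$ is exactly the co-restriction of $Min_{\bullet}$ along $(-)^+$. Thus the two functors of the claimed adjunction are merely the restrictions to the proper subcategories of those of the main theorem.

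The core step is to verify that the unit and counit of $Min_{\bullet}\dashv\ell^2_{\bullet}$ live in the proper subcategories and that the adjunction bijection restricts to proper morphisms. The counit $\epsilon_{(X,x_0,\alpha)}$ is a $\mathbf{WSet}_{\bullet}$-isomorphism, hence proper, so at objects of the form $(X,\alpha)^+$ it serves as the counit of the restricted adjunction. For the unit $\eta_{(H,\mu)}=\Phi^{-1}_{(H,\mu)}\circ\pi_{J(H,\mu)^{\perp}}$ I would compute $\eta^{\dagger}_{(H,\mu)}=i_{J(H,\mu)^{\perp}}\circ\Phi_{(H,\mu)}$ and check that it sends each group-like $\tfrac{\delta_I}{w_{(H,\mu)}(I)}$ of $\ell^2_{\bullet}(Min_{\bullet}(H,\mu))$ to $\mycirc{g}(I)\in G(H,\mu)$, using $w_{(H,\mu)}(I)=\|\mycirc{e}(I)\|^2$ and $\mycirc{g}(I)=\mycirc{e}(I)/\|\mycirc{e}(I)\|^2$; since no group-like is sent to $0$, the unit is proper. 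Finally, from the formula $(f^{\sharp})^{\dagger}(\tfrac{\delta_x}{\alpha(x)})=\mycirc{g}(f(x))$ established in the proof of Theorem~\ref{thm:equivalence_weighted_sets_and_semisimple_Frob}, one reads off that $f$ is proper (that is, $f(x)\neq x_0$ whenever $x\neq x_0$) if and only if $\mycirc{g}(f(x))\neq 0$ for all such $x$, i.e.\ if and only if $f^{\sharp}$ is proper. Hence $f\mapsto f^{\sharp}$ restricts to a bijection between the proper hom-sets, which is precisely the restricted adjunction $Min\dashv\ell^2$.

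For the equivalence, note that $\ell^2(X,\alpha)=(\ell^2_{\alpha}(X),\mu_X)$ is semisimple by Proposition~\ref{prop:grplike_of_ell_2_alpha}, so the right adjoint already lands in ${}_{\mathsf{semisimple},c}\mathbf{FrobSem}(\mathbb{Hilb})_{\mathsf{proper}}$; the counit is always an isomorphism, and the unit $\eta_{(H,\mu)}=\Phi^{-1}_{(H,\mu)}\circ\pi_{J(H,\mu)^{\perp}}$ is an isomorphism exactly when $\pi_{J(H,\mu)^{\perp}}$ is a bijection, i.e.\ when $(H,\mu)$ is semisimple. Restricting to semisimple objects therefore yields the adjoint equivalence ${}_{\mathsf{semisimple},c}\mathbf{FrobSem}(\mathbb{Hilb})_{\mathsf{proper}}\simeq\mathbf{WSet}^{\mathsf{op}}$. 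For the finite-dimensional statement I would observe that a semisimple Frobenius semigroup is finite-dimensional if and only if $G(H,\mu)\cong Min(H,\mu)$ is finite (as $G(H,\mu)$ is then an orthogonal basis of $H=J^{\perp}$), so the equivalence co-restricts to $\mathbf{WFinSet}^{\mathsf{op}}$; and the forgetful functor $\mathbf{WFinSet}\to\mathbf{FinSet}$, $(X,\alpha)\mapsto X$, is an equivalence, being faithful, essentially surjective (the constant weight $\mathbf{1}$ always exists), and full, since on a finite target both conditions defining a $\mathbf{WSet}$-morphism are automatically satisfied.

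The main obstacle is the bookkeeping forced by the non-fullness of the proper subcategories: one must check separately that both the unit and the counit are proper and, above all, that the hom-set bijection $f\mapsto f^{\sharp}$ carries proper morphisms to proper morphisms in both directions. This is exactly where the explicit formula $(f^{\sharp})^{\dagger}(\delta_x/\alpha(x))=\mycirc{g}(f(x))$ from the main theorem is indispensable, since properness is a condition on the behaviour of $f^{\dagger}$ (respectively $(f^{\sharp})^{\dagger}$) on the group-like elements.
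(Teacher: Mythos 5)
Your proof is correct and takes essentially the same route as the paper's: restrict the adjunction of Theorem~\ref{thm:equivalence_weighted_sets_and_semisimple_Frob} to the proper subcategories (via $\mathbf{WSet}\simeq\mathbf{WSet}_{\bullet,\mathsf{proper}}$), note that the counit remains an isomorphism while the unit $\Phi^{-1}_{(H,\mu)}\circ\pi_{J(H,\mu)^{\perp}}$ is an isomorphism precisely when $(H,\mu)$ is semisimple, and settle the finite-dimensional case by $\mathbf{FinSet}\simeq\mathbf{WFinSet}$. The only difference is one of detail: where the paper asserts that the adjunction "clearly restricts", you verify explicitly that the unit and counit are proper and that $f$ is proper if and only if $f^{\sharp}$ is proper (via $(f^{\sharp})^{\dagger}(\delta_x/\alpha(x))=\mycirc{g}(f(x))$), which is exactly the content hidden behind that word.
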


\begin{proof}
The adjunction $Min_\bullet\dashv \ell^2_\bullet\colon {}_c\mathbf{FrobSem}(\mathbb{Hilb})\to\mathbf{WSet}_{\bullet}$ from Theorem~\ref{thm:equivalence_weighted_sets_and_semisimple_Frob}, clearly restricts to an adjunction $Min_{\bullet}\dashv\ell^2_\bullet\colon {}_{c}\mathbf{FrobSem}(\mathbb{Hilb})_{\mathsf{proper}}\to\mathbf{WSet}_{\bullet,\mathsf{proper}}$  and thus to the adjunction $Min\dashv \ell^2\colon  {}_c\mathbf{FrobSem}(\mathbb{Hilb})_{\mathsf{proper}}\to \mathbf{WSet}$. The counit of this adjunction is still a natural isomorphism  while the component $\Phi^{-1}_{(H,\mu)}\circ \pi_{J(H,\mu)^{\dagger}}$ at $(H,\mu)$ of unit is an isomorphism if, and only if, $\pi_{J(H,\mu)^{\dagger}}$ (using the notations from the proof of Theorem~\ref{thm:equivalence_weighted_sets_and_semisimple_Frob}) is an isomorphism if, and only if, $(H,\mu)$ is semisimple. So the required equivalence is proved.

The last statement is obvious because $\mathbf{FinSet}$ is clearly equivalent to $\mathbf{WFinSet}$. 
\end{proof}

The second statement of the following corollary corresponds to~\cite[Corollary~7.2, p.~566]{Coecke} as the categories ${}_c\mathbf{FrobComon}(\mathbb{FdHilb})$ and ${}_c\mathbf{FrobMon}(\mathbb{FdHilb})^{\mathsf{op}}$ are isomorphic under the dagger functor (cf.~Section~\ref{sec:hilbertian_frob_alg}). 
\begin{corollary}\label{cor:fdhilb_and_finset}
One has the equivalences of categories ${}_{c}\mathbf{FrobSem}(\mathbb{Hilb})_{\mathsf{proper}}\simeq \mathbf{WSet}^{\mathsf{op}}\times \mathbf{Hilb}$ and ${}_{\mathsf{semisimple},c}\mathbf{FrobSem}(\mathbb{FdHilb})_{\mathsf{proper}}\simeq {}_c\mathbf{FrobMon}(\mathbb{FdHilb})\simeq \mathbf{FinSet}^{\mathsf{op}}$. 
\end{corollary}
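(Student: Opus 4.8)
The plan is to deduce both equivalences from the equivalences already established, by observing that \emph{properness} of a morphism is a condition bearing only on its semisimple part. Throughout I write $\langle P,J\rangle$ for the equivalence of Proposition~\ref{prop:structuretheoremrevisited1}, and I recall that properness of $f\colon (H,\mu)\to (K,\gamma)$ means $f^{\dagger}(G(K,\gamma))\subseteq G(H,\mu)$.

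For the first equivalence, I would first check that $\langle P,J\rangle$ restricts to an equivalence
$${}_{c}\mathbf{FrobSem}(\mathbb{Hilb})_{\mathsf{proper}}\simeq {}_{\mathsf{semisimple},c}\mathbf{FrobSem}(\mathbb{Hilb})_{\mathsf{proper}}\times\mathbf{Hilb}.$$
The only point to verify is that $f$ is proper if and only if $P(f)$ is proper, the component $J(f)$ being unconstrained since the radical contains no group-like element. This follows because $G(K,\gamma)\subseteq J(K,\gamma)^{\perp}$ and $f^{\dagger}$ carries $G(K,\gamma)$ into $G(H,\mu)\cup\{0\}\subseteq J(H,\mu)^{\perp}$, so that $(P(f))^{\dagger}=\pi_{J(H,\mu)^{\perp}}\circ f^{\dagger}\circ i_{J(K,\gamma)^{\perp}}$ agrees with $f^{\dagger}$ on $G(K,\gamma)$; since $G(P(K,\gamma))=G(K,\gamma)$ and $G(P(H,\mu))=G(H,\mu)$, the two properness conditions coincide. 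Essential surjectivity and fullness are then inherited from $\langle P,J\rangle$ via $f=P(f)\oplus_2 J(f)$ (Lemma~\ref{lem:faithful}), the assembled morphism $f'\oplus_2 g$ of a proper $f'$ and an arbitrary $g$ being again proper. Composing with the equivalence ${}_{\mathsf{semisimple},c}\mathbf{FrobSem}(\mathbb{Hilb})_{\mathsf{proper}}\simeq\mathbf{WSet}^{\mathsf{op}}$ of the preceding Proposition yields ${}_{c}\mathbf{FrobSem}(\mathbb{Hilb})_{\mathsf{proper}}\simeq\mathbf{WSet}^{\mathsf{op}}\times\mathbf{Hilb}$.

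For the finite-dimensional statement, the equivalence $\simeq\mathbf{FinSet}^{\mathsf{op}}$ is already the ``in particular'' part of the preceding Proposition, so it remains to identify ${}_{\mathsf{semisimple},c}\mathbf{FrobSem}(\mathbb{FdHilb})_{\mathsf{proper}}$ with ${}_c\mathbf{FrobMon}(\mathbb{FdHilb})$. On objects these agree by Corollary~\ref{cor:semisimplicityforFrob1}: a finite-dimensional commutative Frobenius semigroup is semisimple precisely when it is unital, and the unit is then unique, equal to $\sum_{I\in Min(H,\mu)}\mycirc{e}(I)$ (from $\langle e,g\rangle=1$ for every $g\in G(H,\mu)$, cf.~Remark~\ref{rem:description_of_HilbFrob} and Lemma~\ref{lem:support_of_idempotent_is_finite}). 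The forgetful functor from monoids to semigroups is thus a bijection on objects, and I would show it is a bijection on hom-sets onto the proper morphisms: writing $\ell=Min_{\bullet}(f)$, the formula $f(\mycirc{e}(I))=\sum_{J\in\ell^{-1}(\{I\})}\mycirc{e}(J)$ from Section~\ref{sec:minimal_ideal_functor} gives
$$f(e_H)=\sum_{I\in Min(H,\mu)}\;\sum_{J\in\ell^{-1}(\{I\})}\mycirc{e}(J)=\sum_{\substack{J\in Min(K,\gamma)\\ \ell(J)\neq 0}}\mycirc{e}(J),$$
whereas $e_K=\sum_{J\in Min(K,\gamma)}\mycirc{e}(J)$. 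By linear independence of the $\mycirc{e}(J)$, the equality $f(e_H)=e_K$ holds exactly when $\ell(J)\neq 0$ for all $J$, i.e.\ when $f^{\dagger}(\mycirc{g}(J))\neq 0$ for every group-like generator, which is precisely properness. Hence unit-preserving and proper coincide, giving an isomorphism ${}_c\mathbf{FrobMon}(\mathbb{FdHilb})\cong{}_{\mathsf{semisimple},c}\mathbf{FrobSem}(\mathbb{FdHilb})_{\mathsf{proper}}$, and chaining with the preceding Proposition completes the second claim.

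The main obstacle is this last identification of proper morphisms with monoid (unit-preserving) morphisms: everything else is a mechanical restriction of equivalences already in hand, but the coincidence requires the explicit description of the image of the unit through the minimal-ideal map $\ell$, and in particular the observation that $f(e_H)=e_K$ fails exactly when some group-like element of the codomain is annihilated by $f^{\dagger}$.
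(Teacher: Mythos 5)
Your proposal is correct, and its overall skeleton matches the paper's: both treat the first equivalence as a restriction of equivalences already in hand (the paper simply declares this part ``clear'', whereas you spell out why properness only constrains the semisimple factor and why $f'\oplus_2 g$ remains proper --- a worthwhile elaboration), and both reduce the second equivalence to showing that, between finite-dimensional semisimple commutative Hilbertian Frobenius semigroups, the proper semigroup morphisms are exactly the monoid morphisms, with the object-level identification given by Corollary~\ref{cor:semisimplicityforFrob1}. Where you genuinely diverge is in how that key identification is proved. The paper argues through the counit: it notes that $\eta^{\dagger}(g)=1$ for every group-like $g$, so a monoid morphism $f$ satisfies $\eta^{\dagger}(f^{\dagger}(h))=(\eta')^{\dagger}(h)=1\neq 0$ for each group-like $h$ of the codomain and is therefore proper; conversely, properness forces $\eta^{\dagger}\circ f^{\dagger}=(\eta')^{\dagger}$ on the group-likes, hence everywhere since these span, so $f^{\dagger}$ is a comonoid morphism and, by dagger duality, $f$ preserves the unit. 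You instead compute the image of the unit directly: from $f(\mycirc{e}(I))=\sum_{J\in\ell^{-1}(\{I\})}\mycirc{e}(J)$ (Section~\ref{sec:minimal_ideal_functor}) you get $f(e_H)=\sum_{J\colon \ell(J)\neq 0}\mycirc{e}(J)$, and linear independence of the minimal idempotents shows $f(e_H)=e_K$ precisely when $\ell$ never vanishes, i.e.\ precisely when $f$ is proper. Your route is more elementary in that it never passes through comonoid morphisms or counit compatibility, and it makes the failure mode explicit (some group-like of the codomain annihilated by $f^{\dagger}$); the paper's route is slightly slicker, replacing the bookkeeping with $\ell$ by a single evaluation against the counit and reusing the unit/counit dagger duality that runs through the rest of the paper. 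Both arguments are complete and rest on the same prior results.
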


\begin{proof}
As the first statement is clear, one only needs to prove the second, and it is clear that one only needs to prove that the categories ${}_c\mathbf{FrobMon}(\mathbb{FdHilb})$ and  ${}_{\mathsf{semisimple},c}\mathbf{FrobSem}(\mathbb{FdHilb})_{\mathsf{proper}}$ are equivalent. According to Corollary~\ref{cor:semisimplicityforFrob1}, the obvious forgetful functor ${}_c\mathbf{FrobMon}(\mathbb{FdHilb})\xrightarrow{|-|}{}_{c}\mathbf{FrobSem}(\mathbb{FdHilb})$ factors through the embedding ${}_{\mathsf{semisimple},c}\mathbf{FrobSem}(\mathbb{FdHilb})\hookrightarrow {}_c\mathbf{FrobSem}(\mathbb{FdHilb})$. One thus only needs to check that $|f|$ is actually proper for each monoid morphism $f$ and that the co-restricted functor ${}_c\mathbf{FrobMon}(\mathbb{FdHilb})\xrightarrow{|-|}{}_{\mathsf{semisimple},c}\mathbf{FrobSem}(\mathbb{FdHilb})_{\mathsf{proper}}$ is full. 

Given a finite-dimensional Hilbertian Frobenius monoid $(H,\mu,\eta)$, by a direct inspection $\eta(1)=\sum_{g\in G(H,\mu)}\frac{g}{\|g\|^2}$. The corresponding counit $\eta^{\dagger}\colon H\to\mathbb{C}$ thus is given by $\eta^{\dagger}(u)=\sum_{g\in G(H,\mu)}\langle u,\frac{g}{\|g\|^2}\rangle$. In particular for each $g\in G(H,\mu)$, $\eta^{\dagger}(g)=1$. 

Let $(H,\mu,\eta)\xrightarrow{f}(H',\mu',\eta')$ be a monoid morphism between finite-dimensional Frobenius monoids. Then, $f^{\dagger}(G(H',\mu'))\subseteq G(H,\mu)\cup\{\, 0\,\}$ and since $f^{\dagger}$ is compatible with the counits, for each $h\in G(H',\mu')$, $\eta^{\dagger}(f^{\dagger}(h))=(\eta')^{\dagger}(h)=1$. As a consequence $f$ is proper. 

Let $(H,\mu,\eta),(H',\mu',\eta')$ be finite-dimensional Frobenius monoids and let $(H,\mu)\xrightarrow{f}(H',\mu')$ be a proper morphism. Then for each $h\in G(H',\mu')$, there is one $g_h\in G(H,\mu)$ such that $f^{\dagger}(h)=g_h$ as $f^{\dagger}(G(H',\mu'))\subseteq G(H,\mu)$. Therefore, for each $h\in G(H',\mu')$, $\eta^{\dagger}(f^{\dagger}(h))=\eta^{\dagger}(g_h)=1=(\eta')^{\dagger}(h)$. Therefore $(\eta')^{\dagger}$ and $\eta^{\dagger}\circ f^{\dagger}$ are equal on $G(H',\mu')$ which spans $H'$, therefore they are equal on the whole $H'$, and as a consequence $f^{\dagger}\colon (H',\mu',\eta')\to (H,\mu,\eta)$ is a comonoid morphism, so that $(H,\mu,\eta)\xrightarrow{f}(H',\mu',\eta')$ is a morphism of monoids. This proves that ${}_c\mathbf{FrobMon}(\mathbb{FdHilb})\xrightarrow{|-|}{}_{\mathsf{semisimple},c}\mathbf{FrobSem}(\mathbb{FdHilb})_{\mathsf{proper}}$ is full. 
\end{proof}

Let ${}_{\mathsf{semisimple},c}\mathbf{Frob}(\mathbb{FdHilb})_{\mathsf{ambi}-\mathsf{proper}}$ be the category of finite-dimensional semisimple Frobenius semigroups with morphisms $f$ such that $(H,\mu)\xrightarrow{f}(K,\gamma)$ is a proper semigroup map and $(K,\gamma)\xrightarrow{f^{\dagger}}(H,\mu)$ is also a proper semigroup map (in particular, $(H,\mu^{\dagger})\xrightarrow{f}(K,\gamma^{\dagger})$ is a cosemigroup map). Let us also define ${}_{1,c}\mathbf{Frob}(\mathbb{FdHilb})_{\mathsf{ambi}}$ to be the category of finite-dimensional Frobenius monoids whose morphisms preserve both the monoid and the comonoid structures (the index ``$1$'' recalls that the structures are both unital and counital).
\begin{remark}
Observe that when $(H,\mu,\eta)$ is a finite-dimensional Hilbertian Frobenius monoid with an isometric comultiplication, then $((H,\mu,\eta),\mu^{\dagger})$ is a cocommutative cosemigroup object in ${}_c\mathbf{Mon}(\mathbb{FdHilb})$ by Proposition~\ref{prop:bisemigroup} and the fact that $\mu^{\dagger}(\eta(1))=\sum_{g\in G(H,\mu)}g\otimes g$. But it is not a bimonoid unless if $\dim_\mathbb{C}H\leq 1$, because $\eta^{\dagger}(\eta(1))=|G(H,\mu)|$.  
\end{remark}
Let $\mathbf{FinSet}_{\mathsf{bij},\mathsf{w}}$ be the category of finite weighted sets and bijections between them preserving the weight functions, that is, $(X,\alpha)\xrightarrow{f}(Y,\beta)$ is given as a bijection $X\xrightarrow{f}Y$ such that $\alpha(x)=\beta(f(x))$ for each $x\in X$. $\mathbf{FinSet}_{\mathsf{bij},\mathsf{w}}$ clearly embeds into $\mathbf{WSet}$. 
\begin{corollary}
${}_{\mathsf{semisimple},c}\mathbf{Frob}(\mathbb{FdHilb})_{\mathsf{ambi-proper}}\simeq {}_{1,c}\mathbf{Frob}(\mathbb{FdHilb})_{\mathsf{ambi}}\simeq \mathbf{FinSet}_{\mathsf{bij},\mathsf{w}}$. Consequently, ${}_{\mathsf{semisimple},c}\mathbf{Frob}(\mathbb{FdHilb})_{\mathsf{ambi-proper}}={}_{\mathsf{semisimple},c}\mathbf{FrobSem}(\mathbb{FdHilb})_{\mathsf{unitary}}$, the category of finite-dimensional semisimple Frobenius semigroups with unitary isomorphisms of semigroups. Moreover every isomorphism of semigroups between finite-dimensional semisimple Frobenius semigroups is a unitary transformation. 
\end{corollary}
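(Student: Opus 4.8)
The plan is to reduce the whole corollary to one structural fact: \emph{for finite-dimensional semisimple Frobenius semigroups a morphism is ambi-proper if, and only if, it is a unitary isomorphism of semigroups}. Granting this, the equality ${}_{\mathsf{semisimple},c}\mathbf{Frob}(\mathbb{FdHilb})_{\mathsf{ambi-proper}}={}_{\mathsf{semisimple},c}\mathbf{FrobSem}(\mathbb{FdHilb})_{\mathsf{unitary}}$ is immediate, since the two categories then have the same objects and the same hom-sets; and the final ``moreover'' is exactly the observation that every morphism, hence every isomorphism, of the ambi-proper category is unitary. The two remaining equivalences, with ${}_{1,c}\mathbf{Frob}(\mathbb{FdHilb})_{\mathsf{ambi}}$ and with $\mathbf{FinSet}_{\mathsf{bij},\mathsf{w}}$, I would then obtain by restricting the adjoint equivalence $Min_{\bullet}\dashv \ell^2_{\bullet}$ of Theorem~\ref{thm:equivalence_weighted_sets_and_semisimple_Frob} (in the ambidextrous form of Proposition~\ref{prop:equiv_partial_injection}) and checking that in the finite-dimensional setting everything becomes finite and that properness upgrades partial injections to genuine bijections.

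For the key fact I would first show that an ambi-proper $f\colon(H,\mu)\to(K,\gamma)$ is unitary. By hypothesis both $f$ and $f^{\dagger}$ are proper semigroup maps, so $f(G(H,\mu))\subseteq G(K,\gamma)$ and $f^{\dagger}(G(K,\gamma))\subseteq G(H,\mu)$ (properness rules out the value $0$ otherwise permitted by Lemma~\ref{lem:cosemigroupmorphisms}). Fixing $g\in G(H,\mu)$ and putting $h:=f(g)\in G(K,\gamma)$, I would compute $\langle g,f^{\dagger}(h)\rangle=\langle f(g),h\rangle=\|h\|^{2}\neq 0$; since $f^{\dagger}(h)\in G(H,\mu)$ and distinct group-like elements are orthogonal (Lemma~\ref{lem:orthogonal}), this forces $f^{\dagger}(h)=g$, whence $\|g\|^{2}=\langle g,f^{\dagger}(h)\rangle=\|h\|^{2}$, that is $\|f(g)\|=\|g\|$. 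Thus $f^{\dagger}\circ f=\mathrm{id}$ on the orthogonal basis $G(H,\mu)$ of $H$, and symmetrically $f\circ f^{\dagger}=\mathrm{id}$ on $G(K,\gamma)$, so $f$ is unitary and in particular invertible. Conversely, if $f$ is a unitary semigroup isomorphism then $f^{\dagger}=f^{-1}$ is again a semigroup map by Lemma~\ref{lem:semigroupiso}, so $f$ is ambidextrous; and since $f$ carries the minimal ideal $\mathbb{C}g$ onto a minimal ideal $\mathbb{C}h$ (Lemma~\ref{lem:minid_gpgen_idem_gen}), writing $f(g)=\lambda h$ and comparing $f(g^{2})=f(g)^{2}$ through $g^{2}=\|g\|^{2}g$ (Remark~\ref{rem:description_of_HilbFrob}) yields $\lambda=\|g\|^{2}/\|h\|^{2}$, while unitarity $\|f(g)\|=\|g\|$ forces $\|h\|=\|g\|$ and hence $\lambda=1$. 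Therefore $f(g)\in G(K,\gamma)$, and both $f$ and $f^{\dagger}=f^{-1}$ are proper, so $f$ is ambi-proper.

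It remains to interpose the other two categories. A finite-dimensional Frobenius monoid is automatically semisimple (Corollary~\ref{cor:semisimplicityforFrob1}), its unit is $\sum_{g}g/\|g\|^{2}$ and its counit $\epsilon$ satisfies $\epsilon(g)=1$ for every $g\in G$, both as computed in the proof of Corollary~\ref{cor:fdhilb_and_finset}. A morphism of ${}_{1,c}\mathbf{Frob}(\mathbb{FdHilb})_{\mathsf{ambi}}$ is at once a monoid and a comonoid map; being a coalgebra map it sends group-like elements into $G\cup\{0\}$, and counit-preservation $\epsilon_{K}(f(g))=\epsilon_{H}(g)=1$ excludes the value $0$, so $f$ is proper, and dually $f^{\dagger}$ is proper, whence $f$ is ambi-proper. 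A unitary isomorphism conversely preserves units and counits, so it is such a morphism; thus ${}_{1,c}\mathbf{Frob}(\mathbb{FdHilb})_{\mathsf{ambi}}$ has the same objects (the unit being now canonically determined) and the same morphisms as the ambi-proper category. Finally, under $\ell^2_{\bullet}$ the weight of a minimal ideal $I$ is $\|\mycirc{e}(I)\|^{2}$, so a weight-preserving bijection of finite pointed weighted sets corresponds precisely to a norm-matching bijection of the orthogonal bases of group-like elements, i.e. to a unitary semigroup isomorphism; this gives $\mathbf{FinSet}_{\mathsf{bij},\mathsf{w}}\simeq{}_{\mathsf{semisimple},c}\mathbf{FrobSem}(\mathbb{FdHilb})_{\mathsf{unitary}}$ and closes the chain of equivalences.

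The main obstacle is the norm-preservation step $\|f(g)\|=\|g\|$. An isomorphism of the bare semigroups remembers only the minimal ideals and their idempotent generators, equivalently the group-like \emph{rays}, and so in general carries each group-like element merely to a nonzero scalar multiple of a group-like element; such an isomorphism need not be unitary. It is exactly the two-sided properness that forces $f$ to send group-like elements to group-like elements rather than to scalar multiples of them, and thereby pins down the norms. Making this the pivot is what turns the purely algebraic bijection of minimal ideals into the metric (unitary) conclusion, and it is the only point at which the Hilbert-space structure, as opposed to the algebra structure alone, genuinely intervenes; accordingly I would read the final ``moreover'' as the assertion that every isomorphism of the ambi-proper (equivalently, ambidextrous) category is a unitary transformation.
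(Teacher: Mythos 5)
Your handling of the two equivalences and of the equality ${}_{\mathsf{semisimple},c}\mathbf{Frob}(\mathbb{FdHilb})_{\mathsf{ambi-proper}}={}_{\mathsf{semisimple},c}\mathbf{FrobSem}(\mathbb{FdHilb})_{\mathsf{unitary}}$ is correct, and in substance it uses the same ingredients as the paper: the duality $\langle f(g),h\rangle=\langle g,f^{\dagger}(h)\rangle$ on group-like elements, their orthogonality (Lemma~\ref{lem:orthogonal}), and the $Min$/$\ell^2$ machinery. The organization differs. You pivot on the isolated statement ``ambi-proper $\Leftrightarrow$ unitary semigroup isomorphism'', proved first and directly; the paper never isolates this, but instead deduces the first equivalence from Corollary~\ref{cor:fdhilb_and_finset} (proper semigroup maps between finite-dimensional semisimple Frobenius semigroups are exactly the monoid morphisms, so ambi-proper maps are exactly the monoid-and-comonoid maps), and then shows that $Min$ of an ambi-proper morphism is a weight-preserving bijection while $\ell^2$ of a weight-preserving bijection is a unitary isomorphism of semigroups and cosemigroups. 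Your route makes the ``consequently'' genuinely a consequence, at the price of redoing by hand part of what Corollary~\ref{cor:fdhilb_and_finset} packages; note also that your converse direction can be shortened: if $f$ is a unitary semigroup isomorphism then $f^{\dagger}=f^{-1}$ is a semigroup map by Lemma~\ref{lem:semigroupiso} and both $f$ and $f^{-1}$, being isomorphisms, are automatically proper, so the computation of $\lambda=\|g\|^{2}/\|h\|^{2}$ is not needed there (it is needed only if one wants to see directly that $f$ sends group-likes to group-likes).

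The one real divergence concerns the final ``moreover'', and there you are right and the paper is not. The paper's justification of that sentence is the bare assertion that ``the above also shows'' that the ambi-proper category coincides with the groupoid of \emph{all} semigroup isomorphisms; nothing above it shows this, and the claim is false. Your diagnosis --- a bare semigroup isomorphism only remembers the group-like \emph{rays}, i.e., the minimal ideals and their idempotent generators, not the norms --- is exactly the obstruction, and it can be made concrete: take $H=K=\mathbb{C}$ with $\mu(a\otimes b)=ab$ and $\gamma(a\otimes b)=2ab$; both are one-dimensional semisimple Hilbertian Frobenius semigroups, with $G(H,\mu)=\{1\}$ and $G(K,\gamma)=\{2\}$, and $f(a)=a/2$ is a semigroup isomorphism $(H,\mu)\to(K,\gamma)$ of operator norm $\frac{1}{2}$, hence not unitary; indeed $f^{\dagger}(b)=b/2$ fails to be a semigroup map $(K,\gamma)\to(H,\mu)$, so $f$ is not ambidextrous. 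Thus the last sentence of the corollary, read literally, cannot be proved, and your reinterpretation --- every isomorphism \emph{of the ambi-proper category} (equivalently of ${}_{1,c}\mathbf{Frob}(\mathbb{FdHilb})_{\mathsf{ambi}}$) is a unitary transformation --- is the only correct reading; your argument does establish that version, which is also what the cited result of Coecke--Pavlovic--Vicary asserts.
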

\begin{proof}
This is~\cite[Corollary~7.1, p.~365]{Coecke} but it can be deduced from Corollary~\ref{cor:fdhilb_and_finset}. Indeed, one knows that $(H,\mu)\xrightarrow{f}(H',\mu')$ is a proper semigroup map between finite-dimensional semisimple Frobenius semigroups if, and only if, $(H,\mu,\eta)\xrightarrow{f}(H',\mu',\eta')$ is a monoid morphism (using the notations from the proof of Corollary~\ref{cor:fdhilb_and_finset}). So $f\in{}_{\mathsf{semisimple},c}\mathbf{Frob}(\mathbb{FdHilb})_{\mathsf{ambi-proper}}((H,\mu),(H',\mu'))$ if, and only if, $(H,\mu,\eta)\xrightarrow{f}(H',\mu',\eta')$ and $(H',\mu',\eta')\xrightarrow{f^{\dagger}}(H,\mu,\eta)$ are monoid morphisms  if, and only if, $f\in {}_{1,c}\mathbf{Frob}(\mathbb{FdHilb})_{\mathsf{ambi}}((H,\mu,\eta),(H',\mu',\eta'))$. 

Let $f\in {}_{\mathsf{semisimple},c}\mathbf{Frob}(\mathbb{FdHilb})_{\mathsf{ambi-proper}}((H,\mu),(K,\gamma))$. 
Now $f(G(H,\mu))\subseteq G(K,\gamma)$ since $f$ is a cosemigroup map and since $f^{\dagger}$ is proper, and for each $g,h\in G(H,\mu)$, $g\not=h$, $0=f(gh)=f(g)f(h)$ so that $f$ is an injection from $G(H,\mu)$ into $G(K,\gamma)$.  $f^{\dagger}$ is an injection from $G(K,\gamma)$ into $G(H,\mu)$ for similar reasons. From the relation $\langle f(g),h\rangle=\langle g,f^{\dagger}(h)\rangle$, $g\in G(H,\mu)$, $h\in G(K,\gamma)$ it follows that $f\circ f^{\dagger}=id_{G(K,\gamma)}$ and $f^{\dagger}\circ f=id_{G(H,\mu)}$, that is, $G(H,\mu)\xrightarrow{f}G(K,\gamma)$ is a bijection with inverse $f^{\dagger}$.  Now by functoriality $Min(f)\colon Min(K,\gamma)\to Min(H,\mu)$ is a bijection with inverse $Min(f^{\dagger})\colon Min(H,\mu)\to Min(K,\gamma)$. Since ${}_{\mathsf{semisimple},c}\mathbf{Frob}(\mathbb{FdHilb})_{\mathsf{ambi-proper}}\hookrightarrow{}_{\mathsf{semisimple},c}\mathbf{FrobSem}(\mathbb{FdHilb})_{\mathsf{proper}}$ it follows from Proposition~\ref{prop:equiv_partial_injection} that $Min_{\bullet}(f)\in \mathbf{PInj}_{\bullet,\mathsf{w}}(Min_{\bullet}(H,\mu),Min_{\bullet}(K,\gamma))$. This implies that $w_{(H,\mu)}(Min(f)(J))=w_{(K,\gamma)}(J)$ for each $J\in Min(K,\gamma)$, that is, $Min(f)\in \mathbf{FinSet}_{\mathsf{bij},\mathsf{w}}(Min(K,\gamma),Min(H,\mu))$. 

Let $(X,\alpha)\xrightarrow{f}(Y,\beta)$ be a $\mathbf{FinSet}_{\mathsf{bij},\mathsf{w}}$.  Then, of course $\ell^2(f)$ is an isomorphism in ${}_{\mathsf{semisimple},c}\mathbf{FrobSem}(\mathbb{FdHilb})$ from $\ell^2(Y,\beta)$ to $\ell^2(X,\alpha)$, and thus it is a proper morphism. Now for $u\in \ell^2(X,\alpha,\mathbf{1})$ and $v\in \ell^2(Y,\beta,\mathbf{1})$, $\langle \ell^2(f^{-1})(u),v\rangle_\beta=\langle u\circ f^{-1},v\rangle_\beta=\sum_y\beta(y)u(f^{-1}(y))\overline{v(y)}=\sum_x\beta(f(x))u(x)\overline{v(f(x))}=\sum_{x}\alpha(x)u(x)\overline{v(f(x))}=\langle u,v\circ f\rangle_\alpha=\langle u,\ell^2(f)(v)\rangle_\alpha$, that is, 
$\ell^2(f^{-1})=\ell^2(f)^{\dagger}$. Now $\ell^2(f)(\frac{\delta_y}{\beta(y)})=\frac{1}{\beta(y)}\delta_y\circ f=\frac{1}{\beta(y)}\delta_{f^{-1}(y)}=\frac{1}{\alpha(f^{-1}(y))}\delta_{f^{-1}(y)}$. Therefore $\ell^2(f)(G(\ell^2(Y,\beta)))\subseteq G(\ell^2(X,\alpha))$. By Lemma~\ref{lem:cosemigroupmorphisms}, $\ell^2(f)$ is also a morphism of cosemigroups. Thus it is a (unitary) isomorphism of cosemigroups (in particular, it is proper). The required equivalence follows now easily. 

Moreover the above also shows that ${}_{\mathsf{semisimple},c}\mathbf{Frob}(\mathbb{FdHilb})_{\mathsf{ambi-proper}}$ is equal to the groupoid  ${}_{\mathsf{semisimple},c}\mathbf{FrobSem}(\mathbb{FdHilb})_{\mathsf{iso}}$ of finite-dimensional semisimple Frobenius semigroups with isomorphisms of semigroups or  ${}_{\mathsf{semisimple},c}\mathbf{FrobSem}(\mathbb{FdHilb})_{\mathsf{unitary}}$, the groupoid of finite-dimensional semisimple Frobenius semigroups with unitary isomorphisms of semigroups.
\end{proof}

\section{Epilogue: And non-commutativity in all that?}\label{sec:epilogue}

Let $X$ be a non-void set and let $x_0\in X$. Let $m_{x_0}\colon \ell^2(X)\times\ell^2(X)\to\ell^2(X)$ be given by $m_{x_0}(u,v):=v(x_0)u$. It is of course bounded since $\|m_{x_0}(u)\|^2=|v(x_0)|^2\|u\|^2\leq \|v\|^2\|u\|^2$. Then $m_{x_0}$ is a weak Hilbert-Schmidt mapping as $\sum_{x,y}|\langle m_{x_0}(\delta_x,\delta_y),u\rangle|^2=\sum_{x\in X}|\langle \delta_x,u\rangle|^2=\|u\|^2$. Let $\mu_{x_0}\colon \ell^2(X)\hat{\otimes}_2 \ell^2(X)\to\ell^2(X)$ be its unique bounded linear extension. $(\ell^2(X),\mu_{x_0})$ is a Hilbertian semigroup, non-commutative as soon as $X\setminus\{\, x_0\,\}\not=\emptyset$. Since $\langle\mu_{x_0}^{\dagger}(u),\delta_x\otimes \delta_y\rangle=\delta_{y,x_0}u(x)$ for each $x,y\in X$ it follows that $\mu_{x_0}^{\dagger}(u)=u\otimes\delta_{x_0}$. Consequently $\mu_{x_0}(\mu_{x_0}^{\dagger}(u))=u$, $u\in \ell^2(X)$, that is, $\mu_{x_0}^{\dagger}$ is an isometry. 

Moreover for each $u,v\in \ell^2(X)$, $\mu_{x_0}^{\dagger}(\mu_{x_0}(u\otimes v))=v(x_0)\mu_{x_0}^{\dagger}(u)=v(x_0)u\otimes\delta_{x_0}$, 
$(id\hat{\otimes}_2\mu_{x_0})(\alpha((\mu^{\dagger}_{x_0}\hat{\otimes}_2 id)(u\otimes v)))=
(id\hat{\otimes}_2\mu_{x_0})(u\otimes (\delta_{x_0}\otimes v))=v(x_0)u\otimes\delta_{x_0}$ and $(\mu_{x_0}\hat{\otimes}_2 id)(\alpha^{-1}((id\hat{\otimes}_2\mu_{x_0}^{\dagger})(u\otimes v)))=
(\mu_{x_0}\hat{\otimes}_2 id)((u\otimes v)\otimes\delta_{x_0})=v(x_0)u\otimes\delta_{x_0}$. Therefore $(\ell^2(X),\mu_{x_0})$ is Frobenius. To summarize, $(\ell^2(X),\mu_{x_0})$ is a not necessarily commutative special Frobenius Hilbertian semigroup.

It is easily seen that $G(\ell^2(X),\mu_{x_0})=\{\,\delta_{x_0}\,\}$. 

It is also clear that $A(\ell^2(X),\mu_{x_0})=\{\, 0\,\}$ as $u\delta_{x_0}=u$, $u\in \ell^2(X)$ and $\{\, \delta_{x_0}\,\}^{\perp}$ consists entirely of nilpotent elements as if $u(x_0)=0$, then $u^2=0$. Also $E(\ell^2(X),\mu_{x_0})=\{\, u\in \ell^2(X)\colon u(x_0)=1\,\}\cup\{\, 0\,\}$. 

As $\ell^2(X)\to \mathbb{C}$, $u\mapsto u(x_0)$, is a morphism of algebras it follows that its kernel, namely $\{\, \delta_{x_0}\,\}^{\perp}=\{\, u\colon u(x_0)=0\,\}$ is a two-sided maximal modular ideal, with modular unit $\delta_{x_0}$. Let $I$ be a modular right ideal of $(\ell^2(X),m_{x_0})$, that is, $I$ is right ideal with a left-unit $e$, that is, $u-eu\in I$ for each $u\in \ell^2(X)$. As for each $u\in \{\, \delta_{x_0}\,\}^{\perp}$, $u=u-u(x_0)e=u-eu\in I$, it follows that $\{\, \delta_{x_0}\,\}^{\perp}\subseteq I$. By a codimensionality argument it follows that either $I=\{\, \delta_{x_0}\,\}^{\perp}$ or $I=\ell^2(X)$. Consequently $J(\ell^2(X),\mu_{x_0})=\{\,\delta_{x_0}\,\}^{\perp}$ and $(\ell^2(X),\mu_{x_0})$ is not semisimple as soon as $X\setminus\{\,x_0\,\}\not=\emptyset$.  It is not a $H^*$-algebra either for if the annihilator would be equal to the Jacobson radical.

\end{document}